\author{Casey Rodriguez}
\address{Department of Mathematics\\
  University of Chicago\\
  5734 S. University Avenue \\
  Chicago,IL, 60637}
\email{c-rod216@math.uchicago.edu}
\numberwithin{equation}{section}
\newcommand{\R}{\mathbb R}
\newcommand{\N}{\mathbb N}
\newcommand{\Z}{\mathbb Z}
\newcommand{\ra}{\rangle}
\newcommand{\la}{\langle}
\newcommand{\cl}{\mathcal}
\newcommand{\energysp}{\dot{H}^1 \times L^2}
\newcommand{\supp}{\mbox{supp }}
\newcommand{\grad}{\nabla}
\newtheorem{lem}{Lemma}[section]
\newtheorem{thm}[lem]{Theorem}
\newtheorem{ppn}[lem]{Proposition}
\newtheorem{defn}[lem]{Definition}
\newtheorem{clm}[lem]{Claim}
\newtheorem{cor}[lem]{Corollary}
\title{Profiles for the Radial Focusing Energy--Critical \\ Wave Equation in Odd Dimensions }
\date{\today}
\begin{document}

\begin{abstract}
In this paper we consider global and non-global radial solutions of the focusing energy--critical wave equation on $\R \times \R^N$ where $N \geq 5$ is odd.  We prove that
if the solution remains bounded in the energy space as you approach the maximal forward time of existence, then along a sequence of times converging to the maximal forward time of existence, the solution decouples into a sum of dynamically rescaled
solitons, a free radiation term, and an error tending to zero in the energy space.  If, in addition, we assume a 
bound on the evolution that rules out formation of multiple solitons, then this decoupling holds for all times approaching
the maximal forward time of existence. 
\end{abstract}

\maketitle

\section{Introduction}
Consider the focusing energy--critical wave equation
\begin{align}
       \partial_t^2 u - \Delta u &= |u|^{\frac{4}{N-2}}u, \quad (t,x) \in \R \times \R^N \label{nlw},\\
       \vec u (0) &= (u_0,u_1) \in \energysp \nonumber,
\end{align}
where $\vec u(t)$ is the vector in $\energysp$
$$
\vec u(t) := (u(t,\cdot), \partial_t u(t,\cdot)).
$$
Equation \eqref{nlw} is locally well--posed in the energy space $\dot H^1 \times L^2$ for all $N \geq 3$: for any initial data $(u_0,u_1) \in \energysp$, there exists
a unique solution $u$ defined on a maximal interval of existence $I_{\max}(u) = (T_-(u),T_+(u))$ such that $\vec u(t) \in C(I_{\max}; \energysp )$
and $u \in S(J)$ for any compact $J \subset I_{\max}(u)$ where
$$
S(J) = L^{\frac{2(N+1)}{N-2}}(J \times \R^N).
$$
Moreover, if $\| (u_0, u_1) \|_{\energysp} \ll 1$, then $I_{\max}(u) = \R$ and $u$ scatters forward and backwards in time:
there exists $v_L^{\pm}(t) = S(t)(v_0^{\pm},v_1^{\pm})$ so that
$$
\lim_{t \rightarrow \pm\infty} \| \vec u(t) - \vec v_L^{\pm}(t) \|_{\energysp} = 0.
$$

The energy of the solution is constant on $I_{\max}(u)$:
\begin{align*}
E(\vec u(t)) := \frac{1}{2} \int_{\R^N} \left ( |\nabla u(t) |^2 + |\partial u(t)|^2 \right ) dx - \frac{N-2}{2N} \int_{\R^N}
|u(t)|^{\frac{2N}{N-2}} dx  = E(\vec u(0)).
\end{align*}
The Cauchy problem \eqref{nlw} is invariant under scaling
$$
u(t,x) \mapsto u_{\lambda}(t,x) := \frac{1}{\lambda^{\frac{N-2}{2}}} u \left ( \frac{t}{\lambda}, \frac{x}{\lambda} \right ).
$$
This scaling leaves unchanged the energy $E(\vec u(t))$, as well as the $\energysp$ of the initial data.  For this reason, \eqref{nlw} is called \emph{energy--critical}.

In this work, we will study the dynamics of radial solutions to \eqref{nlw} that are bounded in the energy space forward in time, i.e.,
\begin{align}\label{bdd}
\sup_{t \in [0,T_+(u))} \| \nabla u(t) \|_{L^2}^2 + \|\partial_t u(t) \|_{L^2}^2 < \infty.
\end{align}
We call these solutions type II solutions.  In the case $T_+ = T_+(u) < \infty$, we call these solutions type II blow--up solutions.

The Cauchy problem \eqref{nlw} has an explicit stationary solution:
\begin{align*}
W(x) = \frac{1}{\left (1 + \frac{|x|^2}{N(N-2)} \right )^{\frac{N-2}{2}}}.
\end{align*}
$W$ is the the unique (up to sign, dilation, and translation) nonnegative nontrivial $C^2$ solution to the elliptic equation
\begin{align}\label{stateq}
-\Delta W = |W|^{\frac{4}{N-2}}W, \quad x \in \R^N.
\end{align}
It is the unique (up to sign and dilation) radial nontrivial $\dot H^1$ solution to \eqref{stateq}.  $W$ is also the
unique (up to sign, dilation, and translation) extremizer for the Sobolev inequality
$$
\| f \|_{L^{\frac{2N}{N-2}}} \leq C_N \| \nabla f \|_{L^2},
$$
where $C_N$ is the best constant (see \cite{tal}).  Because of these characterizations, $W$ is referred to as the ground state.

Kenig and Merle, \cite{km08} classified the possible dynamics for for solutions below the energy of the ground state.
Indeed, if $3 \leq N \leq 5$ and
$$
E(\vec u) < E(W,0),
$$
the following dichotomy holds:
\begin{itemize}
 \item If $\| \nabla u_0 \|_{L^2} < \| \nabla W \|_{L^2}$, then $I_{\max}(u) = \R$ and $u$ scatters both forward and backwards in time.
\item If $\| \nabla u_0 \|_{L^2} > \| \nabla W \|_{L^2}$, then $I_{\max}(u)$ is finite.
\end{itemize}
The case $\| \nabla u_0 \|_{L^2} = \| \nabla W \|_{L^2}$ is impossible if $E(\vec u) < E(W,0)$.  Solutions with threshold
energy, $E(\vec u) = E(W,0)$ were classified by Duyckaerts and Merle \cite{dm08} for $3 \leq N \leq 5$ and by
Li and Zhang \cite{lz} for $N \geq 6$.

For type II solutions, is is known that $\| \nabla W\|^2_{L^2}$ is a sharp threshold for finite time blow--up and scattering.
Indeed, we have the following generalization (see \cite{dkm2}) of the scattering part from Kenig--Merle result: If $3 \leq N \leq 5$, $u(t)$ is a type II solution, and
\begin{align*}
&\sup_{t \in [0, T_+(u))} \| \nabla u(t) \|_{L^2}^2 + \frac{N-2}{2} \| \partial_t u(t) \|_{L^2}^2 < \| \nabla W \|_{L^2}^2
\mbox{ ($u$ non--radial),} \\
&\sup_{t \in [0, T_+(u))} \| \nabla u(t) \|_{L^2}^2 < \| \nabla W \|_{L^2}^2
\mbox{ ($u$ radial),}
\end{align*}
then $T_+(u) = +\infty$ and $u$ scatters forward in time.

In dimension $N = 3$, Krieger, Schlag, and Tataru \cite{krst} constructed, for every $\delta > 0$, a radial type II blow--up solution
$u$ so that $T_+(u) = 1$ and
$$
\sup_{0 \leq t < 1} \| \nabla u(t) \|_{L^2}^2 \leq \| \nabla W \|_{L^2}^2 + \delta.
$$
The blow--up occurs via a bubbling off of the ground state $W$:
$$
\vec u(t) = \left ( \frac{1}{\lambda(t)^{\frac{1}{2}}} W \left ( \frac{x}{\lambda(t)} \right ), 0  \right ) + \vec v(t),
$$
with $\lambda(t) = (1-t)^{1 + \nu}$ for $\nu > 1/2$.  The error $v(t)$ is regular up to $t = 1$.  This result was later extended to cover the case $0 < \nu \leq 1/2$ by
Krieger and Schlag \cite{krs}.  

In dimension $N = 4$, Hillairet and Raphael \cite{hillr} constructed, for every $\delta > 0$, a radial $C^\infty$ type II blow--up solution $u$ so that $T_+(u) = 1$,
$$
\sup_{0 \leq t < 1} \| \nabla u(t) \|_{L^2}^2 \leq \| \nabla W \|_{L^2}^2 + \delta,
$$
and again, the blow--up occurs via a bubbling off of the ground state $W$:
$$
\vec u(t) = \left ( \frac{1}{\lambda(t)} W \left ( \frac{x}{\lambda(t)} \right ), 0  \right ) + \vec v(t),
$$
with $\lambda(t) = (1 - t) \exp ( - \sqrt{\log|1 - t|}(1 + o(1))$ as $t \rightarrow 1^-$.  The construction of
such type II blow--up solutions in higher dimensions is still open.

This type of bubbling behavior is believed to be characteristic of all radial type II solutions that either break down in
finite time or do not scatter forward in time.  More precisely, it is believed that if $u$ is a radial type II solution of
\eqref{nlw}, and either $T_+(u) < +\infty$
or $T_+(u) = +\infty$ and $u$ does not scatter forward in time, then $u(t)$ asymptotically decouples into a sum of
dynamically rescalings of $W$, a radiation term, and a term which goes to zero in $\energysp$ as $t \rightarrow T_+(u)$.  This
soliton resolution type conjecture was verified in the seminal work by Duyckaerts, Kenig, and Merle \cite{dkm4} for $N = 3$, and
along a sequence of times approaching $T_+(u)$ by C\^ote, Kenig, Lawrie, and Schlag \cite{ckls} for $N = 4$.  For results of this
type for other equations, see for example \cite{ckls1} \cite{ckls2} \cite{kls1} \cite{klls2}.

In this paper, we treat the case $N \geq 5$ and odd.  The following results are the higher dimensional analogs of the main
results for $N = 3$ \cite{dkm3} and $N = 4$ \cite{ckls}.  We distinguish two cases: blow--up and global defined solutions.

\begin{thm}[Type II blow--up solutions]\label{thm1} Let $N \geq 5$ be odd.  Let $u$ be a radial solution to \eqref{nlw} with $T_+(u) < +\infty$ that satisfies \eqref{bdd}.
Then there exists $(v_0,v_1) \in \energysp$, a sequence $t_n \rightarrow T_+(u)$, an integer $J_0 \geq 1$, $J_0$ sequences of positive numbers 
$\{\lambda_{j,n}\}_n$,
 $j = 1, \ldots, J_0$, $J_0$ signs $\iota_j \in \{ \pm 1 \}$ such that
 \begin{align*}
\lambda_{1,n} \ll \lambda_{2,n} \ll \cdots \ll \lambda_{J_0,n} \ll T_+(u) - t_n,
\end{align*}
and
\begin{align*}
\vec u(t_n) = (v_0,v_1) + \sum_{j = 1}^{J_0} \left (\frac{\iota_j}{\lambda_{j,n}^{\frac{N-2}{2}}} W \left (\frac{x}{\lambda_{j,n}} \right ),0 \right ) + o(1)
\quad \mbox{in } \energysp \mbox{ as } n \rightarrow \infty.
\end{align*}

Furthermore
\begin{align*}
\lim_{t \rightarrow T_+(u)} \int_{ |x| \leq T_+(u) - t }  \frac{1}{2} |\nabla u(t) |^2 + \frac{1}{2} |\partial u(t)|^2  - \frac{N-2}{2N}
|u(t)|^{\frac{2N}{N-2}} \hspace{2pt} dx = J_0 E(W,0).
\end{align*}
\end{thm}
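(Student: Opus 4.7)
The plan is to follow the strategy pioneered by Duyckaerts--Kenig--Merle in \cite{dkm4} for $N = 3$ and adapted by C\^ote--Kenig--Lawrie--Schlag in \cite{ckls} for $N = 4$, with the essential simplification that in odd dimensions the radial free wave equation on $\R^N$ enjoys clean exterior energy estimates in both time directions. First, since $T_+(u) < \infty$ and $u$ is radial with bounded energy, finite speed of propagation shows that the restriction of $\vec u(t)$ to the exterior region $\{|x| \geq T_+(u) - t\}$ has a limit as $t \to T_+(u)$: the corresponding data $\vec u(t)$ is Cauchy in $\energysp(|x| \geq T_+(u) - t)$, and by standard extension, gives rise to a global radiation profile $(v_0, v_1) \in \energysp$. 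Defining $a(t) = u(t) - v(t)$, where $v$ solves the free wave equation with data $(v_0, v_1)$, reduces the problem to analyzing the ``singular part'' $\vec a(t)$, which is essentially supported in $\{|x| \leq T_+(u) - t\}$ and still bounded in $\energysp$.

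Next, I would extract profiles. Pick any sequence $t_n \to T_+(u)$ and apply the Bahouri--Gerard linear profile decomposition to $\vec a(t_n)$, obtaining profiles $U_L^j$ with associated scales $\lambda_{j,n}$ and time parameters $t_{j,n}$, together with a small remainder. Each profile gives rise to a nonlinear profile $U^j$ solving \eqref{nlw}, and one has nonlinear superposition: $\vec a(t_n) = \sum_j \vec U^j(-t_{j,n}/\lambda_{j,n})_{\lambda_{j,n}} + o(1)$ in $\energysp$. The goal is then to show (i) that each nonlinear profile $U^j$ is, up to sign and scaling, the stationary solution $W$, (ii) that the associated scales satisfy $\lambda_{j,n} \ll T_+(u) - t_n$, and (iii) that they can be ordered so as to be pairwise separated $\lambda_{1,n} \ll \cdots \ll \lambda_{J_0,n}$.

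The heart of the matter, and the main obstacle, is step (i): identifying each nonlinear profile as a rescaled $W$. The tool is the rigidity theorem for type II radial solutions, which in turn rests on the channel of energy estimates for the radial free wave equation in odd dimensions. Specifically, for any radial finite-energy data that is not a static ground-state profile, a fixed fraction of the energy escapes into the exterior of the light cone either as $t \to +\infty$ or as $t \to -\infty$. Pushing this through the nonlinear problem (using long-time perturbation theory and the fact that the original solution has no exterior radiation by construction of $v$) forces every profile with a nontrivial time parameter to scatter and hence contribute trivially, while every profile with a bounded time parameter must be a soliton or vanish. This is where the odd-dimensional assumption is crucial: in odd dimensions one has exterior energy estimates in both time directions, whereas in even dimensions only partial substitutes exist. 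Carrying out this step requires a careful analysis of the interaction between different profile scales and a novel orthogonality argument to rule out profiles with $\lambda_{j,n} \sim T_+(u) - t_n$.

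Finally, once all profiles have been identified as rescaled ground states and the scales ordered, the Pythagorean expansion for the profile decomposition gives
\begin{equation*}
E(\vec a(t_n)) = \sum_{j=1}^{J_0} E(W,0) + o(1),
\end{equation*}
and combining this with finite speed of propagation and the exterior energy identity for $v$ yields the claimed limit of the energy inside the light cone. The separation of scales together with the fact that the remainder of the profile decomposition goes to zero in $\energysp$ gives the stated decomposition of $\vec u(t_n)$. The integer $J_0$ arises as the total number of nontrivial profiles, which is finite by the energy bound together with the fact that each soliton contributes the fixed amount $E(W,0)$ to the energy.
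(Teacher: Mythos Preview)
Your outline has the right overall architecture but contains one concrete error and glosses over the mechanism that actually drives the argument.

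First, a concrete slip: you define $v$ as the solution of the \emph{free} wave equation with data $(v_0,v_1)$. It must be the solution of the \emph{nonlinear} equation \eqref{nlw} with data $(v_0,v_1)$ at $t=T_+$; otherwise finite speed of propagation does not give $u=v$ on $\{|x|\ge T_+-t\}$, and the singular part $a=u-v$ is not supported in the cone. The cone support of $\vec a$ is used repeatedly downstream.

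Second, and more serious, your profile-identification step does not match how the argument works. You write that channels of energy ``force every profile with a nontrivial time parameter to scatter \ldots\ while every profile with a bounded time parameter must be a soliton or vanish.'' That is not the mechanism here. The paper proceeds in two separate channels-of-energy stages. In the first (Section~3), one proves that no energy of $\vec a$ concentrates in the self-similar shell $c_0(T_+-t)\le|x|\le T_+-t$; this is a delicate three-case argument using the $R>0$ exterior estimate \eqref{extbd1}. The self-similar vanishing is what produces, via the virial-type argument of \cite{dkm1}, Proposition~5.1, a \emph{specific} sequence $t_n\to T_+$ along which $\partial_t a(t_n)\to 0$ in $L^2$. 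Only along this special sequence can one conclude that every nonlinear profile is stationary, hence solves $-\Delta U=|U|^{4/(N-2)}U$ and equals $\pm W$ up to scaling. Your proposal takes an \emph{arbitrary} sequence and tries to identify profiles via rigidity for compact-trajectory solutions; but the profiles arising from an arbitrary sequence are not a priori compact, and the channels estimate alone does not force them to be $W$.

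Third, you assert at the end that ``the remainder of the profile decomposition goes to zero in $\energysp$,'' but the Bahouri--G\'erard decomposition only gives smallness of the remainder in the Strichartz norm. Upgrading to energy smallness is a separate step (Section~5.2): one inductively peels off the solitons via finite speed of propagation (Lemma~\ref{cutoffI}), then applies the $R=0$ exterior estimate \eqref{extbd2} to the linear evolution of $(w_{0,n},0)$, which is even in time so the estimate holds for both signs, against the cone support of $\vec a$.
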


If we make an additional assumption on the size of $\nabla u(t)$ inside the backwards light cone, then we obtain the decomposition
above for any sequence of times and can prove the following classification of type II blow--up solutions.

\begin{thm}[Type-II blow-up solutions below $2 \|\nabla W\|^2_{L^2}$]\label{below1}
Let $u$ be a radial solution to \eqref{nlw} with $T_+(u) < +\infty$ that satisfies \eqref{bdd}.  Assume in addition, that
\begin{align*}
\sup_{t \in [0,T_+(u))} \int_{ |x| \leq T_+(u) - t } |\nabla u(t) |^2 dx < 2 \|\nabla W\|^2_{L^2}.
\end{align*}
Then there exists $(v_0,v_1) \in \energysp$, a continuous positive function $\lambda(t)$ defined for $t < T_+(u)$ close to $T_+(u)$, and a sign $\iota \in \{\pm 1\}$ such that
\begin{align*}
\lim_{t \rightarrow T_+(u)} \frac{\lambda(t)}{T_+(u)-t} = 0,
\end{align*}
and
\begin{align*}
\vec u(t) = (v_0,v_1) +  \left (\frac{\iota}{\lambda(t)^{(N-2)/2}} W \left (\frac{x}{\lambda(t)} \right ),0 \right ) + o(1)
\quad \mbox{in } \energysp \mbox{ as } t \rightarrow T_+(u).
\end{align*}
\end{thm}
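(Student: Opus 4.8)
The plan is to bootstrap from Theorem~\ref{thm1}, which already yields the multi-bubble decomposition along a sequence $t_n \to T_+(u)$, together with the energy quantization
\[
\lim_{t \to T_+(u)} \int_{|x| \le T_+(u)-t} e(u)(t,x)\,dx = J_0\,E(W,0),
\]
where $e(u)$ denotes the energy density inside the backward light cone. The additional hypothesis $\sup_t \int_{|x|\le T_+(u)-t} |\nabla u(t)|^2\,dx < 2\|\nabla W\|_{L^2}^2$ is designed precisely to force $J_0 = 1$. First I would show that $J_0 \le 1$: if $J_0 \ge 2$, then the decomposition at time $t_n$ contains two bubbles at well-separated scales $\lambda_{1,n} \ll \lambda_{2,n}$, and by almost-orthogonality of the $\dot H^1$ norms of bubbles at separated scales (a standard profile-decomposition computation), the $\dot H^1$ norm of $u(t_n)$ restricted to the light cone would exceed $2\|\nabla W\|_{L^2}^2 - o(1)$ along the sequence, contradicting the hypothesis. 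Since $J_0 \ge 1$ always (or the solution would scatter in the cone, contradicting blow-up), we conclude $J_0 = 1$. Combined with $E(W,0) > 0$, the quantization also rules out $J_0 = 0$.

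Once $J_0 = 1$ we have, along $t_n$,
\[
\vec u(t_n) = (v_0,v_1) + \Bigl(\frac{\iota_n}{\lambda_n^{(N-2)/2}} W\bigl(\tfrac{\cdot}{\lambda_n}\bigr), 0\Bigr) + o(1) \text{ in } \energysp,
\]
with $\lambda_n \ll T_+(u)-t_n$. The next step is to upgrade ``along a sequence'' to ``for all $t \to T_+(u)$.'' The standard mechanism (as in \cite{dkm4} for $N=3$) is a continuity/connectedness argument combined with the variational rigidity of $W$: define, for each $t$ close to $T_+(u)$, the quantity
\[
d(t) := \inf_{\mu > 0,\ \iota \in \{\pm 1\}} \Bigl\| \vec u(t) - (v_0,v_1) - \bigl(\tfrac{\iota}{\mu^{(N-2)/2}} W(\tfrac{\cdot}{\mu}),0\bigr) \Bigr\|_{\energysp},
\]
at least after localizing to the light cone and splitting off the radiation $(v_0,v_1)$ (which is the regular part of $u$ and, by finite speed of propagation plus the weak limit structure coming from Theorem~\ref{thm1}, is well-defined as the ``exterior'' profile). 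We know $d(t_n) \to 0$. To show $d(t) \to 0$ along the full net, I would argue by contradiction: if $d(t_k) \ge \epsilon_0$ for some sequence $t_k \to T_+(u)$, apply Theorem~\ref{thm1} (or rather its proof mechanism, which produces a subsequential decomposition at \emph{any} sequence of times approaching $T_+(u)$) to $t_k$; by the $J_0 = 1$ argument above, this again gives a single-bubble decomposition with an error $o(1)$, forcing $d(t_{k'}) \to 0$ along a further subsequence --- contradiction. The point is that the hypothesis $J_0 = 1$ is stable: it depends only on the global quantity $\sup_t \int_{|x|\le\cdots}|\nabla u|^2 < 2\|\nabla W\|^2$, not on the particular sequence, so \emph{every} time sequence yields exactly one bubble.

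It then remains to produce the \emph{continuous} modulation parameter $\lambda(t)$ and the fixed sign $\iota$, and to verify $\lambda(t)/(T_+(u)-t) \to 0$. Given that $d(t) \to 0$, one runs the standard modulation/implicit-function-theorem argument near the manifold $\{(\pm\mu^{-(N-2)/2}W(\cdot/\mu),0)\}$: since $W$ is isolated in the appropriate sense (the only radial bubble, by the uniqueness characterization of $W$ recalled in the introduction), the minimizing $\mu = \lambda(t)$ and sign $\iota(t)$ are uniquely determined for $t$ close enough to $T_+(u)$, and depend continuously on $t$ because $\vec u(t) \in C(I_{\max};\energysp)$; the sign is locally constant hence, by connectedness of the interval, globally constant near $T_+(u)$, giving a fixed $\iota$. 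The bound $\lambda(t)/(T_+(u)-t) \to 0$ follows because along $t_n$ we already know $\lambda(t_n)/(T_+(u)-t_n) = \lambda_{1,n}/(T_+(u)-t_n) \to 0$, and the same argument applied to an arbitrary sequence gives the same conclusion; alternatively, one invokes the fact (used already in the proof of Theorem~\ref{thm1}) that the self-similar region carries no energy, which forces the bubble scale to be $o(T_+(u)-t)$.

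I expect the main obstacle to be the continuity and global-in-time upgrade, i.e.\ showing that the single-bubble decomposition holds for \emph{all} $t$ near $T_+(u)$ rather than along a sequence, and that the modulation parameter can be chosen continuously. In the $N=3$ work \cite{dkm4} this is the technical heart of the argument: it requires not just the rigidity of $W$ but also a quantitative ``no-return'' lemma ensuring the solution cannot drift away from and back toward the single-bubble configuration, together with careful control of the radiation term $(v_0,v_1)$ via channels of energy / exterior energy estimates --- and it is precisely these exterior energy estimates that are dimension-dependent and only available in odd dimensions, which is why the hypothesis $N \ge 5$ odd appears. Extracting the continuous $\lambda(t)$ with the correct decay rate, as opposed to merely a sequence $\lambda_n$, is where the bulk of the work lies; the reduction $J_0 = 1$ itself is comparatively soft.
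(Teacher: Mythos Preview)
Your reduction to $J_0 = 1$ along the sequence furnished by Theorem~\ref{thm1} is correct and matches the paper. The gap is in your upgrade from a sequence to all times. You write that ``the proof mechanism [of Theorem~\ref{thm1}] produces a subsequential decomposition at \emph{any} sequence of times,'' and then run a contradiction argument on a hypothetical bad sequence $t_k$. But the mechanism behind Theorem~\ref{thm1} does \emph{not} produce a bubble decomposition along an arbitrary sequence: it first uses the vanishing of self-similar energy (Proposition~\ref{selfsimilarblowup}) together with a virial/averaging argument to \emph{select} a particular sequence $\{t_n\}$ along which $\partial_t a(t_n) \to 0$ in $L^2$, and only then are the nonlinear profiles forced to solve the elliptic equation and hence equal $\pm W$. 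If you start from an arbitrary sequence $\{t_k\}$, the profile decomposition of $\vec a(t_k)$ will in general have nonlinear profiles that are genuine time-dependent solutions of \eqref{nlw}, not rescaled copies of $W$; your ``$J_0=1$ argument'' then says nothing, and Theorem~\ref{thm1} hands you back its own sequence, disjoint from $\{t_k\}$. So the contradiction does not close.

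The paper's route around this is substantially different and relies on the minimization machinery of \cite{dkm6}. One considers the class $\mathcal S_0$ of \emph{all} sequences $\tau_n \to T_+(u)$ admitting an ordered profile decomposition, maximizes the number $J_M$ of non-scattering profiles, then minimizes the total energy $\mathcal E(\mathcal T)$ of those profiles, and finally minimizes $J_1(\mathcal T)$. A key lemma from \cite{dkm6} (Lemma~\ref{minproclem} here) produces a minimizing sequence along which the first $J_m$ profiles have the compactness property on their full interval of existence; by Theorem~\ref{cptsolns} these must be $\pm W$, which forces $J_m = 1$ and pins down $J_M$ and $\mathcal E_m$. With these extremal values in hand, one returns to an \emph{arbitrary} sequence $\tau_n$ and uses energy positivity (here is where the bound $<2\|\nabla W\|_{L^2}^2$ and Lemma~\ref{energytrap} enter) together with the Pythagorean expansion to kill all scattering profiles and the remainder $w^J_n$, and a separate argument to rule out backward scattering of $U^1$. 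This yields compactness of the rescaled singular part $\vec a(t)$ for all $t$, after which Theorem~\ref{cptsolns} and a connectedness argument give the continuous $\lambda(t)$ and fixed sign. The step you flagged as ``comparatively soft'' is indeed soft; what you underestimated is that handling arbitrary sequences requires this extremal-over-all-sequences argument, not a reapplication of Theorem~\ref{thm1}.
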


In the globally defined case, the solution is, at least for a sequence of times,
a sum of rescaled $W$ and a finite energy
solution to the linear wave equation:
\begin{align}
       \partial_t^2 v - \Delta v  &= 0, \quad (t,x) \in \R \times \R^N \label{lw},\\
       \vec v (0) &= (v_0,v_1) \in \energysp \nonumber.
\end{align}

\begin{thm}[Type II global solutions]\label{thm2} Let $N \geq 5$ be odd.  Let $u$ be a radial solution to \eqref{nlw} with $T_+(u) = +\infty$ that satisfies \eqref{bdd}.
Then there exists a solution $v_L$ to the linear wave equation \eqref{lw}, a sequence $t_n \rightarrow +\infty$, an
 integer $J_0 \geq 0$, $J_0$ sequences of positive numbers $\{\lambda_{j,n}\}_n$,
 $j = 1, \ldots, J_0$, $J_0$ signs $\iota_j \in \{ \pm 1 \}$ such that
\begin{align*}
\lambda_{1,n} \ll \lambda_{2,n} \ll \cdots \ll \lambda_{J_0,n} \ll t_n,
\end{align*}
and
\begin{align*}
\vec u(t_n) = \vec v_L(t_n) + \sum_{j = 1}^{J_0} \left (\frac{\iota_j}{\lambda_{j,n}^{\frac{N-2}{2}}} W \left (\frac{x}{\lambda_{j,n}} \right ),0 \right ) + o(1)
\quad \mbox{in } \energysp \mbox{ as } n \rightarrow \infty.
\end{align*}

Furthermore for all $A > 0$, the limit
\begin{align*}
E_A := \lim_{t \rightarrow +\infty} \int_{ |x| \leq t - A }  \frac{1}{2} |\nabla u(t) |^2 + \frac{1}{2} |\partial u(t)|^2  - \frac{N-2}{2N}
|u(t)|^{\frac{2N}{N-2}} \hspace{2pt} dx
\end{align*}
exists and
\begin{align*}
\lim_{A \rightarrow +\infty} E_A = J_0 E(W,0).
\end{align*}
\end{thm}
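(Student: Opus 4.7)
The plan is to adapt to odd dimensions $N \geq 5$ the scheme developed for $N = 3$ by Duyckaerts, Kenig, and Merle \cite{dkm4} and for $N = 4$ by C\^ote, Kenig, Lawrie, and Schlag \cite{ckls}. The proof naturally splits into the extraction of a radiation term, a profile decomposition of the remainder along a sequence of times, a rigidity step that identifies each nonzero profile with a rescaled $\pm W$, and the derivation of the energy identity inside truncated light cones.

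First I would construct the radiation $v_L$. The idea is to show that $S(-t)\vec u(t)$ converges weakly in $\energysp$ as $t \to +\infty$ to some $(v_0, v_1)$, and to set $\vec v_L(t) := S(t)(v_0, v_1)$. Monotonicity of the nonlinear exterior energy combined with the small-data theory upgrades weak convergence to strong convergence in the exterior of any shifted cone:
\begin{align*}
\lim_{t \to +\infty} \int_{|x| \geq t - A} |\nabla(u - v_L)(t)|^2 + |\partial_t(u - v_L)(t)|^2 \, dx = 0, \quad A \in \R.
\end{align*}
This concentrates $\vec a(t) := \vec u(t) - \vec v_L(t)$ inside the wave cone; here the channels-of-energy estimate for the free radial wave equation in odd dimensions is the essential input, and is what forces the restriction to $N$ odd.

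Next I would select a sequence $t_n \to +\infty$ along which $\vec a(t_n)$ admits a Bahouri--G\'erard profile decomposition with linear profiles $\vec U_L^j$, pairwise asymptotically orthogonal parameters $(\lambda_{j,n}, s_{j,n})$, and errors $\vec w_n^J$ whose Strichartz norm tends to zero as $J \to \infty$. The Kenig--Merle perturbation lemma together with the uniform bound \eqref{bdd} promotes this to a nonlinear profile decomposition of $\vec u(t_n)$. The rigidity step then asserts that each nonzero nonlinear profile $\vec U^j$ must be stationary (so that $s_{j,n} \equiv 0$) and, by the uniqueness of nontrivial radial $\dot H^1$ solutions of \eqref{stateq} up to sign and dilation, must equal $(\iota_j W, 0)$. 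This is proved by contradiction: any non-soliton or nonstationary profile would radiate a nonzero amount of energy out of the light cone at large times, contradicting the fact that $\vec v_L$ already captures all such radiation. The ordering $\lambda_{1,n} \ll \cdots \ll \lambda_{J_0,n} \ll t_n$ is then a consequence of the orthogonality of the profile parameters together with the fact that scales comparable to $t_n$ have already been absorbed into $v_L$.

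Finally, the energy identity follows from the Pythagorean decoupling of the nonlinear energy along the profile decomposition, the equality $E(\iota_j W, 0) = E(W, 0)$, and the fact that the conserved energy of $\vec v_L$ inside the cone $|x| \leq t - A$ vanishes as $A \to \infty$ (by finite speed of propagation for the free equation). Existence of the limit $E_A$ for each fixed $A$ follows from standard monotonicity of the truncated nonlinear energy. I expect the principal difficulty to lie in the rigidity step: in dimensions $N \geq 5$ odd the relevant exterior channels-of-energy estimate must be applied after projecting out a finite-dimensional obstruction subspace whose dimension grows with $N$, and the control of non-soliton nonlinear profiles inside the cone is correspondingly more delicate than in the $N = 3, 4$ cases treated in \cite{dkm4} and \cite{ckls}.
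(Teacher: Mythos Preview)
Your outline captures the broad architecture correctly, but it hides the real technical core of the argument behind the phrase ``any non-soliton or nonstationary profile would radiate a nonzero amount of energy out of the light cone at large times.'' In the paper this is not a single rigidity observation; it is the content of an entire section (the exclusion of energy concentration in the self-similar region $|x|\simeq t$, Proposition~\ref{selfsimilarblowup2}), and without it the subsequent steps do not go through as you describe.

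Concretely, the paper does \emph{not} take an arbitrary sequence $t_n\to+\infty$, run a profile decomposition on $\vec a(t_n)$, and then argue profile-by-profile that each must be $\pm W$. Instead it first proves, via a delicate three-case channels-of-energy argument (this is where the projection off the finite-dimensional subspace $P(R)$ is actually used), that $\int_{|x|\ge c_0 t}|\nabla_{t,x}(u-v_L)(t)|^2\,dx\to 0$ for every $c_0\in(0,1)$. This self-similar vanishing is then combined with a virial-type averaging argument to produce a \emph{specific} sequence $t_n$ along which $\partial_t(u-v_L)(t_n)\to 0$ in $L^2$. Only along such a sequence do the nonlinear profiles automatically satisfy the elliptic equation $-\Delta U=|U|^{4/(N-2)}U$, whence they are $\pm W$; and only along such a sequence is the dispersive remainder of the form $(w_{0,n},0)$ with vanishing second component. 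That last point is essential: the upgrade from $\|S(t)(w_{0,n},0)\|_{S(\R)}\to 0$ to $\|w_{0,n}\|_{\dot H^1}\to 0$ uses the $R=0$ exterior energy estimate \eqref{extbd2}, which in the paper is applied after an inductive soliton-removal lemma (Lemma~\ref{cutoffII}), and relies on the time-symmetry $w_n(-t)=w_n(t)$ coming from $\partial_t w_n(0)=0$. Your proposal does not explain how to obtain either the vanishing time derivative or the energy-space vanishing of the remainder, and the bare ``radiation contradiction'' you sketch is not enough to supply them. Finally, the existence of $E_A$ is not obtained from monotonicity of the truncated \emph{nonlinear} energy but by rewriting it in terms of the free energy of $v_L$ in the exterior cone and invoking monotonicity there.
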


As in the blow--up case, if we assume a bound on $\nabla u(t)$ which prevents there being more than one soliton, then
we can prove a global--in--time decomposition along all $t \rightarrow +\infty$.

\begin{thm}[Type-II global solutions below $2 \|\nabla W\|^2_{L^2}$]\label{below2}
Let $u$ be a radial solution to \eqref{nlw} with $T_+(u) = +\infty$ that satisfies \eqref{bdd}.  Assume in addition, that there exists
an $A > 0$ so that
\begin{align*}
\limsup_{t \rightarrow +\infty} \int_{ |x| \leq t - A } |\nabla u(t) |^2 dx < 2 \|\nabla W\|^2_{L^2}.
\end{align*}
Then, there exists a solution $v_L$ to the linear wave equation \eqref{lw}, so that one of the following holds:
\begin{enumerate}
\item $u(t)$ scatters to $v_L(t)$, i.e.
\begin{align*}
\vec u(t) = \vec v_L(t) + o(1) \quad \mbox{in } \energysp \mbox{ as } t \rightarrow +\infty.
\end{align*}
\item There exists a continuous positive function $\lambda(t)$ such that
\begin{align*}
\lim_{t \rightarrow +\infty} \frac{\lambda(t)}{t} = 0,
\end{align*}
and
\begin{align*}
\vec u(t) = \vec v_L(t) +  \left (\frac{\iota}{\lambda(t)^{(N-2)/2}} W \left (\frac{x}{\lambda(t)} \right ),0 \right ) + o(1)
\quad \mbox{in } \energysp \mbox{ as } t \rightarrow +\infty.
\end{align*}
\end{enumerate}
\end{thm}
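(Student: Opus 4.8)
The plan is to derive everything from the sequential resolution in Theorem~\ref{thm2}, using the extra hypothesis only to bound the number of solitons by one. First I apply Theorem~\ref{thm2}: there are a free radiation $v_L$, a sequence $t_n \to +\infty$, an integer $J_0 \ge 0$, scales $\lambda_{1,n} \ll \cdots \ll \lambda_{J_0,n} \ll t_n$, signs $\iota_j$, with the stated decomposition at $t_n$, and $\lim_{A \to \infty} E_A = J_0 E(W,0)$. The key point is that the rescaled ground states live well inside the light cone $\{|x| \le t_n - A\}$: writing $W_{\lambda}(x) = \lambda^{-(N-2)/2}W(x/\lambda)$, one has $\|\nabla W_{\lambda_{j,n}}\|_{L^2(|x|\le R)}^2 = \|\nabla W\|_{L^2(|x| \le R/\lambda_{j,n})}^2$, so with $R = t_n - A$ and $\lambda_{j,n} = o(t_n)$ this tends to $\|\nabla W\|_{L^2}^2$; and since a finite-energy free wave in odd dimension carries asymptotically no energy in a ball $\{|x| \le (1-\delta)t\}$ (approximate the data by compactly supported data and invoke strong Huygens), the cross terms of $\nabla v_L(t_n)$ against the bubbles, and of the bubbles against each other (separated scales), all tend to $0$. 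A Pythagorean expansion of $\int_{|x| \le t_n - A}|\nabla u(t_n)|^2\,dx$ then gives $\liminf_{n} \int_{|x| \le t_n - A}|\nabla u(t_n)|^2\,dx \ge J_0\|\nabla W\|_{L^2}^2$, and the hypothesis $\limsup_{t\to+\infty}\int_{|x| \le t - A}|\nabla u(t)|^2\,dx < 2\|\nabla W\|_{L^2}^2$ forces $J_0 \le 1$.

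If $J_0 = 0$, then $\vec u(t_n) = \vec v_L(t_n) + o(1)$ in $\energysp$. By the Strichartz estimate the finite-energy free wave $v_L$ lies in $S(\R)$, hence $\|v_L\|_{S([t_n,\infty))} \to 0$; for $n$ large, small-data theory produces a global forward nonlinear solution with data $\vec v_L(t_n)$ at $t_n$ of small $S([t_n,\infty))$-norm, and the long-time perturbation lemma for \eqref{nlw} transfers this to $u$: $\|u\|_{S([t_n,\infty))} < \infty$, so (as $u$ is already global) $u$ scatters forward to $v_L$. This is alternative (1).

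If $J_0 = 1$, I want to upgrade the one-bubble decomposition from the sequence $t_n$ to all $t \to +\infty$. Since both $v_L$ (the radiation field of $u$) and $J_0$ (read off from $\lim_{A\to\infty}E_A$) are intrinsic to $u$, running the concentration--compactness and rigidity argument of Theorem~\ref{thm2} along an arbitrary sequence $s_n \to +\infty$ produces, after passing to a subsequence, the same decomposition $\vec u(s_n) = \vec v_L(s_n) + (\iota_n W_{\lambda_n}, 0) + o(1)$ with $\lambda_n = o(s_n)$. Hence the $\energysp$-distance $d(t)$ from $\vec u(t) - \vec v_L(t)$ to the family $\{(\pm W_{\lambda},0) : \lambda > 0\}$ tends to $0$ as $t \to +\infty$ (otherwise $d(s_n) \ge \varepsilon_0$ along some $s_n$ contradicts the subsequential decomposition). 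Standard modulation theory near this one-dimensional (up to sign) family then yields, for $t$ large, a continuous scale $\lambda(t) > 0$ and a continuous, hence eventually constant, sign $\iota$ with $\vec u(t) = \vec v_L(t) + (\iota W_{\lambda(t)}, 0) + o(1)$ in $\energysp$ --- the orthogonality conditions simultaneously force $\partial_t u(t) - \partial_t v_L(t) \to 0$ in $L^2$, i.e. the zero in the second slot. Finally $\lambda(t)/t \to 0$: along any sequence a subsequence satisfies $\lambda(s_n)/\lambda_n \to 1$ (near-minimizing scales have ratio tending to $1$, since $\|\nabla(W_\lambda - W_\mu)\|_{L^2}\to 0$ forces $\lambda/\mu \to 1$), hence $\lambda(s_n) = (1+o(1))\lambda_n = o(s_n)$. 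This is alternative (2).

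The genuine obstacle is precisely the step ``for an arbitrary sequence $s_n$, a subsequence obeys the resolution'': one must know the machinery behind Theorem~\ref{thm2} applies along every sequence --- equivalently, that a single bubble cannot repeatedly appear and disappear, nor have its scale oscillate on the order of $t$, as $t \to +\infty$. This rests on the exterior (``channels of energy'') energy estimates available in odd space dimension $N$, which is where oddness of $N$ enters the argument.
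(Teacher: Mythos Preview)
Your reduction to $J_0 \le 1$ and your treatment of the $J_0 = 0$ case are both fine (the paper handles $J_0 = 0$ slightly differently, via the energy trapping Lemma~\ref{energytrap} applied to $a(t) = u(t) - v(t)$ for \emph{all} $t$, but your perturbation argument from the sequential convergence also works).

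The genuine gap is in the $J_0 = 1$ case, and you have correctly identified it yourself in your final paragraph without actually filling it. The assertion that ``running the concentration--compactness and rigidity argument of Theorem~\ref{thm2} along an arbitrary sequence $s_n \to +\infty$ produces, after passing to a subsequence, the same decomposition'' is precisely what requires proof, and it is \emph{not} a consequence of the machinery behind Theorem~\ref{thm2}. That theorem produces its special sequence $\{t_n\}$ by a mean-value argument: Proposition~\ref{selfsimilarblowup2} (vanishing in the self-similar region) is used to find times at which $\partial_t(u - v_L)(t_n) \to 0$ in $L^2$, and only then can one conclude that every nonlinear profile is stationary and hence equals $\pm W$. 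Along a generic sequence $\{s_n\}$ there is no reason for the time derivative to vanish, and the nonlinear profiles of $\vec a(s_n)$ could a priori be non-stationary, non-scattering solutions of \eqref{nlw}. Neither the intrinsic nature of $v_L$ nor the value of $\lim_{A\to\infty} E_A$ rules this out.

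The paper closes this gap by importing the minimization procedure of Duyckaerts--Kenig--Merle \cite{dkm6}: one pre-orders the profiles, maximizes the number $J_M$ of non-scattering profiles over all sequences, then minimizes the total energy $\mathcal E_m$ of those profiles, and invokes Lemma~\ref{gminproclem} to produce a distinguished sequence along which the first profile has the compactness property on its maximal interval, hence equals $\pm W$ by Theorem~\ref{cptsolns}. Energy bookkeeping (using $\lim_t E(\vec a(t)) = E(W,0)$ and positivity of the energies of the remaining profiles and of $\vec w_n^J$) then forces $J_M = 1$ and $\mathcal E_m \ge E(W,0)$. Only with these in hand can one return to an \emph{arbitrary} sequence $\{\tau_n\}$: since $J_M = 1$ one has $\{\tau_n\} \in \mathcal S_1$, the same energy count kills all scattering profiles and the dispersive remainder, and a separate argument (the first profile cannot scatter backward in time) pins down $t_{1,n} = 0$. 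This yields compactness of $\{\vec a(\tau_n)\}$ up to scaling for every sequence, from which the continuous-in-$t$ conclusion follows as you outlined. The channels-of-energy estimates you invoke are used earlier (in Sections~3--5), not at this step; what is missing from your sketch is the \cite{dkm6} minimization argument.
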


Note that, at least for one sequence of times, Theorem \ref{thm1} and Theorem \ref{thm2} give a complete description of type II radial blow--up solutions as
a sum of rescaled, decoupled solitons $W$.  We expect that such a decomposition still holds for all sequences of times, but at
the moment this remains open.

The outline of this article is as follows.  In Section 2 we gather preliminary results that will be essential to the proof.  These include the
local well--posedness theory for \eqref{nlw} developed in \cite{stab} for higher dimensions, the linear and nonlinear
profile decompositions from \cite{bahger} \cite{bulut}, the exterior energy estimates for the linear wave equation from \cite{klls1}, and
the rigidity of radial solutions to \eqref{nlw} with compact trajectories proved for $3 \leq N \leq 5$ in \cite{dkm1} which we
extend to $N \geq 6$.  In Section 3 we show that no energy can concentrate in the self--similar region of the backwards light cone for type II
blow--up solutions.  In Section 4 we show that no energy can concentrate in the self--similar region of the forward light cone for
global type II  solutions.  The results from these two sections follow from channels of energy arguments that use the exterior energy estimates for the linear wave equation from \cite{klls1}.
Such channels of energy arguments have become very useful in the study of nonlinear wave equations.  See, for example,
\cite{dkm1} \cite{dkm2} \cite{dkm3} \cite{dkm4} \cite{ckls} for the energy-critical equation, \cite{shen} for the energy subcritical
equation, \cite{dkm5} \cite{dodl} for the energy supercritical equation, and \cite{ckls1} \cite{ckls2} \cite{kls1} 
\cite{klls2} for wave maps.
In Section 5 we prove Theorem \ref{thm1} and Theorem \ref{thm2}.  The vanishing of the energy in self--similar regions
established in Section 3 and Section 4 allows us to establish that the errors in the expansions in Theorem \ref{thm1} and Theorem \ref{thm2}
vanish in the Strichartz sense.  By using another channels of energy argument, we are able to show these errors vanish
in the energy space.  Finally, in Section 6 we prove Theorem \ref{below1} and Theorem \ref{below2} using methods recently developed
in \cite{dkm6}.

\textbf{Acknowledgments:}  This work was completed during the author's doctoral studies.  The author would like
to thank his adviser, Carlos Kenig, for introducing him to nonlinear dispersive equations and for his invaluable patience and guidance.

\section{Preliminaries}

In this section, we establish preliminary results that will be essential in the proofs of Theorem \ref{thm1} and \ref{thm2}.
  Unless otherwise indicated, we assume $N \geq 3$ is arbitrary.

\subsection{Cauchy Problem}

In this subsection, we review the the local well--posedness theory for \eqref{nlw} developed in \cite{stab} with $N \geq 6$.  Analogous statements hold in $3 \leq N \leq 5$, and
we refer the reader to Section 2  of \cite{dkm1} for a review.  If $I$ is an interval, we denote
\begin{align*}
S(I) &= L^{\frac{2(N+1)}{N-2}}(I \times \R^N), \\
W(I) &= L^{\frac{2(N+1)}{N-1}}
\left ( I ; \dot B^{\frac{1}{2}, 2}_{\frac{2(N+1)}{N-1}} \right ).
\end{align*}
For $0 < s < 1$ and $1 < p < \infty$, $\dot B^{s, 2}_{p}$ is the standard Besov space on $\R^N$ with norm
\begin{align*}
\| f \|_{\dot B^{s, 2}_{p}} = \left ( \int_{\R^N} |y|^{-N-2s} \| f(x + y) - f(x) \|
_{L^{p}_x}^2 dy \right )^{\frac{1}{2}}.
\end{align*}

Let $S(t)$ be the propagator for the linear wave equation \eqref{lw}. We have that
$$
S(t)(v_0, v_1) = \cos (t |\nabla|)v_0 + \frac{\sin(t |\nabla|)}{|\nabla|}v_1.
$$
By Strichartz estimates, if $v(t) = S(t)(v_0,v_1)$, then
$$
\| v \|_{S(\R)} + \| v \|_{W(\R)} \leq C(N) ( \| (v_0, v_1) \|_{\energysp} ).
$$
A solution to \eqref{nlw} on an interval $I$, where $0 \in I$, is a function $u$ such that $\vec u(t) \in C(I;\energysp)$,
$$
J \Subset I \implies \| u \|_{S(J)} + \| u \|_{W(J)} < \infty,
$$
and $u(t)$ satisfies the Duhamel formulation
$$
u(t) = S(t)(u_0,u_1) + \int_{0}^t \frac{\sin((t-s)|\nabla|)}{|\nabla|}\left ( |u(s)|^{\frac{4}{N-2}}u(s) \right ) ds.
$$

By \cite{stab}, there exists a small $\delta_0 > 0$ such that for any interval
$I$ containing 0 and any $(u_0,u_1) \in \energysp$ such that
$$
\| S(t)(u_0,u_1) \|_{S(I)} < \delta_0,
$$
there exists a unique solution $u$ to \eqref{nlw} on $I$.  Sticking together these
local solutions, we get that for any initial condition $(u_0,u_1) \in
\energysp$, there exists a unique solution $u$ of $\eqref{nlw}$, which
is defined on a maximal interval of definition $I_{\max}(u) =
(T_-(u),T_+(u))$.

If $\| S(t)(u_0,u_1) \|_{S(I)} = \delta < \delta_0$, then $u$ is close
to $S(t)(u_0,u_1)$ in the following sense: there exists a constant $C = C(N) > 0$
and an exponent $\alpha = \alpha(N)$ with $0 < \alpha < \frac{N+2}{N-2}$ such that
\begin{align*}
\| u(\cdot) - S(\cdot)(u_0,u_1) \|_{S(I)} +
\| u(\cdot) - S(\cdot)(u_0,u_1) \|_{W(I)} &+
\sup_{t \in I} \| \vec u(t) - \vec S(t)(u_0,u_1) \|_{\energysp} \\ &\leq
C \delta^\alpha \| (u_0,u_1)\|_{\energysp}^{\frac{N+2}{N-2} - \alpha}.
\end{align*}

Any solution $u$ of \eqref{nlw} satisfies the blow--up criterion
$T_+(u) < \infty \implies \| u \|_{S(0,T_+(u))} = +\infty.$
An analogous statement holds for negative time.  As a consequence,
if $\| u \|_{S(0,T_+(u))} < \infty$, then $T_+(u) = +\infty$.  Furthermore,
in this case, the solution scatters forward in time in $\energysp$:
there exists a solution $v_L$ of the linear equation \eqref{lw} such that
$$
\lim_{t \rightarrow +\infty} \| \vec u(t) - \vec v_L(t) \|_{\energysp} = 0.
$$
An analogous statement holds in negative time.

These facts along with Strichartz estimates yield
that if $\| (u_0, u_1) \|_{\energysp} \ll \delta_0$, then the solution
$u$ of \eqref{nlw} with initial data $(u_0,u_1)$ is global, scatters forward
and backwards in time, and satisfies
$$
\| u \|_{S(\R)} + \| u \|_{W(\R)} + \sup_{t \in \R} \| \vec u(t) \|_{\energysp}
\leq C \| (u_0, u_1) \|_{\energysp}.
$$

\subsection{Profile Decomposition}
An essential tool in our analysis will be the linear and nonlinear profile decompositions of Bahouri and Gerard (see \cite{bahger}).

\subsubsection{Linear profile decomposition} Let $\{(u_{0,n},u_{1,n})\}_n$ be a bounded sequence of radial functions in $\energysp$.  By Theorem 1.1 of \cite{bulut}, there exists a
subsequence of $\{(u_{0,n},u_{1,n})\}_n$ (that will still be denoted by $\{(u_{0,n},u_{1,n})\}_n$) with the following properties.  There
exists a sequence $\{U^j_L\}_{j \geq 1}$ of radial solutions to the linear wave equation \eqref{lw} with initial data $\{(U^j_{0},U^j_{1})\}_j$ in
$\energysp$, and, for $j \geq 1$, sequences $\{\lambda_{j,n}\}_n$, $\{t_{j,n}\}_n$ with $\lambda_{j,n} > 0$, $t_{j,n} \in \R$ that satisfying
the pseudo-orthogonality property
\begin{align}\label{paramorth}
j \neq k \implies \lim_{n \rightarrow \infty} \frac{\lambda_{j,n}}{\lambda_{k,n}} + \frac{\lambda_{k,n}}{\lambda_{j,n}} +
\frac{|t_{j,n}-t_{k,n}|}{\lambda_{j,n}} = + \infty.
\end{align}
such that, if
\begin{align*}
w^J_{0,n} &:= u_{0,n} - \sum_{j = 1}^J \frac{1}{\lambda_{j,n}^{\frac{N-2}{2}}} U^j_L \left (\frac{-t_{j,n}}{\lambda_{j,n}}, \frac{x}{\lambda_{j,n}} \right ) \\
w^J_{0,n} &:= u_{1,n} - \sum_{j = 1}^J \frac{1}{\lambda_{j,n}^{\frac{N}{2}}} \partial_t U^j_L \left (\frac{-t_{j,n}}{\lambda_{j,n}}, \frac{x}{\lambda_{j,n}} \right )
\end{align*}
then
\begin{align*}
\lim_{J \rightarrow \infty} \limsup_{n \rightarrow \infty} \| w^J_n \|_{S(\R)} = 0
\end{align*}
where
\begin{align*}
w^J_n(t) = S(t)(w^J_{0,n},w^J_{1,n}).
\end{align*}
We say that $\{(u_{0,n},u_{1,n})\}_n$ admits a \emph{profile decomposition} with profiles $\{U^j_L\}_j$ and parameters $\{\lambda_{j,n},t_{j,n}\}_{j,n}$.
The following expansions hold for all $J \geq 1$:
\begin{align*}
\|u_{0,n}\|_{\dot H^1}^2 &= \sum_{j = 1}^J \left \| U^j_L
\left (\frac{-t_{j,n}}{\lambda_{j,n}} \right ) \right \|_{\dot H^1}^2 + \| w^J_{0,n} \|^2_{\dot H^1} + o_n(1) \\
\|u_{1,n}\|_{L^2}^2 &= \sum_{j = 1}^J \left \| \partial_t U^j_L
\left (\frac{-t_{j,n}}{\lambda_{j,n}} \right ) \right \|_{L^2}^2 + \| w^J_{1,n} \|^2_{L^2} + o_n(1) \\
E(u_{0,n},u_{1,n}) &= \sum_{j = 1}^J E \left( U^j_L
\left (\frac{-t_{j,n}}{\lambda_{j,n}} \right ), \partial_t U^j_L
\left (\frac{-t_{j,n}}{\lambda_{j,n}} \right ) \right ) + E(w_{0,n}^J, w_{1,n}^J ) + o_n(1)
\end{align*}
We will denote
$$
U^j_{L,n}(t,x) =  \frac{1}{\lambda_{j,n}^{\frac{N-2}{2}}} U^j_L \left (\frac{t-t_{j,n}}{\lambda_{j,n}}, \frac{x}{\lambda_{j,n}} \right ).
$$
By rescaling and time-translating each profile $U^j_L$ and be extracting subsequences, we can, without loss of generality, assume for each
fixed $j$ that either we have
\begin{align}\label{timeassump}
t_{j,n} = 0 \mbox{ for all } n \geq 1,\mbox{ or } \lim_{n \rightarrow \infty} \frac{-t_{j,n}}{\lambda_{j,n}} = \pm \infty.
\end{align}

The following lemma from \cite{dkm1} (see Lemma 2.5)  will allow us to obtain control of the parameters appearing in a profile decomposition.

\begin{lem}\label{bddlem}
Let $\{(u_{0,n},u_{1,n})\}_n$ be a bounded sequence of functions in $\energysp$ admitting a profile decomposition with
profiles $\{U^j_L\}_j$ and parameters $\{\lambda_{j,n},t_{j,n}\}_{j,n}$. Let $\{ \mu_n \}_n$ be a sequence of positive numbers such that
$$
\lim_{R \rightarrow \infty} \limsup_{n \rightarrow \infty}
\int_{|x| \geq R \mu_n} |\nabla u_{0,n}|^2 + |u_{1,n}|^2 dx = 0.
$$
Then, for all $j$ the sequences $\left \{ \frac{\lambda_{j,n}}{\mu_{n}}\right \}_n$ and $\left \{ \frac{t_{j,n}}{\mu_{n}}\right \}_n$ are bounded.  Moreover, there exists at most one $j$ such that $\left \{ \frac{\lambda_{j,n}}{\mu_{n}}\right \}_n$ does not converge to 0.
\end{lem}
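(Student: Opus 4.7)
The proof strategy is by contradiction, using the tightness hypothesis together with the pseudo-orthogonality \eqref{paramorth} of the profile parameters. First, I would rescale to normalize $\mu_n \equiv 1$: setting $\tilde u_{0,n}(x) = \mu_n^{(N-2)/2} u_{0,n}(\mu_n x)$ and $\tilde u_{1,n}(x) = \mu_n^{N/2} u_{1,n}(\mu_n x)$, the hypothesis becomes tightness of $(\tilde u_{0,n}, \tilde u_{1,n})$ in $\energysp$, and the profile decomposition of the rescaled sequence has the same profiles $U^j_L$ with new parameters $\tilde \lambda_{j,n} = \lambda_{j,n}/\mu_n$ and $\tilde t_{j,n} = t_{j,n}/\mu_n$. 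The two conclusions reduce to showing (i) that $\tilde \lambda_{j,n}$ and $\tilde t_{j,n}$ are bounded for every $j$, and (ii) that at most one $j$ has $\tilde \lambda_{j,n}$ bounded away from $0$.

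For (i), fix $j$ and suppose for contradiction that $R_{j,n} := \tilde \lambda_{j,n} + |\tilde t_{j,n}| \to \infty$ along a subsequence. By \eqref{timeassump}, either $\tilde t_{j,n} = 0$ (in which case $\tilde\lambda_{j,n} \to \infty$) or $|\tilde t_{j,n}/\tilde\lambda_{j,n}| \to \infty$. The change of variables $w = y/\tilde\lambda_{j,n}$ gives
\[
\int_{|y|>R} |\nabla \tilde U^j_n|^2\, dy = \int_{|w|>R/\tilde\lambda_{j,n}} \bigl|\nabla U^j_L\bigl(-\tilde t_{j,n}/\tilde\lambda_{j,n}, w\bigr)\bigr|^2\, dw,
\]
and analogously for $\partial_t \tilde U^j_n$. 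In the first case, $R/\tilde\lambda_{j,n} \to 0$, so the right hand side (together with the $\partial_t$ analog) tends to $\|U^j_L(0)\|_{\dot H^1}^2+\|\partial_t U^j_L(0)\|_{L^2}^2$ by absolute continuity. In the second case, the standard dispersive fact $\int_{|w|\leq r} |\nabla U^j_L(s,w)|^2+|\partial_t U^j_L(s,w)|^2\, dw \to 0$ as $|s|\to\infty$ for each fixed $r$, combined with energy conservation for the linear wave, yields the same limit. Hence for every fixed $R$,
\[
\int_{|y|>R} |\nabla \tilde U^j_n|^2+|\partial_t\tilde U^j_n|^2\, dy \longrightarrow \|U^j_L(0)\|_{\dot H^1}^2+\|\partial_t U^j_L(0)\|_{L^2}^2 > 0,
\]
the strict positivity being a consequence of $U^j_L$ being a nontrivial profile. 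Decoupling the $j$-th profile from the other profiles and from the Strichartz remainder $\tilde w^J_n$ on the tail (via a smooth spatial cutoff and the parameter orthogonality) then forces $\int_{|y|>R} |\nabla \tilde u_{0,n}|^2+|\tilde u_{1,n}|^2\, dy$ to be bounded below by a fixed positive constant for every $R$ and $n$ large, contradicting the tightness of $(\tilde u_{0,n}, \tilde u_{1,n})$.

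For (ii), if two profiles $j \neq k$ both had $\tilde \lambda_{j,n}, \tilde \lambda_{k,n}$ bounded away from $0$ along a subsequence, then by (i) both would be comparable to $1$, and hence $\tilde\lambda_{j,n}/\tilde\lambda_{k,n}+\tilde\lambda_{k,n}/\tilde\lambda_{j,n}$ would stay bounded. The pseudo-orthogonality \eqref{paramorth} would then force $|\tilde t_{j,n}-\tilde t_{k,n}|/\tilde\lambda_{j,n}\to\infty$, which is impossible since by (i) both $\tilde t_{j,n}$ and $\tilde t_{k,n}$ are bounded and $\tilde\lambda_{j,n}$ is bounded below.

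I expect the main technical obstacle to be the decoupling of the tail integral in (i): the Bahouri--Gerard profile decomposition provides orthogonality for global $\energysp$-norms but not automatically on a region $|y|>R$. Handling this will require either introducing a smooth cutoff $\chi_R$ and verifying that cross terms such as $\int \chi_R^2 \nabla\tilde U^j_n\cdot\nabla\tilde U^k_n$ vanish in the limit as a consequence of \eqref{paramorth}, or a direct argument exploiting finite propagation and the radial structure to localize the profiles.
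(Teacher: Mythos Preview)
The paper does not supply its own proof of this lemma; it is quoted from \cite{dkm1} (Lemma~2.5). Your approach is correct and is essentially the standard argument: rescale so that $\mu_n\equiv 1$, and then show that any profile whose parameters escape to infinity would carry a fixed positive amount of $\energysp$-mass into the exterior region $\{|y|>R\}$, contradicting tightness. Your case analysis via \eqref{timeassump} and the concentration property of free waves (Lemma~\ref{loclem}) is the right mechanism, and part~(ii) follows immediately from \eqref{paramorth} exactly as you wrote.

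The one point you flag as a potential obstacle, the exterior decoupling
\[
\int_{|y|>R}|\nabla \tilde u_{0,n}|^2+|\tilde u_{1,n}|^2\,dy \ \geq\ \int_{|y|>R}|\nabla \tilde U^j_{L,n}(0)|^2+|\partial_t\tilde U^j_{L,n}(0)|^2\,dy + o_n(1),
\]
is exactly what the paper packages as Lemma~\ref{locorth} (localized orthogonality), and its proof in Appendix~B reduces to the purely linear statement Lemma~\ref{appblem1}. So your instinct to handle the cross terms via a cutoff and parameter orthogonality is on target, and the paper provides the tool you would need. Note that when the paper itself applies Lemma~\ref{bddlem} (e.g.\ in the proof of Lemma~\ref{taulim}), it invokes precisely this localized orthogonality to isolate a single profile's exterior mass, which confirms your reading of the argument.
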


A technical tool used in obtaining new profile decompositions from old ones is the following lemma proved in \cite{dkm1} (see Lemma 4.1) in odd dimensions and later in \cite{cks} (see Lemma 9) in even
dimensions.

\begin{lem}\label{displem}
Let $\{(w_{0,n},w_{1,n})\}_n$ be a bounded sequence of radial functions in $\energysp$ and let $w_n(t) = S(t)(w_{0,n},w_{1,n})$.  Suppose
that
$$
\lim_{n \rightarrow \infty} \| w_n \|_{S(\R)} = 0.
$$
Let $\chi \in C^\infty_0$ be radial so that $\chi \equiv 1$ on $|x| \leq 1$ and supp $\psi \subset \{ |x| \leq 2 \}$.  Let
$\{\lambda_n\}_n$ be a sequence of positive numbers and consider the truncated data
$$
(\tilde w_{0,n} \tilde w_{1,n}) = \varphi( x / \lambda_n) (w_{0,n}, w_{1,n}),
$$
where wither $\varphi = \chi$ or $\varphi = 1- \chi$. Let $\tilde w_n(t) = S(t)(\tilde w_{0,n},\tilde w_{1,n})$.  Then
$$
\lim_{n \rightarrow \infty} \| \tilde w_n \|_{S(\R)} = 0.
$$
\end{lem}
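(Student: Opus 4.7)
The plan is to reduce to $\lambda_n=1$ by scale invariance and then argue by contradiction using a radial profile decomposition. By the invariance of $\|\cdot\|_{\dot{H}^1}$, $\|\cdot\|_{L^2}$, and $\|\cdot\|_{S(\R)}$ under the energy-critical rescaling $(f,g)\mapsto(\lambda^{(N-2)/2}f(\lambda\,\cdot),\,\lambda^{N/2}g(\lambda\,\cdot))$, I may assume $\lambda_n=1$ for all $n$; the truncation then becomes multiplication by the fixed radial $\varphi\in\{\chi,\,1-\chi\}$, which is bounded on $\energysp$ (one controls the commutator $\nabla\varphi\cdot w$ using the Sobolev embedding $\dot{H}^1\hookrightarrow L^{2N/(N-2)}$), so $\{(\tilde w_{0,n},\tilde w_{1,n})\}_n$ is bounded in the energy space.

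Suppose for contradiction that $\|\tilde w_n\|_{S(\R)}\not\to 0$, so that $\|\tilde w_n\|_{S(\R)}\geq\delta>0$ along a subsequence. Applying the radial profile decomposition of \cite{bulut} to $\{(\tilde w_{0,n},\tilde w_{1,n})\}_n$ yields profiles $\{V_L^j\}_{j\geq 1}$ with parameters $\{(\mu_n^j,s_n^j)\}$ satisfying \eqref{paramorth} (and, after further extraction, \eqref{timeassump}), together with a remainder whose $S(\R)$-norm vanishes as $J\to\infty$ uniformly in $n$. By the asymptotic orthogonality of profiles in the Strichartz norm (a standard consequence of the pseudo-orthogonality of their parameters), the lower bound $\|\tilde w_n\|_{S(\R)}\geq\delta$ forces at least one profile, say $V_L^1$, to satisfy $\|V_L^1\|_{S(\R)}>0$. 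Passing to a further subsequence, $\mu_n^1\to\mu_\infty\in[0,+\infty]$.

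The key step is to show that $V_L^1$ also arises as a profile of the untruncated sequence. By definition, $V_L^1(-s_n^1/\mu_n^1,\cdot)$ (and the analogous statement for $\partial_t V_L^1$) is the weak $\energysp$-limit of $\bigl((\mu_n^1)^{(N-2)/2}\tilde w_{0,n}(\mu_n^1\,\cdot),\,(\mu_n^1)^{N/2}\tilde w_{1,n}(\mu_n^1\,\cdot)\bigr)$, which equals $\varphi(\mu_n^1\,\cdot)$ times the analogously rescaled untruncated data. For $\varphi=\chi$, if $\mu_n^1\to+\infty$ then $\chi(\mu_n^1\,\cdot)$ is supported on $|x|\leq 2/\mu_n^1$, which collapses to $\{0\}$; a direct H\"older/Sobolev estimate (using $w_{0,n}\in L^{2N/(N-2)}_{\mathrm{loc}}$ via Sobolev) then forces the weak limit to vanish, contradicting $V_L^1\not\equiv 0$. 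Hence $\mu_n^1$ stays bounded, in which case $\chi(\mu_n^1 x)\to\chi(\mu_\infty x)$ locally uniformly (interpreted as $1$ when $\mu_\infty=0$); comparing weak limits (using Rellich--Kondrachov for the lower-order piece $\mu_n^1\nabla\chi(\mu_n^1\,\cdot)\,f_n$) shows that the rescaled \emph{untruncated} data also converges weakly to a nontrivial limit, producing a nontrivial profile of $\{(w_{0,n},w_{1,n})\}_n$ at scale $\mu_n^1$ and time shift $s_n^1$. A symmetric argument for $\varphi=1-\chi$ excludes $\mu_n^1\to 0$.

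In either surviving case, the original sequence admits a nontrivial profile, whence the pseudo-orthogonality of the $S(\R)$-norm in the corresponding profile expansion of $w_n=S(t)(w_{0,n},w_{1,n})$ gives $\liminf_n\|w_n\|_{S(\R)}>0$, contradicting the hypothesis. The main technical obstacle is the scale-matching step: one must compute weak $\dot{H}^1$-limits of products $\varphi(\mu_n^1 x)\,f_n(x)$ carefully in each of the regimes $\mu_n^1\to 0$, $\mu_n^1\to\mu_\infty\in(0,\infty)$, and $\mu_n^1\to\infty$, and reconcile the surviving cases with a genuine profile of the untruncated data.
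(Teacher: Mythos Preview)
The paper does not give its own proof of this lemma; it quotes the result from \cite{dkm1} (Lemma~4.1, odd dimensions) and \cite{cks} (Lemma~9, even dimensions). Your overall strategy---reduce to $\lambda_n=1$ by scaling, then show that a nontrivial profile of the truncated sequence would force a nontrivial profile of the untruncated one, contradicting $\|w_n\|_{S(\R)}\to 0$---is the same one used in those references. The reduction to $\lambda_n=1$ and the boundedness of $(\tilde w_{0,n},\tilde w_{1,n})$ in $\energysp$ are correct.

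There is, however, a genuine gap. You write that ``$V_L^1(-s_n^1/\mu_n^1,\cdot)$ \ldots\ is the weak $\energysp$-limit of $\bigl((\mu_n^1)^{(N-2)/2}\tilde w_{0,n}(\mu_n^1\,\cdot),\ldots\bigr)$,'' and then analyze this weak limit according to whether $\mu_n^1\to 0$, $\mu_\infty\in(0,\infty)$, or $+\infty$. But that description of the profile is only meaningful when $s_n^1=0$ (the first alternative in \eqref{timeassump}). In the second alternative, $-s_n^1/\mu_n^1\to\pm\infty$, the weak limit of the rescaled data \emph{at time $0$} is zero by local energy decay of free waves; the nontrivial profile is detected only after applying the propagator, i.e.\ as the weak limit of
\[
\vec S\!\left(\frac{s_n^1}{\mu_n^1}\right)\Bigl((\mu_n^1)^{\frac{N-2}{2}}\tilde w_{0,n}(\mu_n^1\,\cdot),\,(\mu_n^1)^{\frac{N}{2}}\tilde w_{1,n}(\mu_n^1\,\cdot)\Bigr).
\]
Your entire case analysis on $\mu_n^1$ is carried out for the time-$0$ weak limit, so it treats only $s_n^1=0$ and says nothing about profiles with diverging time shift.

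This missing case is exactly the nontrivial one, because $S(t)$ does not commute with multiplication by $\varphi$. In \cite{dkm1} it is handled by combining finite speed of propagation with the concentration property of free waves (Lemma~\ref{loclem} here): when $|s_n^1|/\mu_n^1\to\infty$ the evolved rescaled data concentrates near $|x|\approx |s_n^1|/\mu_n^1$, which is disjoint from the transition region of $\varphi(\mu_n^1\,\cdot)$ in every regime of $\mu_n^1$, so the cutoff can be passed through the propagator up to an $o(1)$ error in $\energysp$. In even dimensions \cite{cks} replaces the strong Huygens step by a Fourier argument. To complete your proof you must treat the regime $|s_n^1/\mu_n^1|\to\infty$ explicitly; the $s_n^1=0$ part you wrote is fine.
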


\subsubsection{Nonlinear profile decomposition}
Using the local well-posedness of \eqref{nlw} for the first case appearing in \eqref{timeassump} and the existence of wave operators
for \eqref{nlw} for the second case appearing in \eqref{timeassump}, one can construct a radial solution $U^j$ of \eqref{nlw} such
that $-t_{j,n} / \lambda_{j,n} \in I_{\max}(U^j)$ for large $n$ and
\begin{align*}
\lim_{n \rightarrow \infty} \left \| \vec U^j \left ( \frac{-t_{j,n}}{\lambda_{j,n}} \right )
- \vec U^j_L \left ( \frac{-t_{j,n}}{\lambda_{j,n}} \right )\right \|_{\energysp} = 0.
\end{align*}
We call $U^j$ the \emph{nonlinear profile} associated to $\{U^j_L\}$, $\{\lambda_{j,n},t_{j,n}\}$.  Note that in the case that
$$
\lim_{n \rightarrow \infty} \frac{-t_{j,n}}{\lambda_{j,n}} = +\infty,
$$
the nonlinear profile $U^j$ scatters at $+ \infty$, i.e. if $T_-(U^j) < T$, then
$$
\| U^j \|_{S(T,+\infty)} < \infty.
$$
A similar statement holds if $\lim_{n} -t_{j,n} / \lambda_{j,n} = - \infty$.  We will denote
$$
U^j_n(t,x) = \frac{1}{\lambda_{j,n}^{\frac{N-2}{2}}} U^j \left (\frac{t-t_{j,n}}{\lambda_{j,n}}, \frac{x}{\lambda_{j,n}} \right ).
$$

Because the profiles interact so weakly due to the orthogonality of the parameters, we have the following approximate superposition
principle for the nonlinear evolution.

\begin{ppn}\label{approxthm}
Let $\{(u_{0,n},u_{1,n})\}_n$ be a bounded sequence of radial functions in $\energysp$ admitting a profile decomposition with profiles $\{U^j_L\}$ and parameters $\{\lambda_{j,n},t_{j,n}\}$.
Let $\theta_n \in [0,+\infty)$.  Assume
\begin{align*}
\forall j \geq 1, \mbox{ } \forall n \geq 1, \mbox{ } \frac{\theta_n - t_{j,n}}{\lambda_{j,n}} < T_+(U^j) \mbox{ and }
\limsup_{n \rightarrow \infty} \| U^j \|_{S \left ( \frac{-t_{j,n}}{\lambda_{j,n}}, \frac{\theta_n-t_{j,n}}{\lambda_{j,n}} \right )}
< \infty.
\end{align*}
Let $u_n$ be the solution of \eqref{nlw} with initial data $(u_{0,n},u_{1,n})$.  Then for large $n$, $u_n$ is defined on
$[0,\theta_n]$,
\begin{align*}
\limsup_{n \rightarrow \infty} \|u_n\|_{S   (0,\theta_n) } < \infty,
\end{align*}
and
\begin{align*}
\forall t \in [0,\theta_n],\mbox{ }  \vec{u}_n(t,x) = \sum_{j = 1}^J  \vec{U}^j_n(t,x) +  \vec{w}^J_n(t,x) +  \vec{r}^J_n(t,x),
\end{align*}
with
\begin{align*}
\lim_{J \rightarrow \infty} \limsup_{n \rightarrow \infty} \left [ \| r^J_n \|_{S \left ( 0,\theta_n \right )}
+ \sup_{t \in [0,\theta_n]} \left \| \vec r^J_n(t) \right \|_{\energysp} \right ]  = 0.
\end{align*}
An analogous statement holds if $\theta_n < 0$.
\end{ppn}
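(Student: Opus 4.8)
The plan is to prove Proposition \ref{approxthm} by the now-standard Bahouri--Gérard-style nonlinear superposition argument, adapted to the energy-critical wave equation in high dimensions using the stability theory from \cite{stab}. The strategy is to show that the approximate solution
\[
U_n^{(J)}(t) := \sum_{j=1}^J U^j_n(t) + w^J_n(t)
\]
is, for $J$ large and $n$ large, an approximate solution to \eqref{nlw} on $[0,\theta_n]$ in the sense required by the stability lemma, with error that is small in the relevant Strichartz norms; the conclusion then follows from long-time perturbation theory. The first step is to establish a uniform-in-$n$, uniform-in-$J$ (for $J$ beyond some $J_1$) bound on $\|U_n^{(J)}\|_{S(0,\theta_n)}$ and on $\sup_t\|\vec U_n^{(J)}(t)\|_{\energysp}$. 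For the $S$-norm, one separates the ``large'' profiles $j \le J_1$ (finitely many, each with finite $S$-norm on the rescaled time interval by hypothesis, and mutually asymptotically orthogonal parameters so their sum's $S$-norm is controlled by the sum of $S$-norms plus vanishing cross terms) from the ``small'' profiles $j > J_1$, whose data have small energy, so by the small-data theory $\|U^j_n\|_{S} \lesssim \|(U^j_{0},U^j_{1})\|_{\energysp}$ and $\sum_{j>J_1}\|(U^j_{0},U^j_{1})\|^2_{\energysp}$ is small by the Pythagorean expansion; combined with $\limsup_n\|w^J_n\|_{S(\R)} \to 0$ this gives the bound. The energy bound follows similarly from the Pythagorean expansions together with orthogonality.

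The second step is the error computation. Write $\mathrm{eq}(v) := \partial_t^2 v - \Delta v - |v|^{4/(N-2)}v$ and compute $\mathrm{eq}(U_n^{(J)})$. Since each $U^j_n$ solves \eqref{nlw} and $w^J_n$ solves \eqref{lw}, this equals
\[
\Bigl|\,\sum_{j=1}^J U^j_n + w^J_n\,\Bigr|^{\frac{4}{N-2}}\Bigl(\sum_{j=1}^J U^j_n + w^J_n\Bigr) - \sum_{j=1}^J |U^j_n|^{\frac{4}{N-2}}U^j_n.
\]
The key is to show this tends to $0$ in the dual Strichartz norm $W'$ (or $L^1_tL^2_x$) after taking $\limsup_n$ then $J\to\infty$. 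This is where the cross terms must be estimated: the ``diagonal'' difference between the full nonlinearity and the sum of nonlinearities expands into a sum of mixed products of the $U^j_n$'s and $w^J_n$, each containing at least two distinct indices (or a $w^J_n$ factor). Terms with two distinct profile indices $j\ne k$ vanish as $n\to\infty$ because of the parameter pseudo-orthogonality \eqref{paramorth} — this uses a change of variables and the fact that $\|U^j\|_S$, $\|U^k\|_S$ are finite on the relevant intervals, so their rescaled supports asymptotically separate in scaling or time-translation; one typically truncates to $C^\infty_c$ approximants of the $U^j$ in the Strichartz norms first to make these estimates rigorous. Terms involving $w^J_n$ as a factor are handled by Hölder, using that $\|w^J_n\|_S \to 0$ while the remaining factors stay bounded in $S$ and $W$; here the high-dimensional subtlety is that $4/(N-2) < 1$ when $N\ge 7$, so the nonlinearity is not smooth and one must use the fractional-derivative/Besov estimates from \cite{stab} rather than the Leibniz rule — this necessitates care with the $W$-norm bookkeeping and is where the ``new'' work relative to the $N=3,4$ case lies.

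The third step is to feed the output of steps one and two into the stability/perturbation theorem of \cite{stab} quoted in Section 2: since $U_n^{(J)}$ has bounded $S$-norm, bounded energy, and solves \eqref{nlw} up to an error small in $W'$, and since at $t=0$ we have $\vec U_n^{(J)}(0) = (u_{0,n},u_{1,n}) - \vec w^J_n(0) + \vec w^J_n(0) = (u_{0,n},u_{1,n})$ up to the $o_n(1)$ built into the profile decomposition (so the data match to leading order), long-time perturbation theory yields for $J \ge J_2$ and $n\ge n_0(J)$ that $u_n$ is defined on $[0,\theta_n]$, that $\|u_n\|_{S(0,\theta_n)}$ is bounded uniformly, and that $\sup_t\|\vec u_n(t) - \vec U_n^{(J)}(t)\|_{\energysp} + \|u_n - U_n^{(J)}\|_{S(0,\theta_n)}$ is small. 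Setting $r^J_n := u_n - U_n^{(J)} = u_n - \sum_{j\le J}U^j_n - w^J_n$ gives exactly the claimed decomposition and decay. I expect the main obstacle to be step two in the regime $N \ge 7$: controlling the cross terms in the non-Lipschitz nonlinearity, in particular making the orthogonality-based vanishing of mixed profile interactions rigorous simultaneously in the $S$-norm and the Besov-valued $W$-norm, and verifying the ``one potentially non-small scale'' bookkeeping so that the perturbative argument closes uniformly in $J$. The analogous statement for $\theta_n < 0$ follows by the time-reversal symmetry of \eqref{nlw}.
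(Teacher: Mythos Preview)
Your proposal is correct and follows essentially the same route as the paper's proof in Appendix~A: both build the approximate solution $\sum_{j\le J}U^j_n+w^J_n$, obtain uniform $S$- and $W$-norm bounds by separating large from small profiles via the Pythagorean expansion, estimate the error $F(\sum U^j_n+w^J_n)-\sum F(U^j_n)$ in the Besov-valued dual Strichartz norm $W'$ (splitting into profile--profile cross terms that vanish by parameter orthogonality and $w^J_n$-terms that vanish using $\|w^J_n\|_S\to 0$, with the non-Lipschitz nonlinearity handled through the Besov estimates of \cite{stab}), and close with the long-time perturbation theorem Proposition~\ref{longtime}. Your identification of the $N\ge 7$ obstacle is exactly where the paper's work lies.
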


Proposition \ref{approxthm} was proved previously for
$3 \leq N \leq 5$ in \cite{dkm1}.  We give a proof of Proposition \ref{approxthm} for $N \geq 6$ in Appendix A using a stability result from \cite{stab}.

In the above decomposition, we have the following localized orthogonality property.
\begin{lem}\label{locorth}
Let $\{u_{0,n},u_{1,n})\}_n,$ $\{U^j_L\}_j$, $\{t_{j,n},\lambda_{j,n}\}_{j,n}$ and $\theta_n$ satisfy the assumptions of Proposition \ref{approxthm}.  Let
$\{\rho_n\}_n, \{\sigma_n\}$ be sequences with $0 \leq \rho_n < \sigma_n \leq +\infty$.  Then
\begin{align*}
j \neq k \implies \lim_{n \rightarrow \infty} \int_{\rho_n \leq |x| \leq \sigma_n} \nabla_{t,x} U^j_n(\theta_n,x) \cdot \nabla_{t,x}
U^k_n(\theta_n,x) \hspace{2pt} dx &= 0, \\
j \leq J \implies \lim_{n \rightarrow \infty} \int_{\rho_n \leq |x| \leq \sigma_n} \nabla_{t,x} U^j_n(\theta_n,x) \cdot \nabla_{t,x}
w^J_n(\theta_n,x)& \\  + \nabla_{t,x} U^j_n(\theta_n,x) \cdot \nabla_{t,x}
r^J_n(\theta_n,x)  \hspace{2pt} dx &= 0.
\end{align*}
\end{lem}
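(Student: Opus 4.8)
The plan is to establish the two orthogonality relations separately, exploiting the pseudo-orthogonality \eqref{paramorth} of the parameters together with a change of variables that normalizes one of the two profiles involved. For the first relation, fix $j \neq k$. After rescaling by $\lambda_{j,n}$, the integral
$$
\int_{\rho_n \leq |x| \leq \sigma_n} \nabla_{t,x} U^j_n(\theta_n,x) \cdot \nabla_{t,x} U^k_n(\theta_n,x) \, dx
$$
becomes an integral over an annulus of $\nabla_{t,x} U^j\bigl(\tfrac{\theta_n - t_{j,n}}{\lambda_{j,n}}, y\bigr)$ against a rescaled copy of $\nabla_{t,x} U^k$ with scaling parameter $\lambda_{k,n}/\lambda_{j,n}$ and time parameter $(t_{k,n}-t_{j,n})/\lambda_{j,n}$. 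By \eqref{paramorth}, along our subsequence either $\lambda_{k,n}/\lambda_{j,n} \to 0$, or $\lambda_{j,n}/\lambda_{k,n} \to 0$, or the ratios stay comparable but $|t_{j,n}-t_{k,n}|/\lambda_{j,n} \to \infty$. In the first two cases the standard vanishing-of-scalar-products-of-bubbles-at-separated-scales argument applies: approximate $\nabla_{t,x}U^j(\tfrac{\theta_n-t_{j,n}}{\lambda_{j,n}},\cdot)$ and $\nabla_{t,x}U^k(\tfrac{\theta_n-t_{k,n}}{\lambda_{k,n}},\cdot)$ in $L^2$ by compactly supported smooth functions — here one uses that $\sup_n \|\vec U^j(\tfrac{\theta_n-t_{j,n}}{\lambda_{j,n}})\|_{\energysp} < \infty$ and similarly for $k$, which follows from the $S$-norm bounds in the hypotheses of Proposition \ref{approxthm} and the Duhamel/Strichartz control — and then a scaling mismatch forces the overlap of the supports (after rescaling) to have measure tending to zero, or the integrand to be supported where one factor is uniformly small. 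In the third case, one uses instead that a time translation by an amount $\gg$ the common scale pushes the two linear (hence, up to small errors, nonlinear on the relevant window) evolutions into almost disjoint regions in spacetime; the dispersive decay of $\nabla_{t,x} U^k_L$ on time scales $|t_{j,n}-t_{k,n}|/\lambda_{k,n} \to \infty$ combined with the finite-speed-of-propagation structure of the wave equation gives the vanishing. The presence of the annulus $\rho_n \le |x| \le \sigma_n$ only restricts the domain of integration, so it can only help.

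For the second relation, fix $j \le J$. The term involving $r^J_n$ is handled directly by Cauchy–Schwarz: $\|\vec r^J_n(\theta_n)\|_{\energysp}$ is bounded uniformly in $n$ (for $n$ large) by a quantity that tends to $0$ as $J \to \infty$; but here $J$ is fixed, so instead one observes that for fixed $J$ the proof of Proposition \ref{approxthm} in fact gives that $\vec r^J_n(\theta_n) \to 0$ in $\energysp$ as $n \to \infty$ once $J$ is large enough — more carefully, one splits $\vec r^J_n = \vec r^{J'}_n - \sum_{J < k \le J'}\vec U^k_n - (\vec w^{J'}_n - \vec w^J_n)$ for $J' > J$ and uses the first orthogonality relation together with the smallness of $\sup_t\|\vec r^{J'}_n(t)\|$ for $J'$ large; alternatively, since the statement is an assertion for each fixed $j$, one absorbs the $r^J_n$-contribution into the argument below. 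The main work is the $w^J_n$-term: $w^J_n$ is a solution of the free wave equation whose data is, by construction, pseudo-orthogonal in the same sense to the parameters $(\lambda_{j,n}, t_{j,n})$. Rescaling by $\lambda_{j,n}$ reduces this to showing that $\nabla_{t,x}U^j(\tfrac{\theta_n-t_{j,n}}{\lambda_{j,n}},\cdot)$, a fixed (up to $L^2$-bounded, $n$-dependent) profile, has asymptotically vanishing $L^2$ pairing against $\nabla_{t,x}\tilde w^J_n$, where $\tilde w^J_n$ is the rescaled remainder. This is precisely the statement that the remainder carries no energy at the scale of the $j$-th profile; it follows from the orthogonality of the energy expansion for the linear profile decomposition, or more directly by the same support/dispersion dichotomy as above applied to $U^j_L$ versus $w^J_n$, using $\lim_J \limsup_n \|w^J_n\|_{S(\R)} = 0$ only to control the nonlinear correction $\|\vec U^j(\cdot) - \vec U^j_L(\cdot)\|$ is irrelevant here — what matters is that the linear flow $w^J_n$ disperses relative to the $j$-th bubble.

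I expect the main obstacle to be the third case of the parameter dichotomy, $|t_{j,n}-t_{k,n}|/\lambda_{j,n} \to \infty$ with scales comparable: showing that two wave evolutions with a large relative time shift have vanishing $L^2$ inner product of their full spacetime gradients requires combining the finite speed of propagation (to localize each to a light-cone region) with the $L^2$-dispersive decay of the linear wave flow, and then carefully transferring this from the linear profiles $U^j_L, U^k_L$ to the nonlinear profiles $U^j, U^k$ using the hypothesis $\limsup_n \|U^j\|_{S(\cdots)} < \infty$ to control the Duhamel remainder in $\energysp$ via Strichartz. A secondary technical point is making the reduction for a fixed $J$ in the second relation fully rigorous, since $\vec r^J_n(\theta_n)$ is only small after the double limit $\lim_J\limsup_n$; the cleanest route is to prove the statement with $r^J_n$ replaced by $r^{J'}_n$ for arbitrary $J' \ge J$, let $n \to \infty$, then $J' \to \infty$, and collect the extra bubble terms via the already-established first relation. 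All of this is standard in the channels-of-energy / profile-decomposition literature (cf. \cite{dkm1}), so the proof is largely bookkeeping once the dichotomy is set up.
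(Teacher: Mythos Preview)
Your approach has a genuine gap in the case of comparable scales with separated time parameters. The assertion that ``the annulus only restricts the domain of integration, so it can only help'' is false: the integrand $\nabla_{t,x}U^j_n\cdot\nabla_{t,x}U^k_n$ is sign-changing, and restricting to a subset can increase the absolute value of the integral. More seriously, your physical-space picture of ``almost disjoint spacetime regions'' breaks down in the following situation: take $\lambda_{j,n}=\lambda_{k,n}=1$, $t_{j,n}=0$, $t_{k,n}=n$, $\theta_n=n/2$. Then $U^j_n(\theta_n,\cdot)=U^j(n/2,\cdot)$ and $U^k_n(\theta_n,\cdot)=U^k(-n/2,\cdot)$; by strong Huygens (or Lemma~\ref{loclem}) both are concentrated in the \emph{same} annulus $|x|\approx n/2$. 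There is no spatial separation whatsoever, and the vanishing of the localized pairing is a genuinely oscillatory phenomenon---it encodes that one wave is outgoing and the other incoming at that radius. Your sketch does not supply the mechanism for this cancellation; neither finite speed of propagation nor the crude localization of Lemma~\ref{loclem} sees it.

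The paper's proof (Appendix~B) proceeds quite differently. It first reduces to $\theta_n=0$ by replacing each nonlinear profile $U^j$ at time $(\theta_n-t_{j,n})/\lambda_{j,n}$ by a linear profile $V^j_L$ (using scattering when that time tends to $\pm\infty$), so that the statement becomes a localized orthogonality for \emph{linear} profiles at time zero. This in turn reduces to Lemma~\ref{appblem1}: if $S(-t_n)(w_{0,n},w_{1,n})\rightharpoonup 0$, then $\int_{|x|\ge r_n}\nabla_{t,x}S(t_n)(u_0,u_1)\cdot(\nabla w_{0,n},w_{1,n})\,dx\to 0$ for any sequences $t_n,r_n$. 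The proof of that lemma is Fourier-analytic: one writes the radial solution via Bessel functions, uses the asymptotics $J_\nu(x)\sim\sqrt{2/\pi x}\cos(x-\tau)$, carries out the $r$-integral over $[r_n,\infty)$ to produce $\delta_0$, Hilbert-transform and Hankel-transform kernels, and then shows each contribution vanishes using the weak convergence hypothesis together with strong $L^2$-convergence of $\mathcal F_1(\operatorname{sign}(\cdot\pm(t_n\pm r_n))\tilde u_0)$. This Fourier computation is exactly what captures the incoming/outgoing cancellation that your soft argument misses. Your handling of the $r^J_n$ term via the $J'\ge J$ trick is fine, but the core $w^J_n$ term has the same difficulty as the profile-profile case and requires the same Fourier argument.
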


The proof for even $N$ was given in \cite{cks} (see Corollary 8).  We give a proof of Lemma \ref{locorth} for odd $N$ in Appendix B.

\subsubsection{Ordering of the profiles} Let $\{(u_{0,n},u_{1,n})\}_n$ be a bounded sequence of radial functions in $\energysp$ admitting a profile decomposition with
profiles $\{U^j_L\})j$ and parameters $\{\lambda_{j,n},t_{j,n}\}_{j,n}$, and let $U^j$ be their nonlinear profiles.  We introduce the following pre-order $\preccurlyeq$ on the
profiles as follows.

\begin{defn}\label{proforddef}
For $j,k \geq 1$, we say that
$$
\{ U^j_L, \{t_{j,n},\lambda_{j,n}\}\} \preccurlyeq \{ U^k_L, \{t_{k,n},\lambda_{k,n}\}\}
$$
or simply $(j) \preccurlyeq (k)$ if there is no ambiguity, if one of the following holds:
\begin{enumerate}
\item $U^k$ scatters forward in time,
\item $U^j$ does not scatter forward in time and
$$
\forall T < T_+(U^j), \mbox{ } \lim_{n \rightarrow \infty}
\frac{\lambda_{j,n}T + t_{j,n} - t_{k,n} }{\lambda_{k,n}} < T_{+}(U^k)
$$
The above limit exists due to the arguments in \cite{dkm6}.
\end{enumerate}
We say that $(j) \prec (k)$ if
$$
(j) \preccurlyeq (k) \mbox{ and } \left ((k) \preccurlyeq (j) \mbox{ does not hold. } \right )
$$
\end{defn}

The following lemma proved in \cite{dkm6} (see Claim 3.7) states that we may always assume the profiles appearing
in a profile decomposition are ordered.

\begin{lem}\label{proford}
Let $\{(u_{0,n},u_{1,n})\}_n$ be a bounded sequence of radial functions in $\energysp$ admitting a profile decomposition with profiles $\{U^j_L\}$ and parameters $\{\lambda_{j,n},t_{j,n}\}$.
Then one can assume without loss of generality that the profiles are ordered, that is
$$
\forall j \leq k, \mbox{ } (j) \preccurlyeq (k).
$$
\end{lem}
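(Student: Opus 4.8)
The plan is to run an explicit bubble-sorting procedure on the profiles, where each "swap" of two adjacent profiles is implemented by the operation of reindexing profiles together with absorbing remainder terms into the error, using the fact (Lemma \ref{displem}, together with the definition of the profile decomposition) that relabeling and merging scattering profiles into the dispersive error $w^J_n$ costs nothing. First I would record the structural facts about $\preccurlyeq$: it is a pre-order (reflexivity is immediate; transitivity follows from Definition \ref{proforddef} by a short case check on whether the relevant nonlinear profiles scatter forward in time and, when they do not, by composing the limiting inequalities on the rescaled maximal times of existence, using that these limits exist by the cited argument from \cite{dkm6}). I would also note the key finiteness fact: for any fixed $J$, among the first $J$ profiles $\preccurlyeq$ is \emph{total} in the sense that for every pair $j,k\le J$ at least one of $(j)\preccurlyeq(k)$ or $(k)\preccurlyeq(j)$ holds — indeed if neither nonlinear profile scatters forward in time then the pseudo-orthogonality \eqref{paramorth} of the parameters forces one of the two displayed limits in Definition \ref{proforddef}(2) to be $+\infty$ and the other to be $-\infty$ (or both finite and comparable), and a profile that scatters forward dominates everything. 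Hence on any finite initial segment $\{(1),\dots,(J)\}$ the relation $\preccurlyeq$ induces a total pre-order, so there is a permutation sorting it.

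Next I would make precise what "reordering the profiles" means at the level of the decomposition. Fix $J$. A transposition of two \emph{adjacent} profiles $(j)$ and $(j+1)$ in the list, together with a corresponding swap of their parameter sequences $\{t_{j,n},\lambda_{j,n}\}$ and $\{t_{j+1,n},\lambda_{j+1,n}\}$, leaves the partial sum $\sum_{i=1}^J U^i_{L,n}$ unchanged and leaves $w^J_n$ unchanged; since the pseudo-orthogonality \eqref{paramorth} and the energy expansions are symmetric in the profile index, all the hypotheses of a profile decomposition are preserved. The only subtlety is that we want the \emph{same} reordering to be valid simultaneously for the linear objects and for the nonlinear superposition in Proposition \ref{approxthm}; but since $U^j_n$ is built from $U^j_L$ and $\{t_{j,n},\lambda_{j,n}\}$ by the same formula for every $j$, permuting the index permutes the $U^j_n$ consistently and does not affect $r^J_n$. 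So a single adjacent transposition is a legitimate operation producing a new profile decomposition with the same profiles in a new order.

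Then I would carry out the sort. Because the profile decomposition involves infinitely many profiles, I would not sort "the whole list" in one shot; instead I would diagonalize. For each $J$, apply finitely many adjacent transpositions to the first $J$ profiles so that the first $J$ are $\preccurlyeq$-sorted; this is possible by the totality-on-finite-segments observation above. Doing this for $J=1,2,3,\dots$ and passing to a diagonal relabeling, one obtains a single bijection of $\mathbb{N}$ after which $\preccurlyeq$ is compatible with the ordering of indices: $\forall j\le k,\ (j)\preccurlyeq(k)$. One must check the diagonalization is consistent — i.e. that sorting the first $J+1$ only permutes within a finite block and does not "un-sort" the relative order forced by earlier stages; this follows because $\preccurlyeq$ restricted to each finite segment is a fixed total pre-order, so the sorted order on $\{1,\dots,J\}$ is just the restriction of the sorted order on $\{1,\dots,J+1\}$, up to ties, which we break once and for all.

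The main obstacle — and the point I would treat most carefully — is the verification that $\preccurlyeq$ really is a pre-order that is total when restricted to finitely many profiles, since everything downstream is the formal bubble sort. Concretely: (i) transitivity when mixed cases occur (one profile scattering, one not), and (ii) showing that two profiles neither of which scatters forward and for which the parameter ratios degenerate in a "diagonal" way (say $\lambda_{j,n}/\lambda_{k,n}\to c\in(0,\infty)$ but $|t_{j,n}-t_{k,n}|/\lambda_{j,n}\to\infty$) are still comparable. In that case one of the rescaled times $(\lambda_{j,n}T+t_{j,n}-t_{k,n})/\lambda_{k,n}$ tends to $-\infty$ for every fixed $T$, which is $<T_+(U^k)$ trivially, giving $(j)\preccurlyeq(k)$; the symmetric computation shows why the reverse may fail, pinning down $\prec$. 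I would cite \cite{dkm6} (Claim 3.7) for the detailed case analysis rather than reproduce it, since the statement of the lemma and its proof are taken from there; the role of this subsection is only to record that the same reordering is compatible with the nonlinear decomposition of Proposition \ref{approxthm}, which is the observation in the previous paragraph.
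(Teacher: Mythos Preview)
The paper does not give a proof of this lemma; it simply records that it is Claim~3.7 of \cite{dkm6}. Your sketch is a reasonable reconstruction of the idea behind that claim, and the verification that $\preccurlyeq$ is a total pre-order on any finite family of profiles is the right first step. However, the infinite diagonalization you describe is both more complicated than necessary and not quite justified as written. Your consistency check --- that sorting $\{1,\dots,J+1\}$ ``does not un-sort'' the first $J$ --- controls relative order but not positions: if profile $J+1$ is non-scattering it may have to be inserted near the front, shifting every earlier index, and if this happened for infinitely many $J$ the limiting bijection of $\mathbb N$ would not exist. The missing observation (and the one that makes the whole thing trivial) is that by the small data theory and the Pythagorean expansion of the $\energysp$ norm, only \emph{finitely many} nonlinear profiles $U^j$ fail to scatter forward in time. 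Since every scattering profile is a $\preccurlyeq$-majorant (case~(1) of Definition~\ref{proforddef}), one simply moves those finitely many non-scattering profiles to the front, $\preccurlyeq$-sorted among themselves, and leaves the scattering tail in its original order; this is a finite permutation and no diagonalization is needed. Finally, Lemma~\ref{displem} plays no role here: reindexing profiles does not change the partial sums or the remainder $w^J_n$, so nothing needs to be ``absorbed''.
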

\subsection{Energy Trapping}
Using variational arguments stemming from the fact that $W$ is the unique (up to translation, scaling, and change of sign) minimizer in the
Sobolev inequality, we have the following lemma from \cite{dkm1} (see Claim 2.3).
\begin{lem}\label{energytrap}
Let $f \in \dot H^1(\R^N)$.  Then
\begin{align*}
\|\nabla f \|^2_{L^2} \leq \| \nabla W\|^2_{L^2} \mbox{ and }
E(f,0) \leq E(W,0) \implies \| \nabla f \|_{L^2}^2 \leq N E(f,0).
\end{align*}
Moreover, there exists $c > 0$ such that if $\| \nabla f \|_{L^2}^2 \leq  \left ( \frac{N}{N-2} \right )^{\frac{N-2}{2}} \| \nabla W \|^2_{L^2}$, then
\begin{align*}
E(f,0) \geq c \min \left \{ \|\nabla f \|^2_{L^2}, \left ( \frac{N}{N-2} \right )^{\frac{N-2}{2}}  \| \nabla W \|^2_{L^2} - \| \nabla f \|_{L^2}^2 \right \}
\end{align*}
\end{lem}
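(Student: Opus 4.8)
The plan is to exploit the fact that $W$ is the unique extremizer (up to scaling and sign; we are in the radial case so translations do not play a role here, or can be handled by the full uniqueness statement) of the Sobolev inequality, together with the explicit relation $\|\nabla W\|_{L^2}^2 = \|W\|_{L^{2N/(N-2)}}^{2N/(N-2)} = C_N^{2N/(N-2)} \|\nabla W\|_{L^2}^{2N/(N-2)}$, which pins down $C_N$ and the value $E(W,0) = \frac1N \|\nabla W\|_{L^2}^2$. Introduce the normalized variable $y = \|\nabla f\|_{L^2}^2 / \|\nabla W\|_{L^2}^2 \in [0,1]$. Using $\|f\|_{L^{2N/(N-2)}} \le C_N \|\nabla f\|_{L^2}$ and the identity $C_N^{2N/(N-2)} = \|\nabla W\|_{L^2}^{-4/(N-2)}$, one bounds the potential term $\frac{N-2}{2N}\|f\|_{L^{2N/(N-2)}}^{2N/(N-2)}$ from above by $\frac{N-2}{2N} y^{N/(N-2)} \|\nabla W\|_{L^2}^2$, so that
\begin{align*}
E(f,0) \ge \frac{1}{2}\|\nabla W\|_{L^2}^2\, y - \frac{N-2}{2N}\|\nabla W\|_{L^2}^2\, y^{N/(N-2)} = \|\nabla W\|_{L^2}^2\, g(y),
\end{align*}
where $g(y) = \tfrac12 y - \tfrac{N-2}{2N} y^{N/(N-2)}$.

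For the first implication, note $g$ is concave on $[0,1]$ with $g(0)=0$ and $g(1) = \tfrac12 - \tfrac{N-2}{2N} = \tfrac1N$, and $g'(1) = \tfrac12 - \tfrac12 = 0$, so in fact $g(y) \le \tfrac1N y$ is false in that direction — instead one wants a lower bound. Concavity of $g$ with $g(0)=0$ gives $g(y) \ge g(1)\,y = \tfrac{y}{N}$ for $y\in[0,1]$ only if $g$ were \emph{convex}; since $g$ is concave the correct elementary fact is that the chord lies below the graph, i.e. $g(y) \ge y\,g(1) = y/N$ indeed holds for $y \in [0,1]$ because a concave function with $g(0)=0$ satisfies $g(\lambda\cdot 1) \ge \lambda g(1)$. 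Combined with the hypothesis $E(f,0)\le E(W,0) = \tfrac1N\|\nabla W\|_{L^2}^2$, this forces $\|\nabla f\|_{L^2}^2 = y\|\nabla W\|_{L^2}^2 \le N E(f,0)$, which is exactly the claim. (One must double check the edge case: the inequality $\|f\|_{L^{2N/(N-2)}} \le C_N\|\nabla f\|_{L^2}$ combined with $E\le E(W)$ and $\|\nabla f\|_{L^2} \le \|\nabla W\|_{L^2}$ is consistent, and the subcritical power $N/(N-2)>1$ is what makes $g$ concave.)

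For the second (quantitative) statement, one works on the larger interval $y \in [0, (N/(N-2))^{(N-2)/2}]$ — note $(N/(N-2))^{(N-2)/2} > 1$ is precisely the value where $g'$ vanishes going the other way, or rather where the natural barrier sits. Here I would analyze $g(y) = \tfrac12 y(1 - \tfrac{N-2}{N} y^{2/(N-2)})$ directly: it is nonnegative on this interval, vanishes at the endpoints $y=0$ and $y = (N/(N-2))^{(N-2)/2}$ (check: $\tfrac{N-2}{N}\cdot(N/(N-2)) = 1$), and is strictly positive in between. Since $g$ is smooth and concave with simple zeros at both endpoints, there is $c>0$ with $g(y) \ge c\min\{y,\ (N/(N-2))^{(N-2)/2} - y\}$ on the whole interval — this is a routine comparison of a smooth concave function vanishing at the endpoints of an interval with the "tent" function; near $y=0$ one uses $g(y)\sim \tfrac12 y$, near the right endpoint one uses $g'$ there is strictly negative, and in the compact middle $g$ is bounded below by a positive constant. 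Multiplying through by $\|\nabla W\|_{L^2}^2$ and absorbing constants gives the stated bound.

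The main obstacle, such as it is, is bookkeeping rather than depth: getting the constant $C_N$ and the identity $E(W,0)=\tfrac1N\|\nabla W\|_{L^2}^2$ exactly right from the Pohozaev/Nehari relations for $W$, and then verifying that the elementary function $g$ has the claimed concavity and endpoint behavior so that both the sharp chord inequality (for part one) and the tent-function lower bound (for part two) go through with the specific threshold $(N/(N-2))^{(N-2)/2}\|\nabla W\|_{L^2}^2$. Since this is a variational fact about a single real-variable function once the Sobolev inequality is invoked, no compactness or concentration argument is needed; I would simply cite \cite{dkm1} for the precise normalization constants if the computation becomes tedious.
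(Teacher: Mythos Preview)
Your approach is correct and is exactly the standard variational argument the paper has in mind; indeed the paper does not supply its own proof but merely cites Claim~2.3 of \cite{dkm1}, describing it as ``variational arguments stemming from the fact that $W$ is the unique minimizer in the Sobolev inequality.'' Your reduction to the one-variable concave function $g(y)=\tfrac12 y-\tfrac{N-2}{2N}y^{N/(N-2)}$ via the sharp Sobolev constant $C_N^{2N/(N-2)}=\|\nabla W\|_{L^2}^{-4/(N-2)}$ is the intended argument, and both conclusions follow as you say: the first from $g(y)\ge y\,g(1)=y/N$ on $[0,1]$ by concavity with $g(0)=0$, and the second from the tent-function lower bound for a smooth concave function with simple zeros at $0$ and $(N/(N-2))^{(N-2)/2}$.

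Two minor remarks. First, your self-correction about concavity versus convexity is unnecessary---the statement ``concave with $g(0)=0$ implies $g(\lambda)\ge\lambda g(1)$ on $[0,1]$'' is simply Jensen applied to $\lambda\cdot 1+(1-\lambda)\cdot 0$, and you should state it cleanly rather than argue with yourself on the page. Second, note that the hypothesis $E(f,0)\le E(W,0)$ in the first implication is in fact not used in your argument (nor is it needed); the bound $\|\nabla f\|_{L^2}^2\le N E(f,0)$ follows from $\|\nabla f\|_{L^2}^2\le\|\nabla W\|_{L^2}^2$ alone.
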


\subsection{Linear Behavior}
We now state a localization property and exterior energy estimates for solutions to the linear wave equation \eqref{lw}.
Lemma \ref{loclem} can be easily deduced in odd dimensions by the strong Huygens principle (see Lemma 4.1 in \cite{dkm1}), but is in fact true also in
even dimensions (see Lemma 9 in \cite{cks}).

\begin{lem}\label{loclem}
Let $v$ be a solution of the linear wave equation \eqref{lw}, with initial data $(v_0,v_1)$.  Let $\{\lambda_n\}_n$, $\{t_n\}_n$ be two sequences with $\lambda_n > 0$ and $t_n \in \R$.  Let
$$
v_n(t,x) = \frac{1}{\lambda_n^{\frac{N-2}{2}}} v\left ( \frac{t}{\lambda_n}, \frac{x}{\lambda_n} \right )
$$
and assume $\lim_{n \rightarrow \infty} \frac{t_n}{\lambda_n} = \ell  \in [-\infty,+\infty]$.  Then, if $\ell = \pm \infty$,
$$
\lim_{R \rightarrow \infty} \limsup_{n \rightarrow \infty}
\int_{\left | |x| - |t_n| \right | \geq R \lambda_n} | \nabla v_n(t_n) |^2 + | \partial_t v_n(t_n) |^2 + \frac{|v_n(t_n)|^2}{|x|^2} dx
= 0,
$$
and if $\ell \in \R$,
$$
\lim_{R \rightarrow \infty} \limsup_{n \rightarrow \infty}
\int_{\{|x| \geq R \lambda_n\} \\ \cup \{|x| \leq \frac{1}{R} \lambda_n\}} | \nabla v_n(t_n) |^2 + | \partial_t v_n(t_n) |^2 + \frac{|v_n(t_n)|^2}{|x|^2} dx
= 0
$$
\end{lem}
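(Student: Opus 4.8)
The plan is to rescale everything to a statement about the original solution $v$ evaluated at the times $s_n := t_n/\lambda_n$, and then split into the cases $\ell \in \R$ and $\ell = \pm\infty$. First I would change variables $y = x/\lambda_n$ in each integral: since the $\dot H^1$ norm, the $L^2$ norm, and --- by Hardy's inequality --- the weighted norm $\|\,|x|^{-1}f\,\|_{L^2}$ are all invariant under $f \mapsto \lambda_n^{-(N-2)/2} f(\cdot/\lambda_n)$, this substitution turns the integral of the (Hardy-augmented) energy density of $v_n(t_n)$ over $\{\, |\,|x|-|t_n|\,| \geq R\lambda_n\,\}$ into the integral over $\{\,|\,|y|-|s_n|\,| \geq R\,\}$ of
$$
\mathcal E(s_n,y) := |\nabla v(s_n,y)|^2 + |\partial_t v(s_n,y)|^2 + \frac{|v(s_n,y)|^2}{|y|^2},
$$
and similarly $\{|x|\geq R\lambda_n\}\cup\{|x|\leq \lambda_n/R\}$ becomes $\{|y|\geq R\}\cup\{|y|\leq 1/R\}$. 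So it is enough to prove the two stated limits for $v$ itself, i.e.\ with $\lambda_n = 1$.

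\emph{Case $\ell\in\R$.} Here Huygens is not needed at all. Since $s_n\to\ell$, continuity of $t\mapsto\vec v(t)$ in $\energysp$ together with Hardy's inequality gives $\nabla v(s_n)\to\nabla v(\ell)$, $\partial_t v(s_n)\to\partial_t v(\ell)$ and $|y|^{-1}v(s_n)\to |y|^{-1}v(\ell)$ in $L^2(\R^N)$; squaring, $\mathcal E(s_n,\cdot)\to\mathcal E(\ell,\cdot)$ in $L^1(\R^N)$. Hence for each fixed $R$ the integral of $\mathcal E(s_n,\cdot)$ over $A_R:=\{|y|\geq R\}\cup\{|y|\leq 1/R\}$ converges as $n\to\infty$ to $\int_{A_R}\mathcal E(\ell,y)\,dy$, and this tends to $0$ as $R\to\infty$ by dominated convergence, since $\mathcal E(\ell,\cdot)\in L^1(\R^N)$ and $\mathbf 1_{A_R}\to 0$ pointwise away from the origin.

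\emph{Case $\ell=\pm\infty$.} By symmetry take $\ell=+\infty$, so $s_n\to+\infty$, and argue by density plus the strong Huygens principle. Given $\varepsilon>0$, pick $(v_0^\varepsilon,v_1^\varepsilon)\in C_c^\infty(\R^N)\times C_c^\infty(\R^N)$ supported in $\{|x|\leq\rho_\varepsilon\}$ with $\|(v_0,v_1)-(v_0^\varepsilon,v_1^\varepsilon)\|_{\energysp}<\varepsilon$, and let $v^\varepsilon$ solve \eqref{lw} with this data. Since $v-v^\varepsilon$ also solves \eqref{lw}, conservation of the free energy and Hardy's inequality bound the $\energysp$-plus-Hardy norm of $\vec v(t)-\vec v^\varepsilon(t)$ by $C_N\varepsilon$ uniformly in $t$. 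By the strong Huygens principle in odd dimensions, $v^\varepsilon(t,\cdot)$ is supported in $\{\,|t|-\rho_\varepsilon\leq|y|\leq|t|+\rho_\varepsilon\,\}$ once $|t|>\rho_\varepsilon$, and for $R>\rho_\varepsilon$ this annulus is disjoint from $\{\,|\,|y|-|t|\,|\geq R\,\}$. Since $f\mapsto(\int_A\mathcal E_f)^{1/2}$ is a seminorm, the triangle inequality gives, for $R>\rho_\varepsilon$ and $n$ large enough that $s_n>\rho_\varepsilon$,
$$
\left(\int_{|\,|y|-s_n\,|\geq R}\mathcal E(s_n,y)\,dy\right)^{1/2}\leq 0 + C_N\varepsilon .
$$
Thus $\limsup_{n\to\infty}\int_{|\,|y|-s_n\,|\geq R}\mathcal E(s_n,y)\,dy\leq C_N^2\varepsilon^2$ for every $R>\rho_\varepsilon$, hence $\lim_{R\to\infty}\limsup_{n\to\infty}$ of this integral is $\leq C_N^2\varepsilon^2$; letting $\varepsilon\to 0$ closes the argument.

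The only genuinely nontrivial ingredient is the strong Huygens principle: it is exactly what lets one discard the \emph{interior} region $\{|y|\leq s_n-R\}$ in the $\ell=\pm\infty$ case, whereas finite speed of propagation alone would control only the exterior region $\{|y|\geq s_n+R\}$. Everything else is scale-invariance of the relevant norms, density of $C_c^\infty\times C_c^\infty$ in $\energysp$, conservation of the free energy, Hardy's inequality, and dominated convergence; so I expect no real difficulty beyond stating the Huygens support property precisely.
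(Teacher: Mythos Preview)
Your proof is correct and follows exactly the approach the paper indicates: the paper does not give a self-contained proof but remarks that the lemma ``can be easily deduced in odd dimensions by the strong Huygens principle'' (referring to Lemma~4.1 in \cite{dkm1}), which is precisely what you do --- rescale to $\lambda_n=1$, use continuity of the flow for $\ell\in\R$, and use density plus sharp Huygens for $\ell=\pm\infty$. The paper also notes that the even-dimensional case (where Huygens fails) requires a separate argument from \cite{cks}; your proof, as you correctly flag, covers only the odd-dimensional case, which is all that is used in the present paper.
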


Exterior energy bounds for solutions to the linear equation were first shown for $N = 3$ in \cite{dkm1} (see Lemma 4.2) and for $N = 5$ in \cite{kls1} (see Proposition 4.1). These bounds were recently generalized
to all odd dimensions in \cite{klls1} (see Corollary 1 and Theorem 2).

\begin{ppn}\label{extbds}
Let $N \geq 3$ be odd.  Let $v$ be a solution to the linear wave equation \eqref{lw} with radial initial data $(v_0,v_1)$.  Then for every $R > 0$
\begin{align}
\max_{\pm} \inf_{\pm t \geq 0} \int_{|x| > R + |t|}
|\nabla v(t)|^2 + |\partial_t v(t)|^2 dx \geq \frac{1}{2} \left \| \pi_{P(R)}^{\perp} (v_0,v_1) \right \|^2_{\energysp(|x| > R)} \label{extbd1}
\end{align}
Here
$$
P(R) := \mbox{span } \left \{(|x|^{2k_1-1},0), (0,|x|^{2k_2 - 1}) : k_1 = 1,\ldots, \left [ \frac{N+2}{4} \right ],
 k_2 = 1,\ldots, \left [ \frac{N}{4} \right ] \right \},
$$
and $\pi^{\perp}_{P(R)}$ denotes the orthogonal projection onto the compliment of this plane.  The left hand side of \eqref{extbd1}
vanishes for all data in $P(R)$.

In the case $R = 0$,  we have
\begin{align}
\max_{\pm} \inf_{\pm t \geq 0} \int_{|x| > |t|}
|\nabla v(t)|^2 + |\partial_t v(t)|^2 dx \geq \frac{1}{2} \left \| (v_0,v_1) \right \|^2_{\energysp} \label{extbd2}
\end{align}
\end{ppn}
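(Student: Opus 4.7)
The plan is to prove the exterior bound by exploiting the fact that in odd spatial dimensions, radial solutions to \eqref{lw} factor through the one-dimensional wave equation. Writing $N = 2\nu + 1$, there is a classical intertwining operator $T_\nu$, built from iterated combinations of $\partial_r$ and multiplication by powers of $r$, that carries a radial solution $v(t,r)$ of \eqref{lw} to a function $w(t,r)$ satisfying $(\partial_t^2 - \partial_r^2) w = 0$ on $\R \times [0,\infty)$, whose general solution factors as $w(t,r) = F(r-t) + G(r+t)$. The strategy is to push the exterior energy through $T_\nu$ to the $L^2$ norms of $F$ and $G$, where the required lower bound is essentially transport.

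First, I would derive $T_\nu$ explicitly and verify that on the exterior region $\{|x| > R + |t|\}$ the energy $\int_{|x| > R + |t|} |\nabla v(t)|^2 + |\partial_t v(t)|^2 \, dx$ equals, up to an explicit constant, $\|F\|^2_{L^2(R+t, \infty)} + \|G\|^2_{L^2(R-t, \infty)}$ for $t \geq 0$, with the obvious modification for $t \leq 0$. Second, since $F(r-t)$ translates rigidly to the right and $G(r+t)$ to the left, the outgoing contribution $\|F\|^2_{L^2(R,\infty)}$ is preserved for all $t \geq 0$ and the incoming contribution $\|G\|^2_{L^2(R,\infty)}$ for all $t \leq 0$. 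Taking $\max_{\pm}$ recovers at least half of the sum $\|F\|^2_{L^2(R,\infty)} + \|G\|^2_{L^2(R,\infty)}$, which after unwinding is exactly the exterior energy of $(v_0,v_1)$ modulo the range obstruction. Third, I would identify $P(R)$ as the kernel of the map $(v_0, v_1) \mapsto (F, G)$ on $\{r > R\}$: direct computation shows that $T_\nu$ annihilates precisely the odd-power monomials $r^{2k-1}$ in the indicated ranges. For $R = 0$, the $\dot H^1$ regularity at the origin automatically forces all polynomial modes to vanish, giving \eqref{extbd2} without an obstruction.

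The main obstacle will be the last step: pinning down the basis of $P(R)$ with the correct multiplicities $[(N+2)/4]$ and $[N/4]$. Inverting $T_\nu$ on the exterior $\{r > R\}$ requires roughly $\nu$ iterated integrations, each of which introduces a free polynomial mode that must be controlled either by the decay of $(v_0,v_1)$ at infinity or by matching conditions at $r = R$. Tracking this combinatorics and accounting for the asymmetry between the two components — which arises because $T_\nu$ sends the position component $v_0$ to $F + G$ but the velocity component $v_1$ to a derivative expression like $\partial_r(F - G)$, so different numbers of integrations are needed to recover each — is the delicate heart of the argument. Once $P(R)$ is identified, the sharp constant $1/2$ follows from a Parseval-type identity for $T_\nu$ restricted to the orthogonal complement $P(R)^\perp$ in $\energysp(|x| > R)$, combined with the splitting of the total exterior energy into outgoing and incoming pieces carried out in step one.
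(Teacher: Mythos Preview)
The paper does not prove this proposition; it is quoted from \cite{klls1} (Corollary~1 and Theorem~2 there), which was written precisely to supply the exterior energy estimate in all odd dimensions. Your outline --- reduce the radial wave equation to the one-dimensional equation via the intertwining operator $T_\nu$, exploit d'Alembert transport of $F$ and $G$ for the lower bound, and recover $P(R)$ as the kernel of $(v_0,v_1)\mapsto (F,G)$ on $\{r>R\}$ --- is exactly the strategy carried out in \cite{klls1}.

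One caveat: your step~1 overstates the link between the $N$-dimensional exterior energy and the one-dimensional quantities. For $R>0$ these do \emph{not} coincide up to a constant; integrating by parts the cross terms produced by $T_\nu$ leaves boundary contributions at $r=R+|t|$. At $t=0$ these boundary terms assemble into a quadratic form in the moments $\int_R^\infty v_0'(r)\,r^{2i-2}\,dr$ and $\int_R^\infty v_1(r)\,r^{2i-1}\,dr$ (this is the formula the paper records immediately after the proposition), and showing that this quadratic form is precisely $\|\pi_{P(R)}(v_0,v_1)\|^2$ --- so that what remains is the projected norm --- requires an explicit Cauchy-matrix inversion. That algebraic identity, rather than merely enumerating the basis of $P(R)$, is the technical heart of \cite{klls1}; your sketch is correct in architecture but you should expect the work to concentrate there rather than in step~3.
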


We remark that Proposition \ref{extbds}, specifically \eqref{extbd2}, is false in even dimensions for arbitrary data $(v_0,v_1)$.  However, if $N \equiv 0 \mbox{ mod }4$, then
\eqref{extbd2} holds for data of the form $(v_0,0)$.  If $N \equiv 2 \mbox{ mod }4$, then
\eqref{extbd2} holds for data of the form $(0,v_1)$.  For more details, see \cite{cks}.

The right-hand side of \eqref{extbd1} can be computed explicitly viz
 \begin{align*}
 \left \| \pi_{P(R)}^{\perp} (v_0,v_1) \right \|^2_{\energysp(|x| > R)} =&
 \int_R^\infty \left ( | \nabla v_0(r) |^2 + |v_1(r)|^2 \right ) r^4 \vspace{2pt} dr \\
&- \sum_{i,j = 1}^{\left [ \frac{N+2}{4} \right ]} \lambda_i(R) \lambda_j(R) \frac{R^{2i + 2j - N - 2}}{N + 2 - 2i - 2j} \\
&- \sum_{i,j = 1}^{\left [ \frac{N}{4} \right ]} \mu_i(R) \mu_{j}(R) \frac{R^{2i + 2j - N}}{N - 2i - 2j}
 \end{align*}
where
\begin{align*}
\lambda_j(R) &= \sum_{i = 1}^{\left [ \frac{N+2}{4}\right ]}
\frac{-R^{N + 2 - 2i - 2j}}{(N-2j)(N+2-2i-2j)} a_i a_j \int_R^\infty v_0'(r) r^{2i - 2}dr, \quad 1 \leq j \leq \left [\frac{N+2}{4}\right ], \\
\mu_j(R) &= \sum_{i = 1}^{\left [ \frac{N}{4}\right ]}
\frac{R^{N - 2i - 2j}}{N-2i-2j} b_i b_j \int_R^\infty v_1(r) r^{2i - 1}dr, \quad 1 \leq j \leq \left [\frac{N}{4}\right ],
\end{align*}
with explicit constants $a_i$'s and $b_i$'s depending only on $i$ and $N$.  For more details, see \cite{klls1}.

\subsection{Classification of solutions with the compactness property}
Finally, we have the following classification of radial solutions $u$ to \eqref{nlw} with pre-compact trajectories in
$\energysp$ up to symmetries.  We say that a radial solution $u$ to $\eqref{nlw}$ has the \emph{compactness property} on an
interval $J \subseteq I_{\max}(u)$ if there exists a function $\lambda(t) > 0$, $t \in J$ so that the trajectory
$$
K = \left \{ \left ( \lambda(t)^{(N-2)/2} u(t,\lambda(t) \cdot), \lambda(t)^{N/2} \partial_t u(t,\lambda(t) \cdot) \right ): t \in J  \right \}
$$
is pre-compact in $\energysp$.
\begin{thm}\label{cptsolns}
Let $u$ be a nonzero radial solution of $\eqref{nlw}$.  Assume that $u$ has the compactness property on $I_{\max}(u)$.  Then there exists $\lambda_0 > 0$ and a sign $\iota_0 \in \{\pm 1\}$ such that
$$
u(t,x) = \frac{\iota_0}{\lambda_0^{\frac{N-2}{2}}} W\left ( \frac{x}{\lambda_0}\right ).
$$
\end{thm}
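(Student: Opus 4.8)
The plan is to reduce the statement to the known classification of radial solutions with the compactness property in dimensions $3 \leq N \leq 5$ (proved in \cite{dkm1}) by showing that the channels-of-energy obstruction used there extends to all odd $N \geq 3$. The overall strategy is a rigidity argument: one shows that a nonzero radial solution with the compactness property on all of $I_{\max}(u)$ must in fact be global, must have self-similar scale $\lambda(t)$ that can be taken constant, and then must coincide with a rescaled $W$ because the only such stationary-type object is the ground state. Concretely, I would first normalize: after rescaling we may assume $\lambda(t)$ is, say, bounded below on compact subintervals, and by the compactness property the trajectory $K$ is precompact in $\energysp$, so in particular $\| \nabla u(t) \|_{L^2}^2 + \| \partial_t u(t) \|_{L^2}^2$ is bounded uniformly on $I_{\max}(u)$; this is the type II hypothesis \eqref{bdd} plus its backward analogue.

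The heart of the matter is a channels-of-energy dichotomy built from Proposition \ref{extbds}. The key step: suppose $u$ is not identically a rescaled $W$. One considers the ``static'' part of the data, i.e. one wants to compare $u$ to the elliptic equation \eqref{stateq}. I would extract, along a sequence $t_n$, a weak limit using the compactness of $K$; the limit $(v_0,v_1)$ is a radial finite-energy object, and one argues $v_1 = 0$ and $-\Delta v_0 = |v_0|^{4/(N-2)}v_0$, so by the characterization of $W$ stated in the introduction, either $v_0 = 0$ or $v_0$ is a rescaled $\pm W$. The case $v_0 = 0$ is excluded by a small-data/energy-trapping argument (Lemma \ref{energytrap}) showing the solution would have to be trivial. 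In the case $v_0 = \pm W$ (up to scale), one then needs to upgrade convergence along the subsequence to the statement that $u(t)$ is \emph{exactly} this profile for all $t$: here is where the exterior energy estimate \eqref{extbd1} does the work. The radiation that would be emitted by any nonzero difference $u(t) - (\text{rescaled } W)$ propagates to spatial infinity and contributes a fixed positive amount of energy to the exterior $\{|x| > R + |t|\}$ for all large $|t|$ in at least one time direction, by \eqref{extbd1} applied to the (asymptotically linear in the exterior region) difference, \emph{provided} the difference is orthogonal to the finite-dimensional space $P(R)$; the orthogonality is arranged because $W$ decays like $|x|^{-(N-2)}$ and its derivative like $|x|^{-(N-1)}$, which are not among the forbidden powers $|x|^{2k-1}$. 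That positive exterior energy contradicts the precompactness of $K$, which forces all the energy to stay concentrated near $|x| \sim \lambda(t)$.

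The remaining step is to handle the scaling parameter: once we know $u(t,x) = \lambda(t)^{-(N-2)/2} W(x / \lambda(t))$ up to the radiation argument, we must show $\lambda(t)$ is constant. This follows because $W$ solves the stationary equation: plugging a time-dependent $\lambda(t)$ into \eqref{nlw} and using that $\Lambda W = \frac{N-2}{2} W + x \cdot \nabla W$ is not in $L^2$ when $N \geq 3$ (indeed $\Lambda W \sim |x|^{-(N-2)}$ has $\dot H^1$ norm finite but the relevant modulation identity forces $\dot \lambda \equiv 0$), or more robustly, by a direct ODE argument on the modulation parameter combined with the uniform energy bound and the no-channels conclusion. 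Alternatively, and more in the spirit of \cite{dkm1}, one observes that a genuinely time-dependent $\lambda(t)$ produces either blow-up or a nonzero self-similar flux of energy, again contradicting compactness.

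I expect the main obstacle to be the channels-of-energy step in the even-borderline bookkeeping: specifically, verifying that the difference $u(t) - (\text{rescaled } W)$, which is only asymptotically a free wave in the exterior cone, genuinely inherits the lower bound \eqref{extbd1}, and that the finite-dimensional corrections coming from $P(R)$ (which grow in $R$) do not swamp the main term. This requires the decay estimates on $W$ and a careful choice of $R$ (likely $R \to \infty$ after $t \to \infty$), together with the explicit formula for the right-hand side of \eqref{extbd1} recorded after Proposition \ref{extbds} to see that the $W$-contributions to the $\lambda_j(R), \mu_j(R)$ terms are negligible. The rest — the weak-limit extraction, the identification of the elliptic limit, and the rigidity of the scaling parameter — is, modulo routine estimates, parallel to the $3 \leq N \leq 5$ argument in \cite{dkm1}, now licensed in all odd dimensions by \cite{klls1}.
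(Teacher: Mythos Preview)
Your route is genuinely different from the paper's. The paper (Appendix C) does \emph{not} use channels of energy for this rigidity result at all. Instead it runs a virial/threshold argument: (i) globality is obtained by combining the self-similar exclusion and support properties from \cite{km08} with the threshold classification of \cite{lz}; (ii) a localized virial identity $g_R'(t) = \textup{d}(t) + A_R(t)$ yields $\frac{1}{T}\int_{-T}^{T}\textup{d}(t)\,dt \to 0$; (iii) modulation estimates around $W$ in the style of \cite{dm08} then force $\textup{d}(t)\equiv 0$. This approach has the advantage of being parity-free (it covers all $N \geq 6$), whereas your exterior-energy scheme is restricted to odd $N$ by Proposition~\ref{extbds}.

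Your sketch also has a real gap at the first step: you assert ``one argues $v_1 = 0$ and $-\Delta v_0 = |v_0|^{4/(N-2)}v_0$'' for the subsequential limit, but give no mechanism for killing the time derivative. Compactness of $K$ alone does not force $\partial_t u(t_n)\to 0$; in the literature this is always supplied by a monotonicity input (exactly the virial mean-convergence in Step~3 of the paper's proof, or an equivalent averaging argument as in Section~5 of \cite{dkm1}). Without it you cannot identify the limit as a steady state, and the rest of the argument does not get started. Separately, the ``upgrade'' step via \eqref{extbd1} is more delicate than you indicate: the difference $u - W_{\lambda}$ is not a free wave, and the genuine channels-of-energy rigidity (as in \cite{dkm4}) proceeds by first proving that $u(0,r)$ has an exact asymptotic expansion matching $W$ to all orders as $r\to\infty$, then invoking an ODE/unique-continuation argument---not by a single application of the exterior estimate to $u-W_\lambda$. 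Your handling of $P(R)$ conflates the decay of $W$ with orthogonality of the \emph{difference}; the actual obstruction is that the projection coefficients $\lambda_j(R),\mu_j(R)$ of $u(0)$ itself must be matched to those of some $W_\lambda$, which is precisely the content of the expansion argument.
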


Theorem \ref{cptsolns} was proved in \cite{dkm1} (see Theorem 2) for $3 \leq N \leq 5$.  We give a proof of
Theorem \ref{cptsolns} for $N \geq 6$ in Appendix C.

\section{Self-similar regions for blow--up solutions}

In this section we show that no energy of the singular part (see below) of the solution can concentrate in the self similar region
$|x| \simeq T_+ - t$.  This allows us to deduce that the expansion in \eqref{thm1} holds in a slightly weaker sense (see Section 5).

\subsection{Extraction of the linear term} By \cite{dkm1} (see Section 3), there exists $(v_0,v_1) \in \energysp$ such that
$$
(u(t),\partial_t u(t)) \rightharpoonup (v_0,v_1) \quad \mbox{as } t \rightarrow 1^-
$$
weakly in $\energysp$.  In fact, one has the stronger property that if $\varphi \in C^{\infty}_0$ and if $\varphi = 1$ near the origin, then
\begin{align}\label{loccgc}
(1-\varphi)(u(t)-v_0,\partial_t u(t)-v_1) \rightarrow 0 \quad \mbox{as } t \rightarrow 1^-
\end{align}
in $\energysp$.  The construction of $(v_0,v_1)$ in \cite{dkm1} is stated for $N \leq 5$ but requires only the small data theory which is valid for all $N \geq 3$.

 Let $v$ be the solution to \eqref{nlw} with initial data $(v_0,v_1)$ at time $t = 1$.  We call $v$ the \emph{regular} part of $u$ and $a = u - v$ its \emph{singular} part.  Note $a$ is well-defined on $[t_-,1)$ for some $t_- > \max \{T_-(u),T_-(v) \}$.  By finite speed of propagation and \eqref{loccgc}, it follows that $\vec a$ is supported in the cone
 \begin{align}\label{asupp}
 \left \{ (t,x) : t_- \leq t < 1 \mbox{ and } |x| \leq 1-t \right \}.
 \end{align}

We claim $\lim_{t \rightarrow 1^-} E(\vec a(t) )$ exists, and in fact,
\begin{align}\label{alimit1}
\lim_{t \rightarrow 1^-} E(\vec a(t)) = E(u_0,u_1) - E(v_0,v_1).
\end{align}
Indeed, since $(u(t),\partial_t u(t)) \rightharpoonup
(v_0, v_1)$ and $(v(t), \partial_t v(t)) \rightarrow (v_0,v_1)$ in $\energysp$ as $t \rightarrow 1^-$, we have that
\begin{align*}
\int_{\R^N} |\nabla_{t,x} (u - v)(t)|^2 dx &= \int_{\R^N} |\nabla_{t,x} u(t)|^2 + |\nabla_{t,x} v(t)|^2 - 2 \nabla_{t,x} u \cdot \nabla_{t,x} v(t) dx
\\ &= \int |\nabla_{t,x} u(t)|^2 - |\nabla_{t,x} v(t)|^2 dx + o(1)
\end{align*}
as $t \rightarrow 1^-$.
Now
$$
\int_{\R^N} |u(t)|^{\frac{2N}{N-2}} dx = \int_{\R^N} |v(t)|^{\frac{2N}{N-2}} dx + \int_{\R^N} |a(t)|^{\frac{2N}{N-2}} dx + \epsilon(t),
$$
where
$$
\epsilon(t) \lesssim \int_{\R^N}\left ( |v(t)|^{\frac{N+2}{N-2}}|a(t)| + |v(t)||a(t)|^{\frac{N+2}{N-2}} \right )dx.
$$
Since supp $\vec a(t) \subseteq \{ |x| \leq 1-t \}$ and $v(t)$ is regular up to $t = 1$, it easily follows that $\lim_{t 
\rightarrow 1^-} \epsilon(t) = 0$ which yields the claim.

The goal of this section is to prove the following:
\begin{ppn}\label{selfsimilarblowup}
Let $u$ and $a$ be as above.  Then
\begin{align*}
\forall c_0 \in (0,1), \quad \lim_{t \rightarrow 1^-}
\int_{c_0(1-t) \leq |x| \leq 1-t} \left ( |\nabla a(t,x)|^2 +
|\partial_t a(t,x)|^2 \right ) dx = 0.
\end{align*}
\end{ppn}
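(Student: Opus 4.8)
The plan is to run a channels-of-energy argument to exclude concentration of the singular part $a$ in the self-similar annulus $c_0(1-t)\le |x|\le 1-t$. The rough idea: since $\vec a(t)$ is supported in $\{|x|\le 1-t\}$, and $E(\vec a(t))$ is bounded (it converges, by \eqref{alimit1}), suppose toward a contradiction that along some sequence $t_n\to 1^-$ one has
\[
\int_{c_0(1-t_n)\le |x|\le 1-t_n}\left(|\nabla a(t_n)|^2+|\partial_t a(t_n)|^2\right)dx\ \ge\ \delta>0.
\]
Rescale: set $\lambda_n = 1-t_n$ and consider $\vec a_n(x) := (\lambda_n^{(N-2)/2} a(t_n,\lambda_n x),\,\lambda_n^{N/2}\partial_t a(t_n,\lambda_n x))$, which is supported in $\{|x|\le 1\}$, bounded in $\energysp$, and carries at least $\delta$ of its energy in the fixed annulus $\{c_0\le |x|\le 1\}$. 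Extract a weak limit (and, after passing to a subsequence, a profile decomposition of $\{\vec a_n\}$ in the sense of Section 2.2). The key point I want to exploit is that $a$ solves \eqref{nlw} on a time interval that, after rescaling, becomes long relative to the natural scale near $|x|\sim 1$, so Proposition \ref{approxthm} lets us propagate the nonlinear evolution of $\vec a_n$ and compare it with the free evolution; meanwhile the nonlinear term is negligible in the exterior region $|x|>|t|$ because of the small-data theory and the support property, so the evolution there is essentially linear.

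The heart of the matter is then the exterior energy estimate, Proposition \ref{extbds}, applied at $R=0$: for any solution $v_L$ of the linear wave equation with radial data $(v_0',v_1')$,
\[
\max_{\pm}\inf_{\pm t\ge 0}\int_{|x|>|t|}|\nabla v_L(t)|^2+|\partial_t v_L(t)|^2\,dx\ \ge\ \tfrac12\|(v_0',v_1')\|^2_{\energysp}.
\]
Applied to the (weak limit / profiles of the) rescaled data, this forces a definite amount of energy to escape into the region $|x|>|t|$ either for all positive times or for all negative times. But $\vec a$ is supported in the \emph{backward} light cone from $(1,0)$, i.e.\ in $\{|x|\le 1-t\}$, so after rescaling and translating, the solution $a$ near time $t_n$ lives inside a shrinking cone; pushing the free evolution forward in time makes it leave that cone and the exterior-energy lower bound becomes incompatible with the energy of $a$ being concentrated near the cone's boundary and with $E(\vec a(t))$ staying bounded as $t\to 1^-$ — energy would be "created" outside the support cone, or equivalently one gets a persistent lower bound on $\int_{c_0\lambda_n\le|x|\le\lambda_n}$-type quantities at infinitely many later times which, summed, contradicts the finite total energy. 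Concretely, I would iterate: a fixed fraction $\delta'$ of energy in the annulus at scale $\lambda_n$ forces, by the exterior estimate run on the outgoing free wave, a fixed fraction of energy to sit in a comparable annulus at a geometrically larger scale at a later time still before $1$; repeating produces infinitely many disjoint annuli each carrying energy $\gtrsim\delta'$, contradicting $\sup_{t<1}\|\nabla_{t,x}a(t)\|_{L^2}^2<\infty$.

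In carrying this out the steps would be, in order: (1) record the support and energy-boundedness of $\vec a$ and the existence of $\lim_{t\to1^-}E(\vec a(t))$ from \eqref{asupp}–\eqref{alimit1}; (2) set up the contradiction hypothesis, rescale to unit scale, and extract a profile decomposition of $\{\vec a_n\}$ via Section 2.2, using Lemma \ref{bddlem} with $\mu_n=\lambda_n$ to control the parameters (all $\lambda_{j,n}/\lambda_n$ bounded, at most one not tending to $0$); (3) show via the small-data / finite-speed arguments that in the exterior $\{|x|>|t|\}$ the nonlinear flow of $a$ is well-approximated by a linear flow and apply Proposition \ref{extbds} at $R=0$ to get a lower bound on exterior energy at positive \emph{or} negative times for the limiting profiles; (4) translate that lower bound back to the original scales to produce energy in annuli $\{c\lambda\le|x|\le C\lambda\}$ at a sequence of scales, and iterate to contradict the finite total energy.

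The main obstacle is step (3): making the passage to the limit and the application of the channels-of-energy estimate rigorous, in particular (a) controlling the nonlinear term of $a$ uniformly in the rescaled, long time interval so that the exterior evolution is genuinely linear — this is where Proposition \ref{approxthm} and the stability theory of \cite{stab} are needed — and (b) handling the scenario where the profile decomposition of $\{\vec a_n\}$ has a profile concentrating at a much smaller scale (the one $j$ allowed by Lemma \ref{bddlem}), which must be peeled off and argued separately so that it does not cancel the energy the exterior estimate is detecting. The bookkeeping of \emph{which} sign ($+$ or $-$ in the $\max_\pm$) works, and ensuring the iteration in step (4) stays within $[t_-,1)$, is the other delicate point; both are standard in the $N=3,4$ treatments \cite{dkm3}\cite{ckls} but require the odd-dimensional exterior bounds of \cite{klls1} in place of their low-dimensional analogues.
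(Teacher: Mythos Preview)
Your outline has the right ingredients---rescale by $1-t_n$, take a profile decomposition, invoke the exterior energy estimate, and reach a contradiction with the support of $\vec a$---but the specific mechanism you propose in step (3) has a genuine gap. First, $a=u-v$ does \emph{not} solve \eqref{nlw}; the paper works with $u_n$ and $v_n$ separately, treating $\vec v_n(0)$ as an additional profile with parameters $\lambda_{0,n}=(1-t_n)^{-1}$, $\tau_{0,n}=0$. More seriously, you cannot apply Proposition~\ref{extbds} at $R=0$ to the (linear evolution of the) full rescaled data: the data $\vec a_n$ is not small near the origin (solitons are forming there), so the nonlinear flow is not well-approximated by the linear one on $\{|x|>|t|\}$, and the $R=0$ lower bound $\tfrac12\|\vec a_n\|^2$ does not isolate the annulus contribution from the soliton contribution. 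The paper's key idea, which your proposal is missing, is to truncate to a region where the data \emph{is} small and then apply \eqref{extbd1} with $R>0$: in Case~1 ($U^1\ne 0$ dominates near radius $r_0$) one replaces $U^1$ by its restriction to a thin shell $\{r_0-\epsilon_0<|y|<r_0\}$, so the $\energysp$ norm is small and the projection onto $P(r_0-\epsilon_0)$ is $O(\epsilon_0^{1/2})$ by Cauchy--Schwarz; in Case~2 (some $|\tilde\tau_j|>r_0$) one uses Lemma~\ref{loclem} to see that profile carries energy outside; in Case~3 (all profiles concentrate at $0$) the dispersive remainder $w^J_n$ carries the annulus energy, and since $w^J_n\rightharpoonup 0$ the finite-dimensional projection $\pi_{P(\epsilon_0)}$ vanishes in the limit. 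This trichotomy, together with a second truncation to kill profiles with $\tilde\tau_j<0$, is the substance of Lemma~\ref{channelofenergyI}.

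Finally, the contradiction is not obtained by iteration over scales as you suggest in step (4); rather, once Lemma~\ref{channelofenergyI} gives $\int_{|y|\ge|\theta_n|}|\nabla_{\tau,y}(u_n-v_n)(\theta_n)|^2\,dy\ge\eta_1$ for all $\theta_n\in I_n^+$ (say), one simply takes $\theta_n^+=1/2$, which in original variables is time $(1+t_n)/2$ and radius $(1-t_n)/2$, where $\vec a$ vanishes identically by its support property---this is Claim~\ref{vanishingclaimI}. The backward-in-time choice $\theta_n^-=(T-t_n)/(1-t_n)$ returns to a fixed time $T$ and radius $t_n-T\to 1-T$, again outside the support of $\vec a(T)$. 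No energy-accumulation argument is needed.
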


The proof is based on the main argument from Section 3 of \cite{dkm3} adapted to higher dimensions.

\subsection{Renormalization}  Assume without loss of generality that $t_- = 0$.   The proof proceeds by contradiction.  Assume that there exists a sequence $\{t_n\}_n$ in $(0,1)$ and $c_0 > 0$ such that 
\begin{align*}
\forall n, \mbox{ } t_n < 1 \mbox{ and } \lim_{n \rightarrow \infty} t_n = 1, \\
\int_{c_0(1-t_n) \leq |x| \leq 1-t_n} \left ( |\nabla a(t_n,x)|^2 +
|\partial_t a(t_n,x)|^2 \right ) dx \geq c_0.
\end{align*}
Let $T \in (0,1)$, and let
\begin{align*}
I_n = \left [ \frac{T-t_n}{1-t_n}, 1 \right ).
\end{align*}
Note that for large $n$, $[0,1) \subset I_n$.  Define for $\tau \in I_n$,
\begin{align*}
u_n(\tau,y) &= (1-t_n)^{(N-2)/2} u\left (t_n + (1-t_n)\tau, (1-t_n)y \right ), \\
v_n(\tau,y) &= (1-t_n)^{(N-2)/2} v\left (t_n + (1-t_n)\tau, (1-t_n)y \right ),
\end{align*}
and note that \eqref{asupp} implies
\begin{align}\label{ansupp}
\supp \left (\vec u_n(\tau) - \vec v_n(\tau) \right ) \subseteq \{ |y| \leq 1 - \tau \}.
\end{align}

\subsection{Profile decomposition and analysis of the profiles}
After extraction of a subsequence, suppose $\left \{ \vec u_n(0) - \vec v_n(0) \right \}_n$ admits a profile decomposition with profiles $\{ U^j_L \}_j$
 and parameters $\{\lambda_{j,n}, \tau_{j,n} \}_{j,n}$:
   $$
   \vec u_n(0,y) = \vec v_n(0,y) + \sum_{j = 1}^J \vec U^j_{L,n}(0,y) + \vec w^J_n(0,y).
   $$

   As usual, we will assume that for all $j$,
\begin{align*}
\tau_{j,n} = 0 \mbox{ for all } n \geq 1,\mbox{ or } \lim_{n \rightarrow \infty} \frac{-\tau_{j,n}}{\lambda_{j,n}} = \pm \infty,
\end{align*}
and that the following limits exist in $[-\infty,\infty]$:
\begin{align*}
\tilde \tau_j &= \lim_{n \rightarrow \infty} -\tau_{j,n}, \\
\tilde \lambda_j &= \lim_{n \rightarrow \infty} \lambda_{j,n}
\end{align*}
\begin{lem}\label{taulim}
Fix $j \geq 1$.  Then
\begin{align*}
\tilde \tau_j \in [-1,1]
\end{align*}
and the sequence $\{ \lambda_{j,n} \}_n$ is bounded.
\end{lem}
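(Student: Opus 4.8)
The plan is to apply Lemma \ref{bddlem} to the renormalized sequence $\{\vec u_n(0) - \vec v_n(0)\}_n$ with a well-chosen scale $\mu_n$. The key observation is the support property \eqref{ansupp}: at $\tau = 0$ we have $\supp(\vec u_n(0) - \vec v_n(0)) \subseteq \{|y| \leq 1\}$, so the data of the renormalized singular part is supported in the unit ball for every $n$. Hence taking $\mu_n \equiv 1$, the hypothesis of Lemma \ref{bddlem} is trivially satisfied (the exterior integral over $|x| \geq R$ is identically $0$ for $R \geq 1$). Lemma \ref{bddlem} then immediately gives that $\{\lambda_{j,n}\}_n = \{\lambda_{j,n}/\mu_n\}_n$ and $\{\tau_{j,n}\}_n = \{\tau_{j,n}/\mu_n\}_n$ are bounded for every $j$. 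This disposes of the boundedness of $\{\lambda_{j,n}\}_n$ and also shows $\tilde\tau_j = \lim_n (-\tau_{j,n})$ is a finite real number (after passing to the subsequence along which the limit is taken, which is already built into the setup).

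The remaining point is the sharper claim $\tilde\tau_j \in [-1,1]$. Here I would use the time-normalization convention: either $\tau_{j,n} = 0$ for all $n$, in which case $\tilde\tau_j = 0 \in [-1,1]$ trivially, or $\lim_n (-\tau_{j,n}/\lambda_{j,n}) = \pm\infty$. In the latter case, since $\{\lambda_{j,n}\}_n$ is bounded and $\{\tau_{j,n}\}_n$ is bounded, suppose for contradiction that $|\tilde\tau_j| > 1$, say $\tilde\tau_j > 1$ (the case $\tilde\tau_j < -1$ is symmetric). Then for large $n$ we have $-\tau_{j,n} > 1 + \delta$ for some $\delta > 0$. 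The profile $U^j_{L,n}$ evolved to time $0$ is, up to the $\lambda_{j,n}$-rescaling, the linear solution $U^j_L$ evaluated at time $-\tau_{j,n}/\lambda_{j,n} \to +\infty$; by the strong Huygens principle (or Lemma \ref{loclem}, applicable since $-\tau_{j,n}/\lambda_{j,n}\to+\infty$), the energy of $\vec U^j_{L,n}(0)$ concentrates in the annulus $\big||y| - |\tau_{j,n}|\big| \leq R\lambda_{j,n}$, i.e. near $|y| = |\tilde\tau_j| > 1$. But by the $\energysp$-almost-orthogonality of the profile decomposition, any nontrivial profile must carry a definite amount of the energy of $\vec u_n(0) - \vec v_n(0)$ in the region $|y| > 1 + \delta/2$, contradicting \eqref{ansupp}, which forces $\vec u_n(0) - \vec v_n(0)$ to vanish there. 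More precisely, testing the profile expansion against a cutoff supported in $|y| > 1+\delta/2$ and using the pseudo-orthogonality \eqref{paramorth} together with the vanishing of $\vec u_n(0) - \vec v_n(0)$ outside the unit ball shows $\|\vec U^j_L(-\tau_{j,n}/\lambda_{j,n})\|^2_{\energysp(|y| > (1+\delta/2)/\lambda_{j,n} \cdot \lambda_{j,n})}$ — appropriately rescaled — must go to zero, forcing $U^j_L \equiv 0$, a contradiction with $U^j_L$ being a genuine profile.

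I expect the main obstacle to be the bookkeeping in this last concentration-versus-support argument: one must carefully track how the unit-ball support of $\vec u_n(0) - \vec v_n(0)$ in the $y$ variable interacts with the profile concentration scales $\lambda_{j,n}$, and use both the expansion of the $\dot H^1 \times L^2$ norms (which holds for each fixed $J$, with $o_n(1)$ error) and Lemma \ref{loclem} to localize each profile's energy. The cleanest route is: fix a radial cutoff $\chi$ equal to $1$ on $|y| \geq 1 + \delta$ and supported in $|y| \geq 1 + \delta/2$; apply Lemma \ref{displem}-type reasoning together with the orthogonality to conclude that the $j$-th profile contributes $\|\chi \cdot \vec U^j_{L,n}(0)\|_{\energysp}^2 + o_n(1)$ to $\|\chi(\vec u_n(0) - \vec v_n(0))\|_{\energysp}^2 = 0$; hence this quantity vanishes, and combined with the Huygens concentration near $|y| = |\tilde\tau_j|$ this is impossible unless $|\tilde\tau_j| \leq 1$. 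Alternatively, and perhaps more in the spirit of \cite{dkm3}, one can avoid the explicit cutoff by noting that a profile with $|\tilde\tau_j| > 1$ would, after the nonlinear evolution on a short time interval, produce energy escaping the light cone $\{|y| \leq 1 - \tau\}$ — contradicting \eqref{ansupp} via finite speed of propagation and Proposition \ref{approxthm}. Either way the boundedness of $\{\lambda_{j,n}\}_n$ from Lemma \ref{bddlem} is the essential input that makes the concentration region $|y| \approx |\tilde\tau_j|$ (rather than some degenerate scale) meaningful.
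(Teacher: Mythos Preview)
Your proposal is correct and follows essentially the same approach as the paper: apply Lemma \ref{bddlem} with $\mu_n \equiv 1$ (using the unit-ball support \eqref{ansupp}) to get boundedness of $\{\lambda_{j,n}\}_n$ and $\{\tau_{j,n}\}_n$, then rule out $|\tilde\tau_j|>1$ by combining the concentration from Lemma \ref{loclem} with the vanishing of $\vec u_n(0)-\vec v_n(0)$ outside the unit ball and profile orthogonality. The paper's execution is slightly more streamlined than your cutoff version --- it works directly on the region $\{|y|\ge 1\}$ and invokes the localized orthogonality Lemma \ref{locorth} to get $0=\int_{|y|\ge 1}|\nabla_{\tau,y}(u_n-v_n)(0)|^2\,dy \ge \|\vec U^j_{L,n}(0)\|_{\energysp}^2-\epsilon+o(1)$ --- and it makes explicit the observation (which you leave implicit) that boundedness of $\tau_{j,n}$ together with $|\tau_{j,n}|/\lambda_{j,n}\to\infty$ forces $\lambda_{j,n}\to 0$, which is what pins the concentration annulus $\big||y|-|\tau_{j,n}|\big|\le R\lambda_{j,n}$ inside $\{|y|\ge 1\}$.
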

\begin{proof}
By \eqref{ansupp} and an application of Lemma \ref{bddlem} with $\mu_n = 1$ for all $n$, 
it is immediate that for all $j$, the sequences $\{\lambda_{j,n} \}_n$, $\{-\tau_{j,n} \}_n$ are bounded.  Hence, $\tilde \tau_j, \tilde \lambda_j \in \R$.

Fix $j \geq 1$.  If $\tau_{j,n} = 0$ for all $n$, then we are done. Assume
$$
\lim_{n \rightarrow \infty} \frac{-\tau_{j,n}}{\lambda_{j,n}} = \pm \infty,
$$
and, seeking a contradiction that $|\tilde \tau_j| = 1 + \eta > 1$.  Then $\lim_{n \rightarrow \infty} \frac{-\tau_{j,n}}{\lambda_{j,n}} = \pm \infty$
implies $\tilde \lambda_j = 0$.  Let $\epsilon > 0$.  By Lemma \ref{loclem}, there exists 
$R = R(\epsilon) > 0$ such that 
\begin{align*}
\limsup_{n \rightarrow \infty} \int_{\left | |y| - |\tau_{j,n}| \right | \geq R \lambda_{j,n}} |\nabla U^j_{L,n}(0)|^2 + |\partial_t U^j_{L,n}(0)|^2 dy 
< \epsilon.
\end{align*}
Since $|\tilde \tau_j| = 1 + \eta > 1$ and $\lim_{n \rightarrow \infty} \lambda_{j,n} = 0$,  we have for large $n$
$$
\{ \left | |y| - |\tau_{j,n} | \right | \leq R \lambda_{j,n} \} \subset
\{ |y| \geq 1 \}.
$$
Then, \eqref{ansupp}, Lemma \ref{locorth}, and our choice of $R$ imply that as $n \rightarrow \infty$
\begin{align*}
0 &= \int_{|y| \geq 1} | \nabla (u_n - v_n)(0) |^2 + |\partial_t (u_n - v_n)(0) |^2 dy \\
&\geq  \int_{|y| \geq 1} | \nabla U^j_{L,n}(0) |^2 + |\partial_t U^j_{L,n}(0) |^2 dy + o(1) \\
&\geq \int_{\left | |y| - |\tau_{j,n} | \right | \leq R \lambda_{j,n}} \nabla U^j_{L,n}(0) |^2 + |\partial_t U^j_{L,n}(0) |^2 dy + o(1) \\
&\geq \| \vec U^j_{L,n}(0) \|_{\energysp}^2 - \epsilon + o(1).
\end{align*}
This implies by conservation of the free energy, 
$$
\| (U^j_0, U^j_1) \|^2_{\energysp} = \| \vec U^j_{L,n}(0) \|_{\energysp}^2 < 2 \epsilon.
$$
Since $\epsilon$ was arbitrary, we have that $U^j_L \equiv 0$, a contradiction.  Thus, $|\tilde \tau_j|\leq 1$. 
\end{proof}
Since $\vec u(t_n) \rightharpoonup \vec v(0)$ in $\energysp$, the term $\vec v_n (0)$ may be viewed as a profile
\begin{align*}
\left ( \frac{1}{\lambda_{0,n}^{\frac{N-2}{2}}}U^0_L \left ( \frac{-\tau_{0,n}}{\lambda_{0,n}} \right ), \left ( \frac{y}{\lambda_{0,n}} \right ), \frac{1}{\lambda_{0,n}^{\frac{N-2}{2}}}
\partial_\tau U^0_L \left ( \frac{-\tau_{0,n}}{\lambda_{0,n}} \right ), \left ( \frac{y}{\lambda_{0,n}} \right )\right )
\end{align*}
up to a term converging to 0 in $\energysp$.  Namely:
$$
(U^0_{0,L},U^0_{1,L}) = (v_0,v_1), \quad \lambda_{0,n} = \frac{1}{1-t_n}, \quad \tau_{0,n} = 0.
$$

By the pseudo-orthogonality of the parameters, there is at most one index $j \geq 1$ such that $\tilde \lambda_j > 0$.  For this profile, we have
$$
\frac{\left |\tau_{j,n}\right |}{\lambda_{j,n}} \leq C,
$$
for all $n$.  Hence, for this profile we can assume $\tau_{j,n} = 0$ for all $n$.  Reordering the profiles, we will always assume that this index is 1, setting $U^1 = 0$ if there is no index with the preceding property.  Since $\tilde \lambda_1 > 0$, after rescaling we can also assume that $\lambda_{1,n} = 1$ for all $n$.

\begin{clm}
The profile $\vec U^1(0)$ is supported in $\{ |y| \leq 1 \}$.
\end{clm}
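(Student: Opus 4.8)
The plan is to pair the profile decomposition of $\vec u_n(0)$ against the single rescaled profile $\vec U^1_n(0)$ over an exterior region $\{|y| \ge R\}$ with $R > 1$, exploiting that such a region sits outside the support of $\vec u_n(0) - \vec v_n(0)$. First I would record two facts. Since $\lambda_{1,n} = 1$ and $\tau_{1,n} = 0$ for every $n$, the rescaled profile $\vec U^1_n(0) = \vec U^1_L(0)$ does not depend on $n$; and because $\|\vec U^1(0) - \vec U^1_L(0)\|_{\energysp} \to 0$ has an $n$-independent left-hand side, in fact $\vec U^1(0) = \vec U^1_L(0) =: (U^1_{0,L}, U^1_{1,L})$, a fixed element of $\energysp$. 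Second, by \eqref{ansupp} the pair $\vec u_n(0) - \vec v_n(0)$ vanishes for $|y| > 1$, hence on $\{|y| \ge R\}$ whenever $R > 1$, so that $\nabla_{t,x}(u_n - v_n)(0) \equiv 0$ on $\{|y| \ge R\}$.

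Next I would apply Proposition \ref{approxthm} at $\theta_n = 0$ to the profile decomposition of $\vec u_n(0)$, with $\vec v_n(0)$ regarded (up to an $\energysp$-error tending to $0$) as a zeroth profile $U^0 = v$. Its hypotheses are trivially met at $\theta_n = 0$: the relevant Strichartz intervals $\left(-\tau_{j,n}/\lambda_{j,n},\ -\tau_{j,n}/\lambda_{j,n}\right)$ are degenerate, and $-\tau_{j,n}/\lambda_{j,n} \in I_{\max}(U^j)$ by construction of the nonlinear profiles (for $j = 0$ note $0 \in I_{\max}(v)$ since $v$ is regular at $t = 1$). This gives, for each fixed $J$, a decomposition $\vec u_n(0) = \vec v_n(0) + \sum_{j=1}^J \vec U^j_n(0) + \vec w^J_n(0) + \vec r^J_n(0)$ to which Lemma \ref{locorth} applies. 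I would then compute $\int_{|y| \ge R} \nabla_{t,x} U^1_n(0) \cdot \nabla_{t,x}(u_n - v_n)(0)\, dy$ in two ways: it equals $0$ by the support property above, while expanding $(u_n - v_n)(0)$ via the decomposition and invoking Lemma \ref{locorth} with $\theta_n = 0$, $\rho_n = R$, $\sigma_n = +\infty$ — which sends $\int_{|y| \ge R} \nabla_{t,x} U^1_n(0) \cdot \nabla_{t,x} U^j_n(0) \to 0$ for every $j \ne 1$ and $\int_{|y| \ge R} \nabla_{t,x} U^1_n(0) \cdot \nabla_{t,x}\left(w^J_n(0) + r^J_n(0)\right) \to 0$ — leaves $\int_{|y| \ge R} |\nabla_{t,x} U^1_n(0)|^2\, dy + o_n(1) = 0$. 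Since $\vec U^1_n(0) = \vec U^1(0)$ is $n$-independent, this forces $\int_{|y| \ge R} \left(|\nabla U^1_{0,L}|^2 + |U^1_{1,L}|^2\right) dy = 0$ for every $R > 1$.

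Letting $R \downarrow 1$ then shows $U^1_{1,L} = 0$ and $\nabla U^1_{0,L} = 0$ almost everywhere on $\{|y| > 1\}$; the latter forces $U^1_{0,L}$ to be constant on the connected set $\{|y| > 1\}$, and since $U^1_{0,L} \in \dot H^1(\R^N) \hookrightarrow L^{\frac{2N}{N-2}}(\R^N)$ that constant is $0$. Hence $\vec U^1(0)$ is supported in $\{|y| \le 1\}$. The one point I expect to require genuine care is justifying that Lemma \ref{locorth} is legitimately applied at the degenerate time $\theta_n = 0$ with unbounded outer radius $\sigma_n = +\infty$ (the latter being explicitly permitted in its statement), and that the $\energysp$-error between $\vec v_n(0)$ and the zeroth profile is harmless in the cross terms; granting these, the rest is a short computation.
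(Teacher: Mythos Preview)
Your argument is correct, but it is considerably more involved than the paper's. The paper exploits a single soft fact built into the profile decomposition: since $\lambda_{1,n}=1$ and $\tau_{1,n}=0$, the very construction of the profiles gives
\[
\vec u_n(0)-\vec v_n(0)\ \rightharpoonup\ \vec U^1(0)\quad\text{weakly in }\energysp.
\]
One then pairs against test functions $(\psi_0,\psi_1)\in C^\infty_0\times C^\infty_0$ supported in $\{|y|>1\}$: the pairing vanishes for every $n$ by \eqref{ansupp}, hence vanishes in the weak limit, and the support property follows. This is a three-line argument using only weak convergence.

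Your route replaces the weak-convergence observation by the full machinery of Proposition~\ref{approxthm} and the localized orthogonality Lemma~\ref{locorth}, pairing $\nabla_{t,x}U^1_n(0)$ against the expanded decomposition over $\{|y|\ge R\}$ and killing the cross terms one by one. This works --- your concern about $\theta_n=0$ and $\sigma_n=+\infty$ is unfounded, since both are explicitly allowed --- and it has the minor advantage that it yields the $\energysp$-vanishing on $\{|y|>1\}$ directly rather than via duality. But it is overkill here: the localized orthogonality lemma is itself proved (in Appendix~B) by reducing to weak convergence statements, so you are effectively re-deriving the paper's one-line input through a detour. The paper's proof is the more economical choice.
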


\begin{proof}
Let $(\psi_0,\psi_1) \in C^{\infty}_0 \times C^{\infty}_0$ with $\supp (\psi_0,\psi_1) \subset \{ |y| > 1 \}$.  By the 
construction of a profile decomposition, we have 
$$
\vec u_n(0) - \vec v_n(0) \rightharpoonup \vec U^1(0)
$$
in $\energysp$. The support property \eqref{ansupp} then implies
\begin{align*}
0 &= \lim_{n \rightarrow \infty} \int \nabla \psi_0 \cdot \nabla (u_n(0) - v_n(0) ) + \psi_1 (u_n(0) - v_n(0)) dy \\
&= \int \psi_0 \cdot \nabla U^1(0) + \psi_1 \partial_\tau U^1(0) dy.
\end{align*}
The claim follows.
\end{proof}

The idea of the proof of Proposition \ref{selfsimilarblowup} is to rule out three cases, one of which must arise in the above
 profile decomposition
in order for energy concentration to occur in the self similar region.  Informally, these cases are: $U^1 \neq 0$ and all of the other profiles are concentrating
energy closer to the origin than $U^1$, or there is a profile $U^j$, $j \geq 2$, that is concentrating energy further into the self-similar region than $U^1$,
or all of the profiles are concentrating energy at the origin (in particular $U^1 = 0$) and energy concentration in the self-similar region arises from the dispersive error $w^J_n$.

We will now make these three cases more precise.  Let
\begin{align*}
r_0 = \inf \{ \rho \in [0,1]: \supp \vec U^1(0) \subseteq \{ |y| \leq \rho \} \}.
\end{align*}
The three cases mentioned previously can be distinguished as follows:
\begin{itemize}
\item Case 1: $r_0 > 0$ and for all $j \geq 2$, $|\tilde \tau_j | \leq r_0$.
\item Case 2: there exists $j \geq 2$ such that $|\tilde \tau_j | > r_0$.
\item Case 3: $r_0 = 0$ and for all $j \geq 2$, $\tilde \tau_j = 0$.
\end{itemize}

These three possible cases yield the following lemma as a consequence.

\begin{lem}\label{channelofenergyI}
Let $I_n^+ = I_n \cap [1/2, +\infty)$, and let $I_n^- = I_n \cap (-\infty,-1/2]$.  Then there exists $\eta_1 > 0$ such that the following holds for large $n$;
for all sequences $\{\theta_n\}_n$ with $\theta_n \in I_n^+$, or for all sequences $\{\theta_n\}_n$ with $\theta_n \in I_n^-$
\begin{align*}
\int_{|y| \geq |\theta_n|} \left | \nabla_{\tau,y} (u_n - v_n)(\theta_n,y) \right |^2 dy \geq \eta_1.
\end{align*}
\end{lem}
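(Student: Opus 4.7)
The plan is to argue by contradiction using exterior energy bounds (Proposition \ref{extbds}), following the channels-of-energy strategy of Section~3 of \cite{dkm3}. If the lemma fails then we can extract subsequences $\theta_n^+ \in I_n^+$ and $\theta_n^- \in I_n^-$ on both sides with $\int_{|y|\geq |\theta_n^\pm|}|\nabla_{\tau,y}(u_n - v_n)(\theta_n^\pm)|^2\,dy \to 0$. The goal is then to exhibit a positive lower bound on this exterior energy for at least one sign by inserting the nonlinear profile decomposition of $\vec u_n$ into the integral: I would apply Proposition \ref{approxthm} at times $\theta_n^\pm$, treating $\vec v_n$ as a zeroth profile (with $\lambda_{0,n}=1/(1-t_n)$, $\tau_{0,n}=0$) and the $U^j$, $j\geq 1$, as the nonlinear profiles constructed above. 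The localized orthogonality of Lemma \ref{locorth} at radius $|\theta_n^\pm|$ then decomposes the exterior energy into a Pythagorean sum over profiles and the free remainder $\vec w^J_n$, each of which must vanish in the limit under our contradiction hypothesis.

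In Case 1, Lemma \ref{loclem} shows that the profiles with $j\geq 2$ (which have $|\tilde\tau_j|\leq r_0$ and $\lambda_{j,n}\to 0$ by the pseudo-orthogonality) contribute only $o_n(1)$ to the exterior energy at $|y|\geq|\theta_n|\geq 1/2>r_0$, while $\vec v_n$ lives on macroscopic scales $\sim 1/(1-t_n)$ and contributes nothing there. The exterior energy therefore reduces to that of $U^1$ at $\tau=\theta_n$. Since $\vec U^1(0)$ is radial, non-zero, and compactly supported in $\{|y|\leq r_0\}$, the radial exterior estimate \eqref{extbd2} applied to the linear evolution of $\vec U^1(0)$ gives a positive uniform lower bound on $\int_{|y|\geq|t|}|\nabla_{t,y}S(t)\vec U^1(0)|^2\,dy$ for one sign of $t$. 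A small-data perturbation argument in the exterior region $|y|\geq r_0+c|t|$, where outside the essential support the $S(I)$-norm of $U^1$ is small, transfers this bound to the nonlinear $U^1$ at time $\theta_n$, producing the contradiction.

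In Case 2, I would instead work with the profile $U^j$, $j\geq 2$, having $|\tilde\tau_j|>r_0$: its rescaled linear data $\vec U^j_{L,n}(0)$ concentrates in an annulus of radius $|\tilde\tau_j|$, and applying \eqref{extbd1} at a radius $R$ strictly between $r_0$ and $|\tilde\tau_j|$ (after verifying that projection off $P(R/\lambda_{j,n})$ retains a fixed positive fraction of the energy, which follows from Lemma \ref{loclem} applied to $U^j_L$) yields a definite exterior-energy contribution for one sign of $\theta_n$; the same small-data transfer as in Case 1 finishes the argument. In Case 3, all profiles concentrate at the origin ($\tilde\tau_j=0$, $\lambda_{j,n}\to 0$), so Lemma \ref{loclem} makes every $U^j_n$ contribution to the exterior $o_n(1)$, and the exterior energy must come from $\vec w^J_n$. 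Applying \eqref{extbd2} to $\vec w^J_n$ produces a lower bound of $\tfrac{1}{2}\|\vec w^J_n(0)\|^2_{\energysp}$ for one sign of $\theta_n$, and this norm is bounded below by a positive constant depending only on $c_0$: under Case 3 all profiles are negligible in the self-similar annulus $c_0\leq|y|\leq 1$, so the hypothesis that a fixed fraction $c_0$ of the energy of $\vec u_n(0)-\vec v_n(0)$ lives in this annulus must be absorbed into $\vec w^J_n(0)$, again yielding the contradiction.

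The main obstacles I anticipate are two-fold. First, verifying the uniform Strichartz hypothesis of Proposition \ref{approxthm} up to the (possibly long) times $\theta_n^\pm$ requires using the ordering of profiles from Lemma \ref{proford} together with the scattering of each individual nonlinear profile, and some care at the profiles that may fail to scatter. Second, and more delicate, is the transfer of the \emph{linear} exterior energy estimates from Proposition \ref{extbds} to the corresponding \emph{nonlinear} profiles $U^j$ in Cases 1 and 2: this is a small-data continuation argument in the exterior region based on finite speed of propagation and smallness of the $L^{2(N+1)/(N-2)}$-norm of $U^j$ outside its essential support, but it must be executed with care because the profiles live on distinct scales $\lambda_{j,n}$ that shrink to zero.
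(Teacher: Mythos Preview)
Your outline captures the three-case split and the role of exterior energy, but there is a genuine gap: you never introduce the exterior truncation that the paper's proof hinges on, and without it two of your steps cannot be completed. First, you propose to apply Proposition~\ref{approxthm} to the original profile decomposition of $\vec u_n(0)-\vec v_n(0)$ up to times $\theta_n\in I_n^\pm$. But $u_n$ blows up at $\tau=1$, so at least one nonlinear profile (in Case~1 it is precisely $U^1$) fails to scatter forward and may even blow up before $\theta_n$; the Strichartz hypothesis of Proposition~\ref{approxthm} is simply false for the undecorated decomposition. The ordering from Lemma~\ref{proford} does nothing to repair this. The paper instead multiplies the data by a smooth cutoff $\varphi$ equal to $1$ on $\{|y|\ge r_0-\epsilon_0\}$ and replaces $U^1$ by the small-energy function $\tilde U$ supported in the thin shell $\{r_0-\epsilon_0\le|y|\le r_0\}$. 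The resulting modified sequence $(\tilde u_{0,n},\tilde u_{1,n})$ agrees with $\vec u_n(0)$ outside $\{|y|\ge r_0-\epsilon_0\}$, and its profiles are now either small (hence global and scattering) or coincide with free evolutions on the relevant interval; only then can Proposition~\ref{approxthm} be invoked, and finite speed of propagation reconnects $\tilde u_n$ with $u_n$ in the exterior cone.

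Second, your ``small-data perturbation argument'' to transfer the linear exterior bound to the nonlinear $U^1$ in Case~1 is not available: $U^1$ is not small, and there is no exterior region in which $U^1$ is close to its linear evolution uniformly in time. The paper's cure is again the truncation: $\tilde U$ has $\|(\tilde U_0,\tilde U_1)\|_{\energysp}^2=\eta_0$ as small as one likes, so the local theory gives $\|\vec{\tilde U}(\tau)-\vec{\tilde U}_L(\tau)\|_{\energysp}^2\le C\eta_0^{(N+2)/(N-2)}$ globally in $\tau$. One then applies \eqref{extbd1} with $R=r_0-\epsilon_0$ (not \eqref{extbd2}), after checking via Cauchy--Schwarz on the thin shell that $\|\pi_{P(R)}^\perp(\tilde U_0,\tilde U_1)\|^2\ge\tfrac34\eta_0$. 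Note also that in Case~2 the paper does not use Proposition~\ref{extbds} at all: once the truncation and Proposition~\ref{approxthm} are in place, Lemma~\ref{loclem} alone shows that the profile $U^2_{L,n}$ with $|\tilde\tau_2|>r_0$ deposits a fixed fraction of its energy outside $\{|y|\ge\tilde r_0-\epsilon_0+\theta_n\}$. In Case~3 your use of \eqref{extbd2} would not mesh with the truncation needed to run Proposition~\ref{approxthm}; the paper truncates at $\epsilon_0>0$, applies \eqref{extbd1} with $R=\epsilon_0$, and uses the weak convergence $(\tilde w_{0,n},\tilde w_{1,n})\rightharpoonup 0$ to kill the finite-dimensional projection onto $P(\epsilon_0)$.
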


 This will contradict the following claim and conclude the proof of Proposition \ref{selfsimilarblowup}.

\begin{clm}\label{vanishingclaimI}
Let $u$ satisfy the assumptions of Proposition \ref{selfsimilarblowup}.  Let $\theta_n^+ = \frac{1}{2}$ and $\theta_n^- = \frac{T-t_n}{1-t_n}$. Then for large $n$, $\theta^{\pm}_n \in I^{\pm}_n$ and
\begin{align*}
\lim_{n \rightarrow \infty} \int_{|y| \geq |\theta_n^{\pm}|}
\left |\nabla_{\tau_,y} (u_n - v_n)(\theta^{\pm}_n,y) \right |^2 dy = 0.
\end{align*}
\end{clm}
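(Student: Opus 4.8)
The plan is to unwind the renormalization; once this is done the statement reduces to the observation that $\vec a(T)$ is a \emph{fixed} element of $\energysp$ which, by \eqref{asupp}, is supported in the \emph{fixed} ball $\{|x| \leq 1-T\}$. First I would dispose of the membership claims: since $\frac{T-t_n}{1-t_n} \to -\infty$ as $t_n \to 1^-$, for $n$ large we have $\frac{T-t_n}{1-t_n} < 1/2 < 1$, whence $\theta_n^+ = 1/2 \in I_n \cap [1/2, +\infty) = I_n^+$; similarly $\frac{T-t_n}{1-t_n} \leq -1/2$ for $n$ large (equivalently $t_n \geq \frac{2T+1}{3}$, which holds eventually because $T < 1$), so $\theta_n^- = \frac{T-t_n}{1-t_n} \in I_n \cap (-\infty, -1/2] = I_n^-$.

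For $\theta_n^+ = 1/2$ the vanishing is immediate from finite speed of propagation: by \eqref{ansupp}, $\vec u_n(1/2) - \vec v_n(1/2)$ is supported in $\{|y| \leq 1 - 1/2 = 1/2\}$, so $|\nabla_{\tau,y}(u_n - v_n)(1/2, y)|^2 = 0$ for a.e.\ $|y| > 1/2$ and the integral over $\{|y| \geq 1/2\}$ is identically $0$ for every large $n$ (the sphere being Lebesgue-null). The same reasoning in fact gives $\int_{|y| \geq |\theta_n|} |\nabla_{\tau,y}(u_n - v_n)(\theta_n, \cdot)|^2\, dy = 0$ for \emph{any} $\theta_n \in I_n^+$, since then $1 - \theta_n \leq 1/2 \leq \theta_n$. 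For $\theta_n^- = \frac{T-t_n}{1-t_n}$ the key point is that this rescaled time corresponds to the fixed original time $t_n + (1-t_n)\theta_n^- = T$. Inserting the definitions of $u_n$ and $v_n$, using that $f \mapsto \lambda^{(N-2)/2} f(\lambda \cdot)$ and $g \mapsto \lambda^{N/2} g(\lambda \cdot)$ preserve the $\dot H^1$ and $L^2$ norms, and changing variables $x = (1-t_n) y$ gives
\begin{align*}
\int_{|y| \geq |\theta_n^-|} \big| \nabla_{\tau,y}(u_n - v_n)(\theta_n^-, y) \big|^2 \, dy = \int_{|x| \geq t_n - T} \big| \nabla_{t,x}\, a(T, x) \big|^2 \, dx,
\end{align*}
since $(1-t_n) |\theta_n^-| = t_n - T$. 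As $\vec a(T) \in \energysp$ is a fixed element supported in $\{|x| \leq 1-T\}$ by \eqref{asupp}, while $t_n - T \to 1 - T$ with $t_n - T < 1 - T$, the domain $\{t_n - T \leq |x| \leq 1-T\}$ shrinks to the Lebesgue-null sphere $\{|x| = 1-T\}$, so dominated convergence (with majorant $|\nabla_{t,x}\, a(T,\cdot)|^2 \in L^1$) forces the integral to $0$ as $n \to \infty$.

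There is no real analytic obstacle in this claim — the argument is pure bookkeeping. The one point requiring care is the change of variables, and in particular the realization that the choice $\theta_n^-$ freezes the singular part $a$ at the fixed interior time $T$, where it is a fixed finite-energy object whose support misses $|x| = 1-T$ by a margin that the exterior region $\{|x| \geq t_n - T\}$ eventually consumes.
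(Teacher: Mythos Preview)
Your proof is correct and follows essentially the same approach as the paper: both arguments reduce to the support property of the singular part $\vec a$, with the $\theta_n^+$ case giving an identically zero integral by \eqref{ansupp}/\eqref{asupp} and the $\theta_n^-$ case reducing, after unwinding the renormalization, to the shrinking integral $\int_{t_n-T \le |x| \le 1-T} |\nabla_{t,x} a(T,x)|^2\,dx \to 0$. The only cosmetic difference is that for $\theta_n^+$ you work directly in the rescaled variables while the paper first passes back to the original ones, which is the same computation.
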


\begin{proof}[Proof of Claim \ref{vanishingclaimI}]
Going back to the original variables, we write
\begin{align}\label{origvarI}
\int_{|y| \geq |\theta_n^{\pm}|}
\left |\nabla_{\tau_,y} (u_n - v_n)(\theta^{\pm}_n,y) \right |^2 dy = \int_{|x| \geq (1-t_n)|\theta_n^{\pm}|}
\left |\nabla_{t,x} (u - v)(t_n + (1-t_n)\theta^{\pm}_n,x) \right |^2 dx.
\end{align}
For $\theta_n^+$, the right-hand side of \eqref{origvarI} is equal to $$
\int_{|x| \geq \frac{1-t_n}{2}}
\left |\nabla_{t,x} a\left ( \frac{1+t_n}{2} , x \right ) \right |^2 dx = 0, \quad \mbox{for all } n \geq 1,
$$
since $\supp \vec a\left ( \frac{1+t_n}{2} \right ) \subseteq \left \{ |x| \leq \frac{1-t_n}{2} \right \}$.
For $\theta_n^-$, the right-hand side of \eqref{origvarI} is equal to $$
\int_{|x| \geq t_n - T}
\left |\nabla_{t,x} a\left ( T, x \right ) \right |^2 dx \rightarrow \int_{|x| \geq 1 - T}
\left |\nabla_{t,x} a\left ( T, x \right ) \right |^2 dx = 0, \quad \mbox{as } n \rightarrow \infty,
$$
since $t_n \rightarrow 1$ and $\supp \vec a \left ( T \right ) \subseteq \{ |x| \leq 1-T \}$.
\end{proof}

\begin{proof}[Proof of Lemma \ref{channelofenergyI}]  We begin with Case 1.

\subsubsection*{Case 1}  Denote by $r = |y|$ the radial variable.  By definition of $r_0$, $\supp \vec U^1(0) \subseteq 
\{ |y| \leq r_0 \}$ and for small positive $\epsilon_0 < r_0 /100$,
\begin{align*}
\int_{r_0 - \epsilon < |y| < r_0} \left | \nabla U_0^1(0) \right |^2 + |U^1_1|^2 dy = \eta_0
\end{align*}
is small and positive.  Define radial functions $\left ( \tilde U_0, \tilde U_1 \right ) \in \energysp$ by
\begin{align*}
\tilde U_0(r) &=
  \begin{cases}
   U^1_0(r_0 - \epsilon_0) & \text{if } 0 \leq r \leq r_0 - \epsilon_0 \\
   U^1_0(r) & \text{if }  r > r_0 - \epsilon_0
  \end{cases} \\
  \tilde U_1(r) &=
  \begin{cases}
   0 & \text{if } 0 \leq r \leq r_0 - \epsilon_0 \\
   U^1_1(r) & \text{if }  r > r_0 - \epsilon_0
  \end{cases}.
\end{align*}
  Let $\tilde U$ be the solution to \eqref{nlw} with initial data $(\tilde U_0, \tilde U_1)$.  Note that,
by the small data theory,  $\|(\tilde U_0, \tilde U_1 ) \|^2_{\energysp} = \eta_0$ and $\eta_0$ 
sufficiently small imply $\tilde U$ is globally defined and forward and backwards in time.

  Let $\varphi \in C^{\infty},$ be radial such that $\varphi(y) = 1$ if $|y| \geq r_0 - \epsilon_0$ and $\varphi(y) = 0$ if $|y| \leq r_0 - 2 \epsilon_0$.  Fix $J \geq 1$ and define
  \begin{align}
  \tilde u_{0,n} &= v_n(0) + \tilde U(0) + \varphi \sum_{j = 2}^{J} U^j_{L,n}(0) + \varphi w^J_{0,n}, \\
   \tilde u_{0,n} &= \partial_\tau v_n(0) + \partial_\tau \tilde U(0) + \varphi \sum_{j = 2}^{J} \partial_\tau U^j_{L,n}(0) + \varphi w^J_{1,n}.
  \end{align}

The above definition is independent of $J$.

Let $\tilde u_n$ be the solution of \eqref{nlw} with data $\left (\tilde u_{0,n}, \tilde u_{1,n} \right )$ at $\tau = 0$.  Note that
$$
|y| \geq r_0 - \epsilon_0 \implies (u_n(0,y), \partial_\tau u_n(0,y)) = \left (\tilde u_{0,n}(y), \tilde u_{1,n}(y) \right ).
$$
By finite speed of propagation, if $\tau \in I_{\max}(u_n) \cap I_{\max}(\tilde u_n)$, then
$$
|y| \geq r_0 - \epsilon_0 + |\tau| \implies (u_n(\tau,y), \partial_\tau u_n(\tau,y)) = \left (\tilde u_n(\tau,y), \tilde u_{n}(\tau, y) \right ).
$$
We divide the proof into four steps.

\subsubsection*{Step 1} In this step we show that for $\epsilon_0$ sufficiently small, either for all $\tau \geq 0$ or for all $\tau \leq 0$ we have
\begin{align}
\int_{r_0 - \epsilon_0 + |\tau| \leq |x| \leq r_0 + |\tau|} | \nabla_{\tau,x} \tilde U(\tau) |^2 dx \geq \frac{ \eta_0}{4}. \label{step1blowup}
\end{align}
Note that by choosing $\epsilon_0$ sufficiently small, we can insure that
\begin{align}\label{cptchan1}
\left \| \pi^{\perp}_{P(r_0 - \epsilon_0)} \left (\tilde U_0, \tilde U_1 \right ) \right \|^2_{\energysp} \geq \frac{3 \eta_0}{4}.
\end{align}
Indeed, by the Cauchy-Schwarz inequality for $1 \leq i \leq \left [ \frac{N+2}{4} \right ]$
\begin{align*}
\int_{r_0 - \epsilon_0}^{r_0} \tilde U_0'(r) r^{2i - 2} dr \leq C(r_0) \epsilon_0^{\frac{1}{2}} \eta_0^{\frac{1}{2}}.
\end{align*}
Similarly for $1 \leq i \leq \left [ \frac{N}{4} \right ]$
\begin{align*}
\int_{r_0 - \epsilon_0}^{r_0} \tilde U_1(r) r^{2i - 1} dr \leq C(r_0) \epsilon_0^{\frac{1}{2}} \eta_0^{\frac{1}{2}}.
\end{align*}
By the remark following \eqref{extbds}, for $\epsilon_0$ sufficiently small
\begin{align*}
\left \| \pi_{P(r_0 - \epsilon_0)}^\perp (\tilde U_0, \tilde U_1) \right \|^2 \geq \eta_0 - C(N,r_0) \epsilon_0 \eta_0 \geq \frac{3\eta_0}{4}
 \end{align*}
Denote by $\tilde U_L$ the solution to the linear wave equation \eqref{lw} with data $\vec {\tilde U}(0) = (\tilde U_0, \tilde U_1)$.  As $\eta_0 = \left \| \vec {\tilde U}(0) \right \|^2_{\energysp}$ is small,
the small data theory for the Cauchy problem \eqref{nlw} implies that for all $\tau \in \R$,
\begin{align*}
\left \| \vec {\tilde U}(\tau) - \vec {\tilde U}_L (\tau)  \right \|^2_{\energysp} \leq C \eta_0^{\frac{N+2}{N-2}}
\end{align*}
By Proposition \ref{extbds}, \eqref{cptchan1}, and by choosing $\eta_0$ sufficiently small, we have either for all $\tau \geq 0$ or for all $\tau \leq 0$
\begin{align*}
\int_{r_0 - \epsilon_0 + |\tau|}^{r_0 + |\tau|} | \nabla_{\tau,r} \tilde U(\tau,r) |^2 r^{N-1} dr &\geq \int_{r_0 - \epsilon_0 + |\tau|}^{r_0 + |\tau|} | \nabla_{\tau,r}
\tilde U_L(\tau,r) |^2 r^{N-1} dr - C \eta_0^{\frac{N+2}{N-2}} \\
&\geq \frac{1}{2} \left \| \pi^{\perp}_{P(r_0 - \epsilon_0)} \left (\tilde U_0, \tilde U_1 \right ) \right \|^2_{\energysp} - C \eta_0^{\frac{N+2}{N-2}} \\
&\geq \frac{\eta_0}{4}
\end{align*}
as desired.  Let us assume that \eqref{step1blowup} holds for al $\tau \geq 0$.  The case $\tau \leq 0$ is similar.

\subsubsection*{Step 2}  In this step, we show that for each fixed $J \geq 2$,
\begin{align}
\varphi(y) \sum_{j = 1}^J \left ( U^j_{L,n}(0,y) , \partial_\tau U^j_{L,n}(0,y) \right ) =
\sum_{j = 1}^J \varphi(|\tilde \tau_j|) \left ( U^j_{L,n}(0,y) , \partial_\tau U^j_{L,n}(0,y) \right ) + o(1)
\label{step2blowupa}
\end{align}
as $n \rightarrow \infty$ in $\energysp$.

Let $2 \leq j \leq J$.  We first consider the case that $\lim_{n \rightarrow \infty} \frac{-\tau_{j,n}}{\lambda_{j,n}} \in \{\pm \infty \}$. Let $\epsilon > 0$.
Since $j \geq 2$,
we have that $\lambda_{j,n} \rightarrow 0$ as $n \rightarrow \infty$. Observe that
\begin{align}
\left \| \varphi \vec U^j_{L,n}(0) - \varphi(|\tilde \tau_j|) \vec U^j_{L,n}(0) \right \|_{\energysp}^2 &\lesssim \int |\nabla \varphi(y)|^2 |U^j_{L,n}(0,y)|^2 dy \label{step21}\\
&+ \int |\varphi(y) - \varphi(|\tilde \tau_j|)|^2 |\nabla_{\tau,y}U_{L,n}^j(0,y)|^2 dy \label{step22}.
\end{align}
By Lemma \ref{loclem}, there exists $R = R(\epsilon) > 0$ such that for $n$ sufficiently large
$$\int_{||y| - |\tau_{j,n}|| \geq R \lambda_{j,n}} |\nabla U_{L,n}^j(0,y)|^2 + |\partial_t U_{L,n}^j(0,y)|^2 +
\frac{|U_{L,n}^j(0,y)|^2}{|x|^2} dy < \epsilon.$$
Hence \eqref{step22} can be estimated as $n \rightarrow \infty$ by
\begin{align*}
\eqref{step22} &\lesssim \epsilon + \sup_{||y| - |\tau_{j,n}|| \leq R \lambda_{j,n}} |\varphi(y) - \varphi(|\tilde \tau_j|)|^2
\| (U^j_0,U^j_1) \|^2_{\energysp} \\
&\lesssim \epsilon + o(1),
\end{align*}
where the implied constant depends only on $\varphi$ and $\|(U^j_0,U^j_1)\|_{\energysp}$.
Similarly, using H\"older's inequality and Sobolev embedding $\dot H^1 \hookrightarrow L^{\frac{2N}{N-2}}$,
we have as $n \rightarrow \infty$
\begin{align*}
\eqref{step21} &\lesssim  \int_{||y| - |\tau_{j,n}|| \geq R \lambda_{j,n}}
\frac{|U_{L,n}^j(0,y)|^2}{|x|^2} dy + |\{ ||y| - |\tau_{j,n}|| \leq R \lambda_{j,n} \}|^{\frac{2}{N}} \| \nabla U_{L,n}^j(0) 
\|^2_{L^2} \\
&\lesssim \epsilon + R^2 o(1).
\end{align*}
where the implied constant depends only on $\varphi$,$r_0$, and $\|(U^j_0,U^j_1)\|_{\energysp}$. This proves \eqref{step2blowupa}
in the case that $\lim_{n \rightarrow \infty} \frac{-\tau_{j,n}}{\lambda_{j,n}} \in \{\pm \infty \}$.

Suppose now that  we are in the case $\tau_{j,n} = 0$ for all $n \geq 1$ (recall we always assume that
either $\tau_{j,n} = 0$ for all $n$ or $\lim_n -\tau_{j,n}/\lambda_{j,n} \in \{\pm \infty \}$) .  Then $\tilde \tau_j = 0$.  Therefore, we wish to
show that
\begin{align}\label{step23}
\lim_{n \rightarrow \infty} \left \| \varphi \vec U^j_{L,n}(0) \right \|_{\energysp}^2 = 0.
\end{align}
We estimate
\begin{align*}
\left \| \varphi \vec U^j_{L,n}(0) \right \|_{\energysp}^2 &\lesssim \int |\nabla \varphi(y)|^2 |U^j_{L,n}(0,y)|^2 dy
+ \int |\varphi(y)|^2 |\nabla_{\tau,y}U_{L,n}^j(0,y)|^2 dy.
\end{align*}
By Lemma \ref{loclem}, there exists $R = R(\epsilon) > 0$ such that for $n$ sufficiently large
$$\int_{\{|y| \geq R \lambda_{j,n}\} \cup \{|y| \leq \frac{1}{R}\lambda_{j,n}\}} |\nabla U_{L,n}^j(0,y)|^2 + |\partial_t U_{L,n}^j(0,y)|^2 +
\frac{|U_{L,n}^j(0,y)|^2}{|x|^2} dy < \epsilon.$$
Since $\lim_{n \rightarrow \infty} \lambda_{j,n} = 0$, we have  $\{ |y| \geq r_0 - 2 \epsilon_0 \}
\subset \{ |y| \geq R \lambda_{j,n}\}$ for $n$ sufficiently large. This fact, our choice of $R$, and the support properties of $\varphi$ imply
\begin{align*}
\limsup_{n \rightarrow \infty} &\left (
\int |\nabla \varphi(y)|^2 |U^j_{L,n}(0,y)|^2 dy
+ \int |\varphi(y)|^2 |\nabla_{\tau,y}U_{L,n}^j(0,y)|^2 dy \right ) \\
&\lesssim \limsup_{n \rightarrow \infty} \int_{\{|y| \geq R \lambda_{j,n}\}} |\nabla U_{L,n}^j(0,y)|^2 + |\partial_t U_{L,n}^j(0,y)|^2 +
\frac{|U_{L,n}^j(0,y)|^2}{|x|^2} dy \lesssim \epsilon,
\end{align*}
where the implied constant depends only on $\varphi$.  Since $\epsilon$ was arbitrary, this proves \eqref{step23} and concludes
Step 2.

\subsubsection*{Step 3}  In this step we show that for each fixed $j \geq 2$, we have:
\begin{align}\label{vandisp1}
\lim_{n \rightarrow \infty} \left \| S(t) \left (\varphi(|\tilde \tau_j|)  U^j_{L,n}(0,y),
\varphi(|\tilde \tau_j|) \partial_\tau U^j_{L,n}(0,y) \right ) \right \|_{S(0,r_0/4)} = 0,
\end{align}
and that
\begin{align}\label{vandisp2}
\lim_{J \rightarrow \infty} \limsup_{n \rightarrow \infty} \left \| S(t) (\varphi w^J_{0,n}, \varphi w^J_{1,n} )\right \|_{S(\R)} = 0.
\end{align}
Indeed \eqref{vandisp2} follows from Lemma \ref{displem}.  To show \eqref{vandisp1}, we write
\begin{align}\label{explint}
 \left \| S(t) \left ( U^j_{L,n}(0,y) ,
\partial_\tau U^j_{L,n}(0,y) \right ) \right \|_{S(0,r_0/4)}^{\frac{2(N+1)}{N-2}} =
\int^{\frac{r_0/4 - \tau_{j,n}}{\lambda_{j,n}}}_{-\frac{\tau_{j,n}}{\lambda_{j,n}}} \int_{\R^N} \left | U^j_L(\tau,y)\right |^{\frac{2(N+1)}{N-2}} dy d\tau.
\end{align}
If $|\tilde \tau_j| \leq r_0/4$, then $\varphi(|\tilde \tau_j|) = 0$ and \eqref{vandisp1} follows.  If $\tilde \tau_{j} > r_0/4$ (respectively $\tilde \tau_{j} <-r_0/4$), then
the lower limit (respectively upper limit) in the integral \eqref{explint} tends to $+\infty$ (respectively $-\infty$) and \eqref{vandisp1} follows.

\subsubsection*{Step 4}  In this step we conclude the proof of Lemma \ref{channelofenergyI} for Case 1.  By Step 2 and \eqref{vandisp2}
from Step 3,  we have that the sequence $\left \{ \left (\tilde u_{0,n}, \tilde u_{1,n} \right )\right \}_n$ admits a profile decomposition
\begin{align*}
\tilde u_{0,n} &= v_n(0) + \tilde U_0(0) + \sum_{j = 2}^J \varphi(|\tau_j|) U^{j}_{L,n}(0) + \tilde w^{J}_{0,n}, \\
\tilde u_{1,n} &= \partial_\tau v_n(0) + \tilde U_1(0) + \sum_{j = 2}^J \varphi(|\tau_j|) \partial_\tau U^{j}_{L,n}(0) + \tilde w^{J}_{1,n},
\end{align*}
where
$$
\lim_{J \rightarrow \infty} \limsup_{n \rightarrow \infty} \| \tilde w^J_n \|_{S(\R)} = 0.
$$
By Proposition \ref{approxthm}, Step 2 \eqref{vandisp1}, and the local theory for the Cauchy problem we have that
\begin{align*}
\vec{\tilde u}_n\left ( \frac{r_0}{4} \right ) = \vec v_n \left ( \frac{r_0}{4} \right ) + \vec{\tilde U} \left ( \frac{r_0}{4} \right ) +
\sum_{j = 2}^J \varphi(|\tilde \tau_j|) \vec{U}^j_{L,n} \left ( \frac{r_0}{4} \right ) + \vec{\tilde w}^J_n \left ( \frac{r_0}{4} \right ) +
\vec{\tilde r}^J_n \left ( \frac{r_0}{4} \right )
\end{align*}
with
\begin{align*}
\lim_{J \rightarrow \infty}  \limsup_{n \rightarrow \infty} \left \| \vec{ \tilde r}^J_n \left ( \frac{r_0}{4} \right ) \right \|_{\energysp}   =  0 .
\end{align*}

Let $\psi \in C^{\infty}$ be radial such that $\psi(y) = 1$ if $|y| \geq r_0$, and $\psi(y) = 0$ if $|y| \leq 9r_0/10$.  Fix $J \geq 1$, and define
\begin{align*}
( \check u_{0,n}, \check u_{1,n} ) = \varphi (y) \left ( \vec{\tilde u}_n \left ( \frac{r_0}{4} \right )  
- \vec{v}_n \left ( \frac{r_0}{4} \right ) \right ).
\end{align*}
Let $\check u_n$ be the solution to $\eqref{nlw}$ with $\vec{\check u}_n \left ( \frac{r_0}{4}\right ) =  \left (\check u_{0,n},
 \check u_{1,n} \right )$, and note that by finite speed of propagation
\begin{align*}
|y| \geq \frac{5 r_0}{4} - \epsilon_0 \implies
\left (
\check u_n \left ( \frac{r_0}{4}, y \right ), \partial_\tau \check u_n \left ( \frac{r_0}{4}, y \right ) \right ) = \left (
u_n \left ( \frac{r_0}{4}, y \right ), \partial_\tau u_n \left ( \frac{r_0}{4}, y \right ) \right ).
\end{align*}

By the same reasoning as in Step 2, we see that for all $ 2 \leq j \leq J $,
\begin{align}\label{step4eq1}
\psi(y) \varphi(|\tilde \tau_j|) \vec U^J_{L,n}\left ( \frac{r_0}{4}, y \right ) =
\psi \left ( \left |\frac{r_0}{4} + \tilde \tau_j \right | \right ) \varphi(|\tilde \tau_j|) \vec U^J_{L,n}\left ( \frac{r_0}{4},
 y \right )
+ o(1)
\end{align}
in $\energysp$ as $n \rightarrow \infty$.  Since $|\tilde \tau_j| \leq r_0$ for all $j$, if $\tilde \tau_j < 0$ then $\left |\frac{r_0}{4} + \tilde \tau_j \right |$
is outside the support of $\psi$.  If $\tilde \tau_j > 0$, then $\psi \left ( \left |\frac{r_0}{4} + \tilde \tau_j \right | \right ) \varphi(|\tilde \tau_j) |
= \varphi(\tilde \tau_j)$.  These facts and \eqref{step4eq1} imply that $( \check u_{0,n}, \check u_{1,n} )$ admits the following profile decomposition
\begin{align*}
\check u_{0,n} &= v_n \left ( \frac{r_0}{4} \right ) + \psi(y) \tilde U \left ( \frac{r_0}{4} \right ) +
 \sum_{\tilde \tau_j > 0} \varphi(|\tau_j|) U^j_{L,n} \left ( \frac{r_0}{4} \right ) + \check w^J_{0,n}, \\
\check u_{1,n} &= \partial_\tau v_n \left ( \frac{r_0}{4} \right ) + \psi(y) \partial_\tau \tilde U \left ( \frac{r_0}{4} \right ) +
 \sum_{\tilde \tau_j > 0} \varphi(|\tau_j|) \partial_\tau U^j_{L,n} \left ( \frac{r_0}{4} \right )
+ \check w^J_{1,n}
\end{align*}
where
$$
\lim_{J \rightarrow} \limsup_{n \rightarrow \infty} \| S(t) (\check w_{0,n}^J, \check w_{1,n}^J \|_{S(\R)} = 0.
$$
By the same reasoning as in Step 3, we see that for every $j \geq 2$ with $\tilde \tau_j > 0$,
\begin{align*}
\lim_{n \rightarrow \infty} \left \| S(t) \left (\varphi(|\tilde \tau_j|)  U^j_{L,n} \left ( \frac{r_0}{4},y \right ),
\varphi(|\tilde \tau_j|) \partial_\tau U^j_{L,n} \left (\frac{r_0}{4},y \right ) \right ) \right \|_{S(r_0 / 4,+\infty)} = 0.
\end{align*}
By the smallness of $\eta_0$, the solution to \eqref{nlw} with initial data $ \left ( \psi \tilde U \left ( \frac{r_0}{4} \right ), \psi \tilde \partial_\tau \tilde U
 \left ( \frac{r_0}{4} \right ) \right )$ is globally defined, scatters, and coincides with $\tilde U$ for any $\tau,y$ such that
$\tau \geq r_0/4$, $|y| \geq \tau + r_0 - \epsilon_0$.  Hence by Proposition \ref{approxthm}, if $\tau \geq r_0 / 4$ and
$|y| \geq \tau + r_0 - \epsilon_0$,
\begin{align}\label{case1step4}
\vec u_n (\tau,y) - \vec v_n(\tau, y) = \vec{\tilde U}(\tau,y) +
\sum_{\tilde \tau_j > 0} \varphi (\tilde \tau_j) \vec U^j_{L,n}(\tau, y) + \vec{\check w}^J_n(\tau,y) + \vec{\check r}^J_n(\tau,y)
\end{align}
where
\begin{align*}
\lim_{J \rightarrow \infty} \limsup_{n \rightarrow \infty} \left [ \sup_{\tau \geq r_0/4} \left \| \vec{ \check r}^J_n \left ( \tau \right )
\right \|_{\energysp} + \| \check r_n^J \|_{S((r_0 / 4, \infty))} \right ]  =  0 .
\end{align*}

Let $\theta_n \in I_n^+ \subset \left [\frac{r_0}{4}, \infty \right )$.  By Lemma \ref{locorth} and Step 1, for large $n$ we have that
\begin{align*}
\int_{r_0 - \epsilon_0 + \theta_n \leq |y| \leq r_0 + \theta_n} &|\nabla_{\tau,y} (u_n - v_n) (\theta_n ,y) |^2 dy \\&\geq
\int_{r_0 - \epsilon_0 + \theta_n \leq |y| \leq r_0 + \theta_n} |\nabla_{\tau,y} \tilde U (\theta_n ,y) |^2 dy + o_n(1) \\
 &\geq \frac{\eta_0}{8}.
\end{align*}
This proves Lemma \ref{channelofenergyI} for Case 1.

\subsubsection*{Case 2} Assume that there exists $j \geq 2$ such that $|\tilde \tau_j| > r_0$.  Let $\tilde r_0 = \sup_{j \geq 2} |\tilde
\tau_j| > r_0$, and choose $\epsilon_0 < \frac{\tilde r_0 - r_0}{100}$.  By reordering the profiles, we may assume that
$\tilde r_0 - \epsilon_0 < |\tilde \tau_2 | \leq \tilde r_0$.  Assume that $\tilde \tau_2 > 0$.  The case $\tilde \tau_2 < 0$
is similar. Let $\varphi \in C^{\infty},$ be radial such that $\varphi(y) = 1$ if $|y| \geq \tilde r_0 - \epsilon_0$ and
$\varphi(y) = 0$ if $|y| \leq \tilde r_0 - 2 \epsilon_0$.  Define
\begin{align}
\tilde u_{0,n} &= v_{0,n} + \varphi \sum_{j = 2}^J U^j_{L,n}(0) + \varphi w^J_{0,n} \label{case2a}\\
\tilde u_{1,n} &= v_{1,n} + \varphi \sum_{j = 2}^J \partial_\tau U^j_{L,n}(0) + \varphi w^J_{1,n}. \label{case2b}
\end{align}
The proof proceeds exactly as in Case 1 (without ($\tilde U, \partial_\tau \tilde U)$), replacing $r_0$ wit $\tilde r_0$ up to
\eqref{case1step4}.  

Let $\theta_n \in I_n^+ \subset \left [\frac{\tilde r_0}{4}, \infty \right )$.  Since $\lambda_{2,n} \rightarrow 0$,  by
Lemma \ref{loclem} there exists $R_0 > 0$ such that
$$
\limsup_{n \rightarrow \infty}
\int_{\left | |y| - (\theta_n - \tau_{2,n}) \right | \geq R_0 \lambda_{2,n}} | \nabla_{\tau,y} U^2_{L,n}(\theta_n,y) |^2 dy
< \frac{1}{2}\|(U_0^2,U_1^2) \|_{\energysp}^2.
$$
For $n$ sufficiently large, we have $\{ |y| \leq \theta_n + \tilde r_0 - \epsilon_0 \} \subset 
\{ \left | |y| - (\theta_n - \tau_{2,n}) \right | \geq R_0 \lambda_{2,n}  \}$.
Indeed, if $|y| \leq \theta_n + \tilde r_0 - \epsilon_0$, then as $n \rightarrow \infty$
\begin{align*}
\theta_n - \tau_{2,n} - |y| &\geq - \tau_{2,n} - (\tilde r_0 - \epsilon_0) \\
&= \tilde \tau_2 - (\tilde r_0 - \epsilon) + o(1) \\
&\geq R_0 \lambda_{2,n}.
\end{align*}
Lemma \ref{locorth} and conservation of the free energy imply that as $n \rightarrow \infty$
\begin{align*}
\int_{\tilde r_0 - \epsilon_0 + \theta_n \leq |y|} &|\nabla_{\tau,y} (u_n - v_n) (\theta_n ,y) |^2 dy \\&\geq
\int_{\tilde r_0 - \epsilon_0 + \theta_n \leq |y|} |\nabla_{\tau,y} U^2_{L,n} (\theta_n ,y) |^2 dy + o(1) \\
&= \int_{\R^N} |\nabla_{\tau,y} U^2_{L,n}(\theta_n,y)|^2 dy - \int_{|y| \leq \tilde r_0 - \epsilon_0 + \theta_n}
 |\nabla_{\tau,y} U^2_{L,n}(\theta_n,y)|^2 dy + o(1) \\
&\geq \| (U^2_0,U^1_0) \|_{\energysp}^2 -
\int_{\left | |y| - (\theta_n - \tau_{2,n}) \right | \geq R_0 \lambda_{2,n}} | \nabla_{\tau,y} U^2_{L,n}(\theta_n,y) |^2 dy + o(1) \\
&\geq \frac{1}{4} \| (U^2_0,U^1_0) \|_{\energysp}^2
\end{align*}
This concludes Case 2.

\subsubsection*{Case 3} We now assume that $r_0 = 0$ (so that $U^1 \equiv 0$) and $\tilde \tau_j = 0$ for all $j$.  Our assumption on $u$ implies that that
there exists $\epsilon_0 > 0$ such that for all $J \geq 2$
\begin{align*}
\limsup_{n \rightarrow \infty} \int_{|y| \geq \epsilon_0} |\nabla w^J_{n,0}|^2 + |w^J_{1,n}|^2 dy \geq \eta_0.
\end{align*}
Let $\varphi \in C^\infty$ be radial such that $\varphi = 1$ if $|y| \geq \epsilon_0$ and $\varphi = 0$ if $|y| \leq \epsilon_0/2$.
Define $(\tilde u_{0,n}, \tilde u_{1,n})$ as in \eqref{case2a} and \eqref{case2b} from Case 2, and let $\tilde u_n$ be the solution to \eqref{nlw} with initial data
$(\tilde u_{0,n}, \tilde u_{1,n})$.  Then Lemma \ref{loclem} and Lemma \ref{displem} imply
\begin{align*}
\lim_{n \rightarrow \infty} \left \| \sum_{j =2}^J \varphi \vec{U}^j_{L,n}(0)  \right \|_{\energysp} &= 0 , \\
\lim_{J \rightarrow \infty} \limsup_{n \rightarrow \infty} \| S(\tau) (\varphi w^J_{0,n},\varphi w^J_{1,n}) \|_{S(\R)} &= 0.
\end{align*}
Hence
\begin{align*}
\tilde u_{0,n} = v_{0,n} + \tilde w_{0,n}, \quad u_{1,n} = v_{1,n} + \tilde w_{1,n},
\end{align*}
where
\begin{align*}
\lim_{n \rightarrow \infty} \| S(\tau) (\tilde w_{0,n}, \tilde w_{1,n}) \|_{S(\R)} = 0, \\
\limsup_{n \rightarrow \infty} \int_{|y| \geq \epsilon_0} |\nabla \tilde w_{0,n} |^2 + | \partial_\tau \tilde w_{0,n}|^2 dy \geq \eta_0.
\end{align*}
  In particular,
$(\tilde w_{0,n}, \tilde w_{1,n})  \rightharpoonup 0$ in $\energysp$ as $n \rightarrow \infty$.  This implies (since $P(\epsilon_0)$ is finite dimensional)
\begin{align*}
\lim_{n \rightarrow \infty} \left \| \pi_{P(\epsilon_0)} (\tilde w_{0,n}, \tilde w_{1,n}) \right \|^2_{\energysp( |x| \geq \epsilon_0)} = 0
\end{align*}
which yields
\begin{align*}
\limsup_{n \rightarrow \infty} \left \| \pi^\perp_{P(\epsilon_0)} (\tilde w_{0,n}, \tilde w_{1,n}) \right \|^2_{\energysp( |x| \geq \epsilon_0)} \geq \eta_0.
\end{align*}
Hence, either for all $\tau \geq 0$ or for all $\tau \leq 0$ and for all $n$ sufficiently large
$$
\int_{\epsilon_0 + |\tau| < |y|}  |\nabla \tilde w_n(\tau) |^2 + |\partial_\tau \tilde w_n(\tau)|^2 dy \geq \frac{\eta_0}{2}
$$
where $w_n(t) = S(t)(\tilde w_{0,n}, \tilde w_{1,n} )$.  We then argue as in Case 1 and Case 2 using Proposition \ref{extbds} and Proposition \ref{approxthm} directly for all
$\tau \geq 0$ or for all $\tau \leq 0$ globally in time.  This concludes Case 3.
\end{proof}

\section{Self--similar regions for global solutions}

In this section we construct the linear term $v_L$ appearing in Theorem \ref{thm2} and show that no energy of $u - v_L$ can concentrate in the self similar region
$|x| \simeq t$.  This allows us to deduce that the expansion in Theorem \ref{thm2} holds in a slightly weaker sense (see Section 5).  After
constructing $v_L$, the proof is very similar to the finite time blow--up case.

\subsection{Extraction of the linear term}  In this subsection we construct the linear solution appearing in Theorem \ref{thm2}.  The
main property that this solution has is that it completely captures the linear behavior of the nonlinear wave in exterior regions.
More precisely, we have the following.

\begin{ppn}\label{globallin}
Let $N \geq 3$.  Let $u$ be a radial solution of \eqref{nlw} such that $T_+(u) = +\infty$.  Then there exists a radial solution $v_L$ of the
linear wave equation \eqref{lw} such that
$$
S(-t)\vec u(t) \rightharpoonup \vec v_L(0) \quad \mbox{in } \energysp \mbox{ as } t \rightarrow +\infty,
$$
and for all $A \in \R$
\begin{align}
\lim_{t \rightarrow +\infty} \int_{|x| \geq t - A} |\nabla_{t,x}(u - v_L)(t,x)|^2 dx = 0. \label{linbehavior}
\end{align}
\end{ppn}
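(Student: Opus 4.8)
The plan is to construct $v_L$ as the weak limit of $S(-t)\vec u(t)$ and then upgrade this weak convergence to the strong exterior convergence \eqref{linbehavior} via a channels-of-energy argument. First I would show that the family $\{S(-t)\vec u(t)\}_{t \geq 0}$ is bounded in $\energysp$ (this is immediate from \eqref{bdd} and the fact that $S(t)$ is a unitary group on $\energysp$) and, more importantly, that it is Cauchy in the weak topology. To see this, take $t_n \to +\infty$ and consider the sequence $\vec u_n := \vec u(t_n)$; after passing to a subsequence it admits a profile decomposition. Using Lemma \ref{bddlem} with $\mu_n = t_n$ (the exterior energy at radii $\gg t_n$ vanishes by finite speed of propagation, since $u$ is a solution on $[0,\infty)$ issuing from data at $t=0$, so at time $t_n$ the solution is supported, modulo the regular part, in $|x| \lesssim t_n$ — more precisely one uses the radiation/localization structure) one controls the parameters. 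The key point is that any profile whose parameters $\lambda_{j,n}$ stay comparable to $t_n$ and $t_{j,n}/\lambda_{j,n} \to \pm\infty$ is, by the existence of wave operators, essentially a free wave, and its contribution to $S(-t_n)\vec u(t_n)$ converges; profiles with $\lambda_{j,n} \ll t_n$ concentrate at scales going to zero relative to $t_n$ and carry no energy to the exterior $|x| > t_n - A$. This forces the weak limit to be independent of the sequence, defining $\vec v_L(0) =: (v_0, v_1)$ and hence $v_L$.

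Next I would establish \eqref{linbehavior}. Set $w := u - v_L$. It suffices, by a subsequence argument, to show that for every sequence $t_n \to +\infty$ and every $A \in \R$,
\begin{align*}
\lim_{n \to \infty} \int_{|x| \geq t_n - A} |\nabla_{t,x} w(t_n, x)|^2 \, dx = 0.
\end{align*}
Suppose not: then along a subsequence this integral stays $\geq \eta_0 > 0$. Rescale by $\mu_n := t_n$ and extract a profile decomposition of $\{S(-t_n)(\vec u(t_n) - \vec v_L(t_n))\}_n = \{S(-t_n)\vec u(t_n) - \vec v_L(0)\}_n$; by construction of $v_L$ this sequence converges weakly to $0$ in $\energysp$, so every profile in its decomposition has parameters with either $\lambda_{j,n}/t_n \to 0$ or $\lambda_{j,n}/t_n \to \infty$, and in the borderline case the time parameter escapes to $\pm\infty$. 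I would then argue exactly as in Case 3 of the proof of Lemma \ref{channelofenergyI}: the profiles contribute nothing to the exterior region $|x| > t_n - A$ (their energy concentrates either at scale $\ll t_n$ near the origin or at scale $\gg t_n$, where by Lemma \ref{loclem} the exterior energy vanishes), so the persistent exterior energy $\eta_0$ must come from the dispersive remainder $w^J_n$. Truncating with a cutoff $\varphi$ supported in $\{|y| \geq c\}$ (via Lemma \ref{displem}) produces data $(\tilde w_{0,n}, \tilde w_{1,n})$ with $S(t)(\tilde w_{0,n},\tilde w_{1,n})$ vanishing in $S(\R)$ but with exterior energy $\geq \eta_0/2$, which converges weakly to $0$; since $P(R)$ is finite-dimensional, the projection $\pi_{P(R)}$ part goes to $0$, so $\pi^\perp_{P(R)}$ retains energy $\geq \eta_0/2$. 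Then Proposition \ref{extbds} (the exterior energy estimate \eqref{extbd1}) forces, for all $\tau \geq 0$ or all $\tau \leq 0$, a definite amount of exterior energy at all later times for the \emph{full} solution $\tilde u_n$, via the approximate superposition principle Proposition \ref{approxthm}. But this exterior energy for $\tilde u_n$ is, by finite speed of propagation, also exterior energy for $u - v_L$ at times $t_n + \tau \cdot t_n$, which must go to zero as $n \to \infty$ because $\vec u(t) - \vec v_L(t)$ converges weakly to zero and... — here one needs to turn the weak statement into the vanishing of exterior energy, which is where the rescaled analysis and the Claim-\ref{vanishingclaimI}-type argument (transporting the cone structure) close the loop: the rescaled solutions $u_n(\tau) := t_n^{(N-2)/2} u(t_n(1+\tau), t_n \cdot)$ have $\vec u_n(\tau) - \vec v_{L,n}(\tau)$ supported, modulo small error, so that the exterior integral vanishes, contradicting the channel-of-energy lower bound.

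The main obstacle I anticipate is the very construction of $v_L$ and the proof that the weak limit is sequence-independent: unlike the blow-up case (where the regular part $v$ is a genuine nonlinear solution extracted by finite-speed-of-propagation and local well-posedness near $t = T_+$), here $v_L$ must be a \emph{linear} solution capturing the asymptotic exterior behavior, and showing $S(-t)\vec u(t)$ actually converges weakly — rather than just being bounded — requires ruling out "shifting" of linear energy between different exterior scales along different sequences of times. I expect this to follow from the profile decomposition combined with Lemma \ref{bddlem} (applied with $\mu_n = t_n$) and the observation that any two weak limits differ by a linear solution with vanishing exterior energy on \emph{every} exterior cone $|x| > t - A$, hence by Proposition \ref{extbds}, \eqref{extbd2}, such a solution is trivial. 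Assembling these pieces carefully, with attention to the interplay between the various scales $\lambda_{j,n}$ relative to $t_n$, is the technical heart of the argument; the channels-of-energy portion for \eqref{linbehavior} is then a close adaptation of Section 3.
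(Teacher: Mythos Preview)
Your proposal has a genuine gap: the channels-of-energy argument you sketch for \eqref{linbehavior} is circular. To contradict the channel lower bound you need a vanishing statement of the form ``the exterior energy of $u - v_L$ on $|x| \geq |\theta| + c$ goes to zero along some sequence of times'' (the analogue of Claim \ref{vanishingclaimI}). But such a statement \emph{is} \eqref{linbehavior}, or at least a weak version of it that you have not yet established. In the blow--up case (Section 3) the vanishing claim holds for free because the singular part $a(t)$ is compactly supported in the cone $|x| \leq 1-t$; here $u - v_L$ has no such a priori support property, and weak convergence $S(-t)\vec u(t) \rightharpoonup \vec v_L(0)$ alone does not yield exterior energy decay. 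You flag this yourself (``here one needs to turn the weak statement into the vanishing of exterior energy''), but there is no independent input to close the loop.

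The paper avoids this circularity by reversing the order and \emph{not} using channels of energy for this proposition at all. The key step you are missing is Lemma \ref{globallinlem}: one first finds a sequence $t_n \to +\infty$ and a cutoff scale $\delta$ such that the data $\varphi_\delta(x/t_n)\vec u(t_n)$, localized near the light cone, admits a profile decomposition whose nonlinear profiles all scatter forward in time (one small profile with $t_{1,n}=0$, all others with $-t_{j,n}/\lambda_{j,n} \to +\infty$). This is obtained by a two--stage construction: first extract profiles of the truncated data, then evolve forward by $s_n/2$ via Proposition \ref{approxthm} to force the remaining profiles into the scattering regime. By Proposition \ref{approxthm} the truncated solution then scatters, and finite speed of propagation transfers this to $u$ on the exterior $|x| \geq t - A$, yielding a linear solution $v_L^A$ satisfying \eqref{linbehavior} with that fixed $A$. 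Only \emph{after} this does one take a weak limit $v_L$ of $S(-t_n)\vec u(t_n)$ along some sequence and use localized orthogonality (Lemma \ref{locorth}) to show $v_L - v_L^A$ has vanishing exterior energy, hence \eqref{linbehavior} holds for $v_L$; uniqueness of the weak limit then follows exactly as you suggest. The channels-of-energy machinery is reserved for Proposition \ref{selfsimilarblowup2}, where \eqref{linbehavior} is already available to furnish the vanishing claim.
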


The proof of Proposition \ref{globallin} is essentially the same as those in \cite{dkm3} ($N = 3$) and \cite{ckls} ($N = 4)$).
Let $\{\varphi_\delta\}_\delta$ be a family of radial $C^{\infty}$ functions on $\R^N$, defined for $\delta>0$ such that
\begin{align*}
0 \leq \varphi_\delta \leq 1, \quad |\nabla \varphi_{\delta}| \leq C \delta^{-1}, \\
\varphi_\delta(x) =
\begin{cases}
1 & \mbox{if } |x| > 1 - \delta, \\
0 & \mbox{if } |x| < 1 - 2 \delta.
\end{cases}
\end{align*}

\begin{lem}\label{globallinlem}
Let $u$ be as in Proposition \eqref{globallin}, and let $\epsilon > 0$ be small.  Then there exists $t_n \rightarrow +\infty$ and
$\delta > 0$ small such that $\varphi_{\delta}\left ( \frac{x}{t_n} \right ) \vec u(t_n)$ admits a profile decomposition with profiles
$\{ U^j_L \}_j$ and parameters $\{\lambda_{j,n}, t_{j,n}\}_{j,n}$ such that
\begin{align}
\forall j \geq 2, \quad \lim_{n \rightarrow \infty} -&\frac{t_{j,n}}{\lambda_{j,n}} = + \infty, \label{globallinlem1} \\
\forall n \geq 1,\mbox{ } t_{1,n} = 0 \quad &\mbox{and} \quad \| (U^1_0, U^1_1) \|_{\energysp} \leq \epsilon. \label{globallinlem2}
\end{align}
\end{lem}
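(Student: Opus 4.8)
The plan is to follow the scheme used for $N=3$ in \cite{dkm3} and for $N=4$ in \cite{ckls}. Two elementary facts about the type II solution $u$ will be used repeatedly. First, since $(u_0,u_1)\in\energysp$, finite speed of propagation together with the small data theory recalled in Section 2 gives
\[
\lim_{\rho\to\infty}\ \sup_{t\geq 0}\ \int_{|x|>\rho+t}\left(|\nabla u(t)|^2+|\partial_t u(t)|^2\right)dx=0 ;
\]
in particular the energy in $\{|x|>Rt\}$ at time $t$ tends to $0$ as $t\to\infty$ for every fixed $R>1$. Second, $M:=\sup_{t\geq 0}\|\vec u(t)\|_{\energysp}^2<\infty$ by \eqref{bdd}. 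The structure of the profile decomposition below is essentially forced by Lemma \ref{bddlem} and Lemma \ref{loclem}; the real content of the lemma is the selection of the times $t_n$, and that is where the type II hypothesis is used.

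I would start from an arbitrary sequence $t_n\to+\infty$, fix $\delta>0$ (to be shrunk at the end), and pass to a subsequence so that $\{\varphi_\delta(x/t_n)\vec u(t_n)\}_n$, which is bounded in $\energysp$ by $M$, admits a profile decomposition with profiles $\{U^j_L\}$ and parameters $\{\lambda_{j,n},t_{j,n}\}$ normalized as in \eqref{timeassump}. Applying Lemma \ref{bddlem} with $\mu_n=t_n$ — its hypothesis holds since $\varphi_\delta\leq 1$ and, for fixed $R\geq 2$, $\{|x|>Rt_n\}\subset\{|x|>t_n+(R-1)t_n\}$, so the first elementary fact applies — gives that $\{\lambda_{j,n}/t_n\}_n$ and $\{t_{j,n}/t_n\}_n$ are bounded for every $j$, and that at most one index, which I relabel $j=1$, satisfies $\lambda_{1,n}/t_n\not\to 0$; if there is none I set $U^1_L\equiv 0$. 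For $j=1$ the sequence $t_{1,n}/\lambda_{1,n}$ is then bounded, so by \eqref{timeassump} we must have $t_{1,n}=0$ for all $n$, which is the first part of \eqref{globallinlem2}. For every $j\geq 2$ we have $\lambda_{j,n}=o(t_n)$.

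I would next rule out $t_{j,n}=0$ for $j\geq 2$. If $t_{j,n}=0$ then Lemma \ref{loclem} (case $\ell=0$) says that, up to an arbitrarily small amount of energy, $\vec U^j_{L,n}(0)$ is supported in the annulus $\tfrac1R\lambda_{j,n}\leq|x|\leq R\lambda_{j,n}$; since $\lambda_{j,n}=o(t_n)$ this annulus is eventually contained in $\{|x|<(1-2\delta)t_n\}$, where $\varphi_\delta(x/t_n)\vec u(t_n)$ vanishes identically, and the orthogonality of the profile decomposition then forces $\|(U^j_0,U^j_1)\|_{\energysp}=0$, a contradiction. Hence $-t_{j,n}/\lambda_{j,n}\to\pm\infty$ for $j\geq 2$, and by Lemma \ref{loclem} (case $\ell=\pm\infty$) the profile $\vec U^j_{L,n}(0)$ concentrates, at scale $\lambda_{j,n}$, on the sphere $|x|=|t_{j,n}|$. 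Using the same orthogonality on $\{|x|<(1-2\delta)t_n\}$ and, via the first elementary fact, on $\{|x|>(1+\eta)t_n\}$ for each $\eta>0$, one gets $|t_{j,n}|\simeq t_n$; so at time $t_n$ each nontrivial profile with $j\geq 2$ is a thin ($o(t_n)$-wide) spherical wave localized near $|x|\simeq t_n$, outgoing if $-t_{j,n}/\lambda_{j,n}\to+\infty$ and incoming if $-t_{j,n}/\lambda_{j,n}\to-\infty$.

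It remains to choose $t_n$ so that (a) no $j\geq 2$ profile is incoming and (b) $\|(U^1_0,U^1_1)\|_{\energysp}\leq\epsilon$. For (a) I would combine the exterior energy estimates of Proposition \ref{extbds}, applied to suitably truncated data exactly as in the channels-of-energy arguments of Section 3, with the flux identity for \eqref{nlw} across the forward light cone: writing $\mathcal{E}(t)=\int_{|x|>t}\big(\tfrac12|\nabla u|^2+\tfrac12|\partial_t u|^2-\tfrac{N-2}{2N}|u|^{\frac{2N}{N-2}}\big)dx$, the identity shows that $\mathcal{E}(t)$ has a limit as $t\to\infty$ and controls how much energy may cross the cone $|x|=t$; a pigeonhole over dyadic time windows then produces $t_n\to\infty$ along which an incoming profile near $|x|\simeq t_n$ cannot persist. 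For (b) I would use that the profile $U^1$, being at the scale $\lambda_{1,n}\simeq t_n$ with $t_{1,n}=0$, captures only the part of the exterior data that neither concentrates at a finer scale nor disperses, i.e. the genuinely self-similar part, whose energy can be driven below $\epsilon^2$ by the choice of $t_n$ together with a final shrinking of $\delta$. The main obstacle is precisely this last paragraph: the flux identity alone controls only the energy crossing $|x|=t$, not the full self-similar region $|x|\simeq t$, so one must run the channels-of-energy machinery of Proposition \ref{extbds} carefully on the truncated data, and this, where \eqref{bdd} enters in an essential way, occupies the bulk of the proof.
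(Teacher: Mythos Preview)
Your first two paragraphs are essentially correct and match the paper's Step~1: profile decomposition of the (truncated) data, Lemma~\ref{bddlem} with $\mu_n=t_n$ to bound parameters, identification of the unique large-scale profile $U^1$ with $t_{1,n}=0$, and the use of Lemma~\ref{loclem} together with the support of $\varphi_\delta(\cdot/t_n)$ to see that every $j\ge 2$ profile has $-t_{j,n}/\lambda_{j,n}\to\pm\infty$ and concentrates on a thin shell near $|x|\simeq t_n$. (One small point: the paper first decomposes the \emph{untruncated} $\vec u(s_n)$ and only then multiplies by $\varphi_{\delta'}$; this makes the support property of the rescaled weak limit $\tilde V^1$---namely $\mathrm{supp}\,\vec{\tilde V}^1(0)\subset\{|y|\le 1\}$---transparent, and then $\|(V^1_0,V^1_1)\|_{\energysp}<\epsilon$ follows immediately by taking $\delta'$ small. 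Your route to \eqref{globallinlem2} via ``shrinking $\delta$'' is morally the same but you should state and prove that support property.)

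The genuine gap is your last paragraph, where you propose to rule out incoming profiles ($-t_{j,n}/\lambda_{j,n}\to-\infty$) by a flux identity across $|x|=t$ combined with a pigeonhole and the exterior estimates of Proposition~\ref{extbds}. This is not what the paper does, and as written it does not close. The flux across $|x|=t$ controls energy \emph{leaving} the exterior cone; it does not by itself forbid a linear profile that is, at the chosen time $t_n$, sitting on the shell $|x|\simeq t_n$ and moving inward---such a configuration is perfectly compatible with monotonicity of $\mathcal E(t)$. Nor does Proposition~\ref{extbds} help directly here: it produces outgoing channels for truncated data, but it does not let you \emph{select} times at which no incoming shell is present. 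The paper's mechanism is different and much simpler: having obtained the Step~1 decomposition at times $s_n$ (with all $j\ge2$ profiles on shells $|x|\simeq s_n$), one applies Proposition~\ref{approxthm} to evolve the \emph{nonlinear} solution forward by $s_n/2$. Under this evolution an outgoing profile stays near $|x|\simeq \tfrac{3}{2}s_n$, while an incoming profile moves inward to $|x|\simeq \tfrac{1}{2}s_n$. Setting $t_n=\tfrac{3}{2}s_n$ and $\delta=\delta'/3$, a second truncation by $\varphi_\delta(\cdot/t_n)$ then kills every incoming profile (they now sit at $|x|\simeq t_n/3$, well inside $\{\varphi_\delta=0\}$) while retaining the outgoing ones, and a repetition of the Step~1 analysis on the evolved, re-truncated data gives \eqref{globallinlem1}. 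No flux, no pigeonhole, and no channels-of-energy argument is needed for this lemma; the type~II bound \eqref{bdd} enters only to guarantee that Proposition~\ref{approxthm} applies on $[0,s_n/2]$.
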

\begin{proof} We divide the proof into two steps.
\subsubsection*{Step 1}  In this step we show that there exists $\delta' > 0$ and a sequence $s_n \rightarrow +\infty$ such that
$\left \{ \varphi_{\delta'} \vec u(s_n) \right \}_n$ admits a profile decomposition with profiles $\{V_j\}_j$ and parameters
$\{\mu_{j,n},s_{j,n}\}_{j,n}$ satisfying
\begin{align*}
\forall j \geq 2, \quad \lim_{n \rightarrow \infty} -\frac{s_{j,n}}{\mu_{j,n}} \in \{\pm \infty\} \quad &\mbox{and} \quad
\lim_{n \rightarrow \infty} -\frac{s_{j,n}}{s_n} \in [-1,2\delta'-1] \cup [1-2\delta',1], \\
s_{1,n} = 0 \quad &\mbox{and} \quad \|(V^1_0,V_1^1)\|_{\energysp} \leq \frac{\epsilon}{2}.
\end{align*}
First, note that by finite speed of propagation and the small data theory
\begin{align}\label{globallinlem3}
\lim_{R \rightarrow \infty} \limsup_{t \rightarrow +\infty} \int_{|x| \geq R + t} |\nabla u(t) |^2 + (\partial u(t))^2 dx = 0.
\end{align}
Indeed, let $\epsilon > 0$ be small, and define $(\tilde u_0, \tilde u_1) \in \energysp$ by
\begin{align*}
\tilde u_0(r) &=
  \begin{cases}
   u_0(R) & \text{if } 0 \leq r \leq R \\
   u_0(r) & \text{if }  r > R
  \end{cases}, \\
  \tilde u_1(r) &=
  \begin{cases}
   0 & \text{if } 0 \leq r \leq R \\
   u_1(r) & \text{if }  r > R
  \end{cases}.
\end{align*}
Choose $R > 0$ large enough so that
\begin{align*}
\| (\tilde u_0, \tilde u_1) \|^2_{\energysp} = \int_{|x| \geq R} |\nabla u_0|^2 + |u_1|^2 dx < \epsilon^2 \ll \delta_0^2,
\end{align*}
where $\delta_0$ is from the local Cauchy theory.  Let $\tilde u$ be the solution to \eqref{nlw} with initial data $(\tilde u_0,\tilde u_1)$.
Then by the local Cauchy theory, $\tilde u$ is global and
$$
\sup_{t \in \R} \| (\tilde u(t), \partial_t \tilde u(t) \|_{\energysp} \leq C \epsilon.
$$
By finite speed of propagation, $\tilde u(t,x) = u(t,x)$ for $|x| \geq R + t$ so
$$
\limsup_{t \rightarrow +\infty} \int_{|x| \geq R + t} |\nabla u(t) |^2 + (\partial u(t))^2 dx \leq C \epsilon
$$
as desired.

Since $u(t)$ remains bounded in the energy space on $[0,+\infty)$ by assumption, there exists a sequence $\{s_n\}_n$ with $s_n \rightarrow +\infty$
such that $\{ \vec u(s_n) \}_n$ admits a profile decomposition with profiles $\{\tilde V^j_L \}_j$, parameters
$\{\mu_{j,n}, s_{j,n} \}_{j,n}$, and remainder $(\tilde w^J_{0,n}, \tilde w^J_{1,n})$.  As we can always extract subsequences, without loss of generality we will always assume that all real valued sequences converge in
$\bar \R$.  In particular, either
\begin{align*}
\lim_{n \rightarrow \infty} \frac{-s_{j,n}}{\mu_{j,n}} = \pm \infty, \quad \mbox{or} \quad \forall n \geq 1,\mbox{ } s_{j,n} = 0.
\end{align*}
Define
$$
\tau_j = \lim_{n \rightarrow \infty} \frac{-s_{j,n}}{s_n}.
$$

\begin{clm}
For all $j$, $|\tau_j| \leq 1$, and $\lim_{n \rightarrow \infty} \frac{\mu_{j,n}}{s_n} = 0,$ except for at most one index $j$, for which the limit is finite.
\end{clm}
\begin{proof}
Consider $(u_{0,n},u_{1,n}) = (u(s_n), \partial_t u(s_n))$.  By \eqref{globallinlem3},
$$
\lim_{R \rightarrow \infty} \limsup_{n \rightarrow \infty} \int_{|x| \geq Rs_n} |\nabla u_{0,n}|^2 + |u_{1,n}|^2 dx = 0.
$$
Hence we can apply Lemma \ref{bddlem} and deduce that for all $j$, $|\tau_j| < \infty$.  Moreover $\lim_{n \rightarrow \infty} \frac{\mu_{j,n}}{s_n} < \infty$, and for all $j$
except at most one, the limit is 0.

If $\tau_j = 0$ then we are done.  Suppose $\tau_j \neq 0$ and that $|\tau_j| = 1 + \eta$, $\eta > 0$.  In particular, this means we are also assuming that
$$
\lim_{n \rightarrow \infty} \frac{|s_{j,n}|}{\mu_{j,n}} = \infty.
$$
By \eqref{globallinlem3} and Lemma \ref{locorth},
\begin{align*}
\lim_{R \rightarrow +\infty} \limsup_{n \rightarrow \infty} \int_{|x| \geq s_n + R} |\nabla_{t,x} \tilde V^j_{L,n}(0) |^2 dx = 0.
\end{align*}
We will combine this will Lemma \eqref{loclem}.  Let $\epsilon > 0$.  There exists $R$ and $N_0$ such that for all $n \geq N_0$
$$
\int_{||x| - |s_{j,n}|| \geq R \mu_{j,n}} |\nabla_{t,x} \tilde V^j_{L,n}(0)|^2 dx < \epsilon.
$$
We note that for fixed $\tilde R$ large and for $n$ large,
\begin{align*}
\{ ||x|-|s_{j,n}|| \leq R \mu_{j,n} \} \subset \{ |x| \geq s_n + \tilde R \}.
\end{align*}
Indeed if $||x| - |s_{j,n}|| \leq R \mu_{j,n}$, then
\begin{align*}
|x| \geq |s_{j,n}| - R \mu_{j,n} &= \frac{|s_{j,n}|}{s_n} \left ( 1 - R \frac{\mu_{j,n}}{|s_{j,n}|} \right ) s_n \\
&= ( 1 + \eta + o(1)) (1 - R o(1) ) s_n \\
&\geq (1 + \delta) s_n
\end{align*}
for some $\delta > 0$ and for all $n$ sufficiently large.  Since $s_n \rightarrow +\infty$, for $n$ sufficiently large $\{ |x| \geq s_n + \delta s_n \} \subset \{ |x| \geq s_n + \tilde R \}$ which proves the
desired inclusion.

Hence for all $n \geq N_1$,
$$
\int_{\R^N} |\nabla_{t,x} \tilde V^j_{L,n}(0) |^2 dx \leq 2 \epsilon.
$$
By conservation of the free energy and since $\epsilon$ was arbitrary, $\tilde V^j_L \equiv 0$, which is a contradiction.  Thus, $|\tau_j| \leq 1$.
\end{proof}
Next, not that if $j$ is such that $\lim_{n \rightarrow \infty} \frac{\mu_{j,n}}{s_n} > 0$ (and finite by the previous claim), we cannot have $\lim_{n \rightarrow \infty}
\frac{|s_{j,n}|}{\mu_{j,n}} = \infty$.  Hence $s_{j,n} = 0$ for all $n$.  This happens for at most one $j$. We assume $j = 1$.  By rescaling, we can also assume
$\mu_{1,n} = s_n$.

We now claim
\begin{align}
\supp \left ( \vec{\tilde V}^1(0) \right ) \subseteq \{ |x| \leq 1\}.
\end{align}
Indeed, first note that by definition of the profiles, $$\left (s_n^{\frac{N-2}{2}} u(s_n, s_n \cdot) , s_n^{\frac{N}{2}} \partial_t u(s_n, s_n \cdot)\right ) \rightharpoonup (\tilde V^1_0, \tilde V^1_1)$$
in $\energysp$ as $n \rightarrow \infty$. Let $(\psi_0, \psi_1) \in C^\infty_0 \times C^\infty_0$ with supp$(\psi_0, \psi_1) \subset
\{ |x| > 1 + \eta \}$ for some $\eta > 0$.  By Cauchy-Schwarz and \eqref{globallinlem3}, we have that
\begin{align*}
\left | \int_{\R^N} \nabla \psi_0 \cdot \nabla \tilde V^1_1 + \psi_1 \tilde V^1_1 dx\right | \lesssim \lim_{n \rightarrow \infty}
\left (\int_{|x| \geq s_n + \eta s_n} |\nabla u(s_n)|^2 + |\partial_t u(s_n)|^2 dx \right )^{\frac{1}{2}} = 0.
\end{align*}
This proves the desired support property of $\tilde V^1$.
Define the first profile
\begin{align*}
(V^1_0(y), V^1_1(y)) = \psi_{\delta'}(y) (\tilde V^1_0(y), \tilde V^1_1(y))
\end{align*}
with parameters $\mu_{1,n} = s_n,$ $s_{1,n} = 0$.   For $\delta'$ sufficiently small, $\|(V^1_0,V^1_1)\|_{\energysp} < \epsilon$.
As in Step 2 of the proof of Lemma \eqref{channelofenergyI}, we have that for each fixed $j \geq 2$,
\begin{align}\label{step1b}
\varphi_{\delta'}\left ( \frac{x}{s_n} \right ) \left ( \tilde V^j_{L,n}(0,x) , \partial_t \tilde V^j_{L,n}(0,x) \right ) =
\varphi_{\delta'}(| \tau_j|) \left ( \tilde V^j_{L,n}(0,x) , \partial_t \tilde V^j_{L,n}(0,x) \right ) + o(1)
\end{align}
as $n \rightarrow \infty$ in $\energysp$. Indeed, after a change of variables,
\begin{align*}
\left \| \varphi_{\delta'}\left ( \frac{x}{s_n} \right ) \vec{\tilde V}^j_{L,n}(0,x) -
\varphi_{\delta'}(| \tau_j|) \vec{\tilde V}^j_{L,n}(0,x) \right \|_{\energysp} =
\left \| \varphi_{\delta'}\left ( x \right ) \vec{\check V}^j_{L,n}(0,x) -
\varphi_{\delta'}(| \tau_j|) \vec{\check V}^j_{L,n}(0,x) \right \|_{\energysp}
\end{align*}
where $\check V^j_L$ is just  $\tilde V^j_L$ with new parameters $\left \{ \frac{\tau_{j,n}}{s_n}, \frac{\lambda_{j,n}}{s_n} \right \}_{j,n}$.  We can then
reapply the argument from Step 2 of the proof of Lemma \ref{channelofenergyI} verbatim to obtain \eqref{step1b}.  Moreover, if $|\tau_j| \leq 1 - 2 \delta',$ then
$\varphi_{\delta'}\left ( \frac{x}{s_n} \right ) \left ( \tilde V^j_{L,n}(0) , \partial_t \tilde V^j_{L,n}(0) \right ) = o(1)$ in $\energysp$ as $n \rightarrow
\infty$.  Define new profiles
$$
V^j_L = \varphi_{\delta'}(|\tau_j|) \tilde V^j_L, \quad \mbox{with parameters} \quad
\{\mu_{j,n}, s_{j,n}\}_{j,n}.
$$
Then
\begin{align*}
\varphi_{\delta'}\left (\frac{x}{s_n} \right ) (u(s_n), \partial_t u(s_n)) &= \sum_{j = 1}^J
\varphi_{\delta'}\left ( \frac{x}{s_n} \right ) \left ( \tilde V^j_{L,n}(0) , \partial_t \tilde V^j_{L,n}(0) \right ) +
\varphi_{\delta'}\left (\frac{x}{s_n} \right ) (\tilde w_{0,n}^J, \tilde w_{1,n}^J) \\
&= \sum_{j = 1}^J
 \left ( V^j_{L,n}(0) , \partial_t V^j_{L,n}(0) \right ) \\&+
(\epsilon_{0,n}^J, \epsilon_{1,n}^J)  +
\varphi_{\delta'}\left (\frac{x}{s_n} \right )( \tilde w_{0,n}^J, \tilde w_{1,n}^J)
\end{align*}
where
$$
\lim_{n \rightarrow \infty} \|(\epsilon_{0,n}^J, \epsilon_{1,n}^J)\|_{\energysp} = 0.
$$
By Lemma \eqref{displem}
$$
\lim_{J \rightarrow \infty}
\limsup_{n \rightarrow \infty} \left \| S(t) \varphi_{\delta'}\left (\frac{x}{s_n} \right )( \tilde w_{0,n}^J, \tilde w_{1,n}^J)
\right \|_{S(\R)} = 0.
$$
Thus, we obtain that $\left \{ \varphi_{\delta'}\left ( \frac{x}{s_n} \right ) \vec u(s_n) \right \}$ has the desired profile
decomposition. This concludes Step 1.

\subsubsection*{Step 2} Let $u_n$ be the solution to \eqref{nlw} with initial data $\varphi_{\delta'} \left ( \frac{x}{s_n} \right )
\vec u(s_n)$.  We use Proposition \ref{approxthm} with $\theta_n = s_n / 2$.  Indeed, by construction, for $j \geq 2$,
$\lim_{n \rightarrow \infty} \frac{|s_{j,n}|}{\mu_{j,n}} = + \infty$ and $|\tau_j | \in [1 - 2\delta',1]$. So
\begin{align*}
\lim_{n \rightarrow \infty} \frac{-s_{j,n}}{\mu_{j,n}} = +\infty &\implies
\lim_{n \rightarrow \infty} \frac{s_n / 2 - s_{j,n}}{\mu_{j,n}} = + \infty, \\
\lim_{n \rightarrow \infty} \frac{-s_{j,n}}{\mu_{j,n}} = -\infty &\implies
\lim_{n \rightarrow \infty} \frac{s_n / 2 - s_{j,n}}{\mu_{j,n}} =
\lim_{n \rightarrow \infty} \frac{- s_{j,n}}{\mu_{j,n}} \left ( 1 - \frac{s_n}{2s_{j,n}} \right ) =
- \infty.
\end{align*}
Thus, we can apply Proposition \ref{approxthm} to obtain
$$
\vec u_n (s_n /2) = \sum_{j = 1}^J \vec V^j_n( s_n / 2) + \vec w^J_n(s_n /2) + \vec r_n^J(s_n /2 )
$$
where
$$
\lim_{J \rightarrow \infty} \limsup_{n \rightarrow \infty} \left [
\| r_n^J \|_{S(0, s_n/2)} +
\sup_{t \in [0,s_n/2]} \| \vec r^J_n(t) \|_{\energysp}
\right ] = 0.
$$
Let
$$
t_n = \frac{3}{2}s_n, \quad t_{j,n} = s_{j,n} - \frac{s_n}{2}, \quad \delta = \frac{\delta'}{3}.
$$
Since $\vec u_n(0,x) = \vec u(s_n,x)$ if $|x| \geq (1-\delta')s_n$, by finite speed of propagation
$$
|x| \geq (3/2 - \delta') s_n = (1-2\delta) t_n \implies \vec u_n(s_n / 2, x) = \vec u(t_n,x).
$$
Thus,
\begin{align*}
\varphi_{\delta} \left ( \frac{x}{t_n} \right ) \vec u(t_n) =& \varphi_{\delta} \left ( \frac{x}{t_n} \right ) \vec u(s_n/2) \\
=& \sum_{j = 1}^J \varphi_{\delta} \left ( \frac{x}{t_n} \right ) \vec V^j( s_n /2 ) +
\varphi_{\delta} \left ( \frac{x}{t_n} \right ) \vec w^J_n(s_n / 2) \\&+
\varphi_{\delta} \left ( \frac{x}{t_n} \right ) \vec r^J_n(s_n / 2).
\end{align*}
The conclusion of the lemma follows from a similar analysis as in Step 1.
\end{proof}
\begin{proof}[Proof of Proposition \ref{globallin}]  We divide the proof into two steps.
\subsubsection*{Step 1}  First, we show that for all $A \in \R$ there exists a radial solution $v_L^A$ of the linear wave equation \eqref{lw}
such that
\begin{align}
\lim_{t \rightarrow + \infty} \int_{|x| \geq t - A} | \nabla_{t,x} (u - v_L^A)(t,x)|^2 dx = 0. \label{linbehavior2}
\end{align}
Consider the sequence given $\{t_n\}_n$ given by Lemma \ref{globallinlem}, and let $u_n$ be the solution to \eqref{nlw} with initial data
$\varphi_\delta \left ( \frac{x}{t_n} \right) \vec u(t_n)$ at $t = 0$.   By
Proposition \ref{approxthm}, \eqref{globallinlem1}, and \eqref{globallinlem2}, it follows that for $n$ large $u_n$ is globally defined
and scatters for positive times.  Let $n$ be large and fixed and let $\tilde v_{L,n}$ be the solution to the linear equation \eqref{lw}
such that
$$
\lim_{t \rightarrow +\infty} \| \vec u_n(t) - \vec{\tilde v}_{L,n}(t) \|_{\energysp} = 0.
$$
By finite speed of propagation, $\vec u(t_n + t, x) = \vec u_n(t,x)$ for $|x| \geq (1-\delta) t_n + t,$ $t \geq 0$.  Thus,
$$
\lim_{t \rightarrow +\infty} \int_{|x| \geq -\delta t_n + t} |\nabla_{t,x}u(t,x) - \nabla \tilde v_{L,n} (t - t_n, x)|^2 = 0.
$$
By choosing $n$ large enough so that $-\delta t_n  \leq -A$, we get \eqref{linbehavior2} with $v_L^A(t,x) = \tilde v_{L,n}(t-t_n,x)$.

\subsubsection*{Step 2}  Let $\{t_n\}_n$ be a sequence tending to $+ \infty$ so that, after extraction, $S(-t_n)\vec u(t_n)$ has a weak
limit $(v_{0},v_{1})$ in $\energysp$ as $n \rightarrow \infty$.  After further extraction, we can assume that the sequence
$\vec u(t_n)$ has a profile decomposition
$$\vec u (t_n) = \vec v_L(t_n) + \sum_{j = 2}^J \vec{U}^j_{L,n}(0) + (w^J_{0,n},w^J_{1,n}).$$
Here we have chosen the first profile $U^1_L = v_L$ with parameters $\lambda_{1,n} = 1$, $t_{1,n} = -t_n$.
Let $A \in \R$, and let $v_L^A$ be the linear solution from Step 1.  Then $\vec u(t_n) - v_L^A(t_n)$ admits a profile decomposition
$$
\vec u(t_n) - \vec v_L^A(t_n) = \vec v_L(t_n) - \vec v_L^A(t_n) + \sum_{j = 2}^J \vec{U}^j_{L,n}(0) + (w^J_{0,n},w^J_{1,n})
$$
By the localized orthogonality of the profiles \eqref{locorth}, we get that \eqref{linbehavior2} implies that
$$
\lim_{n \rightarrow \infty} \int_{|x| \geq t_n - A} |\nabla_{t,x} (v_L - v_L^A)(t_n,x) |^2 dx = 0.
$$
Since $v_L^A - v_L$ is a solution to the linear wave equation, the previous line and the decay of the free energy outside of the light cone ${ |x| \geq t - A }$
implies that
$$
\lim_{t \rightarrow +\infty} \int_{|x| \geq t - A} |\nabla_{t,x} (v_L - v_L^A)(t,x) |^2 dx = 0.
$$
This together with \eqref{linbehavior2} yields \eqref{linbehavior}.

Let $\{\tilde t_n\}_n$ be another sequence
tending to $+\infty$ such that $S(-\tilde t_n)\vec u(\tilde t_n) \rightharpoonup (\tilde v_0, \tilde v_1)$ in $\energysp$.  The previous
argument implies that for all $A \in \R$
\begin{align*}
\lim_{t \rightarrow +\infty} \int_{|x| \geq t - A} |\nabla_{t,x} (v_L - \tilde v_L)(t,x) |^2 dx = 0.
\end{align*}
By Lemma \ref{loclem}, given $\epsilon > 0$ there exists $A > 0$ sufficiently large so that
\begin{align*}
\limsup_{t \rightarrow +\infty} \int_{|x| \leq t - A} |\nabla_{t,x} (v_L - \tilde v_L)(t,x) |^2 dx < \epsilon.
\end{align*}
By conservation of the free energy, these facts imply that
\begin{align*}
\int_{\R^N} |\nabla(v_0 - \tilde v_1)|^2 + |v_1 - \tilde v_1|^2 dx \leq \epsilon.
\end{align*}
Since $\epsilon$ was arbitrary, we obtain $(v_0,v_1) = (\tilde v_0, \tilde v_1)$.
\end{proof}

Before turning to the main result of this section, we first prove an analog of \eqref{alimit1} which will be used in a
later section.

\begin{clm}\label{globlim}
Let $u$ and $v_L$ be as in Proposition \ref{globallin}. Let $a(t) = u(t) - v_L(t)$.  Then
$\lim_{t \rightarrow +\infty} E(a(t), \partial_t a(t))$ exists and in fact
\begin{align}
\lim_{t \rightarrow +\infty} E(a(t), \partial_t a(t))
= E(u_0,u_1) - \frac{1}{2} \| (v_0,v_1) \|_{\energysp}^2.
\end{align}
\end{clm}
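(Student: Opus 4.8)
The plan is to expand $E(\vec a(t))$, with $a := u - v_L$ and $\vec a := (a,\partial_t a)$, into a quadratic part, a cross term, and the potential term, and to identify the limit of each. Writing $\la (f_0,f_1),(g_0,g_1)\ra_{\energysp} := \int_{\R^N}(\nabla f_0\cdot\nabla g_0 + f_1 g_1)\,dx$ for the $\energysp$ inner product, one has
\begin{align*}
E(\vec a(t)) = \tfrac12\|\vec u(t)\|_{\energysp}^2 - \la\vec u(t),\vec v_L(t)\ra_{\energysp} + \tfrac12\|\vec v_L(t)\|_{\energysp}^2 - \tfrac{N-2}{2N}\int_{\R^N}|a(t)|^{\frac{2N}{N-2}}\,dx.
\end{align*}
Since $v_L$ solves \eqref{lw}, the free energy $\tfrac12\|\vec v_L(t)\|_{\energysp}^2 = \tfrac12\|(v_0,v_1)\|_{\energysp}^2$ is independent of $t$, and since the energy of $u$ is conserved, $\tfrac12\|\vec u(t)\|_{\energysp}^2 = E(u_0,u_1) + \tfrac{N-2}{2N}\int|u(t)|^{\frac{2N}{N-2}}\,dx$. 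Substituting, the statement reduces to the two limits
\begin{align*}
\la\vec u(t),\vec v_L(t)\ra_{\energysp}\longrightarrow\|(v_0,v_1)\|_{\energysp}^2, \qquad \int_{\R^N}\Big(|u(t)|^{\frac{2N}{N-2}}-|a(t)|^{\frac{2N}{N-2}}\Big)\,dx\longrightarrow 0
\end{align*}
as $t\to+\infty$, for then $E(\vec a(t))\to E(u_0,u_1)-\tfrac12\|(v_0,v_1)\|_{\energysp}^2$.

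For the cross term I would use that the free flow $S(t)$ is unitary on $\energysp$ (conservation of the free energy), so that it preserves the $\energysp$ inner product and $\la\vec u(t),\vec v_L(t)\ra_{\energysp}=\la S(-t)\vec u(t),S(-t)\vec v_L(t)\ra_{\energysp}=\la S(-t)\vec u(t),\vec v_L(0)\ra_{\energysp}$. Proposition \ref{globallin} asserts $S(-t)\vec u(t)\rightharpoonup\vec v_L(0)$ in $\energysp$, so pairing against the fixed vector $\vec v_L(0)$ yields the first limit.

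For the potential term I would put $u=a+v_L$ and, with $p:=\tfrac{2N}{N-2}$, apply the elementary inequality $\big||z+w|^{p}-|z|^{p}\big|\lesssim|w|^{p}+|z|^{p-1}|w|$ together with H\"older's inequality to obtain
\begin{align*}
\left|\int_{\R^N}\big(|u(t)|^{p}-|a(t)|^{p}\big)\,dx\right|\lesssim\|v_L(t)\|_{L^{p}}^{p}+\|a(t)\|_{L^{p}}^{p-1}\,\|v_L(t)\|_{L^{p}}.
\end{align*}
By Sobolev embedding and the type II bound \eqref{bdd}, $\|a(t)\|_{L^{p}}\lesssim\|\nabla a(t)\|_{L^2}$ is bounded uniformly in $t$, so it remains to show that $\|v_L(t)\|_{L^{2N/(N-2)}}\to 0$ as $t\to+\infty$, which is the dispersive decay of a finite-energy free wave in the energy-critical Lebesgue norm. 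Given $\varepsilon>0$ one writes $(v_0,v_1)=(\varphi_0,\varphi_1)+(\psi_0,\psi_1)$ with $(\varphi_0,\varphi_1)\in C^\infty_0\times C^\infty_0$ and $\|(\psi_0,\psi_1)\|_{\energysp}<\varepsilon$; finite speed of propagation confines $S(t)(\varphi_0,\varphi_1)$ to a ball of radius $O(t)$ while the pointwise dispersive estimate bounds it by $O(t^{-(N-1)/2})$ in $L^\infty$, so its $L^{2N/(N-2)}$ norm is $O(t^{-1/2})\to 0$, whereas $\|S(t)(\psi_0,\psi_1)\|_{L^{2N/(N-2)}}\lesssim\|(\psi_0,\psi_1)\|_{\energysp}<\varepsilon$ uniformly in $t$ by Sobolev; since $\varepsilon$ is arbitrary, the decay follows. (Alternatively, one may split $\R^N$ into the shell $\{\,\big||x|-t\big|\le R\,\}$ and its complement and combine the radial bound $|v_L(t,r)|\lesssim r^{-(N-2)/2}\|\nabla v_L(t)\|_{L^2}$ with Lemma \ref{loclem}.)

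The conservation-law bookkeeping and the cross-term identity are routine once Proposition \ref{globallin} is available; the only point requiring genuine care — though it is classical — is the dispersive decay $\|v_L(t)\|_{L^{2N/(N-2)}}\to 0$, which is where the structure of the wave propagator in odd dimensions enters.
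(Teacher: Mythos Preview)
Your proof is correct and follows essentially the same approach as the paper: expand the quadratic part of $E(\vec a(t))$ and handle the cross term via unitarity of $S(t)$ together with the weak convergence $S(-t)\vec u(t)\rightharpoonup\vec v_L(0)$, then control the potential difference by H\"older and Sobolev once one knows $\|v_L(t)\|_{L^{2N/(N-2)}}\to 0$, which both you and the paper obtain by density plus the pointwise dispersive estimate and finite speed of propagation. One small remark: the $L^{2N/(N-2)}$ decay of the free wave holds in every dimension $N\ge 3$ (the estimate $|v_L(t,x)|\lesssim t^{-(N-1)/2}$ for smooth compactly supported data is not special to odd $N$), so your closing comment about ``the structure of the wave propagator in odd dimensions'' is not needed here.
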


\begin{proof}
Since
$$S(-t)\vec u(t) \rightharpoonup \vec v_L(0) \quad \mbox{in } \energysp \mbox{ as } t \rightarrow +\infty,$$
conservation of the free linear energy implies
\begin{align*}
\int_{\R^N} |\nabla_{t,x} a(t)|^2 dx =& \int_{\R^N} |\nabla_{t,x} u(t)|^2 dx +
\| (v_0,v_1) \|^2_{\energysp} - 2 \int_{\R^N} \nabla_{t,x}S(-t)\vec u(t) \cdot (v_0,v_1) dx \\
=& \int_{\R^N} |\nabla_{t,x} u(t)|^2 dx -
\| (v_0,v_1) \|^2_{\energysp} + o(1),
\end{align*}
as $t \rightarrow +\infty$.

We claim that
\begin{align}\label{vanishingint}
\lim_{t \rightarrow \infty} \int_{\R^N} |v_L(t)|^{\frac{2N}{N-2}} dx = 0.
\end{align}
Since the linear propagator $S(t)$ is unitary on $\dot H \times L^2$, we may assume that $(v_0,v_1) \in C^\infty \times C^\infty$ and
$\supp (v_0,v_1) \subseteq \{ |x| \leq R \}$ for some $R > 0$.  By finite speed of propagation, supp
$\vec v_L(t) \subseteq \{ |x| \leq R + t\}$ for $t > 0$.  We also have the dispersive estimate
$$
\forall x \in\R^N, \forall t > 0, \quad |v_L(t,x)| \leq C t^{-\frac{N-1}{2}}.
$$
Hence
\begin{align*}
\int_{\R^N} |v_L(t)|^{\frac{2N}{N-2}} dx \leq& C t^{- \frac{N(N-1)}{N-2}} | \{ |x| \leq R + t \} | \\
=& C t^{- \frac{N(N-1)}{N-2}} ( R + t)^N \rightarrow 0 \quad \mbox{as } t \rightarrow +\infty
\end{align*}
as desired.

As in the finite time blow--up case,
\begin{align*}
\int_{\R^N} |u(t)|^{\frac{2N}{N-2}} dx &= \int_{\R^N} |v(t)|^{\frac{2N}{N-2}} dx + \int_{\R^N} |a(t)|^{\frac{2N}{N-2}} dx + 
\epsilon(t) \\
&= \int_{\R^N} |a(t)|^{\frac{2N}{N-2}} dx + \epsilon(t) + o(1),
\end{align*}
where
$$
\epsilon(t) \lesssim \int_{\R^N}\left ( |v(t)|^{\frac{N+2}{N-2}}|a(t)| + |v(t)||a(t)|^{\frac{N+2}{N-2}} \right )dx.
$$
By H\"older's inequality, Sobolev's inequality, and \eqref{vanishingint}, it easily follows that $\lim_{t
\rightarrow +\infty} \epsilon(t) = 0$.  This concludes the proof of the claim.
\end{proof}

\subsection{Exclusion of energy concentration in the self--similar region}
The goal of the subsection is to prove the following global analog of Proposition \ref{selfsimilarblowup}.

\begin{ppn}\label{selfsimilarblowup2}
Let $u$ and $v_L$ be as above.  Then
\begin{align*}
\forall c_0 \in (0,1), \quad \lim_{t \rightarrow +\infty}
\int_{|x| \geq c_0 t} \left ( |\nabla (u - v_L)(t,x)|^2 +
|\partial_t(u - v_L)(t,x)|^2 \right ) dx = 0.
\end{align*}
\end{ppn}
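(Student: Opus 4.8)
The plan is to run, in renormalized self-similar variables, the channel-of-energy scheme of Section 3, with the exact support property $\supp\vec a(t)\subseteq\{|x|\le 1-t\}$ of the blow-up case replaced by the asymptotic localization \eqref{linbehavior}, and with the ``regular part'' profile replaced by a profile built from $v_L$; the latter, rather than having scale tending to $\infty$ as in the blow-up case, will by Lemma \ref{loclem} concentrate on the light cone $\{|x|=t\}$. I would argue by contradiction: assume there are $t_n\to+\infty$, $c_0\in(0,1)$ and $\alpha>0$ with $\int_{|x|\ge c_0 t_n}|\nabla_{t,x}a(t_n)|^2\,dx\ge\alpha$, and, using \eqref{linbehavior} with $A=0$, refine this to $\int_{c_0 t_n\le|x|\le t_n}|\nabla_{t,x}a(t_n)|^2\,dx\ge\alpha/2$ for large $n$. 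Renormalize at scale $t_n$,
\[
u_n(\tau,y)=t_n^{\frac{N-2}{2}}u\big(t_n(1+\tau),t_ny\big),\qquad v_{L,n}(\tau,y)=t_n^{\frac{N-2}{2}}v_L\big(t_n(1+\tau),t_ny\big),
\]
and set $a_n=u_n-v_{L,n}$, so that $\tau=0$ corresponds to $t=t_n$ and the forward light cone $\{|x|\le t\}$ becomes $\{|y|\le 1+\tau\}$. From \eqref{linbehavior} one gets, for each fixed $\tau$, $\int_{|y|\ge 1+\tau}|\nabla_{\tau,y}a_n(\tau)|^2\,dy\to 0$ and $\lim_{R\to\infty}\limsup_n\int_{|y|\ge 1+\tau+R/t_n}|\nabla_{\tau,y}a_n(\tau)|^2\,dy=0$, which play the role of ``$\vec a_n(\tau)$ is supported in $\{|y|\le 1+\tau\}$''; at $\tau=0$ the contradiction hypothesis becomes $\int_{c_0\le|y|\le 1}|\nabla_{\tau,y}a_n(0)|^2\,dy\ge\alpha/2$.

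After extracting a subsequence, $\{\vec u_n(0)\}_n$ admits a profile decomposition in which one profile is $v_{L,n}$ — with scale $\lambda_{0,n}=1/t_n\to 0$ and $t_{0,n}\equiv-1$, so that $-t_{0,n}/\lambda_{0,n}=t_n\to+\infty$ and, by Lemma \ref{loclem}, $\vec v_{L,n}(0)$ concentrates on the shell $|y|=1$ — together with the profiles $\{U^j_L\}_{j\ge 1}$ of $\vec a_n(0)$. Since $\int_{|y|\ge 1}|\nabla_{\tau,y}a_n(0)|^2\,dy\to 0$, Lemma \ref{bddlem} with $\mu_n\equiv 1$ shows all parameters $\lambda_{j,n},t_{j,n}$ are bounded and at most one profile, say $U^1$, has $\lambda_{1,n}\not\to 0$; after normalizing $\lambda_{1,n}=1,t_{1,n}=0$, the arguments of Lemma \ref{taulim} and of the claim following it carry over, with \eqref{linbehavior} in place of exact support, to give $|\tilde\tau_j|\le 1$ for all $j$ and $\supp\vec U^1(0)\subseteq\{|y|\le r_0\}$ with $r_0:=\inf\{\rho:\supp\vec U^1(0)\subseteq\{|y|\le\rho\}\}\in[0,1]$. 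I would then split into the same three cases as in Section 3 (Case 1: $r_0>0$ and $|\tilde\tau_j|\le r_0$ for all $j\ge 2$; Case 2: $|\tilde\tau_j|>r_0$ for some $j\ge 2$; Case 3: $r_0=0$ and all $\tilde\tau_j=0$), and in each case run the channel-of-energy argument verbatim: truncate $\vec u_n(0)$ away from a ball to produce a finite-energy solution $\tilde u_n$ of \eqref{nlw}, apply Proposition \ref{extbds} to a small channel piece concentrated just outside the relevant shell, and use Proposition \ref{approxthm} together with Lemma \ref{locorth} and Lemma \ref{displem} to transfer the lower bound to $u_n-v_{L,n}$. The only changes are that, where Section 3 used $\vec a_n=\vec u_n-\vec v_n$ to vanish in an exterior region, one instead uses finite speed of propagation for the nonlinear solutions $u_n,\tilde u_n$ together with the fact (Lemma \ref{loclem}) that $\vec v_{L,n}(\tau)$ is negligible in $\energysp$ on any fixed annulus bounded away from $\{|y|=1+\tau\}$. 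The outcome is the exact analog of Lemma \ref{channelofenergyI}: there is $\eta_1>0$ such that, for large $n$, either $\int_{|y|\ge|\theta_n|}|\nabla_{\tau,y}(u_n-v_{L,n})(\theta_n)|^2\,dy\ge\eta_1$ for every sequence $\theta_n\in I_n^+$, or the same holds for every $\theta_n\in I_n^-$, where $I_n^+=I_n\cap[1/2,+\infty)$ and $I_n^-=I_n\cap(-\infty,-1/2]$.

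Finally I would contradict both alternatives, as in Claim \ref{vanishingclaimI}. For the $I_n^-$ alternative, take $\theta_n^-=-\tfrac12\in I_n^-$; then $\int_{|y|\ge 1/2}|\nabla_{\tau,y}a_n(-\tfrac12)|^2\,dy=\int_{|x|\ge t_n/2}|\nabla_{t,x}a(t_n/2)|^2\,dx\to 0$ by \eqref{linbehavior} with $A=0$, since $t_n/2$ is precisely the radius of the forward light cone at time $t_n/2$. For the $I_n^+$ alternative, note that \eqref{linbehavior} together with a diagonal extraction produces a function $A(t)\to+\infty$ with $\int_{|x|\ge t-A(t)}|\nabla_{t,x}a(t)|^2\,dx\to 0$ as $t\to+\infty$; choosing $\theta_n^+\in I_n^+$ large enough (depending on $n$) that $A\big(t_n(1+\theta_n^+)\big)\ge t_n$ and writing $t=t_n(1+\theta_n^+)$, so that $t_n\theta_n^+=t-t_n$, one obtains $\int_{|y|\ge\theta_n^+}|\nabla_{\tau,y}a_n(\theta_n^+)|^2\,dy=\int_{|x|\ge t-t_n}|\nabla_{t,x}a(t)|^2\,dx\le\int_{|x|\ge t-A(t)}|\nabla_{t,x}a(t)|^2\,dx\to 0$. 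Either way this contradicts the previous paragraph and proves Proposition \ref{selfsimilarblowup2}.

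The step I expect to be the main obstacle is making the channel-of-energy machinery of Section 3 run with only the asymptotic localization \eqref{linbehavior} and with the $v_L$-profile sitting exactly on the light cone: one must verify that truncating $\vec u_n(0)$ away from a ball to build $\tilde u_n$ interacts correctly with the concentrating sequence $\vec v_{L,n}(0)$, that the various identities of the blow-up case degrade only to quantitative $o_n(1)$ errors in $\energysp$ controlled by \eqref{linbehavior} and Lemma \ref{loclem}, and — at the outgoing ($I_n^+$) end of the light cone — that the contradiction must be produced by the $n$-dependent choice of $\theta_n^+$ tied to the a priori uncontrolled rate of decay in \eqref{linbehavior}, rather than by a fixed test time as in the blow-up case.
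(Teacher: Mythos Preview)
Your proposal is correct and follows essentially the same route as the paper. The paper streamlines one point: rather than working with $u-v_L$ throughout, it introduces the nonlinear solution $v$ that scatters forward to $v_L$ and runs the whole argument with $u_n-v_n$; this makes the application of Proposition \ref{approxthm} direct (the zeroth nonlinear profile is $v$ itself, so no $v-v_L$ errors need to be tracked in exterior regions) and permits the simpler contradicting choice $\theta_n^-=-1$, for which $(u-v)(0)$ is a fixed $\energysp$ function whose energy outside $\{|x|\ge s_n\}$ vanishes trivially.
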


The proof of Proposition \ref{selfsimilarblowup2} will be very similar to the proof of 
Proposition \ref{selfsimilarblowup}.  As in the finite time blowup case, we argue by contradiction.  Suppose the conclusion of \eqref{selfsimilarblowup2} does not hold.
Then there exists a sequence of times $\{s_n\}_n$ and $c_0 > 0$ such that $s_n \rightarrow +\infty$ and for all $n$
\begin{align*}
\int_{|x| \geq c_0 s_n} ( |\nabla (u - v_L)(s_n,x)|^2 +
|\partial_t(u - v_L)(s_n,x)|^2  ) dx \geq c_0.
\end{align*}

Let $v$ be the unique solution to \eqref{nlw} such that $T_+(v) = +\infty$ and
$$
\lim_{t \rightarrow +\infty} \| \vec v_L(t) - \vec v(t) \|_{\energysp} = 0.
$$
Without loss of generality, we can assume that $v(t)$ is defined on $[0,+\infty)$.  Note that by our definition of 
$v$ and \eqref{linbehavior}, we have for all $A \in \R$
$$
\lim_{t \rightarrow +\infty} \int_{|x| \geq t - A} |\nabla_{t,x}(u - v)(t,x)|^2 dx = 0.
$$
Let
\begin{align*}
u_n(\tau,y) &= s_n^{\frac{N-2}{2}} u(s_n + s_n\tau, s_ny), \\
v_n(\tau,y) &= s_n^{\frac{N-2}{2}} v(s_n + s_n\tau, s_ny).
\end{align*}
for $\tau \in [-1,+\infty)$ and $y \in \R^N$.  Note that
\begin{align}\label{globalsupport}
\lim_{n \rightarrow \infty} \int_{|y| \geq 1} |\nabla_{\tau,y} (u_n - v_n)(0,y) |^2 dy =
\lim_{n \rightarrow \infty} \int_{|x| \geq s_n} |\nabla_{t,x} (u - v)(s_n,x) |^2 dx = 0.
\end{align}
After extraction, assume that $\vec u_n(0) - \vec v_n(0)$ admits a profile decomposition
with profiles $\{ U^j_L \}_j$ and parameters $\{ \lambda_{j,n}, \tau_{j,n} \}_{j,n}$. As usual we will assume either
\begin{align*}
\tau_{j,n} = 0 \mbox{ for all } n \geq 1,\mbox{ or } \lim_{n \rightarrow \infty} \frac{-\tau_{j,n}}{\lambda_{j,n}} = \pm \infty,
\end{align*}
and that the following limits exist in $[-\infty,+\infty]$:
\begin{align*}
\tilde \tau_j &= \lim_{n \rightarrow \infty} -\tau_{j,n}, \\
\tilde \lambda_j &= \lim_{n \rightarrow \infty} \lambda_{j,n}.
\end{align*}
\begin{lem}\label{taulim2}
Fix $j \geq 1$.  Then
\begin{align*}
\tilde \tau_j \in [-1,1],
\end{align*}
and the sequence $\{ \lambda_{j,n} \}_n$ is bounded.  Furthermore,
\begin{align}\label{globalorthparam}
\lim_{n \rightarrow \infty} s_n \lambda_{j,n} + \frac{1}{\lambda_{j,n}s_n} + |\tau_{j,n} + 1|s_n = +\infty.
\end{align}
\end{lem}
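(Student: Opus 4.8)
The plan is to split the statement into two parts: (i) the boundedness of $\{\lambda_{j,n}\}_n$ together with $\tilde\tau_j\in[-1,1]$, which will follow essentially verbatim from the argument used for Lemma \ref{taulim} with the exact support identity \eqref{ansupp} replaced by the weaker vanishing \eqref{globalsupport}; and (ii) the pseudo-orthogonality relation \eqref{globalorthparam}, which is the genuinely new point in the globally defined case, and where the extra input is that the exterior decay \eqref{linbehavior} (equivalently, the vanishing of $u-v$ outside the light cone recorded just above) prevents a profile from hiding at scale $\asymp 1/s_n$ near the focusing time $\tau=-1$.

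For part (i): by \eqref{globalsupport} the free energy of $\vec u_n(0)-\vec v_n(0)$ outside $\{|y|\leq 1\}$ tends to $0$, so the constant sequence $\mu_n\equiv 1$ satisfies the hypothesis of Lemma \ref{bddlem} for $\{\vec u_n(0)-\vec v_n(0)\}_n$; hence $\{\lambda_{j,n}\}_n$ and $\{\tau_{j,n}\}_n$ are bounded and in particular $\tilde\lambda_j,\tilde\tau_j\in\R$. To see $|\tilde\tau_j|\leq 1$ I would repeat the argument of Lemma \ref{taulim}: if $\tau_{j,n}=0$ for all $n$ there is nothing to prove; otherwise $-\tau_{j,n}/\lambda_{j,n}\to\pm\infty$ forces $\lambda_{j,n}\to 0$, and assuming $|\tilde\tau_j|=1+\eta>1$ one uses Lemma \ref{loclem} to trap the free energy of $U^j_{L,n}(0)$ in the annulus $\{\,||y|-|\tau_{j,n}||\leq R\lambda_{j,n}\,\}$, which for large $n$ lies in $\{|y|\geq 1\}$; then \eqref{globalsupport}, the localized orthogonality of the profiles (Lemma \ref{locorth}), and conservation of the free energy give $\|(U^j_0,U^j_1)\|_{\energysp}^2\leq 2\epsilon$ for arbitrary $\epsilon>0$, hence $U^j_L\equiv 0$, a contradiction.

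For part (ii) I would argue by contradiction. If the left-hand limit in \eqref{globalorthparam} is finite along a subsequence then, since the three summands are nonnegative and the first two are reciprocal, after a further extraction $s_n\lambda_{j,n}\to\ell\in(0,\infty)$ and $|\tau_{j,n}+1|\,s_n\to m\in[0,\infty)$; in particular $\lambda_{j,n}\to 0$, $\tau_{j,n}\to-1$, and therefore $-\tau_{j,n}/\lambda_{j,n}\to+\infty$. By Lemma \ref{loclem}, given $\epsilon>0$ there is $R=R(\epsilon)$ so that for large $n$ the free energy of $U^j_{L,n}(0)$ outside the annulus $A_n:=\{\,||y|-|\tau_{j,n}||\leq R\lambda_{j,n}\,\}$ is at most $\epsilon$. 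The key elementary observation is that, since $|\tau_{j,n}|\geq 1-(m+1)/s_n$ and $R\lambda_{j,n}\leq R(\ell+1)/s_n$ for large $n$, one has $A_n\subseteq\{|y|\geq 1-A/s_n\}$ for a constant $A$ depending only on $m$, $\ell$, and $R$. Changing variables back to $(t,x)$ and using that $s_n\to+\infty$ and $\int_{|x|\geq t-A}|\nabla_{t,x}(u-v)(t,x)|^2\,dx\to 0$ as $t\to+\infty$ for every fixed $A$ (the consequence of \eqref{linbehavior} recorded above), one gets
\begin{align*}
\int_{|y|\geq 1-A/s_n}|\nabla_{\tau,y}(u_n-v_n)(0,y)|^2\,dy
&=\int_{|x|\geq s_n-A}|\nabla_{t,x}(u-v)(s_n,x)|^2\,dx\longrightarrow 0.
\end{align*}
On the other hand, since $A_n\subseteq\{|y|\geq 1-A/s_n\}$, the profile decomposition of $\vec u_n(0)-\vec v_n(0)$ together with the localized orthogonality (Lemma \ref{locorth}) and conservation of the free energy for $U^j_L$ give
\begin{align*}
\int_{|y|\geq 1-A/s_n}|\nabla_{\tau,y}(u_n-v_n)(0,y)|^2\,dy
&\geq\int_{A_n}|\nabla_{\tau,y}U^j_{L,n}(0,y)|^2\,dy+o(1)\\
&\geq\|(U^j_0,U^j_1)\|_{\energysp}^2-\epsilon+o(1).
\end{align*}
Letting $n\to\infty$ and then $\epsilon\to 0$ forces $U^j_L\equiv 0$, contradicting $U^j_L\not\equiv 0$. (If instead $s_n\lambda_{j,n}\to+\infty$ or $s_n\lambda_{j,n}\to 0$ then one of the first two terms in \eqref{globalorthparam} already diverges and there is nothing to prove, so the contradiction hypothesis is exactly the case treated above.)

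The step I expect to be the main obstacle is part (ii): one must rule out a profile that sits at scale comparable to $1/s_n$ on the light cone and at the focusing time $\tau=-1$, i.e.\ a bubble concealed inside the radiation $v$ that has been subtracted off. The correct tool is the exterior decay \eqref{linbehavior}, and the only delicate point is the bookkeeping of choosing the cutoff radius $A$ large enough (depending on $R(\epsilon)$, $\ell$, and $m$) that the concentration annulus $A_n$ is absorbed into $\{|y|\geq 1-A/s_n\}$ for all large $n$; since $A$ may be taken as large as needed once $\epsilon$ is fixed, no circularity arises when $\epsilon$ is finally sent to $0$.
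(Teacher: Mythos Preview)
Your argument is correct. For part (i) it is indeed identical to the paper's proof, which simply says the conclusion ``follows almost verbatim as in the finite time blow--up case,'' using \eqref{globalsupport} in place of \eqref{ansupp}.

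For part (ii), however, your route differs from the paper's. The paper dispatches \eqref{globalorthparam} in one line by observing that $S(-s_n)\bigl(\vec u(s_n)-\vec v(s_n)\bigr)\rightharpoonup 0$ in $\energysp$ (this follows from $S(-t)\vec u(t)\rightharpoonup \vec v_L(0)$ together with the strong convergence $\vec v(t)-\vec v_L(t)\to 0$), and then invokes the standard weak--limit characterization of profiles in a Bahouri--G\'erard decomposition: if the sequence, rescaled by parameters $(1/s_n,-1)$ in the $(\tau,y)$ variables, has weak limit zero, then those parameters are pseudo--orthogonal to every profile's parameters, which is precisely \eqref{globalorthparam}. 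Your argument instead uses the \emph{strong} exterior decay recorded just before the lemma (i.e.\ $\int_{|x|\geq t-A}|\nabla_{t,x}(u-v)|^2\,dx\to 0$ for each fixed $A$), combines it with Lemma \ref{loclem} to confine the profile's free energy to the shell $A_n\subseteq\{|y|\geq 1-A/s_n\}$, and then applies the localized Pythagorean expansion (Lemma \ref{locorth}) to force $U^j_L\equiv 0$. Your approach is longer but is fully self--contained within the tools the paper explicitly develops, and it parallels the structure of part (i); the paper's approach is slicker but relies on the reader knowing the weak--limit characterization of profile parameters from the general theory.
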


\begin{proof}
The claims that $\tau_j \in [-1,1]$ and that the sequence $\{ \lambda_{j,n} \}_n$ is bounded follow almost verbatim as in the finite
time blow--up case.  One simply uses \eqref{globalsupport} instead of \eqref{ansupp}. The last claim in Lemma \eqref{taulim2} follows
from the fact that $S(-s_n)(\vec u(s_n) - \vec v(s_n)) \rightharpoonup 0$ in $\energysp$ as $n \rightarrow \infty$.
\end{proof}

As in the proof of Proposition \ref{selfsimilarblowup}, by orthogonality of the parameters and \eqref{globalsupport} there
is at most one index $j \geq 1$ such that $\tilde \lambda_j > 0$.  Reordering the profiles, we will always assume that this index is 1, setting
$U^1 = 0$ if there is no index $j$ with the preceding property.  We can also assume that for all $n$, $\tau_{1,n} = 0$ and $\lambda_{j,n} =1$.
It follows from \eqref{globalsupport} that $\vec U^1(0)$ is supported in $\{ |y| \leq 1\}$.  Let 
$$
r_0 = \sup \{ \rho : \supp \vec U^1(0) \subseteq \{ |y| \leq \rho \} \}.
$$
As in the finite time blowup case, we consider three cases:
\begin{itemize}
\item Case 1: $r_0 > 0$ and for all $j \geq 2$, $|\tilde \tau_j | \leq r_0$.
\item Case 2: There exists $j \geq 2$ such that $|\tilde \tau_j | > r_0$.
\item Case 3: $r_0 = 0$ and for all $j \geq 2$, $\tilde \tau_j = 0$.
\end{itemize}
These three cases yield the following consequence.

\begin{lem}\label{channelofenergyII}
Let $I^+ = [1/2, +\infty)$, and let $I^- = [-1,-1/2]$.  Then there exists $\eta_1 > 0$ such that the following holds for large $n$;
for all sequences $\{\theta_n\}_n$ with $\theta_n \in I^+$, or for all sequences $\{\theta_n\}_n$ with $\theta_n \in I^-$
\begin{align*}
\int_{|y| \geq |\theta_n|} \left | \nabla_{\tau,y} (u_n - v_n)(\theta_n,y) \right |^2 dy \geq \eta_1.
\end{align*}
\end{lem}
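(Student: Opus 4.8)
The plan is to mirror the proof of Lemma~\ref{channelofenergyI} (the finite--time blow--up case) in the renormalized variables $u_n,v_n$, changing only three ingredients. The exact support property \eqref{ansupp} is replaced by the weaker vanishing \eqref{globalsupport}; the ``regular part'' is replaced by the nonlinear solution $v$ of \eqref{nlw} asymptotic to $v_L$, which—being asymptotic to a linear solution—itself scatters forward in time, so $\|v\|_{S(0,+\infty)}<\infty$; and the $n$--dependent intervals $I_n^{\pm}$ become the fixed intervals $I^{\pm}$. Concretely, I would start from the profile decomposition of $\vec u_n(0)-\vec v_n(0)$, use Lemma~\ref{taulim2} to get $\tilde\tau_j\in[-1,1]$ and $\{\lambda_{j,n}\}_n$ bounded, reorder and rescale so that $j=1$ is the unique profile with $\tilde\lambda_1>0$ (with $\lambda_{1,n}\equiv1$, $\tau_{1,n}\equiv0$ and $\supp\vec U^1(0)\subseteq\{|y|\le1\}$) or $U^1\equiv0$, and split into the three cases already isolated above. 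Each case produces a positive constant, and I take $\eta_1$ to be the smallest one.

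In Case~1 ($r_0>0$ and $|\tilde\tau_j|\le r_0$ for $j\ge2$) I would truncate $\vec U^1(0)$ at radius $r_0-\epsilon_0$ to obtain small--norm radial data $(\tilde U_0,\tilde U_1)$ with $\|(\tilde U_0,\tilde U_1)\|^2_{\energysp}=\eta_0$, agreeing with $\vec U^1(0)$ on $\{|y|>r_0-\epsilon_0\}$, and let $\tilde U$ solve \eqref{nlw}. Cauchy--Schwarz bounds on the coefficient integrals in the explicit formula for the right side of \eqref{extbd1} give $\|\pi^{\perp}_{P(r_0-\epsilon_0)}(\tilde U_0,\tilde U_1)\|^2_{\energysp}\ge\tfrac34\eta_0$ for $\epsilon_0$ small, so Proposition~\ref{extbds} with $R=r_0-\epsilon_0$, together with the small--data comparison of $\tilde U$ and its linear evolution, yields for one sign of $\tau$ that $\int_{r_0-\epsilon_0+|\tau|\le|y|\le r_0+|\tau|}|\nabla_{\tau,y}\tilde U(\tau)|^2\,dy\ge\eta_0/4$. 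Since each $U^j_{L,n}$, $j\ge2$, concentrates near $\big||y|-|\tau_{j,n}|\big|\simeq\lambda_{j,n}\to0$, Lemma~\ref{loclem} lets me freeze a cutoff $\varphi$ ($1$ on $\{|y|\ge r_0-\epsilon_0\}$, $0$ on $\{|y|\le r_0-2\epsilon_0\}$), replacing $\varphi\vec U^j_{L,n}(0)$ by $\varphi(|\tilde\tau_j|)\vec U^j_{L,n}(0)+o(1)$; Lemma~\ref{displem} handles $\varphi w^J_n$; and the $\varphi(|\tilde\tau_j|)$--weighted profiles have $S(0,r_0/4)$--norm tending to $0$. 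Letting $\tilde u_n$ solve \eqref{nlw} with data $v_n(0)+\tilde U(0)+\varphi\sum_{j=2}^J U^j_{L,n}(0)+\varphi w^J_n$ at $\tau=0$, applying Proposition~\ref{approxthm} on $[0,r_0/4]$, truncating a second time, and invoking finite speed of propagation together with Proposition~\ref{approxthm} once more, I would obtain, for $\tau\ge r_0/4$ and $|y|\ge\tau+r_0-\epsilon_0$,
\begin{align*}
\vec u_n(\tau)-\vec v_n(\tau)=\vec{\tilde U}(\tau)+\sum_{\tilde\tau_j>0}\varphi(\tilde\tau_j)\vec U^j_{L,n}(\tau)+\vec{\check w}^J_n(\tau)+\vec{\check r}^J_n(\tau),
\end{align*}
with $\lim_J\limsup_n\big[\sup_{\tau\ge r_0/4}\|\vec{\check r}^J_n(\tau)\|_{\energysp}+\|\check r^J_n\|_{S(r_0/4,+\infty)}\big]=0$; all the nonlinear profiles here scatter forward in time (the $v$--profile because $v$ is asymptotic to $v_L$, the $\tilde U$--piece by smallness, and $U^j$ with $\tilde\tau_j>0$ because $-\tau_{j,n}/\lambda_{j,n}\to+\infty$), which is what lets Proposition~\ref{approxthm} apply even for unbounded $\theta_n$. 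Lemma~\ref{locorth} and the above lower bound on $\tilde U$ then give, for $\theta_n\in I^{+}\subset[r_0/4,+\infty)$, $\int_{|y|\ge\theta_n}|\nabla_{\tau,y}(u_n-v_n)(\theta_n,y)|^2\,dy\ge\eta_0/8$ (or along $I^{-}$ if the sign of $\tau$ above was $-$). Case~2 (some $|\tilde\tau_j|>r_0$) runs identically with $\tilde r_0:=\sup_{j\ge2}|\tilde\tau_j|\in(r_0,1]$ in place of $r_0$ and without the $\tilde U$ term: Lemma~\ref{loclem} localizes $U^2_{L,n}(\theta_n)$ to a thin annulus around $|y|=\theta_n-\tau_{2,n}=\theta_n+\tilde\tau_2+o(1)$, which for large $n$ sits inside $\{|y|\ge\theta_n+\tilde r_0-\epsilon_0\}$, so Lemma~\ref{locorth} and conservation of the free energy give $\int_{|y|\ge\theta_n}|\nabla_{\tau,y}(u_n-v_n)(\theta_n,y)|^2\,dy\ge\tfrac14\|(U^2_0,U^2_1)\|^2_{\energysp}$.

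In Case~3 ($U^1\equiv0$, $\tilde\tau_j=0$ and $\lambda_{j,n}\to0$ for all $j\ge2$) every $\vec U^j_{L,n}(0)$ has energy vanishing on $\{|y|\ge\epsilon\}$ for each $\epsilon>0$ (Lemma~\ref{loclem}), so the concentrating energy in the self--similar region must come from the dispersive error—this is where the contradiction hypothesis is used. The hypothesis $\int_{|x|\ge c_0s_n}(|\nabla(u-v_L)(s_n)|^2+|\partial_t(u-v_L)(s_n)|^2)\,dx\ge c_0$, combined with $\|\vec v_L(s_n)-\vec v(s_n)\|_{\energysp}\to0$ and renormalization, yields $\int_{|y|\ge c_0}|\nabla_{\tau,y}(u_n-v_n)(0,y)|^2\,dy\ge c_0/2$ for large $n$, hence—taking $\epsilon_0<c_0$ and discarding the vanishing profile contributions—a fixed $\eta_0>0$ with $\limsup_n\int_{|y|\ge\epsilon_0}|\nabla w^J_{0,n}|^2+|w^J_{1,n}|^2\,dy\ge\eta_0$ for all $J$. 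Truncating $w^J_n$ by a cutoff $\varphi$ and using Lemmas~\ref{loclem} and~\ref{displem} gives $\tilde u_n=v_n+\tilde w_n$ with $\|S(\tau)\tilde w_n\|_{S(\R)}\to0$, $\tilde w_n\rightharpoonup0$ in $\energysp$, and $\limsup_n\int_{|y|\ge\epsilon_0}|\nabla\tilde w_{0,n}|^2+|\partial_\tau\tilde w_{0,n}|^2\,dy\ge\eta_0$; since $P(\epsilon_0)$ is finite dimensional, $\|\pi_{P(\epsilon_0)}(\tilde w_{0,n},\tilde w_{1,n})\|_{\energysp(|x|\ge\epsilon_0)}\to0$, so \eqref{extbd1} with $R=\epsilon_0$ gives $\int_{\epsilon_0+|\tau|<|y|}|\nabla\tilde w_n(\tau)|^2+|\partial_\tau\tilde w_n(\tau)|^2\,dy\ge\eta_0/2$ for one sign of $\tau$ and large $n$. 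Finally, Proposition~\ref{approxthm} applied to $\tilde u_n$—whose profile decomposition consists of the $v$--profile (which scatters forward) plus the $S(\R)$--negligible remainder $\tilde w_n$—together with finite speed of propagation gives $(u_n-v_n)(\theta_n)=\tilde w_n(\theta_n)+\tilde r_n(\theta_n)$ on $\{|y|\ge\epsilon_0+\theta_n\}$ with $\|\vec{\tilde r}_n(\theta_n)\|_{\energysp}\to0$, so $\int_{|y|\ge\theta_n}|\nabla_{\tau,y}(u_n-v_n)(\theta_n,y)|^2\,dy\ge\eta_0/8$ for $\theta_n\in I^{+}$ (or $I^{-}$).

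The step I expect to need the most care is the bookkeeping that replaces the exact support property of the blow--up case: since $u_n-v_n$ is merely small on $\{|y|\ge1\}$ at $\tau=0$ rather than supported in $\{|y|\le1\}$, I would have to verify that \eqref{globalsupport} enters only through Lemma~\ref{taulim2} and the localization $\supp\vec U^1(0)\subseteq\{|y|\le1\}$, whereas the finite--speed--of--propagation identities ``$\tilde u_n=u_n$'' and ``$\check u_n=u_n$'' outside the relevant light cones follow from the \emph{exactness} of the profile decomposition and of the Duhamel expansion in Proposition~\ref{approxthm}, not from any support hypothesis. The one genuinely new point compared with the blow--up proof—where $\theta_n$ stayed bounded—is that here $\theta_n$ ranges over all of $I^{+}=[1/2,+\infty)$, so I must check that the $v$--profile, carried with $\lambda_{0,n}=1/s_n$ and $\tau_{0,n}=-1$ (so $-\tau_{0,n}/\lambda_{0,n}=s_n\to+\infty$), meets the hypotheses of Proposition~\ref{approxthm} on $[0,\theta_n]$ even when $\theta_n\to+\infty$; this is exactly where $\|v\|_{S(s_n,(1+\theta_n)s_n)}\le\|v\|_{S(s_n,+\infty)}\to0$, i.e.\ the forward scattering of $v$, is used.
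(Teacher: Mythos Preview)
Your proposal is correct and follows essentially the same approach as the paper, which explicitly states that the proof is a verbatim copy of the proof of Lemma~\ref{channelofenergyI} and omits it. Your detailed sketch accurately tracks the three cases and correctly identifies the two places where the argument differs from the blow--up setting: the exact support property \eqref{ansupp} is replaced by the vanishing \eqref{globalsupport} (used only through Lemma~\ref{taulim2} and the localization of $\vec U^1(0)$), and the possibly unbounded range of $\theta_n\in I^+$ is accommodated because the $v$--profile scatters forward in time.
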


\begin{proof}
The proof is a verbatim copy of the proof of Lemma \ref{channelofenergyI} and we omit it.
\end{proof}

Lemma \ref{channelofenergyII} contradicts the following claim which finishes the proof of Proposition \ref{selfsimilarblowup2}.

\begin{clm}\label{vanishingclaimII}
Let $u$ satisfy the assumptions of Proposition \ref{selfsimilarblowup2}.  There exists sequences $\{\theta_n^{\pm}\}_n \in I^{\pm}$ such that
\begin{align*}
\lim_{n \rightarrow \infty} \int_{|y| \geq |\theta_n^{\pm}|}
\left |\nabla_{\tau_,y} (u_n - v_n)(\theta^{\pm}_n,y) \right |^2 dy = 0.
\end{align*}
\end{clm}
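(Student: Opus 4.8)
The plan is to reduce the claim to the exterior energy decay recorded just before the definition of $u_n,v_n$: writing $a := u-v$, one has, as a consequence of \eqref{linbehavior} and the choice of $v$, that for every fixed $A\in\R$
\begin{align*}
\lim_{t\rightarrow +\infty}\int_{|x|\geq t-A}|\nabla_{t,x}a(t,x)|^2\,dx = 0.
\end{align*}
Unwinding the rescaling $u_n(\tau,y)=s_n^{(N-2)/2}u(s_n+s_n\tau,s_ny)$ (and likewise for $v_n$) and substituting $x=s_ny$, for every $\theta\in\R$ one obtains the identity
\begin{align*}
\int_{|y|\geq|\theta|}|\nabla_{\tau,y}(u_n-v_n)(\theta,y)|^2\,dy = \int_{|x|\geq s_n|\theta|}|\nabla_{t,x}a(s_n(1+\theta),x)|^2\,dx,
\end{align*}
so it suffices to pick $\theta_n^{\pm}\in I^{\pm}$ making the right-hand side tend to $0$.

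For the backward direction take $\theta_n^-:=-\tfrac12\in I^-$. Then the evaluation time is $s_n/2\rightarrow+\infty$ and the region is $\{|x|\geq s_n/2\}\subseteq\{|x|\geq s_n/2-1\}$, so by the exterior energy decay with $A=1$ the corresponding integral is $\leq\int_{|x|\geq s_n/2-1}|\nabla_{t,x}a(s_n/2,x)|^2\,dx\rightarrow 0$. For the forward direction a fixed choice of $\theta_n^+$ will not work: at time $t=s_n(1+\theta)$ the region $\{|x|\geq s_n|\theta|\}$ is exactly $\{|x|\geq t-s_n\}$, which for fixed $\theta$ is far larger than the region $\{|x|\geq t-A\}$ controlled by exterior energy decay with a fixed constant $A$ — this is where the argument departs from the finite-time blow-up case, in which $\vec a$ is supported in the light cone and $\theta_n^+=\tfrac12$ makes the integral literally zero (Claim \ref{vanishingclaimI}). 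The remedy is to use exterior energy decay with the constant $A=s_n$, which is legitimate since $s_n$ is fixed for each fixed $n$: for each $n$ one has $\lim_{t\rightarrow\infty}\int_{|x|\geq t-s_n}|\nabla_{t,x}a(t,x)|^2\,dx=0$, so one may choose $t_n\geq\tfrac32 s_n$ large enough that $\int_{|x|\geq t_n-s_n}|\nabla_{t,x}a(t_n,x)|^2\,dx<\tfrac1n$, and set $\theta_n^+:=t_n/s_n-1\geq\tfrac12$, i.e. $\theta_n^+\in I^+$; the right-hand side of the identity above is then $<1/n$.

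The one point to be careful about — and the only real obstacle — is that this construction forces $\theta_n^+\rightarrow+\infty$ in general. This is harmless: $I^+=[1/2,+\infty)$ is the unbounded half-line and Lemma \ref{channelofenergyII} is asserted for \emph{all} sequences in $I^+$, so it applies to $\{\theta_n^+\}$ regardless (its proof, the verbatim analog of that of Lemma \ref{channelofenergyI}, is run globally in time: Step 1 of the Case 1 argument furnishes the annular lower bound for all $\tau\geq 0$, and the superposition of Proposition \ref{approxthm} is valid on $[0,\theta_n]$ with $\theta_n\rightarrow\infty$ since every profile surviving the spatial truncation either scatters or has vanishing Strichartz norm on the relevant forward interval). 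Hence Lemma \ref{channelofenergyII}, applied to $\{\theta_n^+\}$ or to $\{\theta_n^-\}$ according to which of its two alternatives holds, contradicts the vanishing just established, and Proposition \ref{selfsimilarblowup2} follows.
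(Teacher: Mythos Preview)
Your proof is correct and follows essentially the same route as the paper's: the same change-of-variables identity, and for $\theta_n^+$ the identical diagonal choice $t_n\gg s_n$ exploiting $\lim_{t\to\infty}\int_{|x|\geq t-s_n}|\nabla_{t,x}(u-v)(t,x)|^2\,dx=0$ for each fixed $n$. The only cosmetic difference is on the backward side: the paper takes $\theta_n^-=-1$, which lands at the fixed time $0$ and makes the integral $\int_{|x|\geq s_n}|\nabla_{t,x}(u-v)(0,x)|^2\,dx\to 0$ by dominated convergence alone, whereas you take $\theta_n^-=-\tfrac12$ and invoke the exterior decay at time $s_n/2$; both are valid.
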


\begin{proof}[Proof of Claim \ref{vanishingclaimII}]
We go back to the original variables and write
\begin{align}\label{vanishingclaimIIeqI}
\int_{|y| \geq |\theta_n^\pm|} \left |\nabla_{\tau_,y} (u_n - v_n)(\theta^{\pm}_n,y) \right |^2 dy = \int_{|x| \geq s_n |\theta_n^\pm|}
\left |\nabla_{t,x} (u - v)(s_n + s_n \theta^{\pm}_n,x) \right |^2 dx.
\end{align}
Taking $\theta_n^- = -1$ implies that right hand side of \eqref{vanishingclaimIIeqI} goes to 0 as $n \rightarrow \infty$.  Let
$\sigma_n = s_n + \theta_n^+ s_n$.  Then
\begin{align*}
 \int_{|x| \geq s_n |\theta_n^\pm|}
\left |\nabla_{t,x} (u - v)(s_n + s_n \theta^{\pm}_n,x) \right |^2 dx =
 \int_{|x| \geq \sigma_n - s_n}
\left |\nabla_{t,x} (u - v)(\sigma_n,x) \right |^2 dx.
\end{align*}
For each $n$, we have that
$$
\lim_{t \rightarrow +\infty}  \int_{|x| \geq t - s_n}
\left |\nabla_{t,x} (u - v)(t,x) \right |^2 dx = 0.
$$
Thus, we can choose $\sigma_n \geq \frac{3}{2}s_n$ (and hence $\theta_n^+ \geq \frac{1}{2})$ so that
$$
\lim_{n \rightarrow \infty} \int_{|y| \geq |\theta_n^{+}|}
\left |\nabla_{\tau,y} (u_n - v_n)(\theta^{+}_n,y) \right |^2 dy =
\lim_{n \rightarrow \infty} \int_{|x| \geq \sigma_n - s_n}
\left |\nabla_{t,x} (u - v)(\sigma_n,x) \right |^2 dx = 0.
$$
\end{proof}

\section{Expansion as a sum of rescaled solitons}

In this section, we prove Theorem \ref{thm1} and Theorem \ref{thm2}.

\subsection{Expansion up to a dispersive error}

In this subsection, we show that Proposition \ref{selfsimilarblowup} (respectively Proposition \ref{selfsimilarblowup2}) implies:

\begin{cor}\label{thm1alm} Let $N \geq 5$ be odd.  Let $u$ be a radial solution to \eqref{nlw} with $T_+(u) < +\infty$ that satisfies \eqref{bdd}.
Then there exists $(v_0,v_1) \in \energysp$, a sequence $t_n \rightarrow T_+(u)$, an integer $J_0 \geq 1$, $J_0$ sequences of positive numbers $\{\lambda_{j,n}\}_n$,
 $j = 1, \ldots, J_0$, $J_0$ signs $\iota_j \in \{ \pm 1 \}$, and a sequence $\{w_{0,n}\}_n$ in $\dot H^1$ such that
 \begin{align*}
\lambda_{1,n} \ll \lambda_{2,n} \ll \cdots \ll \lambda_{J_0,n} \ll T_+(u) - t_n,
\end{align*}
and
\begin{align*}
\vec u(t_n) = (v_0,v_1) + \sum_{j = 1}^{J_0} \left (\frac{\iota_j}{\lambda_{j,n}^{\frac{N-2}{2}}} W \left (\frac{x}{\lambda_{j,n}} \right ),0 \right ) + (w_{0,n},0) + o(1)
\quad \mbox{in } \energysp \mbox{ as } n \rightarrow \infty.
\end{align*}
where
\begin{align*}
\lim_{n \rightarrow \infty} \| S(t)(w_{0,n},0) \|_{S(\R)} = 0.
\end{align*}
\end{cor}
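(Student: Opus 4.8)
The plan is to run, in reverse, the argument of Section~3: renormalize at a sequence $t_n \to T_+(u)$, pass to a profile decomposition of the renormalized singular part, and then use the \emph{absence} of self-similar concentration (Proposition~\ref{selfsimilarblowup}) together with the exterior energy estimate (Proposition~\ref{extbds}) and the rigidity theorem (Theorem~\ref{cptsolns}) to show that every nonlinear profile is a rescaled $\pm W$ and that the remainder is dispersive. Assume $T_+(u)=1$ and that $a=u-v$ is defined on $[0,1)$ and supported in $\{|x|\le 1-t\}$. Fix $t_n\to 1$ and set, as in Section~3.2, $u_n(\tau,y)=(1-t_n)^{\frac{N-2}{2}}u(t_n+(1-t_n)\tau,(1-t_n)y)$ and $v_n$ analogously, so that $\vec\alpha_n:=\vec u_n-\vec v_n$ is the renormalized $\vec a$, supported in $\{|y|\le 1-\tau\}$ and bounded in $\energysp$. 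Rewriting Proposition~\ref{selfsimilarblowup} in the variable $y=x/(1-t_n)$ and using this support gives, for every fixed $c_0\in(0,1)$,
\begin{align*}
\lim_{n\to\infty}\int_{|y|\ge c_0}\left(|\nabla\alpha_n(0,y)|^2+|\partial_\tau\alpha_n(0,y)|^2\right)\,dy=0,
\end{align*}
so by a diagonal extraction there is $\epsilon_n\to 0$ with $\epsilon_n>0$ and the same integral over $\{|y|\ge\epsilon_n\}$ tending to $0$. Since $\vec v(t_n)\to(v_0,v_1)$ in $\energysp$, it suffices to establish the claimed decomposition for $\vec\alpha_n(0)$.

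After passing to a subsequence, let $\vec\alpha_n(0)=\sum_{j=1}^J\vec U^j_{L,n}(0)+\vec w^J_n(0)$ be an ordered profile decomposition (Lemma~\ref{proford}) with parameters $\{\lambda_{j,n},t_{j,n}\}$ normalized by \eqref{timeassump}. Applying Lemma~\ref{bddlem} with $\mu_n=\epsilon_n^{1/2}$ and the concentration just obtained gives that $\lambda_{j,n}/\mu_n$ and $t_{j,n}/\mu_n$ are bounded for all $j$, hence $\lambda_{j,n}\to 0$ and $t_{j,n}\to 0$; in particular, in the original scale, $(1-t_n)\lambda_{j,n}\ll 1-t_n$ and no profile lives at scale $\simeq 1-t_n$. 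Also, by energy trapping (Lemma~\ref{energytrap}) and the uniform $\dot H^1$ bound, all but finitely many profiles carry nonnegative energy, and the total profile energy stays bounded since $E(\vec\alpha_n(0))\to E(u_0,u_1)-E(v_0,v_1)$; this will bound the number of solitons once the other profiles are shown to be trivial.

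The heart of the proof is the identification of the nonlinear profiles $U^j$. The claim is that every nontrivial $U^j$ is a rescaled $\iota_jW$; since $W$ does not scatter this forces $t_{j,n}=0$, and since $W$ is $\tau$-independent one gets $\vec U^j_{L,n}(0)=(\iota_j\lambda_{j,n}^{-(N-2)/2}W(\cdot/\lambda_{j,n}),0)$. One argues exactly as in Case~1 of the proof of Lemma~\ref{channelofenergyI}, now exploiting that $u$ has \emph{no} self-similar energy. If some $U^j$ is nonzero and not a rescaled $\pm W$, then either $U^j$ scatters in some time direction — in which case, far out in that direction, it is close to a nonzero free wave and Proposition~\ref{extbds} produces a channel of energy emitted by $U^j$ persisting over all relevant times — or $U^j$ does not scatter and is not $\pm W$, in which case by Theorem~\ref{cptsolns} it fails the compactness property and again a persistent channel is produced through Proposition~\ref{extbds} and Lemma~\ref{locorth}. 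As in Step~1 of Lemma~\ref{channelofenergyI}, one must first replace $U^j$ by its truncation outside a light cone (a small-energy, hence globally scattering, solution agreeing with $u_n-v_n$ in that exterior region by finite speed of propagation), so that Proposition~\ref{approxthm} applies despite $U^j$ itself being non-scattering; the resulting channel then survives inside the self-similar region of $u_n$ for a sequence of times, contradicting the rescaled form of Proposition~\ref{selfsimilarblowup} (as in Claim~\ref{vanishingclaimI}). Running the same argument on $\vec w^J_n$ — using \eqref{extbd2} and $\vec w^J_n(0)\rightharpoonup 0$ — forces the exterior norm of $\vec w^J_n(0)$, and with it $\|w^J_{1,n}\|_{L^2}$, to vanish as $n\to\infty$; the vanishing of $\|S(t)\vec w^J_n(0)\|_{S(\R)}$ as $J\to\infty$ is the defining property of the remainder. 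Hence only finitely many profiles, say $J_0$, are nontrivial, all rescaled $\iota_jW$, and for $J\ge J_0$ one has $\vec w^J_n=\vec w^{J_0}_n$ with $\|w^{J_0}_{1,n}\|_{L^2}\to 0$ and $\|S(t)\vec w^{J_0}_n(0)\|_{S(\R)}\to 0$.

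For the reassembly: since all soliton profiles have $t_{j,n}=0$, \eqref{paramorth} forces their scales to be totally ordered after reindexing, so we may rename $\lambda_{j,n}=(1-t_n)\tilde\lambda_{j,n}$ with $\lambda_{1,n}\ll\cdots\ll\lambda_{J_0,n}\ll 1-t_n$; undoing the renormalization, setting $w_{0,n}$ equal to the rescaled-back $w^{J_0}_{0,n}$, and using $\vec v(t_n)\to(v_0,v_1)$ in $\energysp$, we obtain
\begin{align*}
\vec u(t_n)=(v_0,v_1)+\sum_{j=1}^{J_0}\left(\frac{\iota_j}{\lambda_{j,n}^{(N-2)/2}}W\left(\frac{x}{\lambda_{j,n}}\right),0\right)+(w_{0,n},0)+o(1)\quad\mbox{in } \energysp,
\end{align*}
with $\|S(t)(w_{0,n},0)\|_{S(\R)}\to 0$. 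The main obstacle is the third step: making the channel-of-energy/rigidity dichotomy airtight for an \emph{arbitrary} nonlinear profile (there must be no gap between "has the compactness property, hence $=\pm W$" and "emits a channel of energy"), and carrying out the truncation and finite-speed-of-propagation bookkeeping of Lemma~\ref{channelofenergyI} — including the extra channel argument that kills the velocity of the remainder — so that Proposition~\ref{approxthm} is legitimately applicable in the presence of the non-scattering soliton profiles. The renormalization, the scale control, and the final reassembly are routine given the preliminaries.
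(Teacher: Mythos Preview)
Your approach differs substantially from the paper's and has a genuine gap that you yourself identify. The paper's route is much simpler and avoids the dichotomy you are worried about. Rather than taking an \emph{arbitrary} sequence $t_n\to 1$ and then trying to force every nonlinear profile to be $\pm W$ via a channel-of-energy/rigidity alternative, the paper first \emph{selects} a special sequence along which $\|\partial_t a(t_n)\|_{L^2}\to 0$. This selection is the content of Proposition~5.1 in \cite{dkm1}: the vanishing of energy in the self-similar region (Proposition~\ref{selfsimilarblowup}) feeds into a virial/averaging argument to produce a sequence of times where the time derivative of the singular part tends to zero. Once you have $\partial_t a(t_n)\to 0$ in $L^2$, the profile decomposition of $\vec a(t_n)$ has vanishing second component, so every nonlinear profile $U^j$ has $\partial_t U^j(0)=0$ and, by Proposition~\ref{approxthm} and uniqueness, is a stationary solution of \eqref{nlw}; hence $U^j=\iota_j W(\cdot/\lambda_j)$ by the classification of radial $\dot H^1$ solutions of $-\Delta U=|U|^{4/(N-2)}U$. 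No channel argument or rigidity theorem is needed at this stage, and the remainder automatically has the form $(w_{0,n},0)+o(1)$.

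Your proposed argument, by contrast, attempts to prove a stronger statement (decomposition along every sequence) and rests on the assertion that a nonlinear profile which is not $\pm W$ must emit a persistent exterior channel of energy. That dichotomy is \emph{not} an immediate consequence of Theorem~\ref{cptsolns}: ``fails the compactness property'' does not by itself produce a channel, and closing that gap is essentially the content of the full soliton resolution in \cite{dkm4}, far beyond what is required for this corollary. Similarly, your claim that \eqref{extbd2} and $\vec w^J_n(0)\rightharpoonup 0$ force $\|w^J_{1,n}\|_{L^2}\to 0$ is not justified; in the paper this vanishing comes for free from the choice of $t_n$. In short: the missing idea is the preliminary selection of a good time sequence with vanishing $\partial_t a$, after which the identification of profiles is elementary.
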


\begin{cor}\label{thm2alm} Let $N \geq 5$ be odd.  Let $u$ be a radial solution to \eqref{nlw} with $T_+(u) = +\infty$ that satisfies \eqref{bdd}.
Then there exists a solution $v_L$ to the linear wave equation \eqref{lw}, a sequence $t_n \rightarrow +\infty$, an integer $J_0 \geq 0$, $J_0$ sequences of positive numbers $\{\lambda_{j,n}\}_n$,
 $j = 1, \ldots, J_0$, $J_0$ signs $\iota_j \in \{ \pm 1 \}$, and a sequence $\{w_{0,n}\}_n$ in $\dot H^1$ such that
\begin{align*}
\lambda_{1,n} \ll \lambda_{2,n} \ll \cdots \ll \lambda_{J_0,n} \ll t_n,
\end{align*}
and
\begin{align*}
\vec u(t_n) = \vec v_L(t_n) + \sum_{j = 1}^{J_0} \left (\frac{\iota_j}{\lambda_{j,n}^{\frac{N-2}{2}}} W \left (\frac{x}{\lambda_{j,n}} \right ),0 \right ) + (w_{0,n},0) + o(1)
\quad \mbox{in } \energysp \mbox{ as } n \rightarrow \infty.
\end{align*}
where
\begin{align*}
\lim_{n \rightarrow \infty} \| S(t)(w_{0,n},0) \|_{S(\R)} = 0.
\end{align*}
\end{cor}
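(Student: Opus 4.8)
I would prove both corollaries in parallel, writing everything out for the blow-up case (Corollary \ref{thm1alm}); the global case (Corollary \ref{thm2alm}) is identical once Proposition \ref{selfsimilarblowup} is replaced by Proposition \ref{selfsimilarblowup2}, $v$ denotes the solution of \eqref{nlw} asymptotic to $v_L$ (so $\vec v(t_n)=\vec v_L(t_n)+o(1)$), and $1-t_n$ is replaced by $t_n$; this is the higher-dimensional analog of the corresponding step in \cite{dkm3}. First I normalize $T_+(u)=1$, recall the splitting $u=v+a$ from Section 3 with $\vec a$ supported in $\{|x|\le1-t\}$, and — by a virial/self-similar-variables energy identity fed by Proposition \ref{selfsimilarblowup}, as in \cite{dkm3} — choose a sequence $t_n\to1$ along which $\|\partial_t a(t_n)\|_{L^2}\to0$; this is what will force the soliton and remainder terms to have vanishing velocity component. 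After passing to a subsequence I set $u_n(\tau,y)=(1-t_n)^{(N-2)/2}u(t_n+(1-t_n)\tau,(1-t_n)y)$, and $v_n,a_n$ analogously, so that $\vec u_n(0)-\vec v_n(0)=\vec a_n(0)$ admits a profile decomposition with profiles $\{U^j_L\}_j$, parameters $\{\lambda_{j,n},\tau_{j,n}\}$ (normalized so each $\tau_{j,n}=0$ or $-\tau_{j,n}/\lambda_{j,n}\to\pm\infty$), and dispersive remainder $\vec w^J_n$. By the proof of Lemma \ref{taulim}, $\tilde\tau_j=\lim_n(-\tau_{j,n})\in[-1,1]$ and each $\{\lambda_{j,n}\}_n$ is bounded; by Lemma \ref{bddlem} at most one profile has scale not tending to $0$; and by Lemma \ref{proford} I may assume the profiles ordered. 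Since $v$ is regular up to $t=1$, the energy of $\vec v_n(\tau)$ escapes to spatial infinity, so $\vec v_n(\tau)\rightharpoonup0$ and $\vec v_n(\tau)\to0$ in $\energysp(|y|\le R)$ for every fixed $R$, $\tau<1$; and the rescaling is scale-invariant in every norm in play, so Proposition \ref{selfsimilarblowup} reads: for each fixed $\tau<1$ and $c_0\in(0,1)$, $\int_{c_0(1-\tau)\le|y|\le1-\tau}|\nabla_{\tau,y}a_n(\tau,y)|^2\,dy\to0$.

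The next step is to show that every profile with $\tilde\tau_j\ne0$ vanishes. Such a profile has $-\tau_{j,n}/\lambda_{j,n}\to\pm\infty$, so by Lemma \ref{loclem} the energy of $\vec U^j_{L,n}(0)$ concentrates, as $n\to\infty$, in the shell $\left| |y|-|\tau_{j,n}| \right|<R\lambda_{j,n}$ about the radius $|\tau_{j,n}|\to|\tilde\tau_j|\in(0,1]$. Partitioning $\R^N$ into $\{|y|<c_0\}$, $\{c_0\le|y|\le1\}$, $\{|y|>1\}$ with $c_0<|\tilde\tau_j|$, I bound $\int|\nabla_{\tau,y}\vec U^j_{L,n}(0)|^2$ over the three pieces using, respectively, Lemma \ref{loclem} (the shell is eventually disjoint from $\{|y|<c_0\}$), the renormalized Proposition \ref{selfsimilarblowup} at $\tau=0$, and the support of $\vec a_n(0)$, each time passing from $\vec a_n(0)$ to $\vec U^j_{L,n}(0)$ by the localized orthogonality Lemma \ref{locorth} together with positivity of the diagonal terms. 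This yields $\|\vec U^j_{L,n}(0)\|_{\energysp}\to0$, which equals the fixed number $\|(U^j_0,U^j_1)\|_{\energysp}$, so $U^j\equiv0$.

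After that step every nonzero profile has $\tilde\tau_j=0$, so its energy at $\tau=0$ sits at the origin at scale $\lambda_{j,n}\to0$. This is where the main work lies, and I expect it to be the principal obstacle. I would argue by a channels-of-energy argument in the spirit of Lemmas \ref{channelofenergyI}--\ref{channelofenergyII}: if a surviving profile either has $-\tau_{j,n}/\lambda_{j,n}\to\pm\infty$ (so $U^j$ scatters in one time direction, hence is asymptotically a nonzero free wave) or is a type II blow-up solution, or a non-scattering solution with noncompact trajectory, then, using the approximate superposition principle Proposition \ref{approxthm} (applied on time intervals made admissible precisely by the pre-ordering of the profiles), the exterior energy bound \eqref{extbd2}, and the localized orthogonality Lemma \ref{locorth}, one transports a fixed amount of energy onto $\vec u_n(\theta_n)$ — hence onto $\vec a_n(\theta_n)$, since $\vec v_n$ is locally negligible — at a renormalized time $\theta_n$ and in an annulus that, un-renormalized, lies either outside the cone $\{|x|\le1-t\}$ or inside a self-similar region $\{c_0(1-t)\le|x|\le1-t\}$; this contradicts the support of $\vec a$ or Proposition \ref{selfsimilarblowup}. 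Hence every surviving profile has $\tau_{j,n}=0$, $\lambda_{j,n}\to0$, is globally defined and type II, and has a trajectory precompact modulo scaling — the compactness property on $\R$ — so by Theorem \ref{cptsolns} it equals $\iota_j\lambda_0^{-(N-2)/2}W(\cdot/\lambda_0)$, and absorbing $\lambda_0$ into $\lambda_{j,n}$ gives $\vec U^j_n(0)=(\iota_j\lambda_{j,n}^{-(N-2)/2}W(y/\lambda_{j,n}),0)$, which is static. Since $E(u_0,u_1)<\infty$ while, by the Pythagorean expansion and the energy-trapping Lemma \ref{energytrap}, each soliton profile contributes at least $E(W,0)-o_n(1)$ with $E(W,0)>0$, only finitely many profiles are nonzero; let $J_0$ be their number, so $U^j\equiv0$ (hence $U^j_L\equiv0$) for $j>J_0$ and $\lim_n\|w^{J_0}_n\|_{S(\R)}=0$. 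The delicate point throughout this step is to arrange the truncation and time sequences so that the radiated energy is genuinely captured by the region controlled by Proposition \ref{selfsimilarblowup}, together with the bookkeeping that makes Proposition \ref{approxthm} applicable on the needed intervals via the pre-order.

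Finally I assemble the result. At $\tau=0$, $\vec u_n(0)=\vec v_n(0)+\sum_{j=1}^{J_0}(\iota_j\lambda_{j,n}^{-(N-2)/2}W(y/\lambda_{j,n}),0)+\vec w^{J_0}_n(0)+o(1)$ in $\energysp$, where $\lambda_{j,n}\to0$ and, by the pseudo-orthogonality \eqref{paramorth} and the ordering, $\lambda_{1,n}\ll\cdots\ll\lambda_{J_0,n}$. Comparing second components and using that the soliton profiles are static gives $\partial_\tau a_n(0)=w^{J_0}_{1,n}+o(1)$, and since $\|\partial_t a(t_n)\|_{L^2}\to0$ by the first step, $\|w^{J_0}_{1,n}\|_{L^2}\to0$, so $\vec w^{J_0}_n(0)=(w^{J_0}_{0,n},0)+o(1)$. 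Undoing the scaling $x=(1-t_n)y$ at $\tau=0$ turns the $W$-profiles into $\iota_j\tilde\lambda_{j,n}^{-(N-2)/2}W(x/\tilde\lambda_{j,n})$ with $\tilde\lambda_{j,n}=(1-t_n)\lambda_{j,n}$, so that $\tilde\lambda_{1,n}\ll\cdots\ll\tilde\lambda_{J_0,n}\ll1-t_n=T_+(u)-t_n$; it turns $\vec v_n(0)$ into $\vec v(t_n)=(v_0,v_1)+o(1)$, since $v$ is continuous at $t=1$ with data $(v_0,v_1)$; and it turns $\vec w^{J_0}_n(0)$ into a term $(w_{0,n},0)$ with $\|S(t)(w_{0,n},0)\|_{S(\R)}\to0$, by scale-invariance of the $S$-norm and Strichartz (absorbing the $o(1)$ from $w^{J_0}_{1,n}$). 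This gives Corollary \ref{thm1alm} (after renaming $\tilde\lambda_{j,n}$ as $\lambda_{j,n}$); the same argument with Proposition \ref{selfsimilarblowup2} gives Corollary \ref{thm2alm}, where $J_0\ge0$ is allowed since the decomposition may contain no surviving profile at all.
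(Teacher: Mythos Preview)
Your overall architecture---extract the radiation, use Proposition \ref{selfsimilarblowup}/\ref{selfsimilarblowup2} plus a virial identity to find $t_n$ with $\|\partial_t a(t_n)\|_{L^2}\to 0$, profile-decompose, show the profiles are rescaled $\pm W$---is correct and matches the paper, which simply cites Proposition 5.1 of \cite{dkm1} and Corollary 4.2 of \cite{dkm3}. The difference is in how the profiles are identified as $\pm W$, and your route there is both longer and contains a gap.

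First, your Step 3 and the scattering sub-case of Step 4 are unnecessary detours. Once $\|\partial_t a(t_n)\|_{L^2}\to 0$, the Pythagorean expansion of the $L^2$ norm of the velocity gives $\|\partial_t U^j_L(-\tau_{j,n}/\lambda_{j,n})\|_{L^2}\to 0$ and $\|w^J_{1,n}\|_{L^2}\to 0$ immediately. For any profile with $-\tau_{j,n}/\lambda_{j,n}\to\pm\infty$, equipartition of the linear energy then forces $U^j_L\equiv 0$; this disposes of all profiles with $\tilde\tau_j\neq 0$ (and more) in one line, without the annulus argument you set up.

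Second, and more seriously, your Step 4 does not close. You want to propagate via Proposition \ref{approxthm} to a renormalized time $\theta_n$ of order $1$ so that a non-compact profile radiates energy into the self-similar shell; but Proposition \ref{approxthm} requires $\theta_n/\lambda_{k,n}<T_+(U^k)$ for \emph{every} profile $U^k$, and since $\lambda_{k,n}\to 0$ this forces $\theta_n\to 0$ unless you already know every $U^k$ is global. The pre-ordering does not supply this; it only compares blow-up times, it does not exclude them. So your channels-of-energy mechanism cannot be launched until you have ruled out finite-time blow-up of the profiles, which is precisely what you are trying to prove.

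The paper's route (which is the argument in \cite{dkm1}, reproduced here in Appendix C, Step 1, in the compact-trajectory context) sidesteps this completely. The virial identity fed by the self-similar vanishing gives not just a pointwise sequence but the \emph{uniform time-averaged} estimate
\[
\forall \sigma\in(0,1-t_n),\quad \frac{1}{\sigma}\int_{t_n}^{t_n+\sigma}\|\partial_t a(t)\|_{L^2}^2\,dt \longrightarrow 0.
\]
Then for each fixed $j$ and each $\tau_0\in(0,T_+(U^j))$ one applies Proposition \ref{approxthm} only on $[0,\tau_0\lambda_{j,n}]$ (which is always admissible), changes variables, and reads off $\int_0^{\tau_0}\|\partial_t U^j(s)\|_{L^2}^2\,ds=0$. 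Hence $\partial_t U^j\equiv 0$ on $I_{\max}(U^j)$, so $U^j$ solves $-\Delta U=|U|^{4/(N-2)}U$ and is $\pm W$ up to scaling---no compactness property, no Theorem \ref{cptsolns}, and no global-in-time propagation needed. Replace your Step 4 with this computation and the rest of your write-up goes through.
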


Corrollary \ref{thm1alm} follows from Proposition 5.1 in \cite{dkm1} and Proposition \ref{selfsimilarblowup}.  Corrollary \ref{selfsimilarblowup2}
follows from the proof of Corrollary 4.2 in \cite{dkm3} and Proposition \ref{selfsimilarblowup2}.  The main idea of the proof in the
finite time blow--up case, say, is that
the vanishing of the energy in self--similar regions implies that there is a sequence of times $\{t_n\}_n$ approaching the maximal forward time
of existence so that the time derivative $\partial_t(u -v)(t_n)$ goes to 0.  Taking a profile decomposition along this sequence of times $\{t_n\}_n$ and using
Proposition \ref{approxthm}, one deduces that the nonlinear profiles for this profile decomposition satisfy the elliptic
equation $-\Delta U = |U|^{4/(N-2)}U$.  Hence, each profile must be $W$ up to sign change and scaling.

\subsection{Expansion in the energy space}

In this subsection, we conclude the proofs of Theorem \ref{thm1} and Theorem \ref{thm2}.

\subsubsection{Finite time blow--up solutions} Assume that $u$ satisfies the assumptions of Theorem \ref{thm1} (with $T_+ = 1$).
Let $w_{0,n}$ be as in Corrollary \ref{thm1alm}.  Theorem \ref{thm1} will follow from Corrollary \ref{thm1alm} and
\begin{align}\label{strengthenI}
\lim_{n \rightarrow \infty} \| w_{0,n} \|_{\dot H^1} = 0.
\end{align}

This will follow in a similar fashion as in the case $N = 3$ (see Section 4.2 of \cite{dkm3}) and $N = 4$
(see Proposition 5.4 in \cite{ckls}) using channels of energy.
We will deduce \ref{strengthenI} by using the following technical lemma which states that in exterior regions we can remove each soliton arising in
the above decomposition by adjusting the sequence of times $\{t_n\}_n$.

\begin{lem}\label{cutoffI}
Let $u$ be as in Corrollary \ref{thm1alm}.  For all $k = 1, \ldots, J_0 + 1$, there exists (after extraction of subsequences in
$n$), a sequence $\{t_n^k \}_n$ such that $t_n \leq t_n^k < 1$ and a sequence $\left \{ \left (u^k_{0,n}, u^k_{1,n} \right )
\right \}_n$ in $\energysp$
such that for large $n$
\begin{align}
\forall x \in \R^N, |x| \geq t^k_n - t_n \implies \left ( u \left (t^k_n, x \right )
, \partial_t u \left (t^k_n, x \right ) \right ) =
\left ( u^k_{0,n} \left ( x \right ), u^k_{1,n} \left (x \right )\right ),  \label{cutoffIeq1}
\end{align}
and
\begin{align}
\left ( u^k_{0,n}, u^k_{1,n} \right ) = &\left ( v \left (t^k_n \right )
, \partial_t v \left (t^k_n \right ) \right ) + \sum_{j = k}^{J_0} \left (\frac{\iota_j}{\lambda_{j,n}^{\frac{N-2}{2}}}
W \left (\frac{x}{\lambda_{j,n}} \right ),0 \right )\\ &+ \left (w_n(t^k_n - t_n), \partial_t w_n(t^k_n - t_n)\right ) + o(1),
\end{align}
in $\energysp$ as $n \rightarrow \infty$.
\end{lem}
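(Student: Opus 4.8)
The plan is to argue by induction on $k$, peeling off the innermost surviving soliton at each step and passing between renormalized times by means of the approximate superposition principle, Proposition \ref{approxthm}, together with finite speed of propagation. Throughout I write $w_n(t):=S(t)(w_{0,n},0)$ and let $v$ be the regular part of $u$ (so $\vec v$ is continuous up to and past $t=1$ and $(v_0,v_1)=\vec v(t)+o(1)$ as $t\to 1^-$); with this notation the asserted expansion at level $k$ reads $(u^k_{0,n},u^k_{1,n})=\vec v(t_n^k)+\sum_{j=k}^{J_0}(\iota_j\lambda_{j,n}^{-(N-2)/2}W(\cdot/\lambda_{j,n}),0)+\vec w_n(t_n^k-t_n)+o(1)$ in $\energysp$. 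For $k=1$ I would take $t_n^1=t_n$ and $(u^1_{0,n},u^1_{1,n})=\vec u(t_n)$: then \eqref{cutoffIeq1} is an identity on all of $\R^N$ and the expansion is exactly Corollary \ref{thm1alm} after using $(v_0,v_1)=\vec v(t_n)+o(1)$ and $\vec w_n(0)=(w_{0,n},0)$.

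For the inductive step $k\to k+1$ ($1\le k\le J_0$, the case $k=J_0+1$ being covered as the final instance), note first that the level-$k$ hypothesis forces $t_n^k-t_n\ll\lambda_{k,n}$: every surviving soliton must have its bulk in the region $\{|x|\ge t_n^k-t_n\}$ on which $(u^k_{0,n},u^k_{1,n})$ coincides with $\vec u(t_n^k)$. I would then fix a scale $\sigma_n$ strictly between $\lambda_{k,n}$ and $\lambda_{k+1,n}$ (say $\sigma_n=(\lambda_{k,n}\lambda_{k+1,n})^{1/2}$; read $1-t_n$ for $\lambda_{k+1,n}$ when $k=J_0$), set $t_n^{k+1}:=t_n^k+\sigma_n$ — so $t_n^k<t_n^{k+1}<1$ and $t_n^{k+1}-t_n$ again lies between $\lambda_{k,n}$ and $\lambda_{k+1,n}$ — choose $\rho_n$ with $\lambda_{k,n}\ll\rho_n\ll\sigma_n$ and a radial cutoff $\chi\in C^\infty_c$ with $\chi\equiv1$ on $\{|x|\le1\}$ and $\supp\chi\subseteq\{|x|\le2\}$, and define the modified data at time $t_n^k$ by deleting a truncation of the $k$-th soliton:
\[ \left(\hat u_{0,n},\hat u_{1,n}\right):=\left(u^k_{0,n},u^k_{1,n}\right)-\chi(\cdot/\rho_n)\left(\frac{\iota_k}{\lambda_{k,n}^{(N-2)/2}}W\!\left(\frac{\cdot}{\lambda_{k,n}}\right),0\right). \]
Since $\lambda_{k,n}\ll\rho_n\ll\lambda_{j,n}$ for $j\ge k+1$ and $\rho_n\to0$, routine localization estimates (Sobolev embedding for the derivative of $\chi(\cdot/\rho_n)$, and Lemma \ref{displem} for the dispersive term) give
\[ \left(\hat u_{0,n},\hat u_{1,n}\right)=\vec v(t_n^k)+\sum_{j=k+1}^{J_0}\left(\frac{\iota_j}{\lambda_{j,n}^{(N-2)/2}}W\!\left(\frac{\cdot}{\lambda_{j,n}}\right),0\right)+\vec w_n(t_n^k-t_n)+o(1), \]
i.e. this is a profile decomposition whose only profiles are the regular part $v$ and the solitons $j=k+1,\dots,J_0$, with a dispersive remainder (still null in $S(\R)$ by time-translation invariance).

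Next I would evolve this data from $t_n^k$ to $t_n^{k+1}$ using Proposition \ref{approxthm}. Its hypotheses hold precisely because the innermost soliton has been removed: each $W$ is a \emph{stationary} solution of \eqref{nlw}, hence globally defined, and $W\in L^{2(N+1)/(N-2)}(\R^N)$, so the $S$-norm of the $j$-th soliton profile over its rescaled time window $(0,\sigma_n/\lambda_{j,n})$ is a fixed multiple of $(\sigma_n/\lambda_{j,n})^{(N-2)/(2(N+1))}\to0$ for every $j\ge k+1$ (here $\sigma_n\ll\lambda_{k+1,n}\le\lambda_{j,n}$), while $\|v\|_{S(t_n^k,t_n^{k+1})}\le\|v\|_{S(0,1)}<\infty$; the forbidden window $(0,\sigma_n/\lambda_{k,n})$, on which $\|W\|_{S}$ is unbounded, has been eliminated. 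Since the soliton profiles are $t$-independent their nonlinear evolutions reproduce the solitons themselves, and since the dispersive remainder propagates linearly, Proposition \ref{approxthm} yields
\[ \vec{\hat u}_n(t_n^{k+1})=\vec v(t_n^{k+1})+\sum_{j=k+1}^{J_0}\left(\frac{\iota_j}{\lambda_{j,n}^{(N-2)/2}}W\!\left(\frac{\cdot}{\lambda_{j,n}}\right),0\right)+\vec w_n(t_n^{k+1}-t_n)+o(1)\quad\text{in }\energysp, \]
using $S(\sigma_n)\vec w_n(t_n^k-t_n)=\vec w_n(t_n^{k+1}-t_n)$; the remainder is genuinely $o(1)$ since the profile decomposition has finitely many profiles, so the outer $\limsup_{J}$ in Proposition \ref{approxthm} is attained at $J=J_0$. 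I would then define $(u^{k+1}_{0,n},u^{k+1}_{1,n})$ to coincide with $\vec u(t_n^{k+1})$ on $\{|x|\ge t_n^{k+1}-t_n\}$ and with $\vec{\hat u}_n(t_n^{k+1})$ on a slightly smaller ball, smoothly interpolating in between; \eqref{cutoffIeq1} then holds by construction, and finite speed of propagation — $(\hat u_{0,n},\hat u_{1,n})$ equals $\vec u(t_n^k)$ outside $\{|x|\le2\rho_n\}$, hence $\hat u_n(t_n^{k+1})=u(t_n^{k+1})$ outside $\{|x|\le2\rho_n+\sigma_n\}$ — combined with the last display yields the claimed expansion.

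The step I expect to be the main obstacle is reconciling the \emph{exact} light-cone radius $t_n^{k+1}-t_n$ required in \eqref{cutoffIeq1} with the slightly larger radius $2\rho_n+(t_n^{k+1}-t_n^k)$ furnished by finite speed of propagation: one must show that every term in the expansion has vanishing energy on the shell $\{t_n^{k+1}-t_n\le|x|\lesssim\sigma_n\}$. For the regular part and the solitons (including the deleted one) this is an elementary localization estimate, the shell sitting at radius tending to $0$; the delicate term is the free radiation $\vec w_n$, for which one must exploit the precise separation $\rho_n\ll\sigma_n$ together with a localization property of radial solutions of the linear wave equation in odd dimensions, of the kind used in Section 3. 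All of the remaining estimates are routine and run parallel to the treatments of $N=3$ in \cite{dkm3} and $N=4$ in \cite{ckls}.
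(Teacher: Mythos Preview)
Your inductive scheme—take $t_n^1=t_n$, then at each step excise the innermost surviving soliton by a cutoff at an intermediate scale and advance via Proposition~\ref{approxthm} over a window short relative to the next soliton scale—is exactly the argument of Lemma~4.5 in \cite{dkm3}, to which the paper defers. The shell discrepancy you flag is indeed the point requiring care; one small correction is that $t_n^k-t_n\ll\lambda_{k,n}$ should be carried as part of the inductive hypothesis (your choice of $\sigma_n$ then propagates it) rather than deduced from the level-$k$ statement alone, since nothing in the lemma as stated prevents the solitons from concentrating inside $\{|x|<t_n^k-t_n\}$.
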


The statement and proof of Lemma \ref{cutoffI} is identical to that in \cite{dkm3} (see Lemma 4.5) and we omit it.  

\begin{proof}[Proof of Theorem \ref{thm1}]
By Lemma \ref{cutoffI} with $k = J_0 + 1$, we have
\begin{align}
\left ( u^{J_0+1}_{0,n}, u^{J_0 + 1}_{1,n} \right ) = \left ( v \left (t^{J_0+1}_n \right )
, \partial_t v \left (t^{J_0+1}_n \right ) \right ) + \left (w_n(t^{J_0+1}_n - t_n), \partial_t w_n(t^{J_0+1}_n - t_n) \right ) + o(1),
\end{align}
in $\energysp$ as $n \rightarrow \infty$ where $t_n \leq t^{J_0+1}_n < 1$.  Denote by $u^{J_0 + 1}_n$ the solution of
\eqref{nlw} with initial data $\left (u^{J_0+1}_{0,n}, u^{J_0+1}_{1,n}\right )$ at $t = 0$.  By Proposition \ref{approxthm}, for $t \in [0,t_0]$
($t_0 > 0$ small enough so that $1+t_0$ is in the domain of $v$),
\begin{align}\label{expn1}
\vec u_n^{J_0 + 1}(t) = \vec v \left ( t^{J_0+1}_n + t \right ) + \vec w_n \left (t^{J_0+1}_n + t - t_n \right ) + \vec r_n(t),
\end{align}
where
$$
\lim_{n \rightarrow \infty} \sup_{t \in [0,t_0]} \| \vec r_n(t) \|_{\energysp} = 0.
$$
By finite speed of propagation and \eqref{cutoffIeq1},
\begin{align}\label{finspeedI}
\left ( u_n^{J_0 + 1}(t,x) , \partial_t u_n^{J_0+1}(t,x) \right ) =
\left ( u\left (t_n^{J_0 + 1} + t,x\right ) , \partial_t u\left (t_n^{J_0+1}+t,x\right ) \right )
\end{align}
for $0 < t < 1 - t_n^{J_0 + 1}$, $|x| \geq t + t_n^{J_0 + 1} - t_n$.

Since $\partial_t w_n(0) = 0$, $w_n(-t) = w_n(t)$.  Thus, by Proposition \ref{extbds}, for all $t \in \R$,
\begin{align}\label{lowbdwn1}
\int_{|x| \geq |t|} |\nabla w_n(t)|^2 + |\partial_t w_n(t)|^2 dx \geq \frac{1}{2} \int_{\R^N}
|\nabla w_{0,n}|^2 + |w_{1,n}|^2 dx.
\end{align}

By \eqref{expn1}, \eqref{finspeedI}, and \eqref{lowbdwn1} with $t = \frac{1-t_n^{J_0+1}}{2}$, we get
\begin{align}
\int_{|x| \geq \frac{1+t_n^{J_0+1}}{2} - t_n} &\left | \nabla a \left ( \frac{1 + t_n^{J_0+1}}{2}\right ) \right |^2
+ \left | \partial_t a \left ( \frac{1 + t_n^{J_0+1}}{2}\right ) \right |^2 dx  \\
&\geq \frac{1}{2} \int_{\R^N}
|\nabla w_{0,n}|^2 + |w_{1,n}|^2 dx + o(1).
\end{align}
As $t_n \leq t_n^{J_0 + 1}$, we have
\begin{align*}
\frac{1 + t_n^{J_0+1}}{2} - t_n = \frac{1 - t_n^{J_0+1}}{2} + t_n^{J_0+1} - t_n \geq 1 - \frac{1 + t_n^{J_0+1}}{2}.
\end{align*}
Since supp $\vec a(t) \subseteq \{|x| \leq 1 - t \}$,
\begin{align*}
\int_{|x| \geq \frac{1+t_n^{J_0+1}}{2} - t_n} &\left | \nabla a \left ( \frac{1 + t_n^{J_0+1}}{2}\right ) \right |^2
+ \left | \partial_t a \left ( \frac{1 + t_n^{J_0+1}}{2}\right ) \right |^2 dx = 0,
\end{align*}
which shows
$$
\lim_{n \rightarrow \infty} \int_{\R^N}
|\nabla w_{0,n}|^2 + |w_{1,n}|^2 dx = 0.
$$

It remains to show
$$
\lim_{t \rightarrow 1^+} \int_{|x| \leq 1-t}\frac{1}{2}|\nabla u(t)|^2 + \frac{1}{2}|\partial_t u(t)|^2 -
\frac{N-2}{2N}| u(t)|^{\frac{2N}{N-2}} dx = J_0 E(W,0).
$$
Let $\{ t_n \}_n$ be as in the statement of Theorem \ref{thm1}.  Then since $v(t)$ is regular up to $t = 1$ and
$a(t) = u(t) - v(t)$ is supported in $B_{1-t}$, we have
\begin{align*}
\int_{|x| \leq 1-t}\frac{1}{2}|\nabla u(t)|^2 + \frac{1}{2}|\partial_t u(t)|^2 -
\frac{N-2}{2N}|u(t)|^{\frac{2N}{N-2}} dx = E(a(t),\partial_t a(t)) + o(1)
\end{align*}
as $t \rightarrow 1^+$.  Hence
\begin{align*}
\lim_{t \rightarrow 1^+} \int_{|x| \leq 1-t}\frac{1}{2}|\nabla u(t)|^2 + \frac{1}{2}|\partial_t u(t)|^2 -
\frac{N-2}{2N}| u(t)|^{\frac{2N}{N-2}} dx &= \lim_{t \rightarrow 1^+} E(a(t),\partial_t a(t)) \\
&= \lim_{n \rightarrow \infty} E(a(t_n),\partial_t a(t_n)) \\
&= J_0 E(W,0).
\end{align*}
This completes the proof of Theorem \ref{thm1}.
\end{proof}

\subsubsection{Global solutions} Assume that $u$ satisfies the assumptions of Theorem \ref{thm2}.
Let $w_{0,n}$ be as in Corrollary \ref{thm2alm}.  Theorem \ref{thm2} will follow from Corrollary \ref{thm2alm} and
\begin{align}\label{strengthenII}
\lim_{n \rightarrow \infty} \| w_{0,n} \|_{\dot H^1} = 0.
\end{align}

This will follow in a similar fashion as the finite time blow--up case.  The technical tool analogous to Lemma \ref{cutoffI}
is the following lemma.

\begin{lem}\label{cutoffII}
Let $u$ be as in Corollary \ref{thm2alm}.  For all $k = 1, \ldots, J_0 + 1$, there exists (after extraction of subsequences in
$n$), a sequence $\{t_n^k \}_n$ such that $t_n \leq t_n^k$ and a sequence $\left \{\left (u^k_{0,n}, u^k_{1,n} \right )
\right \}_n$ in $\energysp$
such that for large $n$
\begin{align}
\forall x \in \R^N, |x| \geq t^k_n - t_n \implies \left ( u \left (t^k_n, x \right )
, \partial_t u \left (t^k_n, x \right ) \right ) =
\left ( u^k_{0,n} \left ( x \right ), u^k_{1,n} \left (x \right )\right ),  \label{cutoffIeq1}
\end{align}
and
\begin{align}
\left ( u^k_{0,n}, u^k_{1,n} \right ) = &\left ( v_{L} \left (t^k_n \right )
, \partial_t v_{L} \left (t^k_n \right ) \right ) + \sum_{j = k}^{J_0} \left (\frac{\iota_j}{\lambda_{j,n}^{\frac{N-2}{2}}}
W \left (\frac{x}{\lambda_{j,n}} \right ),0 \right )\\ &+ \left (w_n(t^k_n - t_n), \partial_t w_n(t^k_n - t_n)\right ) + o(1),
\end{align}
in $\energysp$ as $n \rightarrow \infty$.
\end{lem}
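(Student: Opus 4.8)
The plan is to prove Lemma \ref{cutoffII} by induction on $k$, transcribing the argument of \cite{dkm3} (Lemma 4.5) — and of \cite{ckls} in the four--dimensional case — into the globally defined setting; the only structural changes are that the finite blow--up time and the regular part $v$ of \cite{dkm3} are replaced here by $+\infty$ and by the nonlinear solution $v$ that scatters to $v_L$ (the one fixed in the proof of Proposition \ref{selfsimilarblowup2}), and that the compact--support property \eqref{asupp} is replaced throughout by the exterior--region control coming from \eqref{linbehavior}. Write $W_\lambda := \lambda^{-(N-2)/2}W(\cdot/\lambda)$ and $w_n(t) := S(t)(w_{0,n},0)$, and note that $\| S(t)\vec w_n(s) \|_{S(\R)} = \| S(t)(w_{0,n},0) \|_{S(\R)} \to 0$ for every sequence $s=s_n$, by time--translation invariance of the $S(\R)$--norm. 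Set $\lambda_{J_0+1,n}:=t_n$. For the base case $k=1$ one takes $t^1_n=t_n$ and $(u^1_{0,n},u^1_{1,n})=\vec u(t_n)$: the inclusion \eqref{cutoffIeq1} is vacuous and the expansion is exactly Corollary \ref{thm2alm} (with $w_n(0)=(w_{0,n},0)$).

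For the inductive step $k\to k+1$, among the solitons still present in the step--$k$ expansion the one of index $k$ has the smallest scale $\lambda_{k,n}$. Fix a large $R>0$, let $\chi$ be the cut--off of Lemma \ref{displem}, and excise a small ball around the origin,
\[
(\tilde u^R_{0,n},\tilde u^R_{1,n}) := \bigl(1-\chi(x/(R\lambda_{k,n}))\bigr)(u^k_{0,n},u^k_{1,n}).
\]
Using Lemma \ref{loclem} (the solitons $W_{\lambda_{j,n}}$, $j>k$, live at scale $\lambda_{j,n}\gg R\lambda_{k,n}$, and $v_L(t^k_n)$ has its energy concentrated near $|x|\approx t^k_n\gg R\lambda_{k,n}$ by the strong Huygens principle), Lemma \ref{displem} (the cut of the $S(\R)$--small free error $\vec w_n(t^k_n-t_n)$ stays $S(\R)$--small), and the elementary fact that $\int_{|x|\le 2R\lambda_{k,n}}|\nabla W_{\lambda_{k,n}}|^2\to\|\nabla W\|_{L^2}^2$ as $R\to\infty$, the sequence $(\tilde u^R_{0,n},\tilde u^R_{1,n})$ admits a profile decomposition made of $\vec v_L(t^k_n)$, the solitons $W_{\lambda_{j,n}}$ for $j=k+1,\dots,J_0$, and an $S(\R)$--dispersive remainder, up to an error $O_{\energysp}(\epsilon(R))+o_n(1)$ with $\epsilon(R)\to 0$; in particular the $k$-th soliton has been removed.

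Now let $\tilde u^R_n$ solve \eqref{nlw} with data $(\tilde u^R_{0,n},\tilde u^R_{1,n})$ at $\tau=0$, and pick $s_n$ with $R\lambda_{k,n}\ll s_n\ll\lambda_{k+1,n}$ (possible since $\lambda_{k,n}\ll\lambda_{k+1,n}$); as $\lambda_{j,n}\ll t_n$ this also forces $s_n\ll t_n$. Since no profile of $(\tilde u^R_{0,n},\tilde u^R_{1,n})$ has infinite Strichartz norm over $[0,s_n]$ — the $v_L$--profile scatters, and $\|W\|_{S(0,\,s_n/\lambda_{j,n})}\to 0$ for $j\ge k+1$ because $s_n\ll\lambda_{k+1,n}\le\lambda_{j,n}$ — Proposition \ref{approxthm} applies and, with $t^{k+1}_n:=t^k_n+s_n$ and $(u^{k+1}_{0,n},u^{k+1}_{1,n}):=\vec{\tilde u}^R_n(s_n)$, yields
\[
(u^{k+1}_{0,n},u^{k+1}_{1,n}) = \vec v_L(t^{k+1}_n) + \sum_{j=k+1}^{J_0}(\iota_j W_{\lambda_{j,n}},0) + \vec w_n(t^{k+1}_n-t_n) + O_{\energysp}(\epsilon(R)) + o_n(1),
\]
where one uses $\vec v(t^{k+1}_n)=\vec v_L(t^{k+1}_n)+o(1)$ (as $t^{k+1}_n\to\infty$) and, to rewrite the transported dispersive remainder as $\vec w_n(t^{k+1}_n-t_n)+o(1)$, the localization estimates from Steps 2--3 of the proof of Lemma \ref{channelofenergyI}. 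The inclusion \eqref{cutoffIeq1} at step $k+1$ follows from finite speed of propagation: $(\tilde u^R_{0,n},\tilde u^R_{1,n})$ agrees with $\vec u(t^k_n)$ on $\{|x|\ge\max(2R\lambda_{k,n},\,t^k_n-t_n)\}$, hence $\vec{\tilde u}^R_n(s_n,\cdot)$ agrees with $\vec u(t^{k+1}_n)$ on $\{|x|\ge\max(2R\lambda_{k,n},\,t^k_n-t_n)+s_n\}$, and $2R\lambda_{k,n}\le(t^k_n-t_n)+s_n=t^{k+1}_n-t_n$ once $n$ is large, because $s_n\gg R\lambda_{k,n}$. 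This last inequality is exactly why one must cut \emph{before} flowing: only after excising the $k$-th soliton can one afford to flow for a time $s_n\gg\lambda_{k,n}$ and so enlarge the matching region past the truncation radius. Finally a diagonal extraction in $R$ (taking $R=R_n\to\infty$ slowly enough) upgrades the $O(\epsilon(R))$ error to a genuine $o_n(1)$, which closes the induction.

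The step I expect to be the main obstacle is the bookkeeping that the truncation at scale $\sim\lambda_{k,n}$ is transparent to \emph{all} the remaining pieces — the solitons $W_{\lambda_{j,n}}$ ($j>k$), the linear part $v_L$, and especially the free error $w_n$, whose energy near the origin is not controlled a priori — and that after flowing the transported remainder is still captured by the explicit free wave $\vec w_n(t^{k+1}_n-t_n)$ up to $o(1)$ in $\energysp$; this is carried out precisely as in \cite{dkm3}, combining Lemma \ref{loclem}, Lemma \ref{displem}, the Huygens principle and Proposition \ref{approxthm}. Everything else — the choice $R\lambda_{k,n}\ll s_n\ll\lambda_{k+1,n}$, the finite--speed matching, the diagonalization in $R$ — is routine and follows the finite--time argument with only the modifications indicated above.
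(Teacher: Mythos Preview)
Your proposal is correct and follows essentially the same approach as the paper: the paper does not write out a proof of Lemma \ref{cutoffII} (nor of the finite--time analogue Lemma \ref{cutoffI}, which it explicitly says is ``identical to that in \cite{dkm3} (see Lemma 4.5)''), so the intended argument is precisely the inductive excision--then--flow scheme from \cite{dkm3} that you outline, with the global--in--time modifications you indicate (replacing the regular part $v$ by the solution scattering to $v_L$, and the compact support of $\vec a$ by the exterior control \eqref{linbehavior}).
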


\begin{proof}[Proof of Theorem \ref{thm2}]
By Lemma \ref{cutoffII} with $k = J_0 + 1$, we have
\begin{align}
\left ( u^{J_0+1}_{0,n}, u^{J_0 + 1}_{1,n} \right ) = \left ( v \left (t^{J_0+1}_n \right )
, \partial_t v \left (t^{J_0+1}_n \right ) \right ) + \left (w_n(t^{J_0+1}_n - t_n), \partial_t w_n(t^{J_0+1}_n - t_n) \right ) + o(1),
\end{align}
in $\energysp$ as $n \rightarrow \infty$ where $t_n \leq t^{J_0+1}_n$.  Denote by $u^{J_0 + 1}_n$ the solution of
\eqref{nlw} with initial data $\left (u^{J_0+1}_{0,n}, u^{J_0+1}_{1,n} \right ) $ at $t = 0$.  By Proposition \ref{approxthm}, for all $t \geq 0$,
\begin{align}\label{expn2}
\vec u_n^{J_0 + 1}(t) = \vec v_L \left ( t^{J_0+1}_n + t \right ) + \vec w_n \left (t^{J_0+1}_n + t - t_n \right ) + \vec r_n(t),
\end{align}
where
$$
\lim_{n \rightarrow \infty} \sup_{t \in [0,\infty)} \| \vec r_n(t) \|_{\energysp} = 0.
$$
By finite speed of propagation and \eqref{cutoffIeq1},
\begin{align}\label{finspeedII}
\left ( u_n^{J_0 + 1}(t,x) , \partial_t u_n^{J_0+1}(t,x) \right ) =
\left ( u\left (t_n^{J_0 + 1} + t,x\right ) , \partial_t u\left (t_n^{J_0+1}+t,x\right ) \right )
\end{align}
for $t \geq 0$, $|x| \geq t + t_n^{J_0 + 1} - t_n$.

Since $\partial_t w_n(0) = 0$, $w_n(-t) = w_n(t)$.  Thus, by Proposition \ref{extbds}, for all $t \in \R$,
\begin{align}\label{lowbdwn2}
\int_{|x| \geq |t|} |\nabla w_n(t)|^2 + |\partial_t w_n(t)|^2 dx \geq \frac{1}{2} \int_{\R^N}
|\nabla w_{0,n}|^2 + |w_{1,n}|^2 dx.
\end{align}

By \eqref{expn2}, \eqref{finspeedII}, and \eqref{lowbdwn2}, we have for any sequence of times $s_n \geq t_n^{J_0+1}$,
\begin{align*}
\int_{|x| \geq s_n - t_n} &|\nabla ( u - v_L)(s_n) |^2 + |\partial_t
 ( u - v_L)(s_n) |^2dx
\geq \frac{1}{2} \int_{\R^N}
|\nabla w_{0,n}|^2 +  w_{1,n}|^2 dx + o(1).
\end{align*}
Since, for any fixed $n$,
$$
\lim_{s \rightarrow \infty} \int_{|x| \geq s - t_n} |\nabla (u - v_L)(s) |^2 + |\partial_t (u - v_L)(s) |^2 dx = 0,
$$
we can find a sequence $s_n \geq t_n^{J_0+1}$ such that
$$
\lim_{n \rightarrow \infty} \int_{|x| \geq s_n - t_n} |\nabla ( u - v_L)(s_n) |^2 + |\partial_t
 ( u - v_L)(s_n) |^2 dx = 0.
$$
This shows
$$
\lim_{n \rightarrow \infty} \int_{\R^N}
|\nabla w_{0,n}|^2 + | w_{1,n}|^2 dx = 0
$$
as desired.

It remains to show that for all $A > 0$,
$$
\lim_{t \rightarrow +\infty} \int_{|x| \leq t - A}\frac{1}{2}|\nabla u(t)|^2 + \frac{1}{2}|\partial_t u(t)|^2 -
\frac{N-2}{2N}| u(t)|^{\frac{2N}{N-2}} dx
$$
exists, and
$$
\lim_{A \rightarrow +\infty} \lim_{t \rightarrow +\infty} \int_{|x| \leq t - A}\frac{1}{2}|\nabla u(t)|^2 + \frac{1}{2}|\partial_t u(t)|^2 -
\frac{N-2}{2N}| u(t)|^{\frac{2N}{N-2}} dx = J_0 E(W,0).
$$
To lessen notation, let
$$
e(u(t)) = \frac{1}{2}|\nabla u(t)|^2 + \frac{1}{2}|\partial_t u(t)|^2  - \frac{N-2}{2N} |u(t)|^{\frac{2N}{N-2}}.
$$
By Claim \ref{globlim}, and Proposition \ref{selfsimilarblowup2}, we have that
\begin{align*}
\lim_{t \rightarrow +\infty} \int_{\R^N} | v_L(t) |^{\frac{2N}{N-2}} dx &= 0,\\
\lim_{t \rightarrow +\infty} \int_{|x| \geq t / 2} |\nabla (u - v_L)(t) |^2 + |\partial_t (u - v_L)(t) |^2 dx &= 0.
\end{align*}
This along with conservation of the free energy imply that as $t \rightarrow +\infty$,
\begin{align*}
\int_{|x| \leq t - A} e(u(t)) dx &= E(u_0,u_1) - \int_{|x| \geq t - A} e(u(t)) dx \\
&= E(u_0,u_1) - \int_{|x| \geq t - A} e(v(t)) dx + o(1) \\
&= E(u_0,u_1) - \frac{1}{2} \int_{|x| \geq t - A} |\nabla_{t,x}v(t)|^2 dx + o(1) \\
&= E(u_0,u_1) - \frac{1}{2} \| (v_0,v_1) \|_{\energysp}^2 + \frac{1}{2} \int_{|x| \leq t - A}  |\nabla_{t,x}v(t)|^2 dx + o(1).
\end{align*}
By the monononicity of the free energy in exterior light cones, the limit
$$
\lim_{t \rightarrow +\infty} \int_{|x| \leq t - A}  |\nabla_{t,x}v(t)|^2 dx
$$
exists.  Hence $\lim_{t \rightarrow +\infty} \int_{|x| \leq t - A} e(u(t)) dx$ exists.  By Lemma \ref{loclem} and Claim \ref{globlim}
\begin{align*}
\lim_{A \rightarrow +\infty} \lim_{t \rightarrow +\infty} \int_{|x| \leq t - A} |\nabla_{t,x} v_L(t)|^2 dx &= 0,\\
\lim_{t \rightarrow +\infty} E(u(t) - v_L(t), \partial_t u(t) - \partial_t v_L(t)) &= E(u_0,u_1) - \frac{1}{2} \| (v_0,v_1) \|_{\energysp}^2.
\end{align*}
Thus, if $\{t_n\}_n$ is from the Theorem \ref{thm2}, then
\begin{align*}
\lim_{A \rightarrow +\infty} \lim_{t \rightarrow +\infty}\int_{|x| \leq t - A} e(u(t)) dx &= E(u_0,u_1) - \frac{1}{2} \| (v_0,v_1) \|_{\energysp}^2 \\
&= \lim_{t \rightarrow +\infty} E(u(t) - v_L(t), \partial_t u(t) - \partial_t v_L(t) ) \\
&= \lim_{n \rightarrow \infty} E(u(t_n) - v_L(t_n), \partial_t u(t_n) - \partial_t v_L(t_n) ) \\
&= J_0 E(W,0).
\end{align*}
This completes the proof of Theorem \ref{thm2}.
\end{proof}

\section{Type II solutions below $2\| \nabla W\|_{L^2}^2$}

\subsection{Blow--up solutions}

This subsection is devoted to proving Theorem \ref{below1}.  We assume that $u$ is a type II solution with $T^+(u) = 1$ and
\begin{align}\label{belowassump1}
\sup_{0 \leq t < 1} \int_{|x| \leq 1-t} | \nabla u(t) |^2 dx < 2 \| \nabla W \|^2_{L^2}.
\end{align}

We denote the regular part of $u(t)$ by $v(t)$ and the singular part
$a(t) = u(t) - v(t)$ as defined in Section 3.  Recall that supp $a(t) \subset \{|x| \leq 1-t \}$, $\vec a(t) \rightharpoonup 0$ in $\energysp$ as $t \rightarrow 1^-$, and
$$
\lim_{t \rightarrow 1^-} E(a(t),\partial_t a(t)) = E(u_0,u_1) - E(v_0,v_1).
$$
By Theorem \ref{thm1}, we know that there exists a sequence of times $t_n \rightarrow 1^-$, an integer $J_0 \geq 1$, $J_0$ scales $\lambda_{j,n}$ and signs $\iota_j$ for $1 \leq j \leq J_0$ so that
\begin{align}
\lambda_{1,n} \ll \lambda_{2,n} \ll \cdots \ll \lambda_{J_0,n} \ll 1 - t_n,
\end{align}
and
\begin{align}
\vec a(t_n) =  \sum_{j = 1}^{J_0} \left (\frac{\iota_j}{\lambda_{j,n}^{\frac{N-2}{2}}} W \left (\frac{x}{\lambda_{j,n}} \right ),0 \right ) + o(1)
\quad \mbox{in } \energysp \mbox{ as } n \rightarrow \infty.
\end{align}
The assumption \eqref{belowassump1} and the definition of $a(t)$ imply that
\begin{align}
\limsup_{t \rightarrow 1^-} \| \vec a(t) \|_{\energysp}^2
< 2 \| \nabla W \|^2_{L^2}.
\end{align}
By orthogonality of the scales $\lambda_{j,n}$, there can be only one soliton above, i.e., $J_0 = 1$.  Moreover, by replacing $u$ with $-u$ if necessary, we can assume that $\iota = 1$, so that
\begin{align}\label{belowdecomp1}
\vec a(t_n) = \left (\frac{1}{\lambda_{n}^{\frac{N-2}{2}}} W \left (\frac{x}{\lambda_{n}} \right ),0 \right ) + o(1) \quad
\mbox{in } \energysp \mbox{ as } n \rightarrow \infty,
\end{align}
with
$$
\lambda_n \ll 1 - t_n.
$$
Moreover, this implies that
\begin{align}\label{belowdecomp2}
E(u_0,u_1) - E(v_0,v_1) = \lim_{t \rightarrow 1^-} E(a(t),\partial_t a(t)) = E(W,0).
\end{align}

The proof will continue in a similar manner as in \cite{ckls}.  We divide the proof of Theorem \ref{below1} into several steps.

\subsubsection*{Step 1: Preliminary observations on a profile decomposition.}  Let $\tau_n \rightarrow 1^-$.
We wish to show that (after passing to a subsequence if necessary) the decomposition \eqref{belowdecomp1}
holds for the sequence $\{\tau_n\}_n$.  After passing to a subsequence, we obtain a profile decomposition
\begin{align}\label{profdec1}
\vec a(\tau_n) = \sum_{j = 1}^J \vec U^j_{L,n}(0) + \vec w^J_n(0).
\end{align}
By Lemma \ref{proford}, we may assume that the profiles are preordered as in Definition \eqref{proforddef} with
$$
i \leq j \implies \{ \vec U^i_L, \lambda_{i,n}, t_{i,n} \} \preccurlyeq \{ \vec U^j_L, \lambda_{j,n}, t_{j,n} \}.
$$
We may also view \eqref{profdec1} as profile decomposition for $\{ \vec u(\tau_n) \}_n$ since $(v_0, v_1)$ is the weak limit
of $\vec u(t)$ in $\energysp$ as $t \rightarrow 1$.  Indeed we can view $\vec v(\tau_n)$, up to an error $o(1)$ in $\energysp$,
as a profile $\vec U^0_L$ with initial data $(v_0,v_1)$ and parameters $\lambda_{n,0} = 1, t_{n,0} = 0$, and nonlinear profile
$v(t)$ and we write
\begin{align}\label{profdec2}
\vec u(\tau_n) = \vec v(\tau_n) + \sum_{j = 1}^J \vec U^j_{L,n}(0) + \vec w^J_n(0).
\end{align}
By \ref{bddlem} and the support properties of $a(t)$, we must have $|t_{j,n}| + \lambda_{j,n} \leq C_j (1 - \tau_n)$.

Since $u(t)$ breaks down at $t = 1$ and $v(t)$ is regular up to $t = 1$,  Proposition \ref{approxthm} implies that at least
one of the nonlinear profiles $U^j$ with $j \geq 1$ does not scatter forward in time.  By our pre--ordering this means that the
nonlinear profile $U^1$ does not scatter forward in time.   We claim that
\begin{align}\label{profdec3}
\{ U^1_L, \lambda_{1,n}, t_{1,n} \} \prec \{ U^0_L, 1, 0\}.
\end{align}
If \eqref{profdec3} does not hold, then
\begin{align}\label{profdec4}
\forall T < T_+(v_0,v_1) \implies \lim_{n \rightarrow \infty} \frac{T - t_{1,n}}{\lambda_{1,n}} < T_+(U^1),
\end{align}
where $T^+(v_0,v_1)$ is computed from the evolution starting at $t = 1$.  Since $v(t)$ exists in a neighborhood of $t = 1$, choose
$T > 0$ with $T < T_+(v_0,v_1)$.  We know that $|t_{1,n}| + \lambda_{1,n} \leq C (1- \tau_n)$.  This means that
$$
\lim_{n \rightarrow \infty} \frac{T-t_{1,n}}{\lambda_{1,n}} = +\infty,
$$
which contradicts \eqref{profdec4}.  Thus, \eqref{profdec3} holds.

By orthogonality of the $\dot H^1$ norm in our profile decomposition, we must have for all $j \geq 1$ and for all $J \geq 1$,
\begin{align}
 \limsup_{n \rightarrow \infty} \| \nabla U^j_{L,n}(0) \|_{L^2}^2 &< 2 \| \grad W \|_{L^2}^2, \label{profdec5} \\
\limsup_{n \rightarrow \infty} \| \nabla w^J_{0,n} \|_{L^2}^2 &< 2 \| \grad W \|_{L^2}^2. \label{profdec5b}
\end{align}
If $N \geq 4$, then
$$
2 \leq \left ( \frac{N}{N-2} \right )^{\frac{N-2}{2}}.
$$
Indeed, by elementary calculus the function $ (x-2) \log \left ( \frac{x}{x-2} \right )$ is strictly increasing on $[4,\infty)$.  This fact along with
\eqref{profdec5}, \eqref{profdec5b}, and Lemma \ref{energytrap} imply that for all $j, J\geq 1$ for all $n$ sufficiently large,
\begin{align*}
E(\vec U^j_{L,n}(0) ) \geq 0, \quad E(\vec w^J_n(0)) \geq 0,
\end{align*}
as well as
\begin{align*}
\lim_{n \rightarrow \infty} E(\vec U^j_{L,n}(0) ) = 0 &\implies \lim_{n \rightarrow \infty}
\| \vec U^j_{L,n}(0) \|_{\energysp} = 0, \\
\lim_{n \rightarrow \infty} E(\vec w^J_n(0) ) = 0 &\implies \lim_{n \rightarrow \infty}
\| \vec w^J_n(0) \|_{\energysp} = 0.
\end{align*}
These imply that for the nonlinear profile $U^j$, either $E(\vec U^j) > 0$ or $U^j \equiv 0$.

\subsubsection*{Step 2: Minimization process and consequences}
Here we use the general compactness argument for profile decompositions of $\vec u(\tau_n)$ developed in \cite{dkm6}.  We first
introduce some notation from \cite{dkm6}.

Let $\mathcal S_0$ denote the set of sequences $\{\tau_n\}_n$ with $\tau_n \rightarrow 1$
so that $\vec u(\tau_n)$ admits a pre--ordered profile decomposition. Let $\cl T = \{\tau_n\}_n \in \cl S_0$, and let
\begin{align}\label{minproc1}
J_0(\cl T) = \# \mbox{ of profiles of $\vec u(\tau_n)$ that do not scatter forward in time.}
\end{align}

Since $u(t)$ breaks down at time $t = 1$, we must have $J_0(\cl T) \geq 1$ for any $\cl T \in \cl S_0$.  However, by the small data theory
there exists $\delta_0 > 0$ so that if $\| \vec U^j_L \|_{\energysp} < \delta_0,$ then $U^j$ is global and scatters in both
time directions. In particular, if $U^j$ does not scatter forward in time, then $\| \vec U^j_L \|_{\energysp}^2 \geq \delta_0^2$. Since we are assuming $u$ is type II,
$$
\sup_{0 \leq t < 1} \| (u(t),\partial_t u(t)) \|_{\energysp} = M < \infty.
$$
By orthogonality of the free energy, if $J \in \N$ and $J \leq J_0(\cl T)$, then
$$
J \delta_0^2 \leq \limsup_{n \rightarrow \infty} \sum_{j = 1}^J \| \vec U^j_{L,n}(0) \|_{\energysp}^2 \leq
\limsup_{n \rightarrow \infty} \| (u(\tau_n),\partial_t u(\tau_n)) \|_{\energysp}^2 \leq M^2.
$$
Thus, $J_0(\cl T) \leq M^2 \delta_0^{-2}$ for all $\cl T \in \cl S_0$.

Define
$$J_1 (\cl T) = \min \{ j \geq 1 : j \prec j + 1 \}.$$
Here $\prec$ is the strict order from Definition \eqref{proforddef}.  By our definition of $J_0( \cl T)$, we must have
$J_1 (\cl T) \leq J_0 (\cl T)$.  Hence, $J_1 ( \cl T)$ is uniformly bounded on $\cl S_0$ as well.

Now define
\begin{align}\label{minproc2}
J_M &= \max \{ J_0(\cl T) : \cl T \in \cl S_0 \}, \\
\cl S_1 &= \{ \cl T \in \cl S_0 : J_0 (\cl T) = J_M \}.
\end{align}
For $\cl T \in \cl S_1$, we define $\cl E(\cl T)$ to be the sum of the energies of the nonlinear profiles that do not scatter, i.e.,
\begin{align}\label{minproc3}
\cl E(\cl T) = \sum_{j = 1}^{J_M} E ( \vec U^j).
\end{align}
Let
\begin{align}\label{minproc4}
\cl E_m = \inf \{ \cl E (\cl T) : \cl T \in \cl S_1 \}.
\end{align}

We now recall a result from \cite{dkm6} (see Corollary 4.3).

\begin{lem}
The infimum $\cl E_m$ is attained, i.e., there exists $\cl T_0 \in \cl S_1$ so that
$$
\cl E_m = \cl E (\cl T_0).
$$
\end{lem}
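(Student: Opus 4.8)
The plan is to obtain $\cl T_0$ by a diagonal extraction from a minimizing sequence, following the general compactness scheme of \cite{dkm6}. First I would pick $\cl T^{(k)} = \{\tau_n^{(k)}\}_n \in \cl S_1$ with $\cl E(\cl T^{(k)}) \to \cl E_m$, and write for each $k$ the associated pre--ordered profile decomposition $\vec u(\tau_n^{(k)}) = \sum_{j=1}^J \vec U^{j,(k)}_{L,n}(0) + \vec w^{J,(k)}_n(0)$, in which --- because the profiles are pre--ordered as in Definition \ref{proforddef} and $J_0(\cl T^{(k)}) = J_M$ --- the nonlinear profiles $U^{1,(k)},\dots,U^{J_M,(k)}$ are exactly those that do not scatter forward in time, and $\cl E(\cl T^{(k)}) = \sum_{j=1}^{J_M} E(\vec U^{j,(k)})$. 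For fixed $k$ all the $\dot H^1$-- and energy--norm orthogonality expansions hold up to errors $o_n(1)$, and $\limsup_n \|w_n^{J,(k)}\|_{S(\R)} \to 0$ as $J \to \infty$; so I can choose $J_k$ and then $n_k \nearrow \infty$ such that simultaneously $1 - \tau_{n_k}^{(k)} < 1/k$, $\|w_{n_k}^{J_k,(k)}\|_{S(\R)} < 1/k$, each $\|\vec U^{j,(k)}_{L,n_k}(0)\|_{\energysp}$ and $E(\vec U^{j,(k)}_{L,n_k}(0))$ differs from its $n$--limit by less than $1/k$, and the quantities governing the pre--order of the first $J_M$ profiles (the limits whose existence is recorded in Definition \ref{proforddef}) are within $1/k$ of their limiting values. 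Setting $\cl T_0 := \{\tau_{n_k}^{(k)}\}_k$ and passing to a subsequence in $k$, $\vec u(\tau_{n_k}^{(k)})$ admits a pre--ordered profile decomposition, say with linear profiles $\{V^j_L\}_j$, parameters $\{\mu_{j,k}, s_{j,k}\}$, and nonlinear profiles $\{V^j\}$; in particular $\cl T_0 \in \cl S_0$ and $\tau_{n_k}^{(k)} \to 1$.

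Next I would show $\cl T_0 \in \cl S_1$ with $\cl E(\cl T_0) = \cl E_m$. Since $J_M$ is the maximum of $J_0$ over $\cl S_0$ we automatically have $J_0(\cl T_0) \le J_M$, so it suffices to prove $J_0(\cl T_0) \ge J_M$ and $\cl E(\cl T_0) \le \cl E_m$; combined with the fact that $\cl E(\cl T_0) \ge \cl E_m$ once $\cl T_0 \in \cl S_1$, this gives the lemma. The idea is that, by the choice of $n_k$, for large $k$ the datum $\vec u(\tau_{n_k}^{(k)})$ differs by $O(1/k)$ in $\energysp$ from a sum of $J_M$ rescaled, mutually pseudo--orthogonal pieces carrying the non--scattering nonlinear profiles $U^{1,(k)},\dots,U^{J_M,(k)}$, plus a piece whose nonlinear evolution scatters, plus a term small in $S(\R)$. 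Comparing this representation with $\sum_j \vec V^j_{L,n_k}(0) + (\text{remainder})$ via the uniqueness of profile decompositions up to the pseudo--orthogonality relation \eqref{paramorth}, together with Proposition \ref{approxthm} to compare the nonlinear evolutions, I would argue that each of the $J_M$ non--scattering pieces is absorbed into a \emph{distinct} non--scattering nonlinear profile $V^j$ (distinctness because the parameters of the $U^{j,(k)}$ stay mutually pseudo--orthogonal in the limit, so they cannot collapse into one $V^j$), and that the energy carried by each such $V^j$ is bounded by $\lim_n E(\vec U^{j,(k)})$ up to $o_k(1)$. Summing in $j$ yields $J_0(\cl T_0) \ge J_M$ and $\cl E(\cl T_0) \le \liminf_k \cl E(\cl T^{(k)}) = \cl E_m$, as needed.

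I expect the comparison step in the second paragraph to be the main obstacle: one has to know that a diagonal limit of profile decompositions cannot destroy non--scattering profiles nor create cancellations that lower the total non--scattering energy --- in other words that the number of non--scattering nonlinear profiles is lower semicontinuous and their total energy is upper semicontinuous under this limit. This is exactly the structural stability statement for profile decompositions proved in \cite{dkm6}, and the scheme above is an adaptation of the proof of Corollary 4.3 there; the finiteness of $J_M$ (already established via the type II bound, $J_0(\cl T) \le M^2\delta_0^{-2}$) is what makes the extraction terminate.
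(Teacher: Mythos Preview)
The paper does not supply its own proof of this lemma; it is simply quoted from \cite{dkm6} (Corollary~4.3 there) with no argument. Your diagonal--extraction outline is exactly the scheme of that corollary, and you correctly isolate the crux --- that under the diagonal limit the number of non--scattering nonlinear profiles cannot drop and their total energy cannot jump up --- and defer that structural--stability step to \cite{dkm6}. In that sense your proposal and the paper's treatment coincide.

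One imprecision worth flagging: the sentence ``the energy carried by each such $V^j$ is bounded by $\lim_n E(\vec U^{j,(k)})$ up to $o_k(1)$'' does not quite parse, since $E(\vec U^{j,(k)})$ depends on $k$ (the profiles themselves vary with $k$) and there is no a~priori convergence in $k$. In \cite{dkm6} the energy bookkeeping is done globally via the Pythagorean expansion of $E(\vec a(\tau_{n_k}^{(k)}))$ together with the nonnegativity of every term (which you have from Step~1 in this section), rather than by a profile--by--profile comparison; the matching of non--scattering bubbles to distinct $V^{j'}$ uses that each $\vec U^{j,(k)}_{L,n_k}(0)$ has $\energysp$ norm bounded below by $\delta_0$ and so cannot be absorbed by a remainder small in $S(\R)$. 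This is a cosmetic point and does not affect the validity of your approach.
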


With the above lemma, we can define
\begin{align}
\cl S_2 &= \{ \cl T \in \cl S_1 : \cl E (\cl T) = \cl E_m \} \neq \varnothing, \\
J_m &= \min \{ J_1(\cl T) : \cl T \in \cl S_2 \}, \\
\cl S_3 &= \{ \cl T \in \cl S_2 : J_1(\cl T) = J_m \} \neq \varnothing.
\end{align}

In the radial setting, necessarily $J_m = 1$.  This follows from the following lemma proved in \cite{dkm6} (see Lemma 4.11).

\begin{lem}\label{minproclem}
There exists $\cl T_0 \in \cl S_3$ such that for all $j = 1, \ldots, J_m$,
\begin{align*}
t_{j,n} &= 0, \\
\lambda_{j,n} &= \lambda_{1,n},
\end{align*}
and $U^j$ has the compactness property on $I_{\max}(U^j)$.
\end{lem}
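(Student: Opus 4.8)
The plan is to run the variational/minimization scheme of \cite{dkm6}, which isolates the ``minimal'' non-scattering profiles and forces them to be static. Fix $\mathcal{T}_0 = \{\tau_n\}_n \in \mathcal{S}_3$ and write its pre-ordered profile decomposition $\vec u(\tau_n) = \vec v(\tau_n) + \sum_{j\ge 1}\vec U^j_{L,n}(0) + \vec w^J_n(0)$ as in Step 1, with $J_m = J_1(\mathcal{T}_0)$; write $(j)\sim(k)$ when $(j)\preccurlyeq(k)$ and $(k)\preccurlyeq(j)$. Since $J_1 \le J_0$, the profiles $U^1,\dots,U^{J_m}$ do not scatter forward in time; and since, by the definition of $J_1$, we have $(j)\not\prec(j+1)$ for $1\le j\le J_m-1$ while $(j)\preccurlyeq(j+1)$ holds by the pre-ordering (Lemma \ref{proford}), we conclude $(1)\sim(2)\sim\cdots\sim(J_m)$. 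It thus remains to prove \emph{(i)} that each $U^j$, $j\le J_m$, has the compactness property on $I_{\max}(U^j)$; and then \emph{(ii)} to read off from Theorem \ref{cptsolns} and the meaning of $\sim$ that, after re-parametrizing, $t_{j,n}=0$ and $\lambda_{j,n}=\lambda_{1,n}$ for all $j\le J_m$.

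For \emph{(i)}, argue by contradiction using that $\mathcal{T}_0$ realizes the triple minimum defining $\mathcal{S}_3$. Suppose, say, $U^1$ fails the compactness property on $I_{\max}(U^1)$. Then there is a sequence $s_m$ tending to an endpoint of $I_{\max}(U^1)$ along which $\vec U^1(s_m)$ admits a profile decomposition not reduced to a single time-translate of $\vec U^1(0)$: either its remainder retains positive free energy in the limit, or it splits off two or more non-scattering sub-profiles, or it yields a single non-scattering sub-profile of energy strictly below $E(\vec U^1)$. Applying Proposition \ref{approxthm} to the solutions $u_n$ of \eqref{nlw} with data $\vec u(\tau_n)$ on the time intervals $[0,\lambda_{1,n}s_m]$ (a legitimate range since $s_m$ lies inside $I_{\max}(U^1)$ and $\lambda_{1,n}\ll 1-\tau_n$), and then diagonalizing in $n$ and $m$, one produces a new sequence $\mathcal{T}_0' = \{\tau_n'\}_n$ with $\tau_n'\to 1^-$ lying in $\mathcal{S}_0$, whose profile list is that of $\mathcal{T}_0$ with $U^1$ replaced by the sub-profiles of $\vec U^1(s_m)$, the profiles $U^2,\dots$ being carried along essentially unchanged. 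Now track the three quantities: the number $J_0(\mathcal{T}_0')$ of non-scattering profiles cannot exceed $J_M$ (their maximum over $\mathcal{S}_0$, attained by $\mathcal{T}_0\in\mathcal{S}_1$), so it stays equal to $J_M$; then, since $\mathcal{T}_0\in\mathcal{S}_2$ realizes $\mathcal{E}_m$, the sum $\mathcal{E}(\mathcal{T}_0')$ of the energies of the non-scattering profiles cannot drop below $\mathcal{E}_m$, which — using the additivity of the energy in a profile decomposition together with Lemma \ref{energytrap}, exactly as in Step 1 to keep all relevant energies $\ge 0$ — forces the splitting of $\vec U^1(s_m)$ to be trivial with vanishing remainder, i.e.\ $\vec U^1(s_m)$ relatively compact modulo scaling, contradicting the failure of compactness at the endpoint; the remaining degenerate scenario, in which $\mathcal{E}$ is preserved but $J_1$ would strictly decrease, is excluded because $\mathcal{T}_0\in\mathcal{S}_3$ minimizes $J_1$. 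The same argument applies with $1$ replaced by any $j\le J_m$.

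Granting \emph{(i)}, Theorem \ref{cptsolns} identifies each $U^j$ ($j\le J_m$) with a sign times a fixed dilation of $W$; in particular $U^j$ is static, $I_{\max}(U^j)=\R$, and $U^j$ scatters in \emph{neither} time direction. The latter rules out the second alternative in the normalization \eqref{timeassump} (which would force $U^j$ to scatter as $t\to+\infty$ or as $t\to-\infty$), leaving $t_{j,n}=0$ for all $j\le J_m$; absorbing the fixed dilation of $W$ into $\lambda_{j,n}$, the $j$-th profile then contributes exactly $\bigl(\iota_j\lambda_{j,n}^{-(N-2)/2}W(\,\cdot/\lambda_{j,n}),0\bigr)$ to the decomposition of $\vec u(\tau_n)$. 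With all $t_{j,n}=0$ and all $T_+(U^j)=+\infty$, the equivalence $(1)\sim(j)$ reduces to the requirement that $\lim_n(\lambda_{1,n}/\lambda_{j,n})\,T$ and $\lim_n(\lambda_{j,n}/\lambda_{1,n})\,T$ be finite for every $T>0$, i.e.\ that $\lambda_{1,n}/\lambda_{j,n}$ have a finite positive limit; passing to a further subsequence and renormalizing $\lambda_{1,n}$, we obtain $\lambda_{j,n}=\lambda_{1,n}$ for all $j\le J_m$, which is the assertion of the lemma. (As a byproduct, pseudo-orthogonality \eqref{paramorth} applied to two distinct indices $j\ne k$ in $\{1,\dots,J_m\}$ would then read $1+1+0\to\infty$, which is absurd; hence $J_m=1$, the statement made just before the lemma.) The main obstacle is \emph{(i)}: arranging the diagonal extraction so that the refined decomposition is a genuine, pre-ordered member of $\mathcal{S}_0$, and then the bookkeeping showing the competitor strictly improves one of $(J_0,\mathcal{E},J_1)$ in lexicographic order unless $U^1$ was already compact — here the relative compactness of the lower-order profiles together with Proposition \ref{approxthm} are precisely what keep the remainder and the profiles $U^2,\dots$ under control during the evolution.
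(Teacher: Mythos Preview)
The paper does not prove this lemma; it simply cites \cite{dkm6}, Lemma 4.11. Your proposal is a reasonable reconstruction of that argument and captures the correct minimization scheme: establish compactness of the first $J_m$ profiles by contradiction with the lexicographic minimality of $(J_0,\mathcal E,J_1)$, then read off the parameter normalizations.

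Two small points. First, your derivation of $t_{j,n}=0$ and $\lambda_{j,n}=\lambda_{1,n}$ by invoking Theorem~\ref{cptsolns} to identify each $U^j$ with $\pm W$ is a legitimate shortcut here, but it is more than the lemma requires and is not how \cite{dkm6} argues in general. The compactness property alone already gives global existence and non-scattering in both time directions (Step 1 of Appendix C), which forces $t_{j,n}=0$ via \eqref{timeassump}; the equivalence $(1)\sim(j)$ with $t_{j,n}=0$ and $T_\pm(U^j)=\pm\infty$ then bounds $\lambda_{1,n}/\lambda_{j,n}$ above and below, allowing the renormalization $\lambda_{j,n}=\lambda_{1,n}$. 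No rigidity is needed at this stage.

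Second, in your contradiction argument for (i), you write that ``the relative compactness of the lower-order profiles'' is what keeps $U^2,\dots$ under control on $[0,\lambda_{1,n}s_m]$. That is not quite right: at the moment you are arguing about $U^1$, compactness of $U^2,\dots,U^{J_m}$ has not yet been established. The control needed to apply Proposition~\ref{approxthm} on that interval comes instead from the pre-ordering $(1)\preccurlyeq(j)$ for all $j\ge 2$, which is precisely the statement that $\frac{\lambda_{1,n}T+t_{1,n}-t_{j,n}}{\lambda_{j,n}}$ stays inside $I_{\max}(U^j)$ with bounded Strichartz norm for every $T<T_+(U^1)$.
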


By orthogonality of the parameters, it follows that $J_m = 1$.  Moreover, by Proposition \ref{cptsolns}, after rescaling
$\lambda_{1,n}$, we have that $U^1 = \pm W$.

To proceed further, we distinguish two cases:
\begin{itemize}
\item Case 1: $v(t)$ scatters forward in time.
\item Case 2: $v(t)$ does not scatter forward in time.
\end{itemize}

We now determine the value of $J_M$ based on what case we are in.

\begin{clm}
In Case 1, we have $J_M = 1$ and $\cl E_m \geq E(W,0)$.  In Case 2, we have $J_M = 2$ and $\cl E_m \geq
E(W,0) + E(v_0,v_1)$.
\end{clm}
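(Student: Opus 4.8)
The plan is to obtain the Claim by a pure energy count, combining the structure of the minimizing configuration with the identity \eqref{belowdecomp2}; no new channels-of-energy argument is needed for this particular statement.

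\emph{Lower bound on $\mathcal E_m$.} Since the infimum defining $\mathcal E_m$ is attained, Lemma \ref{minproclem} provides $\mathcal T_0=\{\tau_n\}_n\in\mathcal S_3$ whose first $J_m$ pre-ordered nonlinear profiles have the compactness property; their parameters coincide, so the pseudo-orthogonality \eqref{paramorth} forces $J_m=1$, and Theorem \ref{cptsolns} identifies this profile, after rescaling $\lambda_{1,n}$, as $\pm W$, so its energy equals $E(W,0)$. In Case 2 the regular part $v$ is itself a nonlinear profile of $\vec u(\tau_n)$ that does not scatter forward in time, and by the argument of \eqref{profdec3} it is ordered strictly after the $\pm W$ profile, hence appears among the $J_M$ non-scattering profiles summed in $\mathcal E(\mathcal T_0)$; in Case 1, $v$ scatters and does not appear. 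By Step 1 every profile $U^j$ with $j\geq 1$ has $E(\vec U^j)\geq 0$, with $E(\vec U^j)>0$ whenever $U^j\not\equiv 0$; summing over the non-scattering profiles gives $\mathcal E_m\geq E(W,0)$ in Case 1 and $\mathcal E_m\geq E(W,0)+E(v_0,v_1)$ in Case 2.

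\emph{Upper bound on $\mathcal E_m$.} For any $\mathcal T=\{\tau_n\}_n\in\mathcal S_1$, the Pythagorean energy expansion for the profile decomposition \eqref{profdec1} of $\vec a(\tau_n)$ gives, for every fixed $J$,
\[
E(\vec a(\tau_n))=\sum_{j=1}^{J}E\bigl(\vec U^j_{L,n}(0)\bigr)+E\bigl(\vec w^J_n(0)\bigr)+o_n(1).
\]
Letting $n\to\infty$ and using $\lim_{t\to1^-}E(\vec a(t))=E(W,0)$ (equation \eqref{belowdecomp2}), the convergence $E(\vec U^j_{L,n}(0))\to E(\vec U^j)$ (trivial when $t_{j,n}=0$, and otherwise a consequence of the dispersion of the linear flow and the wave-operator construction of $U^j$), and $E(\vec w^J_n(0))\geq 0$ for large $n$ (Step 1), one obtains $\sum_{j=1}^{J}E(\vec U^j)\leq E(W,0)$ for all $J$, hence $\sum_{j\geq1}E(\vec U^j)\leq E(W,0)$. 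Since $\mathcal E(\mathcal T)$ sums $E(\vec U^j)$ over the non-scattering profiles — a subset of $\{U^j\}_{j\geq1}$ in Case 1, and $v$ together with such a subset in Case 2 — it follows that $\mathcal E(\mathcal T)\leq E(W,0)$ in Case 1 and $\mathcal E(\mathcal T)\leq E(W,0)+E(v_0,v_1)$ in Case 2, so the same bounds hold for $\mathcal E_m$.

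\emph{Conclusion.} Combining, $\mathcal E_m=E(W,0)$ in Case 1 and $\mathcal E_m=E(W,0)+E(v_0,v_1)$ in Case 2. Evaluating at $\mathcal T_0$, whose non-scattering profiles include $\pm W$ and, in Case 2, also $v$, the equality forces the energies of all \emph{remaining} non-scattering profiles to sum to zero; each such profile is a nonzero $U^j$ with $j\geq1$ and so has strictly positive energy, whence there are none, and $J_0(\mathcal T_0)=J_M$ equals $1$ in Case 1 and $2$ in Case 2. The argument is an assembly of earlier results, and the main obstacle is purely bookkeeping: justifying the energy expansion above (notably $E(\vec U^j_{L,n}(0))\to E(\vec U^j)$ for profiles with escaping time parameters), and taking care that in Case 2 the profile $v$ is counted exactly once among the $J_M$ non-scattering profiles and that one does not try to apply the energy-trapping Lemma \ref{energytrap} to $v$, whose energy-norm need not lie below $2\|\nabla W\|^2_{L^2}$ — indeed the sign of $E(v_0,v_1)$ is neither used nor needed.
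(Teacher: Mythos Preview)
Your proof is correct and follows essentially the same energy-counting strategy as the paper: identify $U^1=\pm W$ at the minimizing sequence $\mathcal T_0$, then use the Pythagorean expansion together with the positivity of the profile energies and the identity $E(u_0,u_1)-E(v_0,v_1)=E(W,0)$ to force the extra non-scattering profiles to vanish. The only organizational difference is that you detour through a general upper bound $\mathcal E(\mathcal T)\le E(W,0)$ (resp.\ $\le E(W,0)+E(v_0,v_1)$) valid for every $\mathcal T\in\mathcal S_1$, thereby pinning down $\mathcal E_m$ exactly, whereas the paper works directly at $\mathcal T_0$ and expands $E(\vec u(\tau_n))=E(u_0,u_1)$ (rather than $E(\vec a(\tau_n))$) to reach $0\ge\sum_{j=2}^{J_M}E(\vec U^j)$ in one stroke; both routes are equivalent once one unwinds the identity \eqref{belowdecomp2}.
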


\begin{proof}
Let $\cl T_0 = \{\tau_n\}_n$ be the sequence of times given by Lemma \ref{minproclem}. Then $J_m = 1$ and
$U^1 = \pm W$.  Since $\cl T_0 \subset \cl S_3 \subset \cl S_2$,
$$
\cl E_m = \sum_{j = 1}^{J_M} E(U^j).
$$
By \eqref{belowdecomp2}, we have $E(u_0,u_1) = E(W,0) + E(v_0,v_1)$.  We now perform an expansion of $E(\vec a)$
along $\cl T_0$.  We know that all the nonzero profiles, as well as $w_n^J$ have positive energy.  Then in Case 1,
$$
\cl E_m = E(W,0) + \sum_{j = 2}^{J_M} E(\vec U^j) \geq E(W,0).
$$
If we are in Case 2, then for some $2 \leq j_L \leq J_M$, $U^{j_L} = v$.  Hence,
$$
\cl E_m = E(W,0) + \sum_{j = 2}^{J_M} E(\vec U^j) \geq E(W,0) + E(v_0,v_1).
$$

We now prove the statements about $J_M$.  Suppose we are in Case 1. Then
for $J$ sufficiently large, there exists an index $j_L$ with $J_M + 1 \leq j_L \leq J$ so that $U^{j_L} = v$.  By the Pythagorean expansion of the energy,
\begin{align*}
E(u_0,u_1) &= \sum_{j = 1}^{J_M} E(\vec U^j) + \sum_{j = J_M + 1}^J E(\vec U^j) + E(\vec w_n^J) + o(1) \\
&= E(W,0) + \sum_{j = 2}^{J_M} E(\vec U^j) + \sum_{j = J_M + 1}^J E(\vec U^j) + E(\vec w_n^J) + o(1) \\
&\geq E(W,0) + \sum_{j = 2}^{J_M} E(\vec U^j) + E(v_0,v_1) + o(1).
\end{align*}
as $n \rightarrow \infty$.  Subtracting $E(W,0) + E(v_0,v_1)$ from both sides, letting $n \rightarrow \infty$, and using the fact that $E(u_0,u_1) = E(W,0) + E(v_0,v_1)$, we
obtain
$$
0 = \sum_{j = 2}^{J_M} E(\vec U^j).
$$
Hence $J_M = 1$ in Case 1.  In Case 2, one similarly shows that $J_M = 2$.
\end{proof}

\subsubsection*{Step 3: Compactness of the singular part, $a(t)$}

In this step, we prove the following result.

\begin{lem}\label{complem}
For any sequence $\{ \tau_n \}_n$, with $\tau_n \rightarrow 1$, there exists a subsequence, still denoted by
$\tau_n$, and scales $\lambda_n > 0$ so that $\left (\lambda_n^{(N-2)/2} a(\tau_n \lambda_n x), \lambda_n^{N/2}
\partial_t a(\tau_n, \lambda_n x)\right )$ converges in $\energysp$.
\end{lem}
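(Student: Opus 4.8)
The plan is to argue by contradiction, using the compactness bookkeeping for profile decompositions of $\vec u(\tau_n)$ set up in Steps~1 and~2 together with the exterior energy estimate of Proposition~\ref{extbds}. Suppose the conclusion fails: there is a sequence $\tau_n \to 1$ such that for \emph{every} choice of scales $\lambda = \lambda_n > 0$, no subsequence of $\bigl(\lambda^{(N-2)/2} a(\tau_n,\lambda x),\, \lambda^{N/2}\partial_t a(\tau_n,\lambda x)\bigr)$ converges in $\energysp$. After passing to a subsequence, extract a pre--ordered profile decomposition $\vec a(\tau_n) = \sum_{j \ge 1}\vec U^j_{L,n}(0) + \vec w^J_n(0)$, equivalently $\vec u(\tau_n) = \vec v(\tau_n) + \sum_{j \ge 1}\vec U^j_{L,n}(0) + \vec w^J_n(0)$. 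Recall from Step~1 that $|t_{j,n}| + \lambda_{j,n} \le C_j(1-\tau_n)$, that $\vec a(\tau_n) \rightharpoonup 0$, that $U^1$ does not scatter forward in time, that every nonzero nonlinear profile has strictly positive energy, and that $\lim_n E(\vec U^j_{L,n}(0)) = 0$ (resp. $\lim_n E(\vec w^J_n(0)) = 0$) forces convergence to $0$ in $\energysp$.

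Next I would run the minimization argument of Step~2 on $\mathcal T = \{\tau_n\}_n$. Since $u$ breaks down at $t = 1$ we have $J_0(\mathcal T) \ge 1$, and $J_0(\mathcal T) \ge 2$ in Case~2 (counting $v$ as a non--scattering profile); by maximality of $J_M$ this forces $J_0(\mathcal T) = J_M$, so $\mathcal T \in \mathcal S_1$ and the only non--scattering profiles are $U^1$ (and $v$ in Case~2). By the Claim at the end of Step~2, $\mathcal E_m = E(W,0)$ in Case~1 and $\mathcal E_m = E(W,0) + E(v_0,v_1)$ in Case~2, so $\mathcal E(\mathcal T) \ge \mathcal E_m$ gives $E(\vec U^1) \ge E(W,0)$ in both cases. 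Feeding this into the Pythagorean expansion of the energy along the profile decomposition of $\vec a(\tau_n)$,
\begin{align*}
E(W,0) = \lim_{n \to \infty}E(\vec a(\tau_n)) = E(\vec U^1) + \sum_{j \ge 2}E(\vec U^j) + \lim_{n\to\infty}E(\vec w^J_n(0)),
\end{align*}
together with nonnegativity of every term on the right, forces $E(\vec U^1) = E(W,0)$, $U^j \equiv 0$ for $j \ge 2$, and $\lim_n \|\vec w^J_n(0)\|_{\energysp} = 0$. Hence $\vec a(\tau_n) = \vec U^1_{L,n}(0) + o(1)$ in $\energysp$, and since $\|\vec U^1(s_n) - \vec U^1_L(s_n)\|_{\energysp} \to 0$ with $s_n := -t_{1,n}/\lambda_{1,n}$,
\begin{align*}
\vec a(\tau_n) = \Bigl(\lambda_{1,n}^{-(N-2)/2} U^1(s_n, x/\lambda_{1,n}),\, \lambda_{1,n}^{-N/2}\partial_t U^1(s_n, x/\lambda_{1,n})\Bigr) + o(1).
\end{align*}

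It remains to analyze $s_n$, which by the standard normalization is either $\equiv 0$ or tends to $\pm\infty$. If $s_n \equiv 0$, then $\vec a(\tau_n)$ is the $\lambda_{1,n}$--rescaling of $(U^1_0,U^1_1)$ up to $o(1)$, so with $\lambda_n = \lambda_{1,n}$ the rescaled data converge to $(U^1_0,U^1_1) \neq 0$ in $\energysp$, contradicting the choice of $\{\tau_n\}$. The case $s_n \to +\infty$ cannot occur, since then the nonlinear profile $U^1$ would scatter forward in time. The remaining case $s_n \to -\infty$ is the crux: there $U^1$ is global backwards and scatters as $\tau \to -\infty$, and since $E(\vec U^1) = E(W,0)$ with $U^1$ radial and type~II below $2\|\nabla W\|_{L^2}^2$, the classification of threshold--energy radial solutions (\cite{dm08}, \cite{lz}) identifies $U^1$ up to rescaling with $\pm W$ or with the special solution $W^-$; the static solution is incompatible with $s_n \to -\infty$, so $U^1 = \pm W^-$ after rescaling.

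Excluding $U^1 = \pm W^-$ is where the real work lies, and I would follow the channels--of--energy argument of \cite{ckls}. Applying \eqref{extbd2} to the backward radiation of $W^-$, whose free energy equals $2E(W,0) > 0$, yields a fixed fraction of the energy of $u - v$ living in an exterior region $\{|x| > c(1-\tau_n)\}$ and, after evolving, surviving there over a time span comparable to $1-\tau_n$. On the other hand $\supp \vec a(t) \subseteq \{|x| \le 1-t\}$ shrinks to the origin and, by Theorem~\ref{thm1}, $\vec a(t_m)$ is, along a suitable sequence $t_m \to 1$, asymptotically a single rescaled copy of $W$ concentrating at the origin, which carries no energy in such an exterior region; comparing the two yields the contradiction. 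All the earlier steps are essentially a reorganization of machinery already established in Steps~1--2, so this last exclusion is the only genuinely new ingredient.
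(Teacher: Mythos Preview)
Your reduction to a single nontrivial profile $U^1$ (vanishing of all scattering profiles and of $w^J_n$, via $\mathcal T \in \mathcal S_1$ and the Pythagorean expansion of $E(\vec a(\tau_n)) = E(W,0)$) follows the paper's argument essentially verbatim, and your handling of the cases $s_n \equiv 0$ and $s_n \to +\infty$ is correct.

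The divergence is in excluding $s_n \to -\infty$, i.e.\ backward scattering of $U^1$. You propose to first pin down $U^1$ as $\pm W^-$ via the threshold classification and then rule out $W^-$ by a channels--of--energy argument; you acknowledge that last step is ``where the real work lies'' and only sketch it. The paper bypasses all of this with a two--line argument that uses neither the threshold classification nor channels of energy: assuming $U^1$ scatters backward, apply Proposition~\ref{approxthm} in \emph{negative} time on $[-t_0,0]$ (legitimate since $U^1$ is then globally defined and scatters as $\tau\to -\infty$, while $v$ is defined on $[1-2t_0,1]$), and evaluate at $t_n = 1 - \tau_n - t_0 \in [-t_0,0]$. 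This gives
\[
\vec a(1-t_0) \;=\; \vec U^1_n(1 - \tau_n - t_0) + o(1) \quad \text{in }\energysp .
\]
The left side is a \emph{fixed} nonzero element of $\energysp$ (nonzero because $\|\vec U^1_n(\cdot)\|_{\energysp}$ is bounded below by the small--data threshold), so strong convergence of the rescaled profiles to it forces $\lambda_{1,n}$ to be bounded away from $0$; this contradicts $\lambda_{1,n} \le C(1-\tau_n) \to 0$ established in Step~1.

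So the paper's route is both shorter and avoids the two loose ends in your outline: the appeal to ``type~II below $2\|\nabla W\|_{L^2}^2$'' for $U^1$ (not established---you only control $\|\nabla U^1_L(s_n)\|_{L^2}$ at one time), and the unexecuted channels argument for excluding $W^-$. Your route can in principle be completed (the threshold classification alone, without the type~II hypothesis, already forces $U^1 \in \{\pm W^-\}$ given backward scattering and no forward scattering), but the exclusion of $W^-$ by exterior energy is genuinely nontrivial and you have not supplied it; the paper's argument makes it unnecessary.
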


\begin{proof}
Let $\tau_n \rightarrow 1$.  After passing to a subsequence and reordering, we may assume $\{\tau_n\}_n \in \cl S_0$ so that the
profile decomposition of $\vec u(\tau_n)$ is pre--ordered.  By Step 1 and Step 2, we know that $(v_0,v_1)$ is a profile and
that either $J_M = 1$ or $J_M = 2$ depending on whether or not $v(t)$ scatters forward in time.  We also know that
$\vec U^1_L \prec (v_0,v_1)$.  Further, all of the profiles other than $(v_0,v_1)$ have positive energy and so does $
\vec w^J_n$.  As we shall see, Lemma \ref{complem} follows from the following claim.

\begin{clm}
All of the profiles that scatter forward in time must be identically 0,
\begin{align}\label{comp1}
\lim_{n \rightarrow \infty} \| (w^J_{0,n},w^J_{1,n}) \|_{\energysp} = 0,
\end{align}
and the nonlinear profile $U^1$ cannot scatter backwards in time.
\end{clm}
\begin{proof}
To prove the first two parts of the claim, we again rely on the positivity of the energies.  Since $J_M = 1$ or $J_M = 2$,
we know $\{ \tau_n \}_n \in \cl S_1$. In Case 1 we have
\begin{align*}
E(W,0) &= E(u_0,u_1) - E(v_0,v_1) = E(\vec U^1) + \sum_{j = 2}^J E(\vec U^j) + E(\vec w^J_n) + o(1) \\
&\geq \cl E_m + \sum_{j = 2}^J E(\vec U^j) + E(\vec w^J_n) + o(1) \\
&\geq E(W,0) + \sum_{j = 2}^J E(\vec U^j) + E(\vec w^J_n) + o(1).
\end{align*}
By canceling $E(W,0)$ from both sides, we have that $U^j = 0$ for all $j \geq 2$ and $\lim_{n \rightarrow \infty} \| (w^J_{0,n},w^J_{1,n}) \|_{\energysp} = 0$.
The same proof applies in Case 2 as well.

We now prove that the nonlinear profile $U^1$ cannot scatter backward in time. Suppose this was not the case, and that
$U^1$ scatters as $t \rightarrow -\infty$.  Let $t_0 > 0$ be small so that
$v(t)$ is defined on $[1-2t_0,1]$.  Proposition \ref{approxthm} gives for all $t_n \in [-t_0,0]$, for large $n$,
\begin{align*}
\vec u (\tau_n + t_n) = v(\tau_n + t_n) + \vec U^1_n(t_n) + o(1),
\end{align*}
in $\energysp$.  Set $t_n = 1 - \tau_n - t_0$.  This gives
\begin{align}\label{comp2}
\vec u (1-t_0) - \vec v(1-t_0) = \vec U^1_n(1-\tau_n - t_0) + o(1),
\end{align}
in $\energysp$.  Since $U^1$ does not scatter forward in time,  $\| \vec U^1_n(1-\tau_n - t_0) \|_{\energysp} \geq \frac{1}{C}\delta_0$ where
$\delta_0$ is from the local Cauchy theory.  Hence \eqref{comp2} is a nontrivial profile decomposition for the fixed function
$\vec u (1-t_0) - \vec v(1-t_0) \neq 0$.  This means that necessarily $1 - \tau_n - t_0 - t_{1,n} = c_0$ and $\lambda_{1,n} = 1$ for all $n$.  But by
Step 1, $\lambda_{1,n} \leq C (1 - \tau_n)$ for all $n$.  This is a contradiction, and thus $U^1$ does not scatter
backward in time.
\end{proof}

Since $\vec U^1_L \prec (v_0,v_1)$, $U^1$ does not scatter forward in time.  By the claim, $U^1$ does not scatter backwards in time. We then have that $\left | \frac{-t_{1,n}}{\lambda_{1,n}} \right | \leq C < \infty$.
Hence, we can assume without loss of generality that $t_{1,n} = 0$ for all $n$.  We now have that
\begin{align}
(a(\tau_n, x), \partial_t a(\tau_n , x) ) = \left (
\frac{1}{\lambda_{1,n}^{\frac{N-2}{2}}} U^1 \left ( 0 , \frac{x}{\lambda_{1,n}} \right ),
\frac{1}{\lambda_{1,n}^{\frac{N}{2}}} \partial_t U^1 \left ( 0 , \frac{x}{\lambda_{1,n}} \right )
\right ) + o(1)
\end{align}
in $\energysp$ as $n \rightarrow \infty$. This proves the lemma.
\end{proof}

\subsubsection*{Step 4: Conclusion of the Proof of Theorem \eqref{below1}}

Let $\{\tau_n\}_n$ be any sequence with $\tau_n \rightarrow 1$.  From Step 3, we know that there exists a function $\lambda(t) > 0$, $t \in [0,1)$ so that the trajectory
$$
K = \left \{ \left ( \lambda(t)^{(N-2)/2} a(t,\lambda(t) \cdot), \lambda(t)^{N/2} \partial_t a(t,\lambda(t) \cdot) \right ): t \in [0,1)  \right \}
$$
has compact closure in $\energysp$.  Hence, after extraction, there exists $(U_0,U_1) \in \energysp$ so that
$$
\left ( \lambda(\tau_n)^{(N-2)/2} a(\tau_n,\lambda(\tau_n) \cdot), \lambda(\tau_n)^{N/2} \partial_t a(\tau_n,\lambda(t_n) \cdot) \right )
\rightarrow (U^0,U^1)
$$
in $\energysp$ as $n \rightarrow \infty$.

Let $U$ be the solution to \eqref{nlw} with data $(U_0,U_1)$. By Lemma 8.5 from
\cite{dkm1}, we have that $U$ has the compactness property on $I_{\max}(U)$.  By Theorem \ref{cptsolns}, $U = \pm W$
up to scaling.  As this is true for any sequence $\{\tau_n\}_n$, a diagonal argument gives that
\begin{align}\label{concl1}
d\left ( \vec a(t), \cl O^+ \cup \cl O^-  \right ) \rightarrow 0 \quad \mbox{as } t\rightarrow 1,
\end{align}
where $d$ is the $\energysp$ distance to a set and
$$
\cl O^{\pm} = \left \{ \left ( \frac{\pm 1}{\lambda^{\frac{N-2}{2}}} W \left ( \frac{\cdot}{\lambda}\right ), 0 \right )
: \lambda > 0 \right \}.
$$
It is easy to see that
$$
d_0 = d \left (\cl O^+, \cl O^- \right ) > 0.
$$
Define the sets of time
$$
\cl U^{\pm} = \left \{ t \geq 0 : d \left (\vec a(t) , \cl O^{\pm} \right ) < d_0/2 \right \}.
$$
By our definition of $d_0$, $\cl U^+$ and $\cl U^-$ are disjoint.  We have also proved that for some $t_0$ sufficiently close
to $1$, $[t_0,1) \subset \cl U^+ \cup \cl U^-$. By continuity of $t \mapsto \vec a(t)$, both $\cl U^+$ and $\cl U^-$ are open.

By the conclusion of Theorem \ref{thm1}, $\cl U^+ \cap [t_0,1)$ is not empty.  By connectedness, $[t_0, 1) \subset \cl U^+$. In view of
\eqref{concl1}, this implies that there exists a function $\lambda(t) > 0$ such that
$$
\lim_{t \rightarrow 1} \left ( \lambda(t)^{\frac{N-2}{2}} a(t, \lambda(t)\cdot) , \lambda(t)^{\frac{N}{2}} \partial_t a(t, \lambda(t) \cdot ) \right ) = (W,0) \quad \mbox{in } \energysp.
$$
By Lemma A.1 of \cite{ckls} (with $G = (\R^{\times}, \cdot)$ acting
on $\energysp$ by scaling), we can choose $\lambda$ to be continuous. This completes the proof of Theorem \ref{below1}.

\subsection{Global solutions}

In this subsection, we prove Theorem \ref{below2}.  We assume that $u$ does not scatter in forward time, so that our goal is to
prove that there exists a positive continuous function $\lambda(t)$ such that
\begin{align*}
\lim_{t \rightarrow \infty} \frac{\lambda(t)}{t} = 0,
\end{align*}
and
\begin{align}\label{gbelow1}
\vec u(t) = \vec v_L(t) +  \left (\frac{\iota}{\lambda(t)^{(N-2)/2}} W \left (\frac{x}{\lambda(t)} \right ),0 \right ) + o(1)
\quad \mbox{in } \energysp \mbox{ as } t \rightarrow \infty.
\end{align}
We also assume that there exists $A > 0$ such that
\begin{align}\label{gbelow2}
\limsup_{t \rightarrow \infty} \int_{|x| \leq t - A} |\nabla u(t)|^2 + |\partial_t u(t)|^2 dx < 2 \| \nabla W \|_{L^2}^2.
\end{align}
By Theorem \ref{thm2}, we have that there exists a sequence $t_n \rightarrow +\infty$, an integer $J_0 \geq 0$, $J_0$ sequences of positive numbers $(\lambda_{j,n})$,
 $j = 1, \ldots, J_0$, $J_0$ signs $\iota_j \in \{ \pm 1 \}$ such that
\begin{align*}
\lambda_{1,n} \ll \lambda_{2,n} \ll \cdots \ll \lambda_{J_0,n} \ll t_n,
\end{align*}
and
\begin{align}\label{gbelow3}
\vec u(t_n) = \vec v_L(t_n) + \sum_{j = 1}^{J_0} \left (\frac{\iota_j}{\lambda_{j,n}^{\frac{N-2}{2}}} W \left (\frac{x}{\lambda_{j,n}} \right ),0 \right ) + o(1)
\quad \mbox{in } \energysp \mbox{ as } n \rightarrow \infty.
\end{align}

As in the blow--up case, we divide the proof of Theorem \ref{below2} into steps.

\subsubsection*{Step 1: Preliminary observations on a profile decomposition.}

Let $v(t)$ be the unique solution to \eqref{nlw} such that
$$
\lim_{t \rightarrow \infty} \| \vec v(t) - \vec v_L(t) \|_{\energysp} = 0.
$$
Let $a(t) = u(t) - v(t)$.

By Proposition \ref{selfsimilarblowup2} and Lemma \ref{loclem}, we know that
\begin{align*}
\lim_{t \rightarrow +\infty} \int_{|x| \geq t/2} |\nabla a(t)|^2 dx &=  0, \\
\lim_{t \rightarrow +\infty} \int_{|x| \leq t/2} |\nabla v(t)|^2 dx &= 0.
\end{align*}
Hence, as $t \rightarrow \infty$,
\begin{align*}
\int_{\R^N} |\nabla a(t)|^2 dx &=  \int_{|x| \leq t/2} |\nabla a(t)|^2 dx + o(1) \\
&= \int_{|x| \leq t/2} |\nabla u(t)|^2 dx + o(1) \\
&< 2 \| \nabla W \|^2_{L^2} + o(1).
\end{align*}
Hence for all $t \geq T$,
\begin{align}\label{gbelow4}
\| \nabla a(t) \|_{L^2}^2 < 2 \| \nabla W \|^2_{L^2}.
\end{align}
The convergence \eqref{gbelow3} becomes
\begin{align}\label{gbelow5}
\vec a(t_n) = \sum_{j = 1}^{J_0} \left (\frac{\iota_j}{\lambda_{j,n}^{\frac{N-2}{2}}} W \left (\frac{x}{\lambda_{j,n}} \right ),0 \right ) + o(1)
\quad \mbox{in } \energysp \mbox{ as } n \rightarrow \infty.
\end{align}
By orthogonality arguments, \eqref{gbelow4} implies $J_0 \leq 1$.

We claim $J_0 = 1$.  By Claim \ref{globlim}, $E(\vec a(t))$ has a limit. By \eqref{gbelow5},
$$
\lim_{t \rightarrow +\infty} E(\vec a(t)) = J_0 E(W,0).
$$
If $J_0 = 0$, then by Lemma \ref{energytrap}, we have that
$$
\lim_{t \rightarrow +\infty} \| \vec u(t) - \vec v_L(t) \|_{\energysp} = \lim_{t \rightarrow \infty} \| \vec a(t) \|_{\energysp} = 0,
$$
which implies $u$ scatters forward in time.  This contradicts our initial assumption that $u$ does not scatter forward in time.  Thus $J_0 = 1$, and
$$
\lim_{t \rightarrow +\infty} E(\vec a(t)) = E(W,0).
$$
Moreover, after
a sign change, we assume that $\iota_1 = 1$ so that \eqref{gbelow5} becomes
$$
\vec a(t_n) = \left (\frac{1}{\lambda_{n}^{\frac{N-2}{2}}} W \left (\frac{x}{\lambda_{n}} \right ),0 \right ) + o(1)
\quad \mbox{in } \energysp \mbox{ as } n \rightarrow \infty.
$$

Let $\tau_n \rightarrow +\infty$.  We now make some observations about the profile decomposition of $\vec a(\tau_n)$.  By Lemma
\ref{proford}, we may assume that $\{\tau_n\}_n \in \cl S_0$, i.e. the profile decomposition
$\{ \vec U^j_L, \lambda_{j,n}, t_{j,n} \}_{j \geq 1}$ is ordered by $\preccurlyeq$.  By Lemma \ref{bddlem} and Proposition \ref{selfsimilarblowup2}, we have that the parameters
satisfy
$$
\lambda_{j,n} + |t_{j,n}| \leq C_j \tau_n.
$$

\begin{clm}
Let $\vec U^0_L = \vec v_L(0)$, $\lambda_{0,n} = 1$, and $t_{0,n} = \tau_n$ (with nonlinear profile $U^0(t) = v(t)$).  Then $\{ \vec U^j_L, \lambda_{j,n}, t_{j,n} \}_{j \geq 0}$
is a profile decomposition for $\vec u(\tau_n)$.
\end{clm}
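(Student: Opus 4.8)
The plan is to verify that the enlarged family $\{\vec U^j_L,\lambda_{j,n},t_{j,n}\}_{j\ge 0}$ satisfies the two defining properties of a profile decomposition for $\{\vec u(\tau_n)\}_n$: the pseudo--orthogonality \eqref{paramorth} of the parameters for every pair, and the dispersive smallness $\lim_{J\to\infty}\limsup_{n\to\infty}\|S(t)\tilde w^J_n\|_{S(\R)}=0$ of the remainder. Once these hold, the Pythagorean expansions of the $\dot H^1$, $L^2$ and energy norms are automatic from the general theory, and the assertion that $v$ is the nonlinear profile attached to $\vec U^0_L=\vec v_L(0)$ with these parameters is immediate: since $\tau_n\to+\infty$ we are in the case $-t_{0,n}/\lambda_{0,n}\to+\infty$, and $v$ is by definition the solution scattering to $v_L$ in forward time. (Here one should read $t_{0,n}=-\tau_n$, so that $\vec U^0_{L,n}(0)=\vec v_L(\tau_n)$.)

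For the remainder I would start from $\vec u(\tau_n)=\vec a(\tau_n)+\vec v(\tau_n)$. Because $v$ scatters forward to $v_L$, $\|\vec v(\tau_n)-\vec v_L(\tau_n)\|_{\energysp}\to 0$, and $\vec v_L(\tau_n)=S(\tau_n)\vec v_L(0)$ is exactly $\vec U^0_{L,n}(0)$. Inserting the given profile decomposition $\vec a(\tau_n)=\sum_{j=1}^J\vec U^j_{L,n}(0)+\vec w^J_n(0)$ gives
\[
\vec u(\tau_n)=\sum_{j=0}^J\vec U^j_{L,n}(0)+\tilde w^J_n(0),\qquad \tilde w^J_n(0):=\vec w^J_n(0)+\big(\vec v(\tau_n)-\vec v_L(\tau_n)\big).
\]
By the Strichartz bound for $S(t)$, $\|S(t)\tilde w^J_n\|_{S(\R)}\le\|S(t)\vec w^J_n\|_{S(\R)}+C\|\vec v(\tau_n)-\vec v_L(\tau_n)\|_{\energysp}$, so the required double limit vanishes.

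The main obstacle is the pseudo--orthogonality between the new profile $(0)$ and each old profile $(j)$, $j\ge1$ (the relations among the $(j)$, $j\ge1$, being inherited). Using $\lambda_{0,n}=1$, relation \eqref{paramorth} for such a pair fails only if, along a subsequence, $\lambda_{j,n}\to\lambda_\infty\in(0,\infty)$ and $\tau_n+t_{j,n}$ stays bounded, say $\to c\in\R$. To rule this out I would invoke Proposition \ref{globallin}, which gives $S(-\tau_n)\vec u(\tau_n)\rightharpoonup\vec v_L(0)$, together with $S(-\tau_n)\vec v(\tau_n)\to\vec v_L(0)$ strongly, so that $S(-\tau_n)\vec a(\tau_n)\rightharpoonup 0$ in $\energysp$. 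On the other hand, by construction of the profile decomposition of $\vec a(\tau_n)$ one recovers the linear profile as the weak limit $\mathcal D_{\lambda_{j,n}}^{-1}S(t_{j,n})\,\vec a(\tau_n)\rightharpoonup\vec U^j_L(0)$, where $\mathcal D_\lambda$ is the $\energysp$--unitary dilation $(f,g)\mapsto(\lambda^{-(N-2)/2}f(\cdot/\lambda),\lambda^{-N/2}g(\cdot/\lambda))$. Writing $\mathcal D_{\lambda_{j,n}}^{-1}S(t_{j,n})=\mathcal D_{\lambda_{j,n}}^{-1}S(\tau_n+t_{j,n})\,S(-\tau_n)$ and observing that, under the failure hypothesis, $\mathcal D_{\lambda_{j,n}}^{-1}S(\tau_n+t_{j,n})$ and its adjoint converge in the strong operator topology to the bounded operator $\mathcal D_{\lambda_\infty}^{-1}S(c)$, the composition maps the weakly null sequence $S(-\tau_n)\vec a(\tau_n)$ to a weakly null sequence; hence $\vec U^j_L\equiv 0$, contradicting nontriviality of the profiles. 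Thus every $(j)$, $j\ge1$, is orthogonal to $(0)$, which completes the verification. Alternatively one can argue by spatial separation: Proposition \ref{selfsimilarblowup2} and the decay of $\vec a(t)$ near the light cone force $\vec a(\tau_n)$ to be asymptotically supported in $\{|x|\le\tau_n/2\}$, while $\vec v_L(\tau_n)$ concentrates near $|x|\approx\tau_n$ by finite speed of propagation; Lemma \ref{loclem} then shows each $\vec U^j_{L,n}(0)$ must be carried in $\{|x|\lesssim\tau_n/2\}$, which again forces $|\tau_n+t_{j,n}|\to+\infty$.
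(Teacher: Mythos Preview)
Your proposal is correct and is essentially a fleshed--out version of the paper's one--line proof, which simply says that the only point to check is the pseudo--orthogonality between $(0)$ and each $(j)$, $j\ge1$, and that this ``is a consequence of the construction of a profile decomposition and $S(-t)\vec u(t)\rightharpoonup(v_0,v_1)$.'' Your argument makes this precise: from $S(-\tau_n)\vec u(\tau_n)\rightharpoonup\vec v_L(0)$ and $S(-\tau_n)\vec v(\tau_n)\to\vec v_L(0)$ one gets $S(-\tau_n)\vec a(\tau_n)\rightharpoonup 0$; if orthogonality failed for some $j\ge1$ the operators $\mathcal D_{\lambda_{j,n}}^{-1}S(\tau_n+t_{j,n})$ would converge strongly together with their adjoints, forcing $\vec U^j_L\equiv0$. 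Your observation that the statement should read $t_{0,n}=-\tau_n$ (so that $\vec U^0_{L,n}(0)=\vec v_L(\tau_n)$) is also correct and consistent with the convention used elsewhere in the paper (e.g.\ Step~2 of the proof of Proposition~\ref{globallin}).
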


\begin{proof}
The only thing that needs to be checked is the pseudo--orthogonality of the parameters.  This is a consequence of the construction of a profile decomposition
and $\vec S(-t)u(t) \rightharpoonup (v_0,v_1)$.
\end{proof}

This situation is the general case.  More precisely, as $S(-t)\vec u(t) \rightharpoonup (v_0,v_1)$ as $t \rightarrow +\infty$, and from the general construction of profile decomposition (see \cite{bulut}), we have

\begin{clm}
Let $\{\tau_n\}_n$ be any sequence tending to $+\infty$.  The sequence $\{\vec u(\tau_n)\}_n$ admits a profile decomposition with profiles
$\{U^j\}_j$ and parameters $\{\lambda_{j,n}, t_{j,n} \}_{j,n}$ ordered by $\preccurlyeq$.  Then for some $j_L \geq 2$,
$$
U^{j_L}_L = v_L, \quad \lambda_{j_L,n} = 1, \quad t_{j_L,n} = \tau_n.
$$
Also, $\{U^j_L, \lambda_{j,n}, t_{j,n}\}_{j \neq j_L}$ is a $\preccurlyeq$--ordered profile decomposition of $\vec a(\tau_n)$.
\end{clm}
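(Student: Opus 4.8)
The plan is to deduce the statement from the preceding Claim together with the essential uniqueness of profile decompositions. The preceding Claim already exhibits \emph{one} profile decomposition of $\vec u(\tau_n)$ in which $v_L$ occurs as a profile: namely $\{\vec U^j_L,\lambda_{j,n},t_{j,n}\}_{j\ge 0}$ with $\vec U^0_L=\vec v_L(0)$, $\lambda_{0,n}=1$, $t_{0,n}=\tau_n$, nonlinear profile $U^0=v$, and with $\{\vec U^j_L,\lambda_{j,n},t_{j,n}\}_{j\ge1}$ a $\preccurlyeq$--ordered profile decomposition of $\vec a(\tau_n)$. (If $v_L\equiv0$ then $a=u$ and there is nothing to prove, so assume $v_L\not\equiv0$.) Thus the task is to show that \emph{any} $\preccurlyeq$--ordered profile decomposition of $\vec u(\tau_n)$ has this shape.

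First I would invoke the uniqueness of profile decompositions up to the scaling--translation group and up to reshuffling the $o(1)$ remainder (Bahouri--G\'erard \cite{bahger}; see also \cite{bulut} and the discussion in \cite{dkm6}): given an arbitrary $\preccurlyeq$--ordered decomposition $\{\vec V^k_L,\mu_{k,n},s_{k,n}\}_k$ of $\vec u(\tau_n)$, there is a bijection of the index sets matching it to the decomposition of the preceding Claim. The profile with linear profile $v_L$ and parameters $(1,\tau_n)$ is matched to a unique index $j_L$, so $\vec V^{j_L}_L$ is a fixed rescaling of $v_L$ and $(\mu_{j_L,n},s_{j_L,n})$ is equivalent to $(1,\tau_n)$; after the standard renormalization of the decomposition we may take $\vec V^{j_L}_L=v_L$, $\mu_{j_L,n}=1$, $s_{j_L,n}=\tau_n$, and in any case $\vec V^{j_L}_{L,n}(0)=\vec v_L(\tau_n)+o(1)$ in $\energysp$. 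The same bijection identifies $\{\vec V^k_L\}_{k\ne j_L}$, with their parameters, with $\{\vec U^j_L\}_{j\ge1}$, and makes the two remainders agree up to $o(1)$. Subtracting $\vec V^{j_L}_{L,n}(0)$ from $\vec u(\tau_n)$ and using $\vec v_L(\tau_n)=\vec v(\tau_n)+o(1)$, it follows that $\{\vec V^k_L,\mu_{k,n},s_{k,n}\}_{k\ne j_L}$ is a profile decomposition of $\vec a(\tau_n)=\vec u(\tau_n)-\vec v(\tau_n)$ (the extra $o(1)$ is harmless since, by Strichartz, it has vanishing $S(\R)$--norm after propagation), and it is $\preccurlyeq$--ordered because the full decomposition is.

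It then remains to check $j_L\ge2$. Since $v$ is asymptotic to $v_L$ as $t\to+\infty$, its nonlinear profile $U^0=v$ scatters forward in time. If $j_L=1$, then, the decomposition being $\preccurlyeq$--ordered, $(1)\preccurlyeq(k)$ for every $k\ge1$; by Definition \ref{proforddef}, since $V^1=v$ scatters forward, this forces every $V^k$ with $k\ge2$ to scatter forward as well, so all nonlinear profiles of $\vec u(\tau_n)$ scatter forward. By Proposition \ref{approxthm} this would imply $u$ scatters forward in time, contradicting the standing assumption of this subsection. Hence $j_L\ge2$.

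The main obstacle is the bookkeeping behind the uniqueness step: one must know that profile decompositions are unique up to the group action and up to absorbing $o(1)$ errors, so that the structural information obtained in the preceding Claim — that $v_L$ occurs as a profile with parameters $(1,\tau_n)$ — transfers to an arbitrary decomposition; and one must verify that deleting the $v_L$--profile yields a decomposition of $\vec a(\tau_n)$ rather than of $\vec u(\tau_n)-\vec v_L(\tau_n)$, which is immediate from $\vec v(\tau_n)-\vec v_L(\tau_n)=o(1)$ in $\energysp$.
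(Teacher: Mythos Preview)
Your proposal is correct and essentially follows the paper's reasoning. The paper's justification is extremely terse: it simply notes that $S(-t)\vec u(t)\rightharpoonup(v_0,v_1)$ and invokes ``the general construction of profile decomposition (see \cite{bulut}),'' i.e., the fact that any nonzero weak limit of the rescaled/translated sequence must occur among the profiles. Your route through the uniqueness of profile decompositions is the same structural fact phrased differently (uniqueness is precisely what forces the profile $v_L$ with parameters $(1,\tau_n)$, detected in the preceding Claim, to appear in every decomposition). Your argument for $j_L\ge2$ via Definition~\ref{proforddef} is correct and in fact makes explicit what the paper leaves to the surrounding text, where it is observed that $U^1$ cannot scatter forward while $v$ does.
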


Since $u$ does not scatter in forward time, by Proposition \ref{approxthm} at least one of the nonlinear profiles $U^j$ does not scatter forward in time.  Due to the
ordering, $\preccurlyeq$, this means that $U^1$ does not scatter forward in time.  Since $U^0 = v$ scatters forward in time, we conclude that $1 \prec 0$.

By the Pythagorean expansion of the $\dot H^1$ norm and the bound on $a$ \eqref{gbelow4}, we have
\begin{align*}
\forall j \geq 1, \quad \limsup_{n \rightarrow \infty} \| \nabla U^j_L (-t_{j,n} / \lambda_{j,n} ) \|^2_{L^2} &< 2 \| \nabla W \|_{L^2}^2, \\
\forall J \geq 1, \quad \limsup_{n \rightarrow \infty} \| \nabla w^J_{0,n} \|^2_{L^2} &< 2 \| \nabla W \|_{L^2}^2.
\end{align*}

As in the blow--up case  Lemma \ref{energytrap} implies that for all $j, J \geq 1$, for all $n$ sufficiently large,
\begin{align*}
E(\vec U^j_L(-t_{j,n}/\lambda_{j,n})) &\geq 0, \\
E(\vec w^J_n(0)) &\geq 0,
\end{align*}
as well as
\begin{align*}
\forall j \geq 1 \quad E(\vec U^j) > 0 \mbox{ or } U^j = U^j_L \equiv 0, \\
\lim_{n \rightarrow \infty} E(\vec w^J_n(0)) = 0 \implies
\lim_{n \rightarrow \infty} \| (w_{0,n}^J, w_{1,n}^J) \|_{\energysp} = 0.
\end{align*}

\subsubsection*{Step 2: Minimization process and consequences}

We again use a general compactness argument for profile decompositions of $\vec u(\tau_n)$ developed in \cite{dkm6}.  This step is similar to the finite time blow--up case.  We first
recall some notation.

Let $\mathcal S_0$ denote the set of sequences $\{\tau_n\}_n$ with $\tau_n \rightarrow +\infty$
so that $\vec u(\tau_n)$ admits a pre--ordered profile decomposition. Let $\cl T = \{\tau_n\}_n \in \cl S_0$, and let
\begin{align*}
J_0(\cl T) &= \# \mbox{ of profiles of $\vec u(\tau_n)$ that do not scatter forward in time.} \\
J_1 (\cl T) &= \min \{ j \geq 1 : j \prec j + 1 \}.
\end{align*}

Here $\prec$ is the strict order from Definition \ref{proforddef}.  As in the finite time blow--up case,
$J_1 (\cl T) \leq J_0 (\cl T)$ which is uniformly bounded on $\cl S_0$ so that $J_1 ( \cl T)$ is uniformly bounded on $\cl S_0$ as well.

Now define
\begin{align}
J_M &= \max \{ J_0(\cl T) : \cl T \in \cl S_0 \}, \\
\cl S_1 &= \{ \cl T \in \cl S_0 : J_0 (\cl T) = J_M \}.
\end{align}
For $\cl T \in \cl S_1$, we define $\cl E(\cl T)$ to be the sum of the energies of the nonlinear profiles that do not scatter, i.e.,
\begin{align}\label{gminproc3}
\cl E(\cl T) = \sum_{j = 1}^{J_M} E ( \vec U^j).
\end{align}
Let
\begin{align}\label{gminproc4}
\cl E_m = \inf \{ \cl E (\cl T) : \cl T \in \cl S_1 \}.
\end{align}
Again by \cite{dkm6} the infimum $\cl E_m$ is attained, i.e., there exists $\cl T_0 \in \cl S_1$ so that
$$
\cl E_m = \cl E (\cl T_0).
$$
We then define
\begin{align}
\cl S_2 &= \{ \cl T \in \cl S_1 : \cl E (\cl T) = \cl E_m \} \neq \varnothing, \\
J_m &= \min \{ J_1(\cl T) : \cl T \in \cl S_2 \}, \\
\cl S_3 &= \{ \cl T \in \cl S_2 : J_1(\cl T) = J_m \} \neq \varnothing.
\end{align}

In the radial setting, necessarily $J_m = 1$.  This again follows from the lemma proved in \cite{dkm6}:

\begin{lem}\label{gminproclem}
There exists $\cl T_0 \in \cl S_3$ such that for all $j = 1, \ldots, J_m$,
\begin{align*}
t_{j,n} &= 0, \\
\lambda_{j,n} &= \lambda_{1,n},
\end{align*}
and $U^j$ has the compactness property on $I_{\max}(U^j)$.
\end{lem}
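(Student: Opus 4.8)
The plan is to follow the argument of the finite-time blow-up case (Lemma \ref{minproclem}) essentially verbatim; it is the transcription to our setting of Lemma 4.11 of \cite{dkm6}, and the only changes are that $\tau_n \to +\infty$ replaces $\tau_n \to 1^-$ and that the forward-scattering solution $v$ (equivalently $v_L$) of Step 1 plays the role of the regular part in the blow-up case. I would fix an arbitrary $\cl T_0 = \{\tau_n\}_n \in \cl S_3$ and first record the structural consequences of the three successive minimizations. Since $J_1(\cl T_0) = J_m$, no strict inequality $(j) \prec (j+1)$ occurs for $1 \le j < J_m$; combined with the pre-ordering this forces $U^1,\dots,U^{J_m}$ to be pairwise $\preccurlyeq$-equivalent, and, being $\preccurlyeq$-equivalent to $U^1$ which does not scatter forward in time, none of $U^1,\dots,U^{J_m}$ scatters forward in time.

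The core of the proof is an extraction step that upgrades each of these profiles to one with the compactness property while keeping the sequence inside $\cl S_3$. Fix $j_0 \le J_m$. Since $U^{j_0}$ does not scatter forward, $\|U^{j_0}\|_{S(s,T_+(U^{j_0}))} = \infty$ for every $s < T_+(U^{j_0})$; applying the linear profile decomposition of Section 2 to $\vec U^{j_0}$ along a sequence $s_n \uparrow T_+(U^{j_0})$, then propagating back to $\vec u$ by the superposition principle of Proposition \ref{approxthm} and using the bound $\lambda_{j,n}+|t_{j,n}| \le C_j\tau_n$ from Lemma \ref{bddlem}, one builds a new sequence of times $\tau_n' = \tau_n + t_{j_0,n} + s_n\lambda_{j_0,n} \to +\infty$ with $\cl T_0' = \{\tau_n'\}_n \in \cl S_0$. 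One checks, as in \cite{dkm6}, that $J_0(\cl T_0') = J_M$ (so $\cl T_0' \in \cl S_1$), that $\cl E(\cl T_0') \le \cl E(\cl T_0) = \cl E_m$ (so $\cl E(\cl T_0') = \cl E_m$ and $\cl T_0' \in \cl S_2$), and that $J_1(\cl T_0') \le J_m$ (so $\cl T_0' \in \cl S_3$) --- and moreover that at least one of these three inequalities is strict unless $U^{j_0}$ already has the compactness property on all of $I_{\max}(U^{j_0})$ and $t_{j_0,n}/\lambda_{j_0,n}$ stays bounded. Since $\cl T_0$ already realizes all three extrema, the exceptional case must hold for every $j_0 \le J_m$: each such $U^{j_0}$ has the compactness property, and as $t_{j_0,n}/\lambda_{j_0,n}$ is bounded we may, after the harmless time-translation built into the profile, take $t_{j_0,n} = 0$. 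Finally the equality $\lambda_{j,n} = \lambda_{1,n}$ for $j \le J_m$ follows because two $\preccurlyeq$-equivalent profiles with vanishing time parameters and the compactness property must have comparable scales (clause (2) of Definition \ref{proforddef}, read in both directions), and a bounded non-vanishing ratio is normalized to $1$ by rescaling the common profile; this is precisely the point at which the main text then applies the pseudo-orthogonality \eqref{paramorth} to conclude $J_m = 1$.

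The main obstacle is the extraction step: one must verify that translating $\cl T_0$ along the flow of $U^{j_0}$ yields a sequence that genuinely lies in $\cl S_0$ --- i.e. that the new data $\vec u(\tau_n')$ still admits a pre-ordered profile decomposition, which requires re-establishing the pseudo-orthogonality \eqref{paramorth} and the ordering of Lemma \ref{proford} for the translated decomposition --- and that the functionals $J_0$, $\cl E$, $J_1$ vary monotonically under this operation, with strict improvement in the non-compact case. This is carried out in full generality in Sections 3 and 4 of \cite{dkm6}, and that argument is insensitive to whether $T_+(u) < \infty$ or $T_+(u) = +\infty$ once the self-similar concentration has been excluded (Proposition \ref{selfsimilarblowup2}) and the linear profile $v_L$ extracted (Proposition \ref{globallin}); hence it transfers to our setting with only notational changes.
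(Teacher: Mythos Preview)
Your proposal is correct and takes essentially the same approach as the paper: the paper does not give an independent proof of Lemma \ref{gminproclem} but simply states it as ``the lemma proved in \cite{dkm6}'' (Lemma 4.11 there), exactly as it did for the blow-up analogue Lemma \ref{minproclem}. Your sketch of the \cite{dkm6} argument is accurate and appropriately adapted to the global setting, so there is nothing to compare.
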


By orthogonality of the parameters, it follows that $J_m = 1$.  Moreover, by Proposition \ref{cptsolns}, after rescaling
$\lambda_{1,n}$, we have that $U^1 = \pm W$.  We now make the following claim.

\begin{clm}
We have that $J_M = 1$ and $\cl E_m \geq E(W,0)$.
\end{clm}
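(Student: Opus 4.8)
The plan is to follow the template used for the analogous claim in the finite-time blow--up subsection (the Claim just before Step 3 there), exploiting that the global case collapses to the ``Case 1'' of that argument: since $v$ is by construction the nonlinear solution with $\|\vec v(t)-\vec v_L(t)\|_{\energysp}\to 0$, it always scatters forward in time, so there is no ``Case 2'' to treat. First I would fix the minimizing sequence $\cl T_0=\{\tau_n\}_n\in\cl S_3$ produced by Lemma \ref{gminproclem}: for this sequence $J_m=1$ and, after rescaling $\lambda_{1,n}$, the single compactness--property profile $U^1$ equals $\pm W$ by Theorem \ref{cptsolns}. Since $\cl T_0\in\cl S_2$ we have $\cl E_m=\sum_{j=1}^{J_M}E(\vec U^j)$; combining $E(\vec U^1)=E(W,0)$ with the positivity $E(\vec U^j)\geq 0$ for $j\geq 2$ (from the energy--trapping dichotomy of Step 1, which applies because \eqref{gbelow4} gives $\|\nabla U^j_L(-t_{j,n}/\lambda_{j,n})\|_{L^2}^2<2\|\nabla W\|_{L^2}^2\leq(\tfrac{N}{N-2})^{(N-2)/2}\|\nabla W\|_{L^2}^2$ for $N\geq 5$) immediately yields $\cl E_m\geq E(W,0)$. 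That is the easy half.

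For $J_M=1$ I would invoke the second Claim of Step 1, which guarantees that $v_L$ appears among the profiles of the pre--ordered decomposition of $\vec u(\tau_n)$ as some $U^{j_L}$ with $t_{j_L,n}=\tau_n$, $\lambda_{j_L,n}=1$, and nonlinear profile $U^{j_L}=v$; since $v$ scatters forward in time and $\cl T_0\in\cl S_1$, necessarily $j_L\geq J_M+1$. Then I apply the Pythagorean expansion of the energy over $J\geq j_L$ profiles, in which the conserved nonlinear energies $E(\vec U^j)$ appear because $\vec U^j_n(-t_{j,n}/\lambda_{j,n})-\vec U^j_{L,n}(-t_{j,n}/\lambda_{j,n})\to 0$ in $\energysp$. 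Dropping the nonnegative terms $E(\vec w^J_n(0))$ and $E(\vec U^j)$ for $j\geq J_M+1$, $j\neq j_L$, keeping $E(\vec U^{j_L})=\tfrac12\|(v_0,v_1)\|_{\energysp}^2$ (this identity uses $\int|v_L(t)|^{2N/(N-2)}\,dx\to 0$ from Claim \ref{globlim}), and substituting $\sum_{j=1}^{J_M}E(\vec U^j)=\cl E_m=E(W,0)+\sum_{j=2}^{J_M}E(\vec U^j)$, I compare with the energy identity $E(u_0,u_1)=E(W,0)+\tfrac12\|(v_0,v_1)\|_{\energysp}^2$, which follows from $\lim_{t\to+\infty}E(\vec a(t))=E(W,0)$ (proven in Step 1) together with Claim \ref{globlim}. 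After cancelling $E(W,0)+\tfrac12\|(v_0,v_1)\|_{\energysp}^2$ and letting $n\to\infty$, this forces $\sum_{j=2}^{J_M}E(\vec U^j)\leq 0$, hence $E(\vec U^j)=0$ and so $U^j\equiv 0$ for all $2\leq j\leq J_M$. Since $U^1,\dots,U^{J_M}$ are by definition the profiles that do not scatter forward in time, none of them can be trivial, so $J_M=1$.

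The main obstacle is bookkeeping rather than analysis: one must make sure the profile $v_L$ is genuinely counted in a slot beyond the first $J_M$ of the pre--ordered decomposition attached to $\cl T_0$ (exactly the content of the second Claim of Step 1, which is already available), and that the expansion is written in terms of the conserved energies $E(\vec U^j)$ of the nonlinear profiles rather than the non--conserved quantities $E(\vec U^j_L(-t_{j,n}/\lambda_{j,n}))$. Beyond these two points the argument is a verbatim transcription of the blow--up Claim with ``$E(v_0,v_1)$'' replaced throughout by ``$\tfrac12\|(v_0,v_1)\|_{\energysp}^2$''.
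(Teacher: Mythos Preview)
Your proposal is correct and follows essentially the same approach as the paper. The only cosmetic difference is that the paper works directly with $\vec a(\tau_n)=\vec u(\tau_n)-\vec v(\tau_n)$ and uses the already--established identity $\lim_{n\to\infty}E(\vec a(\tau_n))=E(W,0)$ in the Pythagorean expansion, whereas you expand $E(u_0,u_1)$ over all profiles of $\vec u(\tau_n)$ and then cancel the $v$--contribution via $E(\vec v)=\tfrac12\|(v_0,v_1)\|_{\energysp}^2$ and $E(u_0,u_1)=E(W,0)+\tfrac12\|(v_0,v_1)\|_{\energysp}^2$; these two routes are equivalent through Claim~\ref{globlim}.
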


\begin{proof}
Let $\cl T_0 = \{\tau_n\}_n$ be the sequence of times given by Lemma \eqref{gminproclem}. Then $J_m = 1$ and
$U^1 = \pm W$.  Since $\cl T_0 \subset \cl S_3 \subset \cl S_2$,
$$
\cl E_m = \sum_{j = 1}^{J_M} E(U^j).
$$
We know that all of the nonzero profiles other than $v$, as well as $w_n^J$ have positive energy.
 We also
know that $v_L$ must appear in the profile decomposition, at some index $j_L$ with $j_L > J_M$, because $v_L$ has a scattering
profile $v$.
This implies that $\cl E_m \geq E(W,0)$.

We now show $J_M = 1$. Up to an $o(1)$ term in $\energysp$,
$$
\vec u(\tau_n) - \vec v(\tau_n) = \sum_{j =1}^{J_M} \vec U^j_{L,n}(0) + \sum_{j = J_M + 1}^{j_L - 1} \vec U^j_{L,n}(0)
+ \vec w^{j_L}_n(0).
$$
By the Pythagorean expansion of the energy, we obtain
\begin{align*}
E(W,0) &= \lim_{n \rightarrow \infty} E(\vec a(\tau_n)) \\
&\geq E(W,0) + \sum_{j = 2}^{J_M} E(\vec U^j).
\end{align*}
This implies that $J_M = 1$ as desired.
\end{proof}

\subsubsection*{Step 3: Compactness of the singular part, $a(t)$}

In this step, we prove the following result.

\begin{lem}\label{gcomplem}
For any sequence $\{ \tau_n \}_n$, with $\tau_n \rightarrow +\infty$, there exists a subsequence, still denoted by
$\tau_n$, and scales $\lambda_n > 0$ so that $\left (\lambda_n^{(N-2)/2} a(\tau_n \lambda_n x), \lambda_n^{N/2}
\partial_t a(\tau_n, \lambda_n x)\right )$ converges in $\energysp$.
\end{lem}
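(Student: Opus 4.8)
The plan is to follow the proof of Lemma \ref{complem} from the finite-time blow-up case essentially verbatim, with $v_L$ (and its nonlinear counterpart $v$) playing the role of $(v_0,v_1)$ and Proposition \ref{selfsimilarblowup2} replacing Proposition \ref{selfsimilarblowup}. Given $\tau_n\to+\infty$, I first pass to a subsequence so that $\{\tau_n\}_n\in\cl S_0$ and $\vec u(\tau_n)$ admits a pre-ordered profile decomposition; by the second Claim of Step 1, $v_L$ occurs as a profile $U^{j_L}$ at some index $j_L\geq 2$ (with nonlinear profile $v$, scale $\lambda_{j_L,n}=1$, and $t_{j_L,n}=-\tau_n$, so that the $j_L$-profile's effective time at any $\theta_n=T_0-\tau_n$ equals the fixed time $T_0$), the family $\{U^j_L,\lambda_{j,n},t_{j,n}\}_{j\neq j_L}$ is a pre-ordered profile decomposition of $\vec a(\tau_n)$, $1\prec 0$ so that $U^1$ does not scatter forward, and $\lambda_{j,n}+|t_{j,n}|\leq C_j\tau_n$ for all $j$. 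Since $u$ does not scatter forward, $J_0(\{\tau_n\}_n)\geq 1$, while $J_0(\{\tau_n\}_n)\leq J_M=1$ by Step 2, so $\{\tau_n\}_n\in\cl S_1$ and $E(\vec U^1)=\cl E(\{\tau_n\}_n)\geq\cl E_m\geq E(W,0)$; moreover the energy-trapping consequences of Step 1 show that every nonzero profile other than $v$, as well as every remainder $w^J_n$, has positive energy, together with the dichotomy $E\to 0\Rightarrow\|\cdot\|_{\energysp}\to 0$.

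The core of the argument is to establish, exactly as in the blow-up case, that (i) every profile $U^j$ with $j\geq 2$, $j\neq j_L$, vanishes identically, (ii) $\lim_n\|\vec w^J_n(0)\|_{\energysp}=0$, and (iii) $U^1$ does not scatter backward in time. For (i) and (ii) I use positivity: by Step 1, $\lim_n E(\vec a(\tau_n))=E(W,0)$ (here $J_0=1$), so the Pythagorean expansion of the energy along $\{\tau_n\}_n$ gives, for each fixed $J$,
$$
E(W,0)=E(\vec U^1)+\sum_{2\leq j\leq J,\ j\neq j_L}E(\vec U^j)+E(\vec w^J_n(0))+o(1)\ \geq\ E(W,0)+\sum_{2\leq j\leq J,\ j\neq j_L}E(\vec U^j)+E(\vec w^J_n(0))+o(1),
$$
forcing $E(\vec U^j)=0$ for all such $j$ and $\lim_n E(\vec w^J_n(0))=0$; the dichotomy from Step 1 upgrades these to $U^j\equiv 0$ and $\lim_n\|\vec w^J_n(0)\|_{\energysp}=0$.

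For (iii), suppose $U^1$ scatters backward, so $T_-(U^1)=-\infty$ and $\|U^1\|_{S(-\infty,s)}<\infty$ for every $s<T_+(U^1)$. Fix $T_0\geq 0$ and apply Proposition \ref{approxthm} in its $\theta_n<0$ form with $\theta_n=T_0-\tau_n\to-\infty$: the hypotheses hold because the profiles $U^j$, $j\geq 2$, $j\neq j_L$, are zero by (i), the nonlinear profile $v$ of the $j_L$-profile scatters forward (so its effective time at $\theta_n$ is $T_0$ and $\limsup_n\|v\|_{S(T_0,\tau_n)}\leq\|v\|_{S(T_0,\infty)}<\infty$), and $U^1$ scatters backward. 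Combining with (ii) yields
$$
\vec u(T_0)-\vec v(T_0)=\vec U^1_n(\theta_n)+o(1)\qquad\mbox{in }\energysp\mbox{ as }n\to\infty.
$$
The left-hand side is a fixed element of $\energysp$, and it is nonzero, since $\vec u(T_0)=\vec v(T_0)$ would force $u\equiv v$ to scatter forward, contrary to our standing assumption. As $U^1$ does not scatter forward, $\|\vec U^1(s)\|_{\energysp}$ is bounded below by $\delta_0/C>0$ uniformly in $s$, so the right-hand side is a nontrivial one-profile decomposition of a fixed nonzero function. This rigidity forces (after a further subsequence) $\lambda_{1,n}\to\lambda_0\in(0,\infty)$ and $(\theta_n-t_{1,n})/\lambda_{1,n}\to s_0\in\R\cup\{+\infty\}$, the value $-\infty$ being excluded because backward scattering of $U^1$ would make $\vec U^1_n(\theta_n)$ converge weakly to $0$. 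Since $(T_0-\tau_n)/\lambda_{1,n}\to-\infty$, the identity $-t_{1,n}/\lambda_{1,n}=(\theta_n-t_{1,n})/\lambda_{1,n}-(T_0-\tau_n)/\lambda_{1,n}$ gives $-t_{1,n}/\lambda_{1,n}\to+\infty$, hence $U^1$ scatters forward — a contradiction.

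Granting the claim, $U^1$ scatters in neither time direction, so the standard normalization of profiles forces $t_{1,n}=0$ for all $n$; since the remaining profiles vanish and $\vec w^J_n(0)\to 0$ in $\energysp$, the decomposition of $\vec a(\tau_n)$ collapses to
$$
\bigl(a(\tau_n,x),\partial_t a(\tau_n,x)\bigr)=\Bigl(\tfrac{1}{\lambda_{1,n}^{(N-2)/2}}U^1\bigl(0,\tfrac{x}{\lambda_{1,n}}\bigr),\tfrac{1}{\lambda_{1,n}^{N/2}}\partial_t U^1\bigl(0,\tfrac{x}{\lambda_{1,n}}\bigr)\Bigr)+o(1)
$$
in $\energysp$ as $n\to\infty$, which is the assertion of the lemma with $\lambda_n=\lambda_{1,n}$. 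The main obstacle is part (iii): in the blow-up case one gets an immediate contradiction from the degenerating scale bound $\lambda_{1,n}\leq C(1-\tau_n)\to 0$, but here $\lambda_{1,n}\leq C\tau_n$ does not degenerate, so the contradiction must instead be extracted from the rigidity of single-profile decompositions of a fixed nonzero function together with the fact that $U^1$ does not scatter forward.
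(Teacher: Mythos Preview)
Your proposal is correct and follows essentially the same approach as the paper: you establish the same three-part claim (vanishing of the extra profiles, vanishing of the remainder, and non-scattering of $U^1$ backward) via the Pythagorean energy expansion and an application of Proposition~\ref{approxthm} at $\theta_n=T_0-\tau_n$, then conclude by normalizing $t_{1,n}=0$. Your treatment of part~(iii) is in fact slightly more detailed than the paper's---you spell out why the rigidity of a single-profile decomposition of the fixed function $\vec u(T_0)-\vec v(T_0)$ forces $-t_{1,n}/\lambda_{1,n}\to+\infty$ (hence forward scattering of $U^1$), whereas the paper simply asserts the constancy of the parameters---but the underlying mechanism is identical.
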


\begin{proof}
Let $\tau_n \rightarrow +\infty$.  After passing to a subsequence and reordering, we may assume $\{\tau_n\}_n \in \cl S_0$ so that the
profile decomposition of $\vec u(\tau_n)$ is pre--ordered.  By Step 1 and Step 2, we know that $v_L$ is a profile,
$J_M = 1$, and $\cl E_m \geq E(W,0)$  We also know that
$\vec U^1$ does not scatter forward in time.  Further, all of the profiles other than $v_L$ have positive energy and so does $
\vec w^J_n$.  As in the finite time blow--up case, Lemma \ref{gcomplem} follows from the claim:

\begin{clm}
All of the profiles other than $v_L$ that scatter forward in time must be identically 0,
\begin{align}\label{gcomp1}
\lim_{n \rightarrow \infty} \| (w^J_{0,n},w^J_{1,n}) \|_{\energysp} = 0,
\end{align}
and the nonlinear profile $U^1$ cannot scatter backwards in time.
\end{clm}
\begin{proof}
To prove the first two parts of the claim, we again rely on the positivity of the energies.  Since $J_M = 1$,
we know $\{ \tau_n \}_n \in \cl S_1$.  If $v_L = U^{j_L}_L$, then for all $J \geq j_L$, we have (up to an $o(1)$ term)
\begin{align*}
\vec u(\tau_n) - \vec v(\tau_n) = \vec U^1_{L,n}(0) + \sum_{j = 2}^{j_L - 1} \vec U^j_{L,n}(0)+
\sum_{j = j_L + 1}^{J} \vec U^j_{L,n}(0) + \vec w_n^J(0).
\end{align*}
The Pythagorean expansion of the energy yields
\begin{align*}
E(W,0) &= E(\vec U^1 ) + \sum_{j = 2}^{j_L - 1} E(\vec U^j)+
\sum_{j = j_L + 1}^{J} E(\vec U^j)+ E(\vec w_n^J(0)) + o(1) \\
&\geq E(W,0) + \sum_{j = 2}^{j_L - 1} E(\vec U^j)+
\sum_{j = j_L + 1}^{J} E(\vec U^j)+ E(\vec w_n^J(0)) + o(1).
\end{align*}
This shows $U^j = 0$ for all $j \neq j_L$ and \eqref{gcomp1}.

We now prove that the nonlinear profile $U^1$ cannot scatter backward in time. Suppose this were not that case, and that
$U^1$ scatters as $t \rightarrow -\infty$.  Choose $t_0 > 0$ so large so that $t_0 > T_-(v)$. Set $\theta_n = t_0 - \tau_n$.
Proposition \ref{approxthm} gives for $n$ sufficiently large
\begin{align}\label{gcomp2}
\vec u (t_0) - \vec v(t_0) = \vec U^1_n(t_0 - \tau_n) + o(1),
\end{align}
in $\energysp$.  Since $U^1$ does not scatter forward in time,  $\| \vec U^1_n(\tau_n - t_0) \|_{\energysp} \geq \frac{1}{C}\delta_0$ where
$\delta_0$ is from the local Cauchy theory.  Hence \eqref{gcomp2} is a nontrivial profile decomposition for the fixed function
$\vec u (t_0) - \vec v(t_0) \neq 0$.  This means that necessarily $t_0 - \tau_n - t_{1,n} = c_0$ and $\lambda_{1,n} = 1$ for all $n$.  But then
$t_{1,n} \rightarrow +\infty$ as $n \rightarrow \infty$.  This is a contradiction since $U^1$ does not scatter
forward in time.  Thus, $U^1$ does not scatter backwards in time.
\end{proof}

By the claim, $U^1$ does not scatter forward or backwards in time. We then have that $\left | \frac{-t_{1,n}}{\lambda_{1,n}} \right | \leq C < \infty$.
Hence, we can assume without loss of generality that $t_{1,n} = 0$ for all $n$.  We now have that
\begin{align*}
(a(\tau_n, x), \partial_t a(\tau_n , x) ) = \left (
\frac{1}{\lambda_{1,n}^{\frac{N-2}{2}}} U^1 \left ( 0 , \frac{x}{\lambda_{1,n}} \right ),
\frac{1}{\lambda_{1,n}^{\frac{N}{2}}} \partial_t U^1 \left ( 0 , \frac{x}{\lambda_{1,n}} \right )
\right ) + o(1)
\end{align*}
in $\energysp$ as $n \rightarrow \infty$. This proves the lemma.
\end{proof}

\subsubsection*{Step 4: Conclusion of the Proof of Theorem \eqref{below2}}

Here the argument is exactly the same as the corresponding step for the finite time blow--up case, and
we only sketch it.  Let $\{\tau_n\}_n$ be any sequence with $\tau_n \rightarrow +\infty$.  From Step 3, we know that there exists a function $\lambda(t) > 0$, $t \in [0,1)$ so that the trajectory
$$
K = \left \{ \left ( \lambda(t)^{(N-2)/2} a(t,\lambda(t) \cdot), \lambda(t)^{N/2} \partial_t a(t,\lambda(t) \cdot) \right ):
t \in [0,+\infty)  \right \}
$$
has compact closure in $\energysp$.  By Lemma 8.5 from \cite{dkm1} and Theorem \ref{cptsolns}, after extraction,
$$
\left ( \lambda(\tau_n)^{(N-2)/2} a(\tau_n,\lambda(\tau_n) \cdot), \lambda(\tau_n)^{N/2} \partial_t a(\tau_n,\lambda(t_n) \cdot) \right )
\rightarrow (\pm W,0)
$$
in $\energysp$ as $n \rightarrow \infty$.

A continuity argument implies that the sign can be chosen independent ofn $\{\tau_n\}_n$.  This implies that there exists a function $\lambda(t) > 0$ such that
$$
\lim_{t \rightarrow 1} \left ( \lambda(t)^{\frac{N-2}{2}} a(t, \lambda(t)\cdot) , \lambda(t)^{\frac{N}{2}} \partial_t a(t, \lambda(t) \cdot ) \right ) = (W,0) \quad \mbox{in } \energysp.
$$
By Lemma A.1 of \cite{ckls} we can choose $\lambda$ to be continuous. This completes the proof of Theorem \ref{below2}.

\appendix
\section{}  In this appendix, we prove the Proposition \ref{approxthm} for $N \geq 6$. First, we recall a long time
perturbation result from \cite{stab} (see Theorem 3.6).
We denote the following spaces
\begin{align*}
S(I) &= L^{\frac{2(N+1)}{N-2}}(I \times \R^N), \\
W(I) &= L^{\frac{2(N+1)}{N-1}}
\left ( I ; \dot B^{\frac{1}{2}, 2}_{\frac{2(N+1)}{N-1}} \right ), \\
W'(I) &= L^{\frac{2(N+1)}{N+3}}
\left ( I; \dot B^{\frac{1}{2}, 2}_{\frac{2(N+1)}{N+3}} \right ),
\end{align*}
For $0 < s < 1$ and $1 < p < \infty$, $\dot B^{s, 2}_{p}$ is the standard Besov space on $\R^N$ with norm
\begin{align*}
\| f \|_{\dot B^{s, 2}_{p}} = \left ( \int_{\R^N} |y|^{-N-2s} \| f(x + y) - f(x) \|
_{L^{p}_x}^2 dy \right )^{\frac{1}{2}}.
\end{align*}
We denote the nonlinearity $F(u) = |u|^{\frac{4}{N-2}}u.$
\begin{ppn}\label{longtime}
Let $N \geq 6$.  Assume $\tilde u$ is a near solution on $I \times \R^N$
\begin{align*}
\partial_t^2 \tilde u - \Delta \tilde u = F(\tilde u) + e,
\end{align*}
such that
\begin{align*}
\sup_{t \in I} \| (\tilde u(t),\partial_t \tilde u(t)) \|_{\energysp} + \| \tilde u \|_{W(I)}   &\leq E \\
\| \tilde u_0 - u_0 \|_{\dot H^1} + \| \tilde u - u_1 \|_{L^2} + \|  e \|_{W'(I)} &\leq \epsilon
\end{align*}
Then there exists $\epsilon_0 = \epsilon_0(N,E)$ such that if $0 < \epsilon < \epsilon_0$, there exists a unique solution
to \eqref{nlw} on $I \times \R^N$ with initial data $(u_0,u_1)$ such that
\begin{align*}
\sup_{t \in I} \| (\tilde u(t) - u(t), \partial_t \tilde u(t) - \partial_t u(t)) \|_{\energysp} + 
\| \tilde u - u \|_{S(I)} \leq C \epsilon^c
\end{align*}
where $C > 0$ and $0 < c < 1$ depend only on $N$ and $E$.
\end{ppn}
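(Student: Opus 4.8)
The plan is to deduce Proposition~\ref{longtime} from Strichartz estimates, a nonlinear difference estimate adapted to the non-smooth nonlinearity $F(u)=|u|^{\frac{4}{N-2}}u$, and a finite iteration over a partition of $I$; this is the scheme of \cite{stab} and we only outline it. Write $q=\frac{4}{N-2}\in(0,1]$ for the power in $F$ when $N\ge 6$. First I would record the Strichartz/energy estimate in the relevant norms: if $\partial_t^2 w-\Delta w=h$ on $J\times\R^N$ with data $(w_0,w_1)$, then
$$
\|w\|_{S(J)}+\|w\|_{W(J)}+\sup_{t\in J}\|\vec w(t)\|_{\energysp}\le C(N)\Big(\|(w_0,w_1)\|_{\energysp}+\|h\|_{W'(J)}\Big),
$$
together with the pointwise bounds $|F(a+b)-F(a)|\lesssim |b|\big(|a|^{q}+|b|^{q}\big)$ and $|F'(a)-F'(b)|\lesssim|a-b|^{q}$. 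Combining these with the fractional Leibniz rule and a fractional chain rule for the homogeneous Besov norm $\dot B^{\frac12,2}_{r}$ yields the key estimate
$$
\|F(\tilde u+w)-F(\tilde u)\|_{W'(J)}\le C(N)\Big(\|\tilde u\|_{W(J)}^{q}+\|w\|_{W(J)}^{q}\Big)\|w\|_{W(J)}.
$$

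Second, I would partition $I=\bigcup_{k=1}^{M}I_k$ with $M=M(N,E)$ so that $\|\tilde u\|_{W(I_k)}\le\delta$ for a small absolute constant $\delta=\delta(N)$, which is possible since $\|\tilde u\|_{W(I)}\le E$. Let $u$ be the solution of \eqref{nlw} with data $(u_0,u_1)$ — its existence on each $I_k$ for $\epsilon$ small follows from the small-data local theory recalled in Section~2 — and set $w=u-\tilde u$, which solves $\partial_t^2 w-\Delta w=F(\tilde u+w)-F(\tilde u)-e$ with energy-space norm $\le\epsilon$ at the left endpoint of $I_1$. Feeding the nonlinear estimate into the Strichartz estimate and running a continuity (bootstrap) argument on $I_1$ gives
$$
\|w\|_{W(I_1)}+\|w\|_{S(I_1)}+\sup_{t\in I_1}\|\vec w(t)\|_{\energysp}\le C(N)\,\epsilon^{c_1}
$$
for some $0<c_1\le 1$, provided $\delta$ and $\epsilon$ are small enough; the power $\epsilon^{c_1}<\epsilon$ enters because the error $e$ contributes linearly while the nonlinear feedback, of order $q<2$, is only controlled after distributing the smallness, producing a Hölder-type loss. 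Iterating, on $I_{k+1}$ the data for $w$ is bounded by $\sup_{t\in I_k}\|\vec w(t)\|_{\energysp}+\|e\|_{W'(I)}$, so each step multiplies the bound by a fixed constant $C(N)$; after $M$ steps one obtains $\|w\|_{S(I)}+\sup_{t\in I}\|\vec w(t)\|_{\energysp}\le C(N,E)\,\epsilon^{c}$ with $c=c(N,E)\in(0,1)$. In particular $\|u\|_{S(I)}<\infty$, so by the blow-up criterion $u$ extends to all of $I$ and is the unique such solution, completing the proof.

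The main obstacle will be the nonlinear difference estimate in the Besov-valued space $W'(I)$: for $N\ge 7$ the exponent $q<1$, so $F$ is not even $C^1$, and establishing the fractional chain and Leibniz rules for $|u|^{q}u$ at regularity $\tfrac12$ while keeping all Hölder and Lebesgue exponents admissible is the delicate, dimension-dependent point (this is exactly where the hypothesis $N\ge 6$ is used, and why one is forced into the exotic Strichartz space $W$ in the first place). Once this estimate is in hand, the remaining bootstrap and the finite iteration over the subintervals are routine, and the only bookkeeping is to track how the exponent $c$ and the constant degrade through the $M=M(N,E)$ steps.
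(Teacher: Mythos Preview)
Your outline is correct and is precisely the scheme of \cite{stab} that the paper invokes: the paper does not give its own proof of this proposition but cites Theorem~3.6 of \cite{stab} and remarks that replacing the $D^{1/2}L^{\frac{2(N+1)}{N+3}}_{t,x}$ norm on the error by the slightly weaker Besov norm $W'(I)$ requires only cosmetic changes. Your identification of the delicate step---the nonlinear difference estimate in $W'$ for the non-$C^1$ power $|u|^{q}u$ with $q=\frac{4}{N-2}<1$---is exactly the point of \cite{stab}, and the rest (partition of $I$ into $M(N,E)$ pieces with small $W$-norm, bootstrap on each piece, finite iteration) is the standard long-time perturbation argument.
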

We remark that the theorem stated in \cite{stab} assumes smallness of $\| D^{\frac{1}{2}} e \|_{L^{\frac{2(N+1)}{N+3}}_{t,x}(I \times \R^N)}$ rather than of the slightly weaker norm $\|  e \|_{W'(I)}$, but the proof adapts nearly verbatim.

\begin{proof}[Proof of Proposition \ref{approxthm}]
Let
$\tilde u^J_n(t,x) = \sum_{j = 1}^J U^j_{n}(t) + w^J_n(t)$.
We will apply Proposition \ref{longtime} to $\tilde u^J_n$ and $u_n$ for $n$ and $J$ large.
We first note that the hypotheses, Strichartz estimates, and a simple bootstrapping argument imply that
\begin{align*}
\forall j \geq 1, \mbox{ } \limsup_{n \rightarrow \infty} \left [  \| U^j_n(t) \|_{W(0,\theta_n)} + \sup_{t \in [0,\theta_n]} \| (U^j_n(t), \partial_t U^j_n(t)) \|_{\energysp}
\right ] < \infty
\end{align*}
By orthogonality of the linear energy, there exists $J_0 \geq 1$ such that
\begin{align*}
\sum_{j > J_0} \| (U^j_0, U^j_1) \|_{\energysp}^2 \ll \delta_0
\end{align*}
where $\delta_0$ is as in the local Cauchy theory.  By the small data theory, $U^j$ is globally defined for all $j > J_0$ and
\begin{align*}
 \| U^j \|_{S(\R)} + \| U^{j} \|_{W(\R)}  +
\sup_{t \in \R} \| (U^j(t), \partial_t U^j(t) ) \|_{\energysp} \leq C \|(U_0^j,U^j_1)\|_{\energysp}.
\end{align*}
Hence
\begin{align*}
\sum_{j > J_0} \| U^{j} \|_{S(\R)}^{\frac{2(N+1)}{N-2}} + \sum_{j > J_0} \| U^{j} \|_{W(\R)}^{\frac{2(N+1)}{N-1}}+
\sum_{j > J_0} \sup_{t \in \R} \| (U^j(t), \partial_t U^j(t) ) \|_{\energysp}^2 < \infty
\end{align*}

Let $J \geq 1$. We first prove that the orthogonality of the parameters 
\begin{align}
i \neq j \implies \lim_{n \rightarrow \infty} \frac{\lambda_{i,n}}{\lambda_{j,n}} + \frac{\lambda_{j,n}}{\lambda_{i,n}}
+\frac{|t_{i,n} - t_{j,n}|}{\lambda_{j,n}} = +\infty, \label{appendixa7}
\end{align}
implies 
\begin{align}
\left \| \sum_{j = 1}^J U^j_n(t) \right \|^{\frac{2(N+1)}{N-1}}_{W(0,\theta_n)} &=
\sum_{j = 1}^J \| U^j_n(t) \|^{\frac{2(N+1)}{N-1}}_{W(0,\theta_n)} + o_n(1) \label{appendixa1} \\
\left \| \sum_{j = 1}^J U^j_n(t) \right \|^{\frac{2(N+1)}{N-2}}_{S(0,\theta_n)} &=
\sum_{j = 1}^J \| U^j_n(t) \|^{\frac{2(N+1)}{N-2}}_{S(0,\theta_n)} + o_n(1) \label{appendixa2}
\end{align}
as $n \rightarrow \infty$. Recall that if $a_1,\ldots,a_J$ are real numbers, then for all $p \geq 2$
$$
\left | \left |\sum_{j = 1}^J a_j \right |^p - \sum_{j = 1}^J |a_j|^p \right | \leq C(J) 
\sum_{i \neq j} |a_i|^{p-1}|a_{j}|.
$$  
This fact implies that 
$$
\left \| \sum_{j = 1}^J U^j_n(t) \right \|_{B^{\frac{1}{2},2}_{\frac{2(N+1)}{N-1}}} = \sum_{j = 1}^J \| U^j_n(t) \|_{B^{\frac{1}{2},2}_{
\frac{2(N+1)}{N-1}}} + O \left (\sum_{i \neq j}^J R^{i,j}_n(t) \right ),
$$
where the implied constant depends only on $J$ and 
\begin{align*}
R^{i,j}_n(t) = \left ( \int_{\R^N}
|y|^{-N-1} \|  |\Delta_y U^i_n(t,x)| |\Delta_y U^{j}_n(t,x)|^{\frac{N+3}{N-1}} \|_{L^{1}}^{\frac{N-1}{N+1}} dy 
\right )^{\frac{1}{2}}. 
\end{align*}
We now show using \eqref{appendixa7} that
\begin{align}
i \neq j \implies \lim_{n \rightarrow \infty} \int_{0}^{\theta_n} \left | R^{i,j}_n(t)\right |^{\frac{2(N+1)}{N-1}} dt = 0. \label{appendixa3}
\end{align}
By density, we assume that $U^i,U^{j} \in C^\infty_0(\R \times \R^N)$ and extend the integration in $t$ 
to $\R$. Suppose first that 
$$
\lim_{n \rightarrow \infty} \frac{\lambda_{i,n}}{\lambda_{j,n}} + \frac{\lambda_{j,n}}{\lambda_{i,n}}  = +\infty.
$$
Without loss of generality, suppose that $\lambda_{i,n}/ \lambda_{j,n} \rightarrow 0$ as 
$n \rightarrow \infty$.  By H\"older's inequality in $x$ then in
$y$,
\begin{align}
R^{i,j}_n(t) &\leq \left ( 
\int_{\R^N} |y|^{-N-1} \| \Delta_y U^i_n(t,x) \|_{L^{\frac{2(N+1)}{N-1}}}^{\frac{N-1}{N+1}}
\| \Delta_y U^i_n(t,x) \|_{L^{\frac{2(N+1)}{N-1}}}^{\frac{N+3}{N-1}} dy 
\right )^{\frac{1}{2}} \\
&\leq
\| U^i_n(t) \|_{B^{\frac{1}{2},2}_{\frac{2(N+1)}{N-1}}}^{\frac{N-1}{2(N+1)}}
\| U^j_n(t) \|_{B^{\frac{1}{2},2}_{\frac{2(N+1)}{N-1}}}^{\frac{N+3}{2(N+1)}}. \label{appendixa6}
\end{align}
Note that by a change of variables in $x$ and $y$, 
$$
\forall i \geq 1, \quad \| U^i_n(t) \|_{B^{\frac{1}{2},2}_{\frac{2(N+1)}{N-1}}} = 
\lambda_{i,n}^{- \frac{N-1}{2(N+1)}}\left \| U^i\left ( \frac{t - t_{i,n}}{\lambda_{i,n}} \right ) \right \|_{B^{\frac{1}{2},2}_{\frac{2(N+1)}{N-1}}}. 
$$
By \eqref{appendixa6} and a change of variables in $t$ we obtain
\begin{align}
\int_{\R} \left | R^{i,j}_n(t)\right |^{\frac{2(N+1)}{N-1}} dt &
\leq \left ( \frac{\lambda_{i,n}}{\lambda_{j,n}} \right )^{\frac{N+3}{2(N+1)}} \int_{\R}
\| U^i(t) \|_{B^{\frac{1}{2},2}_{\frac{2(N+1)}{N-1}}}
\left \| U^j \left ( \frac{\lambda_{i,n}}{\lambda_{j,n}} t + \frac{t_{i,n} - t_{j,n}}{\lambda_{j,n}} \right )
 \right \|_{B^{\frac{1}{2},2}_{\frac{2(N+1)}{N-1}}}^{\frac{N+3}{N-1}}
dt. \label{appendixa4}
\end{align}
Since we are assuming that $U^i,U^{j} \in C^\infty_0(\R \times \R^N)$ and $\lambda_{i,n}/ \lambda_{j,n} \rightarrow 0$, 
the right side of \eqref{appendixa4} tends to $0$ as $n \rightarrow \infty$. Thus 
$$
\lim_{n \rightarrow \infty} \int_{\R} \left | R^{i,j}_n(t)\right |^{\frac{2(N+1)}{N-1}} dt = 0.
$$

Suppose now that there exists a constant $C$ so that $C^{-1} \lambda_{i,n} \leq \lambda_{j,n} \leq C \lambda_{i,n}$.  After 
extraction and rescaling we
may assume that $\lambda_{i,n} = \lambda_{j,n}$.  Then \eqref{appendixa7} reads
$$
i \neq j \implies \lim_{n \rightarrow \infty} 
 \frac{|t_{i,n} - t_{j,n}|}{\lambda_{j,n}} = +\infty.
$$
Returning to \eqref{appendixa4}, we have that 
\begin{align}
\int_{\R} \left | R^{i,j}_n(t)\right |^{\frac{2(N+1)}{N-1}} dt &
\leq \int_{\R}
\| U^i(t) \|_{B^{\frac{1}{2},2}_{\frac{2(N+1)}{N-1}}}^{\frac{N-1}{2(N+1)}}
\left \| U^j \left ( t + \frac{t_{i,n} - t_{j,n}}{\lambda_{j,n}} \right ) \right \|_{B^{\frac{1}{2},2}_{\frac{2(N+1)}{N-1}}}^{\frac{N+3}{2(N+1)}}
dt. \label{appendixa5}
\end{align} 
Since we are assuming that $U^i,U^{j} \in C^\infty_0(\R \times \R^N)$ and $|t_{i,n} - t_{j,n}| / \lambda_{j,n} \rightarrow +\infty$, 
$U^i(t)$ and $ U^j \left ( t + \frac{t_{i,n} - t_{j,n}}{\lambda_{j,n}} \right )$ will have disjoint
support in $t$ for large $n$.  This implies that the right side of \eqref{appendixa5} is 0 for large $n$ so that
$$
\lim_{n \rightarrow \infty} \int_{\R} \left | R^{i,j}_n(t)\right |^{\frac{2(N+1)}{N-1}} dt = 0.
$$
Thus the orthogonality of the parameters implies 
$$
\lim_{n \rightarrow \infty} \int_{\R} \left | R^{i,j}_n(t)\right |^{\frac{2(N+1)}{N-1}} dt = 0
$$
which implies \eqref{appendixa1}.  The proof for \eqref{appendixa2} is similar (and simpler) and we omit it. 
  
By \eqref{appendixa1} and \eqref{appendixa2},
\begin{align*}
\lim_{J \rightarrow \infty} \limsup_{n \rightarrow \infty} \left ( \| \tilde u^J_n \|_{W(0,\theta_n)} + \| \tilde u^J_n \|_{S(0,\theta_n)} \right )< \infty.
\end{align*}
Similarly
\begin{align*}
\lim_{J \rightarrow \infty} \limsup_{n \rightarrow \infty} \left [\sup_{t \in [0,\theta_n]} \| (\tilde u^J_n(t), \partial_t \tilde u_n^J(t)
\|_{\energysp} \right ]< \infty.
\end{align*}
Let
$$
e_n^J =  F\left ( \sum_{j = 1}^J U^j_n(t) + w^J_n(t) \right ) - \sum_{j = 1}^J F(U^j_n(t).
$$
We claim that
\begin{align}
\lim_{J \rightarrow \infty} \limsup_{n \rightarrow \infty} \| e_n^J \|_{W'(0,\theta_n)} = 0. 
\end{align}
By the triangle inequality, this reduces to proving
\begin{align}\label{approxthm1}
\lim_{J \rightarrow \infty} \limsup_{n \rightarrow \infty} \left \| F \left ( \tilde u^J_n(t) - w^J_n(t) \right ) -
F( \tilde u^J_n(t) ) \right \|_{W'(0, \theta_n)} = 0,
\end{align}
and
\begin{align}\label{approxthm2}
\lim_{J \rightarrow \infty} \limsup_{n \rightarrow \infty} \left \| F \left ( \sum_{j = 1}^J U^j_n(t) \right ) -
\sum_{j = 1}^J F(U^j_n(t) \right \|_{W'(0,\theta_n)} = 0.
\end{align}

We now estimate the left hand side of \eqref{approxthm1}.  By the fundamental theorem of calculus we obtain
\begin{align}
\| F(f) - F(g) \|_{\dot B^{\frac{1}{2}, 2}_{\frac{2(N+1)}{N+3}}} &\lesssim \left (
\int_{\R^N} |y|^{-N - 1} \left \| |\Delta_y(f-g)(x)||g(x)|^{\frac{4}{N-2}} \right \|_{L_x^{\frac{2(N+1)}{N+3}}}^2 dy
\right )^{\frac{1}{2}} \label{nonlinest1} \\
&+ \left (
\int_{\R^N} |y|^{-N - 1} \left \| |\Delta_y g(x)||(f-g)(x)|^{\frac{4}{N-2}} \right \|_{L_x^{\frac{2(N+1)}{N+3}}}^2 dy
\right )^{\frac{1}{2}} \label{nonlinest2}.
\end{align}
We apply this estimate to $\tilde u_n^J - w^J_n$ and $\tilde u_n^J$.  Let $\epsilon > 0$ be small.  Choose $J_0 = J_0(\epsilon)$ so that
$$
\limsup_{n \rightarrow \infty} \left [ \left \| \sum_{j > J_0} U^j_n \right \|_{S(\R)} + \left \| \sum_{j > J_0} U^j_n \right \|_{W(\R)} \right ]
< \epsilon.
$$
H\"older's inequality in space and time, \eqref{nonlinest1}, \eqref{nonlinest2}, and our choice of $J_0$ imply for large $n$, the term
$\left \| F \left ( \tilde u^J_n(t) - w^J_n(t) \right ) -
F( \tilde u^J_n(t) ) \right \|_{W'(I)}$ is
\begin{align}
\lesssim&
\left \|
\left ( \int_{\R^N} |y|^{-N - 1}    \left \| |\Delta_y w_n^J(t,x)|\left |\sum_{j = 1}^{J_0} U^j_n(t,x)\right |^{\frac{4}{N-2}} \right \|_{L_x^{\frac{2(N+1)}{N+3}}}^2
dy \right )^{\frac{1}{2}}
\right \|_{L_t^\frac{2(N+1)}{N+3}(0,\theta_n)} \label{nonlinest3} \\
&+ \| w^J_n \|_{W(\R)} \epsilon^{\frac{4}{N-2}} +
\| \tilde u^{J}_n \|_{W(0,\theta_n)}  \| w^J_n \|^{\frac{4}{N-2}}_{S(\R)}. \nonumber
\end{align}
where the implied constant is independent of $n$ and $J$.  By Strichartz estimates and the boundedness of the sequence 
$\{ (w^J_{0,n}, w^J_{1,n} ) \}_n$ in $\energysp$, 
$$
\| w^J_n \|_{W(\R)} \leq C_0.
$$
Since $\lim_{J \rightarrow \infty} \limsup_n \| w^J_n \|_{S(\R)} = 0$, we have for all $J$ and $n$ sufficiently large, 
$$
\| w^J_n \|_{W(\R)} \epsilon^{\frac{4}{N-2}} +
\| \tilde u^{J}_n \|_{W(0,\theta_n)}  \| w^J_n \|^{\frac{4}{N-2}}_{S(\R)}  \leq (C_0 + 1)  \epsilon^{\frac{4}{N-2}}.
$$ 
Thus to show \eqref{approxthm1}, we are reduced to showing that for each $1 \leq j \leq J_0$,
\begin{align}
\lim_{J \rightarrow \infty} \limsup_{n \rightarrow \infty} &\left \|
\left ( \int_{\R^N} |y|^{-N - 1} \left \| \left |\Delta_y w^J_n(t,x) \right|\left | U^j_n(t,x)\right |^{\frac{4}{N-2}} \right \|_{L_x^{\frac{2(N+1)}{N+3}}}^2
dy \right )^{\frac{1}{2}}
\right \|_{L_t^\frac{2(N+1)}{N+3}(0,\theta_n)} \label{nonlinest4} \\
&= 0, \nonumber
\end{align}
where
$$
\lim_{J \rightarrow \infty} \limsup_{n \rightarrow \infty} \| w^J_n \|_{S(\R)} = 0.
$$

Without loss of generality, we may assume $U^j$ is smooth and compactly supported on $\R \times \R^N$.  By a change of 
variables
\begin{align*}
\eqref{nonlinest4} = \lim_{J \rightarrow \infty} \limsup_{n \rightarrow \infty}\left \|
\left ( \int_{\R^N} |y|^{-N - 1} \left \| \left |\Delta_y \tilde{w}^J_n(t,x) \right|\left | U^j(t,x)\right |^{\frac{4}{N-2}} \right \|_{L_x^{\frac{2(N+1)}{N+3}}}^2
dy \right )^{\frac{1}{2}}
\right \|_{L_t^{\frac{2(N+1)}{N+3}}}, 
\end{align*}
where 
$$
\tilde w_n^J(t,x) = \lambda_{j,n}^{\frac{N-2}{2}} w^J_n(\lambda_{j,n} t + t_{j,n}, \lambda_{j,n} x + x_{j,n}). 
$$
Note that 
$$
\lim_{J \rightarrow \infty} \limsup_{n \rightarrow \infty} \| \tilde{w}^J_n \|_{S(\R)} =
\lim_{J \rightarrow \infty} \limsup_{n \rightarrow \infty} \| w^J_n \|_{S(\R)} = 0
$$
By a slight abuse of notation, we will continue to write $w^J_n$ instead of $\tilde{w}^J_n$ during the rest of the argument.
Let $\varphi \in C^\infty_0(\R \times \R^N)$ be nonnegative, such that $\varphi \equiv 1$ on a neighborhood of the support
of $U^j$. Then
\begin{align*}
\left | w^J_n(t,x+y) - w^J_n(t,x) \right | \left | U^j(t,x) \right |^{\frac{4}{N-2}} &\leq
\left | (\varphi w^J_n)(t,x+y) - (\varphi w^J_n)(t,x) \right | \left | U^j(t,x) \right |^{\frac{4}{N-2}} \\
&+ |U^j(t,x)|^{\frac{4}{N-2}} |w^J_n(x+y)| |\varphi(t,x+y) - \varphi(t,x)|.
\end{align*}
We insert this estimate into \eqref{nonlinest4}, and we see that 
\begin{align*}
\eqref{nonlinest4} \leq& \left \|
\left ( \int_{\R^N} |y|^{-N - 1} \left \| |\Delta_y(\varphi w_n^J)(t,x)|\left | U^j_n(t,x)\right ||^{\frac{4}{N-2}} \right \|_{L_x^{\frac{2(N+1)}{N+3}}}^2
dy \right )^{\frac{1}{2}}
\right \|_{L_t^\frac{2(N+1)}{N+3}(0,\theta_n)}  \\
&+ \left \|
\left ( \int_{\R^N} |y|^{-N - 1} \left \| |\Delta_y \varphi(t,x)||w^J_n(t,x+y)| \left | U^j_n(t,x)\right ||^{\frac{4}{N-2}} \right \|_{L_x^{\frac{2(N+1)}{N+3}}}^2
dy \right )^{\frac{1}{2}}
\right \|_{L_t^\frac{2(N+1)}{N+3}(0,\theta_n)}.
\end{align*}
By H\"older's inequality in space and time and the fact 
$\|\cdot \|_{\dot B^{\frac{1}{2}, 2}_2} \simeq \| \cdot \|_{\dot H^{\frac{1}{2}}}$, the first term appearing on the right hand 
side above is dominated by 
$$
 \| \varphi w^J_n \|_{L^2\left ( \R ; \dot H^{\frac{1}{2}}\right )} \| U^j \|_{L^{\frac{4(N+1)}{N-2}}}^{\frac{4}{N-2}},
$$ 
while the second term is dominated by 
$$
 \|\varphi\|_{L^{2(N+1)/5}\left (\R ; \dot B
^{\frac{1}{2}, 2}_{\frac{2(N+1)}{5}} \right )} \| w^J_n \|_{S(\R)} \| U^j \|_{L^\infty}^{\frac{4}{N-2}}.
$$
Since $\lim_{J \rightarrow \infty} \limsup_n \| w^J_n \|_{S(\R)} = 0$, we have that 
\begin{align*}
\eqref{nonlinest4} &\lesssim \limsup_{J \rightarrow \infty} \limsup_{n \rightarrow \infty}
\left [ \left (\int \| (\varphi w^J_n)(t) \|^2_{\dot H^{\frac{1}{2}}} dt \right )^{1/2} + \| w^J_n \|_{S(\R)} \|\varphi\|_{L^{2(N+1)/5} \left (\dot B
^{\frac{1}{2}, 2}_{\frac{2(N+1)}{5}}\right )}
\right ] \\
&\lesssim \limsup_{J \rightarrow \infty} \limsup_{n \rightarrow \infty} \left ( \int \| 
(\varphi w^J_n)(t) \|^2_{\dot H^{\frac{1}{2}}} dt \right )^{1/2}.
\end{align*}
where the implied constant depends only on $U^j$.  By interpolation, H\"older's inequality in space, and conservation of the free energy we obtain
$$
\| (\varphi w^J_n)(t) \|^2_{\dot H^{\frac{1}{2}}} \leq \| (\varphi w^J_n)(t) \|_{L^2} \| (\varphi w^J_n)(t) \|_{\dot H^1}  \lesssim \| w^J_n(t) \|_{L^{\frac{2(N+1)}{N-2}}}
$$
where the implied constant depends on $\varphi$ and $\sup_{n} \| (u_{0,n},u_{1,n} ) \|_{\energysp}$.  H\"older's inequality
in time yields
$$
\limsup_{J \rightarrow \infty} \limsup_{n \rightarrow \infty} \int_{|t| \leq T} \| (\varphi w^J_n)(t) \|^2_{\dot H^{\frac{1}{2}}} dt
\lesssim \limsup_{J \rightarrow \infty} \limsup_{n \rightarrow \infty} \| w^J_n \|_{S(\R)} = 0.
$$
This shows \eqref{nonlinest4} $= 0$ as desired.  Tracing back through the argument, this yields \eqref{approxthm1}.

We now show \eqref{approxthm2}.  Estimating similarly to \eqref{nonlinest1} and \eqref{nonlinest2} we obtain for fixed $J \geq 1$,
\begin{align*}
\left \| F \left ( \sum_{j = 1}^J U^j_n(t) \right ) -
\sum_{j = 1}^J F(U^j_n(t)) \right \|_{\dot B^{\frac{1}{2}, 2}_{\frac{2(N+1)}{N+3}}} \leq C(J) \sum_{i \neq j}
R^{i,j}_n(t)
\end{align*}
where 
$$
R^{i,j}_n(t) = \left ( \int_{\R^N} |y|^{-N-1} \| |\Delta_y U_n^i(x)| |U_n^j(x)|^{\frac{4}{N-2}} 
\|_{L_x^{\frac{2(N+1)}{N+3}}}^2 dy 
 \right )^{\frac{1}{2}}
$$
If $i \neq j$, then orthogonality of the parameters implies that
$$
\lim_{n \rightarrow \infty} \int_0^{\theta_n} \left |R^{i,j}_n(t) \right|^{\frac{2(N+1)}{N+3}} dt = 0.
$$
Hence, for each fixed $J \geq 1$,
$$
\lim_{n \rightarrow \infty} \left \| F \left ( \sum_{j = 1}^J U^j_n(t) \right ) -
\sum_{j = 1}^J F(U^j_n(t)) \right \|_{W'(0,\theta_n)} = 0,
$$
which shows \eqref{approxthm2}.

Thus, we have shown that
$$
\lim_{J \rightarrow \infty} \limsup_{n \rightarrow \infty} \| \tilde e^J_n \|_{W'(0,\theta_n)} = 0.
$$
This along with
$$
(\tilde u^J_n(0), \partial_t \tilde u^J_n(0)) = (u_n(0), \partial_t u_n(0)) + o(1) \quad \mbox{in } \energysp
\mbox{ as } n \rightarrow \infty,
$$
yield the conclusion of Proposition \ref{approxthm} by Theorem \ref{longtime}.
\end{proof}

\section{}

In this section we prove Lemma \ref{locorth} for odd $N$.  Lemma \ref{locorth} was proved for even $N$
in \cite{cks} using Fourier methods.  The proof for odd $N$ follows in a very similar fashion.

We first reduce Lemma \ref{locorth} to the case $\theta_n = 0$.  Indeed, let $s_{j,n} = \frac{\theta_n - t_{j,n}}{\lambda_{j,n}}$.
Extracting subsequences, we can assume that $s_{j,n}$ has a limit $s_j$ in $\bar \R$ as $n \rightarrow \infty$.  Observe
that there exists a unique solution $V^j_L$ of the linear wave equation such that
$$
\lim_{n \rightarrow \infty} \left \| \vec V^j_L (s_{j,n}) - \vec U^j_n(s_{j,n}) \right \|_{\energysp} = 0.
$$
Indeed, if $s_j \in I_{\max}(U^j)$, then we can take $V^j_L(t) = S(t-s_j)\vec U^j(s_j)$.  If $s_j = +\infty$ (respectively
$s_j = -\infty$), then
our assumption that $\limsup_{n \rightarrow \infty} \| U^j_n \|_{S(0,\theta_n)} < \infty$ implies $U^j$ scatters forward in time
(respectively backwards in time)
and the existence of $V^j_L$ follows.

Letting $\tau_{j,n} = -\theta_n + t_{j,n}$, it is easy to see that the parameters $\{\lambda_{j,n}, \tau_{j,n} \}$ are orthogonal.
By Proposition \ref{approxthm}, we have $\vec u(\theta_n)$ admits the following profile decomposition
$$
\vec u(\theta_n) = \sum_{j = 1}^J \vec V^J_{L,n}(0) +  (\eta_{0.n}^J, \eta_{1,n}^J),
$$
where
$$
(\eta_{0,n}^J, \eta_{1,n}^J) = (w^J_n(\theta_n),\partial_t w^J_n(\theta_n) + (r^J_n(\theta_n), \partial_t r_n^J(\theta_n))
 + o_n(1) \quad
\mbox{in } \energysp.
$$
After these considerations, we see that it suffices to prove Lemma \eqref{locorth} with $U^j_n$ replaced with $V^j_{L,n}$ and
with $w^J_n + r_n^j$ replaced by $\eta^J_n$.  So without loss of generality, we may suppose that we are
in the case $\theta_n = 0$ for all $n$.
By the construction of a profile decomposition,
$$
\forall J \geq 1, \forall 1 \leq j \leq J, \quad
\vec S(t_{j,n}/\lambda_{j,n} ) \left (\lambda_{j,n}^{(N-2)/2} w^J_{0,n}(\lambda_{j,n} \cdot ), \lambda_{j,n}^{N/2} w^J_{1,n}(\lambda_{j,n} \cdot) \right )
\rightharpoonup 0.
$$
This along with orthogonality of the parameters easily imply that the case $\theta_n = 0$ reduces to the following lemma.
\begin{lem}\label{appblem1}
Let $N \geq 3$ be odd.  Let $(w_{0,n},w_{1,n})$ be a bounded sequence of radial functions in $\energysp$.  Let $\{t_n\}_n$,
$\{r_n\}$ be two sequences with $r_n \geq 0$.  Assume that $\vec S(-t_n)(w_{0,n},w_{1,n}) \rightharpoonup 0$ in $\energysp$ as
$n \rightarrow \infty$.  Then for any $(u_0,u_1) \in \energysp$, one has (after passing to a subsequence if necessary)
\begin{align*}
\lim_{n \rightarrow \infty} \int_{|x| \geq r_n} \nabla_{t,x} S(t_n)(u_0,u_1) \cdot (\nabla w_{0,n}, w_{1,n}) dx &= 0, \\
\lim_{n \rightarrow \infty} \int_{|x| \leq r_n} \nabla_{t,x} S(t_n)(u_0,u_1) \cdot (\nabla w_{0,n}, w_{1,n}) dx &= 0.
\end{align*}
\end{lem}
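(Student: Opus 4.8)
The plan is to first make several elementary reductions and dispose of the case of bounded $t_n$, then extract the genuine content from the strong Huygens principle and the asymptotic structure of the radial linear flow in odd dimensions.

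Write $\la (f_0,f_1),(g_0,g_1)\ra=\int_{\R^N}\nabla f_0\cdot\nabla g_0+f_1g_1\,dx$ for the ($t$-conserved) bilinear form on $\energysp$, under which $\vec S(t)$ is an isometry; the two integrands in the statement are then the densities of $\la\vec S(t_n)(u_0,u_1),(w_{0,n},w_{1,n})\ra$ restricted to $\{|x|\geq r_n\}$ and $\{|x|\leq r_n\}$. First I would reduce to the exterior region: by the isometry property and the hypothesis, $\la\vec S(t_n)(u_0,u_1),(w_{0,n},w_{1,n})\ra=\la(u_0,u_1),\vec S(-t_n)(w_{0,n},w_{1,n})\ra\to0$, so the two limits are equivalent and it suffices to treat $\{|x|\geq r_n\}$. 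Since $\big|\int_{|x|\geq r_n}(\cdots)\,dx\big|\leq\|\vec S(t_n)(u_0,u_1)\|_{\energysp}\|(w_{0,n},w_{1,n})\|_{\energysp}$ is bounded uniformly in $n$, a density argument reduces to $(u_0,u_1)$ radial, smooth, and supported in a ball $B_\rho$. After extracting a subsequence we may assume $t_n\to t_\infty\in[-\infty,+\infty]$, the cases $t_n\to\pm\infty$ being identical up to time reversal. If $t_\infty\in\R$, then $\vec S(t_n)(u_0,u_1)\to\vec S(t_\infty)(u_0,u_1)$ in $\energysp$, and (extracting so that $r_n\to r_\infty\in[0,+\infty]$) dominated convergence replaces $\vec S(t_n)(u_0,u_1)$ by this fixed element and $\mathbf 1_{\{|x|\geq r_n\}}$ by $\mathbf 1_{\{|x|\geq r_\infty\}}$; since $t_n$ converges and $\vec S(-t_n)(w_{0,n},w_{1,n})\rightharpoonup0$, one has $(w_{0,n},w_{1,n})\rightharpoonup0$ in $\energysp$, and the desired limit follows.

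So assume $t_n\to+\infty$. By finite speed of propagation and the strong Huygens principle ($N$ odd), $\vec S(t_n)(u_0,u_1)$ is supported in the annulus $A_n=\{\,\big||x|-t_n\big|\leq\rho\,\}$ for large $n$. With $r=|x|$, $\sigma=r-t_n$, and $(U_0,U_1)$ the components of $\vec S(t_n)(u_0,u_1)$, introduce the rescaled radial profiles $a_n(\sigma)=t_n^{(N-1)/2}U_0'(t_n+\sigma)$, $b_n(\sigma)=t_n^{(N-1)/2}U_1(t_n+\sigma)$, and likewise $\alpha_n(\sigma)=t_n^{(N-1)/2}w_{0,n}'(t_n+\sigma)$, $\beta_n(\sigma)=t_n^{(N-1)/2}w_{1,n}(t_n+\sigma)$. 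The energy bound makes $(\alpha_n,\beta_n)$ bounded in $L^2_{\mathrm{loc}}(\R)^2$, so after extraction $(\alpha_n,\beta_n)\rightharpoonup(\alpha_\infty,\beta_\infty)$ weakly in $L^2_{\mathrm{loc}}(\R)^2$. Since $(u_0,u_1)$ is fixed and smooth, the existence of its radiation field as a strong limit — combined with strong Huygens, so that $\vec S(t_n)(u_0,u_1)$ is exactly supported in $A_n$ and, after rescaling, a finite combination of fixed profiles in $\sigma$ times negative powers of $r$ — gives $a_n\to a_\infty$ and $b_n\to-a_\infty$ strongly in $L^2([-\rho,\rho])$, with $a_\infty$ the derivative of the radiation field of $(u_0,u_1)$; the same holds for any fixed smooth radial datum. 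Changing variables, the quantity to control equals, up to a dimensional constant, $\int_{\sigma_n}^{\rho}\big(a_n\alpha_n+b_n\beta_n\big)(1+\sigma/t_n)^{N-1}\,d\sigma$ with $\sigma_n=\min\{\max\{r_n-t_n,-\rho\},\rho\}$.

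The heart of the matter is to show $\alpha_\infty=\beta_\infty$, and this is where the hypothesis $\vec S(-t_n)(w_{0,n},w_{1,n})\rightharpoonup0$ enters. For any radial $(\phi_0,\phi_1)\in C_c^\infty$, isometry gives $\la(w_{0,n},w_{1,n}),\vec S(t_n)(\phi_0,\phi_1)\ra=\la\vec S(-t_n)(w_{0,n},w_{1,n}),(\phi_0,\phi_1)\ra\to0$; but $\vec S(t_n)(\phi_0,\phi_1)$ is, for large $n$, supported in an annulus at radius $t_n$ whose rescaled profiles converge strongly in $L^2$ to the radiation profile $(\tilde a_\infty,-\tilde a_\infty)$ of $(\phi_0,\phi_1)$, so by weak$\times$strong the pairing tends to a constant times $\int(\alpha_\infty-\beta_\infty)\tilde a_\infty\,d\sigma$. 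Since for $N$ odd the radiation map carries smooth radial data onto a dense subspace of $L^2(\R)$, the vanishing of this for all $(\phi_0,\phi_1)$ forces $\alpha_\infty=\beta_\infty$ a.e. Returning to the displayed integral, after extracting $\sigma_n\to\sigma_\infty$, $\mathbf 1_{[\sigma_n,\rho]}a_n(1+\sigma/t_n)^{N-1}\to\mathbf 1_{[\sigma_\infty,\rho]}a_\infty$ strongly in $L^2$ (and likewise with $b_n$), so pairing against $(\alpha_n,\beta_n)\rightharpoonup(\alpha_\infty,\beta_\infty)$ the integral converges to a constant times $\int_{\sigma_\infty}^{\rho}a_\infty(\alpha_\infty-\beta_\infty)\,d\sigma=0$. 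Undoing the reductions completes the proof. I expect the main obstacle to be the last two paragraphs: pinning down, uniformly in odd $N$, the asymptotic straightening of the radial wave flow — the strong $L^2$ convergence of the rescaled profiles $a_n,b_n$ of a fixed smooth datum and the density of the range of the radiation map — which in even dimensions was handled via the Fourier transform in \cite{cks}. In odd dimensions it follows from the explicit Duhamel/spherical-means representation together with strong Huygens (the structure also underlying Proposition~\ref{extbds} and the formula following it), but the bookkeeping of the lower-order-in-$1/r$ correction terms, and the verification that they do not contribute in the limit, will require care.
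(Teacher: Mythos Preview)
Your approach is correct and genuinely different from the paper's. The paper works on the Fourier side: it writes the radial propagator via Bessel functions, inserts the asymptotic expansion $J_\nu(x)\sim\sqrt{2/(\pi x)}\cos(x-\tau)$, carries out the $r$--integration explicitly (using that the phase shift $\tau=(N-1)\pi/4$ satisfies $2\tau\in\Z\pi$ precisely when $N$ is odd), and then reduces the resulting double integrals in $\rho,\sigma$ to pairings against Hilbert-- and Hankel--type kernels with oscillatory prefactors $e^{\pm i(t_n\pm r_n)(\cdot)}$, which converge by the weak hypothesis combined with strong convergence of $\cl F_1(\mathrm{sign}(\cdot\pm B_n)\tilde u_0)$ along any subsequence where $B_n=t_n\pm r_n$ has a limit in $\bar\R$. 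Your approach stays in physical space: strong Huygens localizes $\vec S(t_n)(u_0,u_1)$ to a shell of fixed width, the radiation field gives strong $L^2$ convergence of its rescaled radial profiles to $(a_\infty,-a_\infty)$, and the hypothesis $\vec S(-t_n)(w_{0,n},w_{1,n})\rightharpoonup0$ is decoded as the outgoing constraint $\alpha_\infty=\beta_\infty$ on the weak limits of the rescaled $w_n$ profiles, via density of the range of the radiation map. The paper's route directly parallels the even-dimensional argument of \cite{cks} and needs no external input beyond Bessel asymptotics and boundedness of the Hilbert/Hankel transforms; your route is shorter and more conceptual once one invokes the Friedlander radiation field and the Lax--Phillips translation representation, which supply exactly the two ingredients you flag as obstacles (strong $L^2$ convergence of $(a_n,b_n)$ for smooth compactly supported data, and unitarity of the radiation map $\energysp_{\mathrm{rad}}\to L^2(\R)$, hence density of its range on $C_c^\infty\times C_c^\infty$). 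Both arguments are specific to odd $N$: the paper through $2\tau\in\Z\pi$, you through sharp Huygens.
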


Before proceeding further, we establish conventions and gather facts that will be needed for the proof.  The solution $u(t)$ to
the linear equation \eqref{lw} with initial data $(u_0,u_1)$ is
given by
$$
u(t) = \cos(t |\nabla|) u_0 + \frac{\sin(t|\nabla|)}{|\nabla|} u_1.
$$
The Fourier transform of $f$ is defined to be
\begin{align}\label{appb1}
\hat f(\xi) = \int_{\R^N} e^{-ix \cdot \xi} f(x) dx, \quad f(x) = (2 \pi)^{-N} \int_{\R^N} e^{i x \cdot \xi} \hat f (\xi) d\xi,
\end{align}
and Parseval's identity is
\begin{align}\label{appb2}
\int_{\R^N} f(x) \overline{g(x)} dx = (2\pi)^{-N} \int_{\R^N} \hat f(\xi) \overline{\hat g(\xi)} d\xi.
\end{align}

If $f$ is radial, then $\hat f$ is also radial.  Recall that
$$
\widehat{\sigma_{S^{N-1}}}(\xi) = (2 \pi)^{\frac{N}{2}} |\xi|^{-\nu} J_{\nu}(|\xi|), \quad \nu = \frac{N-2}{2},
$$
where $J_{\nu}$ is the Bessel function of the first kind of order $\nu$.  It is characterized as being the solution
to Bessel's equation
$$
x^2 J''_{\nu}(x) + x J'_{\nu}(x) + (x^2 - \nu^2 ) J_{\nu}(x) = 0
$$
which is regular at $x = 0$ (unique up to a multiplicative constant).  The inversion formula takes the form
\begin{align*}
f(r) = (2 \pi)^{-\frac{N}{2}} \int_0^{\infty} \hat f(\rho) J_{\nu}(r \rho) (r \rho)^{-\nu} \rho^{N-1} d\rho.
\end{align*}
For the solution $u(t,r)$, this means that
\begin{align*}
u(t,r) &= (2 \pi)^{-\frac{N}{2}} \int_0^\infty \left(  \cos(t \rho) \hat f(\rho) + \frac{\sin(t\rho)}{\rho} \right )
J_{\nu}(r \rho) (r \rho)^{-\nu} \rho^{N-1} d\rho, \\
\partial_t u(t,r) &= (2 \pi)^{-\frac{N}{2}} \int_0^\infty \left(  -\sin(t \rho) \rho \hat f(\rho) + \cos(t\rho) \right )
J_{\nu}(r \rho) (r \rho)^{-\nu} \rho^{N-1} d\rho.
\end{align*}
The standard asymptotics for the Bessel functions are given by
\begin{align}\label{appb3}
J_{\nu}(x) &= \sqrt{\frac{2}{\pi x}}\left [ (1 + \omega_2(x)) \cos(x - \tau) + \omega_1(x) \sin (x - \tau) \right ], \\
J_{\nu}'(x) &= \sqrt{\frac{2}{\pi x}}\left [ \tilde \omega_1(x) \cos(x - \tau) - (1 - \tilde \omega_2(x)) \sin (x - \tau) \right ],
\end{align}
with phase-shift $\tau = (N-1) \frac{\pi}{4}$, and with the bounds (for $n \geq 0$, $x \geq 1$),
\begin{align}\label{appb4}
|w_1^{(n)}(x)| + | \tilde \omega_1^{(n)}(x)| &\leq C_n x^{-1-n}, \\
|w_2^{(n)}(x)| + | \tilde \omega_2^{(n)}(x)| &\leq C_n x^{-2-n}.
\end{align}

Finally, we have the following limit (in the sense of distributions)
\begin{align}\label{appb9}
\lim_{\epsilon \rightarrow 0^+} \int_{r_0}^\infty \cos(ar) e^{-\epsilon r} dr = \lim_{\epsilon \rightarrow 0^+}
\frac{1}{2} \left ( \frac{e^{-r_0(ia + \epsilon)}}{ia + \epsilon} - \frac{e^{r_0(ia - \epsilon)}}{ia - \epsilon} \right )
= \pi \delta_0(a) - \frac{\sin(r_0 a)}{a}.
\end{align}
We now turn to the proof of Lemma \ref{appblem1}.

\begin{proof}[Proof Lemma \ref{appblem1}]
The proof is very similar to the proof of Lemma 6 from \cite{cks}, and we focus on the main difference.  By conservation of the free energy, we have
\begin{align}
\int_{\R^N}& \nabla_{t,x} S(t_n)(u_0,u_1) \cdot (\nabla w_{0,n}, w_{1,n}) dx \nonumber \\
&= \int_{\R^N}(\nabla u_0,u_1) \cdot \nabla_{t,x} S(-t_n)( w_{0,n}, w_{1,n}) dx \rightarrow 0 \quad \mbox{as }
n \rightarrow \infty. \label{appb6}
\end{align}
Hence, to prove the lemma, we only need to show
\begin{align}\label{appb5}
\lim_{n \rightarrow \infty} \int_{|x| \geq r_n} \nabla_{t,x} S(t_n)(u_0,u_1) \cdot (\nabla w_{0,n}, w_{1,n}) dx = 0.
\end{align}
Since the linear propagator $S(t)$ is unitary, we may suppose that $(u_0,u_1),(w_{0,n},w_{1,n}) \in \cl S(\R^N) \times
\cl S(\R^N)$ with Fourier support away from the origin.
We may also assume (after passing to a subsequence) that the sequences
$$
\{ t_n \}_n, \quad \{ r_n \}_n, \quad \{ t_n - r_n \}_n, \mbox{  and  } \{ t_n + r_n \}_n,
$$
have limits in $\bar \R$.

If $\{ t_n \}_n$ has a limit in $\R$, then $\grad_{t,x} S(t_n)(u_0,u_1)$ converges strongly in
$(L^2)^{N+1}$ and $(\nabla w_{n,0}, w_{1,n})$ converges weakly to 0 in $(L^2)^{N+1}$.  The dominated convergence theorem
shows that
$$
\textbf 1_{|x| \geq r_n} (\nabla w_{n,0}, w_{1,n}) \rightharpoonup 0,
$$
in $(L^2)^{N+1}$ as $n \rightarrow \infty$.  This shows \eqref{appb5} in the case $\lim t_n \in \R$.

Suppose that $\lim t_n \in \{\pm \infty\}$ and $\lim r_n \in [0,\infty)$. Let $\epsilon > 0$.  By Lemma \ref{loclem},
there exists $R = R(\epsilon)$ so that
$$
\limsup_{n \rightarrow \infty} \int_{||x| - |t_n|| \geq R} |\nabla_{t,x} S(t)(u_0,u_1)|^2 dx < \epsilon.
$$
Since $\lim |t_n| = \infty$ and $\lim r_n \in [0,\infty)$, for $n$ sufficiently large, $\{ |x| \leq r_n \} \subset
\{ ||x| - |t_n|| \geq R \}$. By \eqref{appb6},
\begin{align*}
\limsup_{n \rightarrow \infty} & \left | \int_{|x| \geq r_n} \nabla_{t,x} S(t_n)(u_0,u_1) \cdot (\nabla w_{0,n}, w_{1,n}) dx \right | \\
&\leq \limsup_{n \rightarrow \infty} \left | \int_{|x| \leq r_n} \nabla_{t,x} S(t_n)(u_0,u_1) \cdot (\nabla w_{0,n}, w_{1,n}) dx \right |
\leq \sup_n \| (w_{0,n}, w_{1,n})\|_{\energysp} \sqrt \epsilon.
\end{align*}
This shows \eqref{appb5} in the case $\lim r_n \in [0,\infty)$.

It remains to treat the case where both $\{t_n\}_n$ and $\{r_n\}_n$ have infinite limits.  We use the Fourier representation
of $S(t)(u_0,u_1)$ using Bessel functions.  Using the asymptotics \eqref{appb3} and \eqref{appb4} and the
boundedness of the Hilbert and Hankel transforms on the half line (see the end
of the proof of Lemma 6 in \cite{cks}), it can be shown that in this case \eqref{appb5} is equal to
\begin{align*}
\lim_{\epsilon \rightarrow 0^+} \int_{r_n}^\infty \int_0^\infty&
\left ( \cos(t_n) \rho) \rho \widehat{u_0}(\rho) + \sin(t_n \rho) \widehat{u_1}(\rho) \right ) \sin(r \rho - \tau)(r \rho)^{-\nu - \frac{1}{2}}
\rho^{N-1} d\rho \\
\int_0^\infty& \widehat{w_{0,n}}(\sigma) \sigma \sin(r \sigma - \tau) (r\sigma)^{-\nu - \frac{1}{2}} \sigma^{N-1} d\sigma
e^{-\epsilon r} r^{N-1} dr \\
+ \int_{r_n}^\infty \int_0^\infty&
\left ( -\sin(t_n) \rho) \rho \widehat{u_0}(\rho) + \cos(t_n \rho) \widehat{u_1}(\rho) \right ) \cos(r \rho - \tau)(r \rho)^{-\nu - \frac{1}{2}}
\rho^{N-1} d\rho \\
\int_0^\infty& \widehat{w_{0,n}}(\sigma) \cos(r \sigma - \tau) (r\sigma)^{-\nu - \frac{1}{2}} \sigma^{N-1} d\sigma
e^{-\epsilon r} r^{N-1} dr
\end{align*}
up to an $o(1)$ term as $n \rightarrow \infty$.  We now show that the above limit is $o(1)$ as $n \rightarrow \infty$.

In order to carry out the $r$--integration, we use (note $2\tau \in \Z \pi$ when $N$ is odd)
\begin{align*}
\cos (r \rho - \tau) \cos (r \sigma - \tau) =& \frac{1}{2} [(-1)^{\frac{N-1}{2}} \cos(r(\rho + \sigma))
+ \cos(r(\rho - \sigma))], \\
\sin (r \rho - \tau) \sin (r \sigma - \tau) =& \frac{1}{2} [-(-1)^{\frac{N-1}{2}} \cos(r(\rho + \sigma))
+ \cos(r(\rho - \sigma))].
\end{align*}

Carrying out the $r$--integration and passing to the limit, \eqref{appb9} yields the expression (up to a
constant)
\begin{align*}
\int_0^\infty \int_0^\infty &\left ( \cos(t_n) \rho) \rho \widehat{u_0}(\rho) + \sin(t_n \rho) \widehat{u_1}(\rho) \right )
\rho^{\frac{N-1}{2}} \sigma \widehat{w_{0,n}}(\sigma) \sigma^{\frac{N-1}{2}} \\
& \left ( \pi \delta_0 (\rho - \sigma ) - \frac{\sin(r_n(\rho - \sigma))}{\rho - \sigma} + (-1)^{\frac{N-1}{2}}
\frac{\sin(r_n(\rho + \sigma))}{\rho + \sigma}\right ) d\rho d\sigma \\
+ \int_0^\infty & \int_0^\infty \left ( -\sin(t_n) \rho) \rho \widehat{u_0}(\rho) + \cos(t_n \rho) \widehat{u_1}(\rho) \right )
\rho^{\frac{N-1}{2}} \widehat{w_{1,n}}(\sigma) \sigma^{\frac{N-1}{2}} \\
& \left ( \pi \delta_0 (\rho - \sigma ) - \frac{\sin(r_n(\rho - \sigma))}{\rho - \sigma} - (-1)^{\frac{N-1}{2}}
\frac{\sin(r_n(\rho + \sigma))}{\rho + \sigma}\right ) d\rho d\sigma.
\end{align*}
The $\delta_0$ make the following contribution
\begin{align*}
\int_0^\infty & \rho \widehat{u_0}(\rho)\left ( \cos(t_n \rho) \rho \widehat{w_{0,n}}(\rho) -
\sin(t_n \rho) \widehat{w_{1,n}}(\rho) \right ) \rho^{N-1} d\rho \\
&+ \int_0^\infty  \widehat{u_1}(\rho)\left ( \sin(t_n \rho) \rho \widehat(w_{0,n})(\rho) +
\cos(t_n \rho) \widehat{w_{1,n}}(\rho) \right ) \rho^{N-1} d\rho
\end{align*}
which tends to $0$ since $S(-t_n)(w_{0,n},w_{1,n}) \rightharpoonup 0$ in $\energysp$.

Next, we extract terms involving the Hilbert transform kernel $\frac{1}{\rho - \sigma}$.  These terms contribute (up to a constant)
\begin{align*}
\int_0^\infty& \int_0^\infty \rho \widehat{u_0}(\rho)\left ( \cos(t_n \rho) \rho \widehat{w_{0,n}}(\sigma) -
\sin(t_n \rho) \widehat{w_{1,n}}(\sigma) \right ) \frac{\sin(r_n(\rho - \sigma))}{\rho - \sigma} (\rho \sigma)^{\frac{N-1}{2}}
d\rho d\sigma \\
&+ \int_0^\infty \int_0^\infty \widehat{u_1}(\rho)\left ( \sin(t_n \rho) \rho \widehat{w_{0,n}}(\sigma) +
\cos(t_n \rho) \widehat{w_{1,n}}(\sigma) \right ) \frac{\sin(r_n(\rho - \sigma))}{\rho - \sigma} (\rho \sigma)^{\frac{N-1}{2}}
d\rho d\sigma.
\end{align*}
Using simple trigonometric identities, the terms involving $u_0$ can be transformed into
\begin{align*}
\int_0^\infty \int_0^\infty& \rho \widehat{u_0}(\rho) \left [ \sin((t_n + r_n)(\rho - \sigma)) + \sin((t_n - r_n)
(\rho - \sigma)) \right ] \\
&\left [ \cos(t_n \sigma) \sigma \widehat{w_{0,n}}(\sigma) - \sin(t_n \sigma) \widehat{w_{1,n}}(\sigma) \right ] \\
&+ \rho \widehat{u_0}(\rho) \left [ \cos((t_n + r_n)(\rho - \sigma)) - \cos((r_n - t_n)
(\rho - \sigma)) \right ] \\
& \left [ \sin(t_n \sigma) \sigma \widehat{w_{0,n}}(\sigma) + \cos(t_n \sigma) \widehat{w_{1,n}}(\sigma) \right ]
\frac{(\rho \sigma)^{\frac{N-1}{2}}}{\rho - \sigma} d\rho d\sigma.
\end{align*}
Let $\cl F_1$ be the Fourier transform on $\R$.  Define
$$
\tilde u_0 = \cl F_1\left ( \textbf 1_{\R^+} \rho \widehat{u_0}(\rho) \rho^{\frac{N-1}{2}} \right ) \in L^2(\R).
$$
Then with some constant $c$,
$$
\int_0^\infty e^{\pm i B_n (\rho - \sigma)} \frac{\rho \widehat{u_0}(\rho) \rho^{\frac{N-1}{2}}}{\rho - \sigma} d\rho
= c \cl F_1 \left ( \mbox{sign}(\cdot \pm B_n) \tilde u_0 \right ) (\sigma).
$$
If $B_n$ has a limit in $\bar \R$, then this converges strongly in $L^2(\R)$.  In our case $B_n = t_n \pm r_n$.  Thus,
the terms involving $u_0$ can be reduced to the form $\la f_n, \tilde f_n \ra \rightarrow 0$ where $\tilde f_n$ converges
strongly in $L^2(\R)$ and $f_n \rightharpoonup 0$ in $L^2(\R)$.  Analogously, the terms involving $u_1$ can be transformed
into
\begin{align*}
\int_0^\infty \int_0^\infty& \widehat{u_1}(\rho) \left [ \sin((t_n + r_n)(\rho - \sigma)) - \sin((t_n - r_n)
(\rho - \sigma)) \right ] \\
&\left [ \sin(t_n \sigma) \sigma \widehat{w_{0,n}}(\sigma) + \cos(t_n \sigma) \widehat{w_{1,n}}(\sigma) \right ] \\
&- \widehat{u_1}(\rho) \left [ \cos((t_n + r_n)(\rho - \sigma)) - \cos((t_n - r_n)
(\rho - \sigma)) \right ] \\
& \left [ \cos(t_n \sigma) \sigma \widehat{w_{0,n}}(\sigma) - \sin(t_n \sigma) \widehat{w_{1,n}}(\sigma) \right ]
\frac{(\rho \sigma)^{\frac{N-1}{2}}}{\rho - \sigma} d\rho d\sigma.
\end{align*}
which converges to 0 by the same reasoning.

Finally, we extract the terms involving the Hankel transform kernel $\frac{1}{\rho + \sigma}$.  These terms contribute (up to a
constant)
\begin{align*}
\int_0^\infty& \int_0^\infty \rho \widehat{u_0}(\rho)\left ( \cos(t_n \rho) \rho \widehat{w_{0,n}}(\sigma) +
\sin(t_n \rho) \widehat{w_{1,n}}(\sigma) \right ) \frac{\sin(r_n(\rho + \sigma))}{\rho + \sigma} (\rho \sigma)^{\frac{N-1}{2}}
d\rho d\sigma \\
&+ \int_0^\infty \int_0^\infty \widehat{u_1}(\rho)\left ( \sin(t_n \rho) \rho \widehat{w_{0,n}}(\sigma) -
\cos(t_n \rho) \widehat{w_{1,n}}(\sigma) \right ) \frac{\sin(r_n(\rho + \sigma))}{\rho + \sigma} (\rho \sigma)^{\frac{N-1}{2}}
d\rho d\sigma.
\end{align*}
Using the same type of trigonometric identities as before, the terms involving $u_0$ can be transformed into
\begin{align*}
\int_0^\infty \int_0^\infty& \rho \widehat{u_0}(\rho) \left [ \cos((t_n - r_n)(\rho + \sigma)) - \cos((t_n + r_n)
(\rho + \sigma)) \right ] \\
&\left [ \sin(t_n \sigma) \sigma \widehat{w_{0,n}}(\sigma) + \cos(t_n \sigma) \widehat{w_{1,n}}(\sigma) \right ] \\
&+ \rho \widehat{u_0}(\rho) \left [ \sin((t_n + r_n)(\rho + \sigma)) - \sin((t_n - r_n)
(\rho + \sigma)) \right ] \\
& \left [ \cos(t_n \sigma) \sigma \widehat{w_{0,n}}(\sigma) - \sin(t_n \sigma) \widehat{w_{1,n}}(\sigma) \right ]
\frac{(\rho \sigma)^{\frac{N-1}{2}}}{\rho + \sigma} d\rho d\sigma.
\end{align*}
As in the case of the Hilbert transform, define
$$
\check u_0 = \cl F_1\left ( \textbf 1_{\R^-} \rho \widehat{u_0}(\rho) \rho^{\frac{N-1}{2}} \right ) \in L^2(\R),
$$
and notice
$$
\int_0^\infty e^{\pm i B_n (\rho + \sigma)} \frac{\rho \widehat{u_0}(\rho) \rho^{\frac{N-1}{2}}}{\rho + \sigma} d\rho
= c \cl F_1 \left ( \mbox{sign}(\cdot \mp B_n) \check u_0 \right ) (\sigma).
$$
By the same reasoning as in the case of the Hilbert transform, the terms involving $u_0$ converge to 0.  Similarly, the terms
involving $u_1$ can be transformed into
\begin{align*}
\int_0^\infty \int_0^\infty& \widehat{u_1}(\rho) \left [ \cos((t_n - r_n)(\rho + \sigma)) - \cos((t_n + r_n)
(\rho + \sigma)) \right ] \\
&\left [ \cos(t_n \sigma) \sigma \widehat{w_{0,n}}(\sigma) - \sin(t_n \sigma) \widehat{w_{1,n}}(\sigma) \right ] \\
&- \widehat{u_1}(\rho) \left [ \sin((t_n + r_n)(\rho + \sigma)) - \sin((t_n - r_n)
(\rho + \sigma)) \right ]  \\
& \left [ \sin(t_n \sigma) \sigma \widehat{w_{0,n}}(\sigma) + \cos(t_n \sigma) \widehat{w_{1,n}}(\sigma) \right ]
\frac{(\rho \sigma)^{\frac{N-1}{2}}}{\rho - \sigma} d\rho d\sigma.
\end{align*}
These terms converge to 0 by the same reasoning as before. This completes the proof of Lemma \ref{appblem1}.
\end{proof}

\section{}

In this appendix, we give a proof of Theorem \ref{cptsolns} for $N \geq 6$.  Theorem \ref{cptsolns} was proved for $3 \leq N \leq 5$ in Section 6
of \cite{dkm1}.  The proof for $N \geq 6$ will follow in a very similar fashion and we will highlight the main differences.  We split the proof up into several steps.
Without loss of generality, we may assume that $\lambda$ is continuous on $I_{\max}(u)$ (see \cite{km06}, Remark 5.4).

\begin{proof}[Proof of Theorem \ref{cptsolns}]  We divide the proof into four steps.

\subsubsection*{Step 1: u is globally defined.}  In this step we show that $I_{\max}(u) = \R$.  Suppose not, i.e., $T^+(u) < \infty.$
By rescaling, we may assume that $T^+ = 1$.  We first remark that by the compactness of $K$, we have $E(u_0,u_1) > 0$ and $T^-(u) = -\infty$ (cf. Section 2 of
\cite{dkm6}).  By Section 4 of \cite{km08}, $\lambda(t) \leq C(1-t)$, and $\supp (u(t),\partial_t u(t))
\subseteq \{ |x| \leq 1-t \}$.  By Section 6 of \cite{km08}, self--similar, compact blow--up is excluded.
This implies
that there exists a sequence $\{\tau_n\}_n$ such that
$$
\tau_n \in (0,1), \quad \lim_{n \rightarrow \infty} \tau_n = 1, \quad \lim_{n \rightarrow \infty}
\frac{\lambda(\tau_n)}{1 - \tau_n} = 0.
$$
We find that there exists a sequence $\{t_n\}_n$ such that
\begin{align}\label{appc1}
\forall n, \forall \sigma \in (0, 1-t_n), \quad \frac{1}{\sigma} \int_{t_n}^{t_n + \sigma} \int_{\R^N} |\partial_t u (t,x)|^2 dx \hspace{1pt} dt
\leq \frac{1}{n}.
\end{align}
(see Section 5 of \cite{dkm1}).  Let $(U^0,U^1) \in \energysp$ be such that for a subsequence
$$
\lim_{n \rightarrow \infty} (\lambda(t_n)^{(N-2)/2}u(t_n, \lambda(t_n)\cdot), \lambda(t_n)^{N/2}\partial_t u(t_n, \lambda(t_n)\cdot))
= (U^0,U^1), \quad \mbox{in } \energysp.
$$
Let $U$ be a solution of \eqref{nlw} with initial condition $(U^0,U^1)$ and $\tau_0 \in (0,T^+(U))$.  Then by Proposition
\ref{longtime},
$$
\lim_{n \rightarrow \infty} \int_0^{\tau_0} \int_{\R^N} \lambda(t_n)^N ( \partial_t u(t_n + \lambda(t_n)s, \lambda(t_n)x))^2
dx ds = \int_0^{\tau_0} \int_{\R^N} (\partial_t U(t))^2 dx dt.
$$
By \eqref{appc1} and a change of variables, we obtain
$$
\lim_{n \rightarrow \infty} \lim_{n \rightarrow \infty} \int_0^{\tau_0} \int_{\R^N} \lambda(t_n)^N ( \partial_t u(t_n + \lambda(t_n)s, \lambda(t_n)x))^2
dx ds = 0.
$$
This implies that $\partial_t U \equiv 0$ for $t \in [0,\tau_0]$ so that $-\Delta U^0 = |U^0|^{4/(N-2)}U^0$.  Hence, $U = 0$ or
$U = W$ up to the invariances of the equation.  If $U = 0$, then $\| (u(t_n), \partial_t u(t_n) )\|_{\energysp} \rightarrow 0$
as $n \rightarrow \infty$.  By the small data theory, $u$ is global and scatters, contradicting $T^+(u) < \infty$.  Thus, $U = W$
up to the invariances of the equation.  By conservation of energy, $E(u_0,u_1) = E(W,0)$.

The solution $u$ of \eqref{nlw} has threshold energy $E(W,0)$, is not globally defined, and satisfies $u_0 \in L^2$ (since
supp $u_0 \subset B_1$).  By the main result of \cite{lz}, $u$ has to be the special solution $W^+$ constructed in this paper which
satisfies $\| u(t) - W \|_{\dot H^1} \leq e^{ct}$ as $t \rightarrow -\infty$.  Since $\supp (u_0,u_1) \subseteq \{ |x| \leq 1\}$, by
finite speed of propagation $\supp (u(t),\partial_t u(t)) \subseteq \{ |x| \leq 1 + |t| \}$.  Hence, as $t \rightarrow -\infty$
\begin{align*}
|t|^{2-N} \simeq \int_{|x| \geq 1 + |t|} |\nabla W|^2 dx \leq \int_{\R^N} |\nabla W - \nabla u(t)|^2 dx
\leq e^{-2c |t|},
\end{align*}
which is impossible.  We conclude that $u$ must be globally defined.

\subsubsection*{Step 2: $E(u_0,u_1) = E(W,0)$.}
The proof that $E(u_0,u_1) = E(W,0)$ is exactly the same as Steps 2--3 of Section 6 of \cite{dkm1} and we omit it.  The key
again is that the only radial solution to the elliptic equation $-\Delta U = |U|^{4/(N-2)} U$ is $W$ up to the invariances of the
equation.

\subsubsection*{Step 3: Convergence in mean to $W$.}  By the main result of \cite{lz}, $\| \nabla u(t) \|_{L^2}^2 \geq
\| \nabla W \|^2_{L^2}$ for all $t$.  If this were not the case, $u$ would scatter in at least one time direction, contradicting
the compactness of $K$.

Define
$$
\textup d(t) = \frac{2(N-1)}{N-2} \int_{\R^N} |\partial_t u(t)|^2 dx + \frac{2}{N-2} \left ( \int_{\R^N} |\nabla u(t)|^2 dx
- \int_{\R^N} |\nabla W|^2 dx \right ) \geq 0.
$$
By the variational characterization of $W$ (see \cite{aub} and \cite{tal}), for any $t_0$, $\textup d(t_0) = 0$ if and only if
$(u_(t_0), \partial_t u(t_0)) = (W,0)$.  In this step we will show by a virial argument that
\begin{align}\label{appc2}
\lim_{T \rightarrow \infty} \frac{1}{T} \int_{-T}^T \textup d(t) dt = 0.
\end{align}
This step and the following step will follow very closely Steps 4--5 of \cite{dkm1} which is based on Section 3 of \cite{dm08}.

Choose a function $\varphi \in C^{\infty}_0$ such that $\varphi = 1$ for $|x| \leq 1$, and write $\varphi_R(x) = \varphi(x/R)$.  Let
$$
g_R(t) = \int_{\R^N} \varphi_R u(t) \partial_t u(t) dx,
$$
and note that $|g_R(t)| \leq C_0 R$ where $C_0$ depends only on $\sup_t \| \vec u(t) \|_{\energysp}$.  Using that $u$ solves
\eqref{nlw}, we get by integration by parts
\begin{align}\label{appc3}
g_R'(t) = \textup d(t) + A_R(t),
\end{align}
where
\begin{align}\label{appc4}
|A_R(t)| \leq \int_{|x| \geq R} |\nabla u(t)|^2 + |\partial_t u(t)|^2 + |u(t)|^{\frac{2N}{N-2}} + \frac{|u(t)|^2}{|x|^2} dx.
\end{align}
Let $\epsilon > 0$.  As in the Step 4 of \cite{dkm1}, we deduce that that there exists a constant $C_1$ independent of $\epsilon$,
and a time $t_1(\epsilon)$ such that
$$
\forall T > 2 t_1(\epsilon), \forall t \in [t_1(\epsilon), T], \quad g'_{\epsilon T} (t) \geq \textup d(t) - C_1 \epsilon.
$$
This follows from the compactness of $K$ and the fact that $\lim_{|t| \rightarrow +\infty} \lambda(t) / |t| = 0$.  Integrating from
$t_1(\epsilon)$ to $T$, we obtain
\begin{align*}
\int_{t_1(\epsilon)}^T \textup{d}(t) dt \leq (C_0 \epsilon + C_1 \epsilon) T,
\end{align*}
and thus,
\begin{align*}
\limsup_{T \rightarrow +\infty} \frac{1}{T} \int_0^T \textup d(t) dt \leq
\limsup_{T \rightarrow +\infty} \frac{1}{T} \int_{t_1(\epsilon)}^T \textup{d}(t) dt \leq (C_1 + C_2) \epsilon.
\end{align*}
The same proof works for negative time, yielding \eqref{appc2}.

\subsubsection*{Step 4: Conclusion of the proof.}  In this step we show that $\textup d(t) \equiv 0$.  As in the
proof of Lemma 3.8 in \cite{dm08}, by refining the bound on
$g_R(t)$ and the estimate on $A_R(t)$ (using $g_R(t)$ and $A_R(t)$ vanish if $u$ is $W$), and by modulating the solution around $W$ for small $\textup d(t)$, we conclude from \eqref{appc3}
that there is a constant $C = C(K) > 0$ such that
\begin{align}\label{appc5}
\forall \sigma, \tau \in \R, \quad \sigma < \tau \implies \int_{\sigma}^{\tau}
\textup d(t) dt \leq C \left ( \sup_{\sigma \leq t \leq \tau} \lambda(t) \right ) (\textup d(\sigma) + \textup d(\tau)).
\end{align}
Using compactness and modulation arguments similar to those used to prove Lemma 3.10 in \cite{dm08}, we get the following
control on $\lambda(t)$:
\begin{align}\label{appc6}
\sigma + \lambda(\sigma) \leq \tau \implies |\lambda(\sigma) - \lambda(\tau)| \leq \int_{\sigma}^{\tau} \textup d(t) dt.
\end{align}
Let $\{\sigma_n \}_n$ and $\{ \tau_n \}_n$ be two sequences such that
\begin{align*}
\lim_{n \rightarrow \infty} \sigma_n = -\infty, \quad \lim_{n \rightarrow \infty} \tau_n = +\infty, \\
\lim_{n \rightarrow \infty} \textup d(\sigma_n) = \lim_{n \rightarrow \infty} \textup d(\tau_n) = 0.
\end{align*}
These sequences exist by Step 3.  We prove that $\lambda$ is bounded.  Let $n_0 \geq 1$ be such that $\textup d(\tau_{n}) \leq 1/4C$ for
all $n \geq n_0$.  Here $C$ is the constant appearing in \eqref{appc5}.
For large $n$, let $t_n \in [\tau_{n_0}, \tau_n]$ be such that
$$
\lambda(t_n) = \sup_{\tau_{n_0} \leq t \leq \tau_n} \lambda(t).
$$
If (after passing to a subsequence) $\lim_{n \rightarrow \infty} \lambda(t_n) = +\infty$, then by the continuity of $\lambda$, $\lim_{n \rightarrow \infty} t_n =
+ \infty$.  In particular, for large $n$, $\tau_{n_0} + \lambda(\tau_{n_0}) \leq t_n$.  By \eqref{appc6} and \eqref{appc5},
this implies that
\begin{align*}
\lambda(t_n) &\leq \lambda(\tau_{n_0}) + \int_{\tau_{n_0}}^{t_n} \textup d(t) dt \\
&\leq \lambda(\tau_{n_0}) + C \lambda(t_n) ( \textup d(\tau_{n_0}) + \textup d(\tau_{n})) \\
&\leq \frac{1}{4C} + \frac{1}{2} \lambda(t_n),
\end{align*}
a contradiction if $\lim_{n \rightarrow \infty} \lambda(t_n) = +\infty$.  Thus, $\lambda$ is bounded on $[0,\infty)$.  A similar proof
yields that $\lambda$ is bounded on $(-\infty, 0]$.  The boundedness of $\lambda$ and \eqref{appc5} imply that
$$
\int_{\R} \textup d(t) dt = \lim_{n \rightarrow \infty} \int_{\sigma_n}^{\tau_n} \textup d(t) dt \leq C
\lim_{n \rightarrow \infty} ( \textup d(\sigma_n) + \textup d(\tau_n)) = 0,
$$
which implies that $\textup d(t) = 0$ for all $t$.  The concludes the proof of Theorem \ref{cptsolns}.
\end{proof}

\end{document}